\providecommand{\abs}[1]{\left\vert#1\right\vert}
\providecommand{\parens}[1]{\left( #1\right)}
\providecommand{\norm}[1]{\left\Vert#1\right\Vert}
\def\nab{\nabla}
\def\dt{\partial_t}
\def\ls{\lesssim}
\def\pt{\partial}
\def\ddt{\frac{d}{dt}}
\def\coloneqq{:=}
\def\calA{\mathcal{A}}
\def\calD{\mathcal{D}}
\def\calE{\mathcal{E}}
\def\calF{\mathcal{F}}
\def\calG{\mathcal{G}}
\def\calH{\mathcal{H}}
\def\calK{\mathcal{K}}
\def\calM{\mathcal{M}}
\def\calN{\mathcal{N}}
\def\calP{\mathcal{P}}
\def\calS{\mathcal{S}}
\def\calW{\mathcal{W}}
\def\calY{\mathcal{Y}}
\def\al{\alpha}
\def\ov{\overline}
\def\lb{\lambda}
\newenvironment{aligneq}{
    \begin{equation}
    \begin{aligned}
} {
    \end{aligned}
    \end{equation}
}
 \def\N{\mathbb{N}}
\DeclareMathOperator{\diverge}{div}
\title[Faraday stability]{The nonlinear stability regime of the viscous Faraday wave problem}
\author{David Altizio}
\address{
Department of Mathematical Sciences\\
Carnegie Mellon University\\
Pittsburgh, PA 15213, USA
}
\email[D. Altizio]{daltizio@andrew.cmu.edu}
\author{Ian Tice}
\address{
Department of Mathematical Sciences\\
Carnegie Mellon University\\
Pittsburgh, PA 15213, USA
}
\email[I. Tice]{iantice@andrew.cmu.edu}
\author{Xinyu Wu}
\address{
Department of Mathematical Sciences\\
Carnegie Mellon University\\
Pittsburgh, PA 15213, USA
}
\email[X. Wu]{xinyuw1@andrew.cmu.edu}
\author{Taisuke Yasuda}
\address{
Department of Mathematical Sciences\\
Carnegie Mellon University\\
Pittsburgh, PA 15213, USA
}
\email[T. Yasuda]{taisukey@andrew.cmu.edu}
\thanks{I. Tice was supported by an NSF CAREER Grant (DMS \#1653161).  D. Altizio, X. Wu, and T. Yasuda were supported by the summer research support provided by this grant.}
\subjclass[2010]{Primary: 35Q30, 35R35, 76E17; Secondary: 35B40, 76D45  }
\keywords{Faraday waves, Free boundary problems,  Asymptotic stability}
\newtheorem{lem}{Lemma}[section]
\newtheorem{prop}[lem]{Proposition}
\newtheorem{thm}[lem]{Theorem}
\theoremstyle{definition}
\numberwithin{equation}{section} 
\begin{document}

\begin{abstract}
This paper concerns the dynamics of a layer of incompressible viscous fluid lying above a vertically oscillating rigid plane and with an upper boundary given by a free surface.  We consider the problem with gravity and surface tension for horizontally periodic flows. This problem gives rise to flat but vertically oscillating equilibrium solutions, and the main thrust of this paper is to study the asymptotic stability of these equilibria in certain parameter regimes. We prove that both with and without surface tension there exists a parameter regime in which sufficiently small perturbations of the equilibrium at time $t = 0$ give rise to global-in-time solutions that decay to equilibrium at an identified quantitative rate.
\end{abstract}

\maketitle

\section{Introduction}

\subsection{Faraday waves}\label{sec:faraday}

Consider a flat rigid surface in three dimensions, and suppose that a finite layer of incompressible fluid is deposited on the surface and held there by a uniform gravitational field.  The upper surface of the fluid is free.  Suppose that the rigid lower surface is then oscillated in the vertical direction as indicated in Figure \ref{fig:faraday-wave-problem}. 

\begin{figure}[h]
\centering

\begin{tikzpicture}[decoration={
    coil,
    amplitude = 1.5mm,
    aspect=1,
    pre length=1mm,
    post length=1mm,
}]

\pgfmathsetmacro{\cubex}{10}
\pgfmathsetmacro{\cubey}{1}
\pgfmathsetmacro{\cubez}{6}
\pgfmathsetmacro{\trayx}{10}
\pgfmathsetmacro{\trayy}{0.3}
\pgfmathsetmacro{\trayz}{6}

\draw[decorate] (-5,-\cubey-\trayy,-3) -- (-5,-\cubey-\trayy-3,-3);

\draw[fill=gray,opacity=0.85] (0,-\cubey,0) -- ++(-\trayx,0,0) -- ++(0,-\trayy,0) -- ++(\trayx,0,0) -- cycle;
\draw[fill=gray,opacity=0.85] (0,-\cubey,0) -- ++(0,0,-\trayz) -- ++(0,-\trayy,0) -- ++(0,0,\trayz) -- cycle;
\draw[fill=gray,opacity=0.85] (0,-\cubey,0) -- ++(-\trayx,0,0) -- ++(0,0,-\trayz) -- ++(\trayx,0,0) -- cycle;

\draw[fill=blue,opacity=0.3] (0,0,0) -- ++(-\cubex,0,0) -- ++(0,-\cubey,0) -- ++(\cubex,0,0) -- cycle;
\draw[fill=blue,opacity=0.3] (0,0,0) -- ++(0,0,-\cubez) -- ++(0,-\cubey,0) -- ++(0,0,\cubez) -- cycle;
\draw[fill=blue,opacity=0.3] (0,0,0) -- ++(-\cubex,0,0) -- ++(0,0,-\cubez) -- ++(\cubex,0,0) -- cycle;

\draw[<->, ultra thick] (-\cubex-1,-\cubey-\trayy,0) -- (-\cubex-1,-\trayy,0);

\end{tikzpicture}

\caption{A layer of fluid evolves on a vertically oscillating rigid surface.}
\label{fig:faraday-wave-problem}
\end{figure}
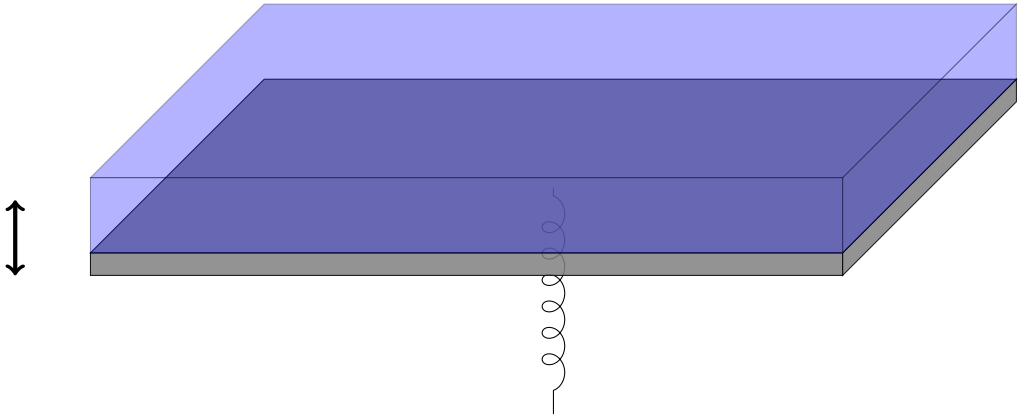

It was observed by Faraday \cite{faraday1831forms}  in the nineteenth century that in certain regions of the frequency-amplitude parameter space the free surface of the fluid forms standing waves, and in the complement of this region the free surface remains flat.  This phenomenon is now given the moniker \emph{Faraday waves}, and the various fascinating patterns formed by these standing waves have been studied intensively, both experimentally and theoretically.  As such, we will only attempt a very brief survey of the mathematical literature related to Faraday waves.  For a more thorough survey of the literature, especially for the case of inviscid fluids, we refer to the review by Miles-Henderson \cite{miles_henderson_1990}.
 
From a mathematical perspective, the linearized problem has been analyzed in the inviscid case by Benjamin-Ursell  \cite{benjamin1954stability} and in the viscous case by Kumar \cite{kumar1996linear} and Kumar-Tuckerman \cite{kumar1994parametric} to determine conditions for the onset of these surface waves, or more precisely to characterize the stability or instability of the flat interface.  In the inviscid case it is known \cite{benjamin1954stability} that the instability mechanism is equivalent to the parametric instability mechanism of the Mathieu ODE, about which much is known (see, for instance, McLachlan's book \cite{mclachlan}).  The viscous problem is more complicated and does not reduce to the Mathieu ODE, but the numerical approximations of \cite{kumar1996linear} show that instability regions persist and are qualitatively similar to those in the inviscid case.  In  the work of Skeldon-Rucklidge \cite{skeldon2015can}  and Westra-Binks-VanDeWater \cite{westra2003patterns} the tools of weakly nonlinear analysis were employed to explain the various surface wave patterns observed in experiments.  Simulations and numerical studies also have achieved results that agree well with experiments in various settings: see for instance P\'{e}rinet-Juric-Tuckerman \cite{perinet2009numerical} and Qadeer \cite{qadeer2018simulating}.  Faraday waves have also been studied with linear and numerical analysis in compressible fluids by Das-Morris-Bhattacharyay \cite{dasetal_2009}.

Faraday waves have recently experienced a renewed interest since the experimental discovery of  Couder-Proti\`{e}re-Fort-Boudaoud \cite{couder2005dynamical}, which showed that Faraday waves coupled with fluid droplets can ``walk.''  These walking water droplets can further be coupled with other water droplets, and have been shown to exhibit behavior analogous to quantum mechanical phenomena. We refer the reader to the review of Bush \cite{bush2015pilot} for more details of this line of work. 
 
To the best of our knowledge, there has been no fully nonlinear analysis of the viscous Faraday wave problem.   In particular, there are no results rigorously establishing the existence of a stable parameter regime.  The principal goal in this paper is to prove such a result and to provide some quantitative estimates for where in the oscillation parameter space Faraday waves \emph{do not} occur, i.e. where the flat free interface remains stable.

\subsection{Free boundary Navier-Stokes equations in an oscillating domain}

We now properly formulate the problem to be studied in the paper.

\subsubsection{Overview of assumptions}
We consider a layer of viscous incompressible  fluid evolving above a flat plane in three dimensions. We assume the fluid is subjected to a uniform gravitational force field of the form $-ge_3\in\mathbb R^3$ where $g>0$ is a constant and $e_3 = (0,0,1)$ is the vertical unit vector. Furthermore, we work in a situation where the layer of fluid lies on top of a lower boundary that moves in the vertical direction, so that the vertical component at time $t$ is given by $Af(\omega t)-b$ where $f:\mathbb{T} = \mathbb{R} / \mathbb{Z} \to[-1,1]$ is a smooth, non-constant oscillation profile, $A>0$ is an amplitude parameter, $\omega>0$ is a frequency parameter, and $b>0$ is a constant depth parameter.  A typical choice of the oscillation profile is $f(t) = \cos(2\pi t - \delta)$ for some $\delta \in [0,2\pi)$.  We allow for the more general profile $f$ in order to highlight that it is the amplitude and frequency of the oscillation profile that play the dominant role in determining stability.  Note in particular that since $f$ is a smooth function on the torus $\mathbb{T}$, the assumption that it is not a constant implies that none of its derivatives may vanish identically. 

In addition to the above assumption on the external force acting on the fluid, we will assume three other main features. First, we assume that the fluid is bounded above by a free surface that evolves with the fluid. Second, we assume that above the free interface the fluid is bordered by a trivial fluid of constant pressure (for instance, a vacuum). Third, we assume that the fluid is horizontally periodic so that we can determine its dynamics by studying a single horizontal periodicity cell.

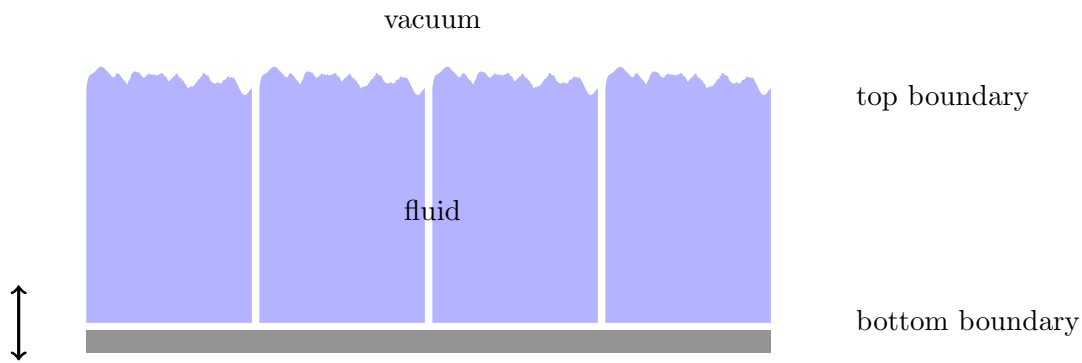
\begin{figure}[ht]
    \centering
\begin{tikzpicture}
    \fill[blue,opacity=0.3](0.9,-3) -- (0.9, 0.12291074114698593) -- (0.91, 0.1950766528886293) -- (0.92, 0.24200926113724006) -- (0.93, 0.2712938679570716) -- (0.9400000000000001, 0.2888723911039713) -- (0.9500000000000001, 0.2992824262429433) -- (0.96, 0.30587502902418484) -- (0.97, 0.3110121066369235) -- (0.98, 0.31624430846038326) -- (0.99, 0.3224703054312061) -- (1.0, 0.3300783477466577) -- (1.01, 0.33907099052294526) -- (1.02, 0.3491738770279719) -- (1.03, 0.3599294691078594) -- (1.04, 0.370776614426565) -- (1.05, 0.38111684013791797) -- (1.06, 0.39036826260940677) -- (1.07, 0.3980080028170425) -- (1.08, 0.4036039970306254) -- (1.09, 0.40683709240874616) -- (1.1, 0.4075143171228428) -- (1.11, 0.40557421462965126) -- (1.12, 0.4010851317113529) -- (1.1300000000000001, 0.39423734990278975) -- (1.1400000000000001, 0.3853299499250271) -- (1.15, 0.37475330343260255) -- (1.1600000000000001, 0.3563436163344125) -- (1.17, 0.3546293581133293) -- (1.1800000000000002, 0.3434681248208313) -- (1.19, 0.3327381589801685) -- (1.2, 0.32249234960120016) -- (1.21, 0.3105946161151378) -- (1.22, 0.3039600186535996) -- (1.23, 0.28688840435559215) -- (1.24, 0.27631910871380294) -- (1.25, 0.2747579808655225) -- (1.26, 0.26610266617760764) -- (1.27, 0.2683769995776154) -- (1.28, 0.2798991507270649) -- (1.29, 0.2972375574794057) -- (1.3, 0.32102948738582615) -- (1.31, 0.32322284786522776) -- (1.32, 0.31511228334186053) -- (1.33, 0.31770309839586575) -- (1.34, 0.3043232265728078) -- (1.35, 0.2832286162768577) -- (1.36, 0.2858645872666763) -- (1.37, 0.2567921162831364) -- (1.38, 0.25278514529597557) -- (1.3900000000000001, 0.2297799813267726) -- (1.4, 0.22430275029217728) -- (1.4100000000000001, 0.22633700091393613) -- (1.42, 0.20203059129091314) -- (1.4300000000000002, 0.19204121569457774) -- (1.44, 0.1735901411541578) -- (1.4500000000000002, 0.17860360496082062) -- (1.46, 0.22025362930432146) -- (1.4700000000000002, 0.22647129893759455) -- (1.48, 0.2452841776959024) -- (1.49, 0.27831045314021513) -- (1.5, 0.3040172354034933) -- (1.51, 0.3218491799129439) -- (1.52, 0.3293557371379631) -- (1.53, 0.3468652114865258) -- (1.54, 0.34069034881962607) -- (1.55, 0.3487454260558381) -- (1.56, 0.33502678233029454) -- (1.57, 0.3367456213349664) -- (1.58, 0.33924651126943545) -- (1.59, 0.3259352151060257) -- (1.6, 0.3351175565011755) -- (1.6099999999999999, 0.32383773251777914) -- (1.62, 0.3025265471902197) -- (1.63, 0.29184398704110365) -- (1.6400000000000001, 0.27657253508962204) -- (1.65, 0.2708614764435344) -- (1.6600000000000001, 0.2562853073192842) -- (1.67, 0.2700283808179787) -- (1.6800000000000002, 0.2620687792273508) -- (1.69, 0.2884015360479295) -- (1.7000000000000002, 0.2949241784037812) -- (1.71, 0.29121957994539427) -- (1.7200000000000002, 0.3166675184892683) -- (1.73, 0.31889690773197693) -- (1.74, 0.312622530208679) -- (1.75, 0.3037430762082276) -- (1.76, 0.2992760577627454) -- (1.77, 0.3120572822123504) -- (1.78, 0.3015057161801582) -- (1.79, 0.2949393021572416) -- (1.8, 0.3032230335291212) -- (1.81, 0.309692421267794) -- (1.82, 0.29008219147393366) -- (1.83, 0.29982693859833404) -- (1.84, 0.28663426122514096) -- (1.85, 0.29754001868824165) -- (1.8599999999999999, 0.3035160622976074) -- (1.87, 0.30675757118512953) -- (1.88, 0.31329626865856225) -- (1.8900000000000001, 0.31000751490920925) -- (1.9, 0.32862872883316396) -- (1.9100000000000001, 0.32064796035167137) -- (1.92, 0.3158760370562475) -- (1.9300000000000002, 0.30076133063703425) -- (1.94, 0.2752668594243774) -- (1.9500000000000002, 0.2828525168901326) -- (1.96, 0.2785026421420431) -- (1.9700000000000002, 0.25431789256062) -- (1.98, 0.23898488057598666) -- (1.9900000000000002, 0.21523907035225673) -- (2.0, 0.2100289328185864) -- (2.0100000000000002, 0.22812729125449938) -- (2.02, 0.23468648292710514) -- (2.0300000000000002, 0.2566632765750441) -- (2.04, 0.24999062616202805) -- (2.0500000000000003, 0.26360141741461085) -- (2.06, 0.2842660179076294) -- (2.07, 0.2769942093883895) -- (2.08, 0.29923414302046863) -- (2.09, 0.31105152915158135) -- (2.1, 0.3180046787445816) -- (2.11, 0.
3051889334223304) -- (2.12, 0.2896068141897182) -- (2.13, 0.2682238733296975) -- (2.14, 0.2563200885198964) -- (2.15, 0.26102823269063496) -- (2.16, 0.25396322920696013) -- (2.17, 0.2372007247914635) -- (2.18, 0.21874468908207356) -- (2.19, 0.22825200657613456) -- (2.2, 0.20209326491871923) -- (2.21, 0.17647321657206416) -- (2.22, 0.18643508870458336) -- (2.23, 0.15462426880618899) -- (2.24, 0.13810319771610757) -- (2.25, 0.11479261815267255) -- (2.2600000000000002, 0.12742329130550709) -- (2.27, 0.1424672298799308) -- (2.2800000000000002, 0.1352717413318057) -- (2.29, 0.13988266122672208) -- (2.3000000000000003, 0.15703009061828763) -- (2.31, 0.14525602387440534) -- (2.32, 0.14145960026143486) -- (2.33, 0.16716773115098948) -- (2.34, 0.15990750178413923) -- (2.35, 0.18445224405025792) -- (2.36, 0.19484575016623093) -- (2.37, 0.20566380344128107) -- (2.38, 0.227972803192257) -- (2.39, 0.24292364770484107) -- (2.4, 0.2436230539007264) -- (2.41, 0.24352677715651802) -- (2.42, 0.278176844260742) -- (2.43, 0.2791946350075714) -- (2.44, 0.28962140625908567) -- (2.45, 0.326524384131525) -- (2.46, 0.3216386813882537) -- (2.47, 0.32162356121497143) -- (2.48, 0.30696567855193563) -- (2.49, 0.2943548687488767) -- (2.5, 0.29245006251798983) -- (2.5100000000000002, 0.2928572249741567) -- (2.52, 0.2823502677206839) -- (2.5300000000000002, 0.2765065394539235) -- (2.54, 0.2433930847266742) -- (2.5500000000000003, 0.22348278992515525) -- (2.56, 0.21717148276008622) -- (2.57, 0.2006338446884197) -- (2.58, 0.19431705809638378) -- (2.59, 0.19114576002993414) -- (2.6, 0.18282749679578145) -- (2.61, 0.1912632117611317) -- (2.62, 0.17764843665243524) -- (2.63, 0.19767578691172694) -- (2.64, 0.19894618017689) -- (2.65, 0.1994744094806289) -- (2.66, 0.19778440299585046) -- (2.67, 0.1806292676865254) -- (2.68, 0.1927146966084676) -- (2.69, 0.18762088941065658) -- (2.7, 0.18696303485149857) -- (2.71, 0.21062836277453864) -- (2.72, 0.2346039423037656) -- (2.73, 0.22958388000832955) -- (2.74, 0.23707853509843294) -- (2.75, 0.2388321681426053) -- (2.7600000000000002, 0.2499766560480241) -- (2.77, 0.2681608701011314) -- (2.7800000000000002, 0.27724452973449004) -- (2.79, 0.278593985580782) -- (2.8000000000000003, 0.2678030890680097) -- (2.81, 0.25829413934868034) -- (2.82, 0.26896304759769113) -- (2.83, 0.2673357265327721) -- (2.84, 0.2504390038513554) -- (2.85, 0.27014740026587564) -- (2.86, 0.2600495678446209) -- (2.87, 0.2468397638740145) -- (2.88, 0.2308394794742706) -- (2.89, 0.21248525975284394) -- (2.9, 0.1923110962518332) -- (2.91, 0.17092805331053249) -- (2.92, 0.14900168524461208) -- (2.93, 0.1272278058622552) -- (2.94, 0.10630717183669108) -- (2.9499999999999997, 0.08691964145512139) -- (2.96, 0.06969837026377242) -- (2.9699999999999998, 0.05520460512903382) -- (2.98, 0.04390363823431376) -- (2.9899999999999998, 0.036142482532374474) -- (3.0, 0.03212983017349893) -- (3.01, 0.03191885542859909) -- (3.02, 0.03539342362728182) -- (3.03, 0.04225826763132976) -- (3.04, 0.05203369336219451) -- (3.05, 0.0640553759036931) -- (3.06, 0.07747980769844459) -- (3.07, 0.09129596035911765) -- (3.08, 0.10434372161278753) -- (3.09, 0.11533966890004411) -- (3.1, 0.12291074114698593) -- (3.1,-3) -- cycle;
\fill[blue,opacity=0.3](3.2,-3) -- (3.2, 0.12291074114698593) -- (3.21, 0.1950766528886293) -- (3.22, 0.24200926113724006) -- (3.23, 0.2712938679570716) -- (3.24, 0.2888723911039713) -- (3.25, 0.2992824262429433) -- (3.2600000000000002, 0.30587502902418484) -- (3.27, 0.3110121066369235) -- (3.2800000000000002, 0.31624430846038326) -- (3.29, 0.3224703054312061) -- (3.3000000000000003, 0.3300783477466577) -- (3.31, 0.33907099052294526) -- (3.3200000000000003, 0.3491738770279719) -- (3.33, 0.3599294691078594) -- (3.3400000000000003, 0.370776614426565) -- (3.35, 0.38111684013791797) -- (3.3600000000000003, 0.39036826260940677) -- (3.37, 0.3980080028170425) -- (3.3800000000000003, 0.4036039970306254) -- (3.39, 0.40683709240874616) -- (3.4000000000000004, 0.4075143171228428) -- (3.41, 0.40557421462965126) -- (3.4200000000000004, 0.4010851317113529) -- (3.43, 0.39423734990278975) -- (3.4400000000000004, 0.3853299499250271) -- (3.45, 0.37475330343260255) -- (3.46, 0.3563436163344125) -- (3.47, 0.3546293581133293) -- (3.4800000000000004, 0.3434681248208313) -- (3.49, 0.3327381589801685) -- (3.5, 0.32249234960120016) -- (3.5100000000000002, 0.3105946161151378) -- (3.52, 0.3039600186535996) -- (3.5300000000000002, 0.28688840435559215) -- (3.54, 0.27631910871380294) -- (3.5500000000000003, 0.2747579808655225) -- (3.56, 0.26610266617760764) -- (3.5700000000000003, 0.2683769995776154) -- (3.58, 0.2798991507270649) -- (3.5900000000000003, 0.2972375574794057) -- (3.6, 0.32102948738582615) -- (3.6100000000000003, 0.32322284786522776) -- (3.62, 0.31511228334186053) -- (3.6300000000000003, 0.31770309839586575) -- (3.64, 0.3043232265728078) -- (3.6500000000000004, 0.2832286162768577) -- (3.66, 0.2858645872666763) -- (3.6700000000000004, 0.2567921162831364) -- (3.68, 0.25278514529597557) -- (3.6900000000000004, 0.2297799813267726) -- (3.7, 0.22430275029217728) -- (3.71, 0.22633700091393613) -- (3.72, 0.20203059129091314) -- (3.7300000000000004, 0.19204121569457774) -- (3.74, 0.1735901411541578) -- (3.75, 0.17860360496082062) -- (3.7600000000000002, 0.22025362930432146) -- (3.7700000000000005, 0.22647129893759455) -- (3.7800000000000002, 0.2452841776959024) -- (3.79, 0.27831045314021513) -- (3.8000000000000003, 0.3040172354034933) -- (3.81, 0.3218491799129439) -- (3.8200000000000003, 0.3293557371379631) -- (3.83, 0.3468652114865258) -- (3.8400000000000003, 0.34069034881962607) -- (3.85, 0.3487454260558381) -- (3.8600000000000003, 0.33502678233029454) -- (3.87, 0.3367456213349664) -- (3.8800000000000003, 0.33924651126943545) -- (3.89, 0.3259352151060257) -- (3.9000000000000004, 0.3351175565011755) -- (3.91, 0.32383773251777914) -- (3.92, 0.3025265471902197) -- (3.93, 0.29184398704110365) -- (3.9400000000000004, 0.27657253508962204) -- (3.95, 0.2708614764435344) -- (3.96, 0.2562853073192842) -- (3.97, 0.2700283808179787) -- (3.9800000000000004, 0.2620687792273508) -- (3.99, 0.2884015360479295) -- (4.0, 0.2949241784037812) -- (4.01, 0.29121957994539427) -- (4.0200000000000005, 0.3166675184892683) -- (4.03, 0.31889690773197693) -- (4.04, 0.312622530208679) -- (4.05, 0.3037430762082276) -- (4.0600000000000005, 0.2992760577627454) -- (4.07, 0.3120572822123504) -- (4.08, 0.3015057161801582) -- (4.09, 0.2949393021572416) -- (4.1000000000000005, 0.3032230335291212) -- (4.11, 0.309692421267794) -- (4.12, 0.29008219147393366) -- (4.13, 0.29982693859833404) -- (4.140000000000001, 0.28663426122514096) -- (4.15, 0.29754001868824165) -- (4.16, 0.3035160622976074) -- (4.17, 0.30675757118512953) -- (4.18, 0.31329626865856225) -- (4.19, 0.31000751490920925) -- (4.2, 0.32862872883316396) -- (4.21, 0.32064796035167137) -- (4.220000000000001, 0.3158760370562475) -- (4.23, 0.30076133063703425) -- (4.24, 0.2752668594243774) -- (4.25, 0.2828525168901326) -- (4.26, 0.2785026421420431) -- (4.2700000000000005, 0.25431789256062) -- (4.28, 0.23898488057598666) -- (4.29, 0.21523907035225673) -- (4.300000000000001, 0.2100289328185864) -- (4.3100000000000005, 0.22812729125449938) -- (4.32, 0.23468648292710514) -- (4.33, 0.2566632765750441) -- (4.34, 0.24999062616202805) -- 
(4.3500000000000005, 0.26360141741461085) -- (4.36, 0.2842660179076294) -- (4.37, 0.2769942093883895) -- (4.38, 0.29923414302046863) -- (4.390000000000001, 0.31105152915158135) -- (4.4, 0.3180046787445816) -- (4.41, 0.3051889334223304) -- (4.42, 0.2896068141897182) -- (4.43, 0.2682238733296975) -- (4.44, 0.2563200885198964) -- (4.45, 0.26102823269063496) -- (4.46, 0.25396322920696013) -- (4.470000000000001, 0.2372007247914635) -- (4.48, 0.21874468908207356) -- (4.49, 0.22825200657613456) -- (4.5, 0.20209326491871923) -- (4.51, 0.17647321657206416) -- (4.5200000000000005, 0.18643508870458336) -- (4.53, 0.15462426880618899) -- (4.54, 0.13810319771610757) -- (4.550000000000001, 0.11479261815267255) -- (4.5600000000000005, 0.12742329130550709) -- (4.57, 0.1424672298799308) -- (4.58, 0.1352717413318057) -- (4.59, 0.13988266122672208) -- (4.6000000000000005, 0.15703009061828763) -- (4.61, 0.14525602387440534) -- (4.62, 0.14145960026143486) -- (4.63, 0.16716773115098948) -- (4.640000000000001, 0.15990750178413923) -- (4.65, 0.18445224405025792) -- (4.66, 0.19484575016623093) -- (4.67, 0.20566380344128107) -- (4.68, 0.227972803192257) -- (4.69, 0.24292364770484107) -- (4.7, 0.2436230539007264) -- (4.71, 0.24352677715651802) -- (4.720000000000001, 0.278176844260742) -- (4.73, 0.2791946350075714) -- (4.74, 0.28962140625908567) -- (4.75, 0.326524384131525) -- (4.76, 0.3216386813882537) -- (4.7700000000000005, 0.32162356121497143) -- (4.78, 0.30696567855193563) -- (4.79, 0.2943548687488767) -- (4.800000000000001, 0.29245006251798983) -- (4.8100000000000005, 0.2928572249741567) -- (4.82, 0.2823502677206839) -- (4.83, 0.2765065394539235) -- (4.84, 0.2433930847266742) -- (4.8500000000000005, 0.22348278992515525) -- (4.86, 0.21717148276008622) -- (4.87, 0.2006338446884197) -- (4.88, 0.19431705809638378) -- (4.890000000000001, 0.19114576002993414) -- (4.9, 0.18282749679578145) -- (4.91, 0.1912632117611317) -- (4.92, 0.17764843665243524) -- (4.93, 0.19767578691172694) -- (4.94, 0.19894618017689) -- (4.95, 0.1994744094806289) -- (4.96, 0.19778440299585046) -- (4.970000000000001, 0.1806292676865254) -- (4.98, 0.1927146966084676) -- (4.99, 0.18762088941065658) -- (5.0, 0.18696303485149857) -- (5.01, 0.21062836277453864) -- (5.0200000000000005, 0.2346039423037656) -- (5.03, 0.22958388000832955) -- (5.04, 0.23707853509843294) -- (5.050000000000001, 0.2388321681426053) -- (5.0600000000000005, 0.2499766560480241) -- (5.07, 0.2681608701011314) -- (5.08, 0.27724452973449004) -- (5.09, 0.278593985580782) -- (5.1000000000000005, 0.2678030890680097) -- (5.11, 0.25829413934868034) -- (5.12, 0.26896304759769113) -- (5.13, 0.2673357265327721) -- (5.140000000000001, 0.2504390038513554) -- (5.15, 0.27014740026587564) -- (5.16, 0.2600495678446209) -- (5.17, 0.2468397638740145) -- (5.18, 0.2308394794742706) -- (5.19, 0.21248525975284394) -- (5.2, 0.1923110962518332) -- (5.210000000000001, 0.17092805331053249) -- (5.220000000000001, 0.14900168524461208) -- (5.23, 0.1272278058622552) -- (5.24, 0.10630717183669108) -- (5.25, 0.08691964145512139) -- (5.26, 0.06969837026377242) -- (5.27, 0.05520460512903382) -- (5.28, 0.04390363823431376) -- (5.29, 0.036142482532374474) -- (5.300000000000001, 0.03212983017349893) -- (5.3100000000000005, 0.03191885542859909) -- (5.32, 0.03539342362728182) -- (5.33, 0.04225826763132976) -- (5.34, 0.05203369336219451) -- (5.35, 0.0640553759036931) -- (5.36, 0.07747980769844459) -- (5.37, 0.09129596035911765) -- (5.380000000000001, 0.10434372161278753) -- (5.390000000000001, 0.11533966890004411) -- (5.4, 0.12291074114698593) -- (5.4,-3) -- cycle;
\fill[blue,opacity=0.3](5.5,-3) -- (5.5, 0.12291074114698593) -- (5.51, 0.1950766528886293) -- (5.52, 0.24200926113724006) -- (5.53, 0.2712938679570716) -- (5.54, 0.2888723911039713) -- (5.55, 0.2992824262429433) -- (5.56, 0.30587502902418484) -- (5.57, 0.3110121066369235) -- (5.58, 0.31624430846038326) -- (5.59, 0.3224703054312061) -- (5.6, 0.3300783477466577) -- (5.61, 0.33907099052294526) -- (5.62, 0.3491738770279719) -- (5.63, 0.3599294691078594) -- (5.64, 0.370776614426565) -- (5.65, 0.38111684013791797) -- (5.66, 0.39036826260940677) -- (5.67, 0.3980080028170425) -- (5.68, 0.4036039970306254) -- (5.69, 0.40683709240874616) -- (5.7, 0.4075143171228428) -- (5.71, 0.40557421462965126) -- (5.72, 0.4010851317113529) -- (5.73, 0.39423734990278975) -- (5.74, 0.3853299499250271) -- (5.75, 0.37475330343260255) -- (5.76, 0.3563436163344125) -- (5.77, 0.3546293581133293) -- (5.78, 0.3434681248208313) -- (5.79, 0.3327381589801685) -- (5.8, 0.32249234960120016) -- (5.81, 0.3105946161151378) -- (5.82, 0.3039600186535996) -- (5.83, 0.28688840435559215) -- (5.84, 0.27631910871380294) -- (5.85, 0.2747579808655225) -- (5.86, 0.26610266617760764) -- (5.87, 0.2683769995776154) -- (5.88, 0.2798991507270649) -- (5.89, 0.2972375574794057) -- (5.9, 0.32102948738582615) -- (5.91, 0.32322284786522776) -- (5.92, 0.31511228334186053) -- (5.93, 0.31770309839586575) -- (5.94, 0.3043232265728078) -- (5.95, 0.2832286162768577) -- (5.96, 0.2858645872666763) -- (5.97, 0.2567921162831364) -- (5.98, 0.25278514529597557) -- (5.99, 0.2297799813267726) -- (6.0, 0.22430275029217728) -- (6.01, 0.22633700091393613) -- (6.02, 0.20203059129091314) -- (6.03, 0.19204121569457774) -- (6.04, 0.1735901411541578) -- (6.05, 0.17860360496082062) -- (6.0600000000000005, 0.22025362930432146) -- (6.07, 0.22647129893759455) -- (6.08, 0.2452841776959024) -- (6.09, 0.27831045314021513) -- (6.1, 0.3040172354034933) -- (6.11, 0.3218491799129439) -- (6.12, 0.3293557371379631) -- (6.13, 0.3468652114865258) -- (6.14, 0.34069034881962607) -- (6.15, 0.3487454260558381) -- (6.16, 0.33502678233029454) -- (6.17, 0.3367456213349664) -- (6.18, 0.33924651126943545) -- (6.19, 0.3259352151060257) -- (6.2, 0.3351175565011755) -- (6.21, 0.32383773251777914) -- (6.22, 0.3025265471902197) -- (6.23, 0.29184398704110365) -- (6.24, 0.27657253508962204) -- (6.25, 0.2708614764435344) -- (6.26, 0.2562853073192842) -- (6.27, 0.2700283808179787) -- (6.28, 0.2620687792273508) -- (6.29, 0.2884015360479295) -- (6.3, 0.2949241784037812) -- (6.3100000000000005, 0.29121957994539427) -- (6.32, 0.3166675184892683) -- (6.33, 0.31889690773197693) -- (6.34, 0.312622530208679) -- (6.35, 0.3037430762082276) -- (6.36, 0.2992760577627454) -- (6.37, 0.3120572822123504) -- (6.38, 0.3015057161801582) -- (6.39, 0.2949393021572416) -- (6.4, 0.3032230335291212) -- (6.41, 0.309692421267794) -- (6.42, 0.29008219147393366) -- (6.43, 0.29982693859833404) -- (6.44, 0.28663426122514096) -- (6.45, 0.29754001868824165) -- (6.46, 0.3035160622976074) -- (6.47, 0.30675757118512953) -- (6.48, 0.31329626865856225) -- (6.49, 0.31000751490920925) -- (6.5, 0.32862872883316396) -- (6.51, 0.32064796035167137) -- (6.52, 0.3158760370562475) -- (6.53, 0.30076133063703425) -- (6.54, 0.2752668594243774) -- (6.55, 0.2828525168901326) -- (6.5600000000000005, 0.2785026421420431) -- (6.57, 0.25431789256062) -- (6.58, 0.23898488057598666) -- (6.59, 0.21523907035225673) -- (6.6, 0.2100289328185864) -- (6.61, 0.22812729125449938) -- (6.62, 0.23468648292710514) -- (6.63, 0.2566632765750441) -- (6.640000000000001, 0.24999062616202805) -- (6.65, 0.26360141741461085) -- (6.66, 0.2842660179076294) -- (6.67, 0.2769942093883895) -- (6.68, 0.29923414302046863) -- (6.6899999999999995, 0.31105152915158135) -- (6.7, 0.3180046787445816) -- (6.71, 0.3051889334223304) -- (6.72, 0.2896068141897182) -- (6.73, 0.2682238733296975) -- (6.74, 0.2563200885198964) -- (6.75, 0.26102823269063496) -- (6.76, 0.25396322920696013) -- (6.77, 0.2372007247914635) -- (6.78, 0.21874468908207356) -- (6.79, 0.22825200657613456) -- (6.8, 0.20209326491871923) -- (6.8100000000000005, 
0.17647321657206416) -- (6.82, 0.18643508870458336) -- (6.83, 0.15462426880618899) -- (6.84, 0.13810319771610757) -- (6.85, 0.11479261815267255) -- (6.86, 0.12742329130550709) -- (6.87, 0.1424672298799308) -- (6.88, 0.1352717413318057) -- (6.890000000000001, 0.13988266122672208) -- (6.9, 0.15703009061828763) -- (6.91, 0.14525602387440534) -- (6.92, 0.14145960026143486) -- (6.93, 0.16716773115098948) -- (6.9399999999999995, 0.15990750178413923) -- (6.95, 0.18445224405025792) -- (6.96, 0.19484575016623093) -- (6.97, 0.20566380344128107) -- (6.98, 0.227972803192257) -- (6.99, 0.24292364770484107) -- (7.0, 0.2436230539007264) -- (7.01, 0.24352677715651802) -- (7.02, 0.278176844260742) -- (7.03, 0.2791946350075714) -- (7.04, 0.28962140625908567) -- (7.05, 0.326524384131525) -- (7.0600000000000005, 0.3216386813882537) -- (7.07, 0.32162356121497143) -- (7.08, 0.30696567855193563) -- (7.09, 0.2943548687488767) -- (7.1, 0.29245006251798983) -- (7.11, 0.2928572249741567) -- (7.12, 0.2823502677206839) -- (7.13, 0.2765065394539235) -- (7.140000000000001, 0.2433930847266742) -- (7.15, 0.22348278992515525) -- (7.16, 0.21717148276008622) -- (7.17, 0.2006338446884197) -- (7.18, 0.19431705809638378) -- (7.1899999999999995, 0.19114576002993414) -- (7.2, 0.18282749679578145) -- (7.21, 0.1912632117611317) -- (7.22, 0.17764843665243524) -- (7.23, 0.19767578691172694) -- (7.24, 0.19894618017689) -- (7.25, 0.1994744094806289) -- (7.26, 0.19778440299585046) -- (7.27, 0.1806292676865254) -- (7.28, 0.1927146966084676) -- (7.29, 0.18762088941065658) -- (7.3, 0.18696303485149857) -- (7.3100000000000005, 0.21062836277453864) -- (7.32, 0.2346039423037656) -- (7.33, 0.22958388000832955) -- (7.34, 0.23707853509843294) -- (7.35, 0.2388321681426053) -- (7.36, 0.2499766560480241) -- (7.37, 0.2681608701011314) -- (7.38, 0.27724452973449004) -- (7.390000000000001, 0.278593985580782) -- (7.4, 0.2678030890680097) -- (7.41, 0.25829413934868034) -- (7.42, 0.26896304759769113) -- (7.43, 0.2673357265327721) -- (7.4399999999999995, 0.2504390038513554) -- (7.45, 0.27014740026587564) -- (7.46, 0.2600495678446209) -- (7.47, 0.2468397638740145) -- (7.48, 0.2308394794742706) -- (7.49, 0.21248525975284394) -- (7.5, 0.1923110962518332) -- (7.51, 0.17092805331053249) -- (7.52, 0.14900168524461208) -- (7.53, 0.1272278058622552) -- (7.54, 0.10630717183669108) -- (7.55, 0.08691964145512139) -- (7.5600000000000005, 0.06969837026377242) -- (7.57, 0.05520460512903382) -- (7.58, 0.04390363823431376) -- (7.59, 0.036142482532374474) -- (7.6, 0.03212983017349893) -- (7.609999999999999, 0.03191885542859909) -- (7.62, 0.03539342362728182) -- (7.63, 0.04225826763132976) -- (7.640000000000001, 0.05203369336219451) -- (7.65, 0.0640553759036931) -- (7.66, 0.07747980769844459) -- (7.67, 0.09129596035911765) -- (7.68, 0.10434372161278753) -- (7.6899999999999995, 0.11533966890004411) -- (7.7, 0.12291074114698593) -- (7.7,-3) -- cycle;
\fill[blue,opacity=0.3](7.8,-3) -- (7.8, 0.12291074114698593) -- (7.81, 0.1950766528886293) -- (7.819999999999999, 0.24200926113724006) -- (7.83, 0.2712938679570716) -- (7.84, 0.2888723911039713) -- (7.85, 0.2992824262429433) -- (7.859999999999999, 0.30587502902418484) -- (7.87, 0.3110121066369235) -- (7.88, 0.31624430846038326) -- (7.89, 0.3224703054312061) -- (7.8999999999999995, 0.3300783477466577) -- (7.91, 0.33907099052294526) -- (7.92, 0.3491738770279719) -- (7.93, 0.3599294691078594) -- (7.9399999999999995, 0.370776614426565) -- (7.95, 0.38111684013791797) -- (7.96, 0.39036826260940677) -- (7.97, 0.3980080028170425) -- (7.9799999999999995, 0.4036039970306254) -- (7.99, 0.40683709240874616) -- (8.0, 0.4075143171228428) -- (8.01, 0.40557421462965126) -- (8.02, 0.4010851317113529) -- (8.03, 0.39423734990278975) -- (8.04, 0.3853299499250271) -- (8.05, 0.37475330343260255) -- (8.06, 0.3563436163344125) -- (8.07, 0.3546293581133293) -- (8.08, 0.3434681248208313) -- (8.09, 0.3327381589801685) -- (8.1, 0.32249234960120016) -- (8.11, 0.3105946161151378) -- (8.12, 0.3039600186535996) -- (8.129999999999999, 0.28688840435559215) -- (8.14, 0.27631910871380294) -- (8.15, 0.2747579808655225) -- (8.16, 0.26610266617760764) -- (8.17, 0.2683769995776154) -- (8.18, 0.2798991507270649) -- (8.19, 0.2972375574794057) -- (8.2, 0.32102948738582615) -- (8.209999999999999, 0.32322284786522776) -- (8.22, 0.31511228334186053) -- (8.23, 0.31770309839586575) -- (8.24, 0.3043232265728078) -- (8.25, 0.2832286162768577) -- (8.26, 0.2858645872666763) -- (8.27, 0.2567921162831364) -- (8.28, 0.25278514529597557) -- (8.29, 0.2297799813267726) -- (8.3, 0.22430275029217728) -- (8.31, 0.22633700091393613) -- (8.32, 0.20203059129091314) -- (8.33, 0.19204121569457774) -- (8.34, 0.1735901411541578) -- (8.35, 0.17860360496082062) -- (8.36, 0.22025362930432146) -- (8.37, 0.22647129893759455) -- (8.379999999999999, 0.2452841776959024) -- (8.39, 0.27831045314021513) -- (8.4, 0.3040172354034933) -- (8.41, 0.3218491799129439) -- (8.42, 0.3293557371379631) -- (8.43, 0.3468652114865258) -- (8.44, 0.34069034881962607) -- (8.45, 0.3487454260558381) -- (8.459999999999999, 0.33502678233029454) -- (8.47, 0.3367456213349664) -- (8.48, 0.33924651126943545) -- (8.49, 0.3259352151060257) -- (8.5, 0.3351175565011755) -- (8.51, 0.32383773251777914) -- (8.52, 0.3025265471902197) -- (8.53, 0.29184398704110365) -- (8.54, 0.27657253508962204) -- (8.55, 0.2708614764435344) -- (8.56, 0.2562853073192842) -- (8.57, 0.2700283808179787) -- (8.58, 0.2620687792273508) -- (8.59, 0.2884015360479295) -- (8.6, 0.2949241784037812) -- (8.61, 0.29121957994539427) -- (8.62, 0.3166675184892683) -- (8.629999999999999, 0.31889690773197693) -- (8.64, 0.312622530208679) -- (8.65, 0.3037430762082276) -- (8.66, 0.2992760577627454) -- (8.67, 0.3120572822123504) -- (8.68, 0.3015057161801582) -- (8.69, 0.2949393021572416) -- (8.7, 0.3032230335291212) -- (8.709999999999999, 0.309692421267794) -- (8.72, 0.29008219147393366) -- (8.73, 0.29982693859833404) -- (8.74, 0.28663426122514096) -- (8.75, 0.29754001868824165) -- (8.76, 0.3035160622976074) -- (8.77, 0.30675757118512953) -- (8.78, 0.31329626865856225) -- (8.79, 0.31000751490920925) -- (8.8, 0.32862872883316396) -- (8.81, 0.32064796035167137) -- (8.82, 0.3158760370562475) -- (8.83, 0.30076133063703425) -- (8.84, 0.2752668594243774) -- (8.85, 0.2828525168901326) -- (8.86, 0.2785026421420431) -- (8.87, 0.25431789256062) -- (8.879999999999999, 0.23898488057598666) -- (8.89, 0.21523907035225673) -- (8.9, 0.2100289328185864) -- (8.91, 0.22812729125449938) -- (8.92, 0.23468648292710514) -- (8.93, 0.2566632765750441) -- (8.94, 0.24999062616202805) -- (8.95, 0.26360141741461085) -- (8.959999999999999, 0.2842660179076294) -- (8.969999999999999, 0.2769942093883895) -- (8.98, 0.29923414302046863) -- (8.99, 0.31105152915158135) -- (9.0, 0.3180046787445816) -- (9.01, 0.3051889334223304) -- (9.02, 0.2896068141897182) -- (9.03, 0.2682238733296975) -- (9.04, 0.2563200885198964) -- (9.05, 0.26102823269063496) -- (9.06, 0.25396322920696013) -- (9.07, 0.2372007247914635) 
-- (9.08, 0.21874468908207356) -- (9.09, 0.22825200657613456) -- (9.1, 0.20209326491871923) -- (9.11, 0.17647321657206416) -- (9.12, 0.18643508870458336) -- (9.129999999999999, 0.15462426880618899) -- (9.14, 0.13810319771610757) -- (9.15, 0.11479261815267255) -- (9.16, 0.12742329130550709) -- (9.17, 0.1424672298799308) -- (9.18, 0.1352717413318057) -- (9.19, 0.13988266122672208) -- (9.2, 0.15703009061828763) -- (9.209999999999999, 0.14525602387440534) -- (9.219999999999999, 0.14145960026143486) -- (9.23, 0.16716773115098948) -- (9.24, 0.15990750178413923) -- (9.25, 0.18445224405025792) -- (9.26, 0.19484575016623093) -- (9.27, 0.20566380344128107) -- (9.28, 0.227972803192257) -- (9.29, 0.24292364770484107) -- (9.3, 0.2436230539007264) -- (9.31, 0.24352677715651802) -- (9.32, 0.278176844260742) -- (9.33, 0.2791946350075714) -- (9.34, 0.28962140625908567) -- (9.35, 0.326524384131525) -- (9.36, 0.3216386813882537) -- (9.37, 0.32162356121497143) -- (9.379999999999999, 0.30696567855193563) -- (9.39, 0.2943548687488767) -- (9.4, 0.29245006251798983) -- (9.41, 0.2928572249741567) -- (9.42, 0.2823502677206839) -- (9.43, 0.2765065394539235) -- (9.44, 0.2433930847266742) -- (9.45, 0.22348278992515525) -- (9.46, 0.21717148276008622) -- (9.469999999999999, 0.2006338446884197) -- (9.48, 0.19431705809638378) -- (9.49, 0.19114576002993414) -- (9.5, 0.18282749679578145) -- (9.51, 0.1912632117611317) -- (9.52, 0.17764843665243524) -- (9.53, 0.19767578691172694) -- (9.54, 0.19894618017689) -- (9.55, 0.1994744094806289) -- (9.56, 0.19778440299585046) -- (9.57, 0.1806292676865254) -- (9.58, 0.1927146966084676) -- (9.59, 0.18762088941065658) -- (9.6, 0.18696303485149857) -- (9.61, 0.21062836277453864) -- (9.62, 0.2346039423037656) -- (9.629999999999999, 0.22958388000832955) -- (9.64, 0.23707853509843294) -- (9.65, 0.2388321681426053) -- (9.66, 0.2499766560480241) -- (9.67, 0.2681608701011314) -- (9.68, 0.27724452973449004) -- (9.69, 0.278593985580782) -- (9.7, 0.2678030890680097) -- (9.71, 0.25829413934868034) -- (9.719999999999999, 0.26896304759769113) -- (9.73, 0.2673357265327721) -- (9.74, 0.2504390038513554) -- (9.75, 0.27014740026587564) -- (9.76, 0.2600495678446209) -- (9.77, 0.2468397638740145) -- (9.78, 0.2308394794742706) -- (9.79, 0.21248525975284394) -- (9.8, 0.1923110962518332) -- (9.81, 0.17092805331053249) -- (9.82, 0.14900168524461208) -- (9.83, 0.1272278058622552) -- (9.84, 0.10630717183669108) -- (9.85, 0.08691964145512139) -- (9.86, 0.06969837026377242) -- (9.87, 0.05520460512903382) -- (9.879999999999999, 0.04390363823431376) -- (9.89, 0.036142482532374474) -- (9.9, 0.03212983017349893) -- (9.91, 0.03191885542859909) -- (9.92, 0.03539342362728182) -- (9.93, 0.04225826763132976) -- (9.94, 0.05203369336219451) -- (9.95, 0.0640553759036931) -- (9.96, 0.07747980769844459) -- (9.969999999999999, 0.09129596035911765) -- (9.98, 0.10434372161278753) -- (9.99, 0.11533966890004411) -- (10.0, 0.12291074114698593) -- (10,-3) -- cycle;
    
    \fill[gray,opacity=0.85] (0.9,-3-0.1) -- (10,-3-0.1) -- (10,-3-0.1-0.3) -- (0.9,-3-0.1-0.3) -- cycle;
    
    \draw[<->, very thick] (0,-3.5) -- (0,-2.5);
    \node[anchor=west] at (11,-3) {bottom boundary};
    \node[anchor=west] at (11,0) {top boundary};
    \node[] at (5.5,-1.5) {fluid};
    \node[] at (5.5,1) {vacuum};
\end{tikzpicture}
    \caption{Cross-sectional side view of the top free boundary and bottom rigid oscillating boundary of a horizontally periodic fluid.}
\end{figure}
 
\subsubsection{Fluid domain and boundaries}
We introduce the horizontal cross section $\Sigma = (L_1\mathbb T)\times(L_2\mathbb T)$ for horizontal periodicity parameters $L_1,L_2>0$, and we assume that the moving upper boundary of the fluid is given by the graph of an unknown function $\tilde\eta:\Sigma\times\mathbb R^+\to\mathbb R$, so that the moving fluid domain is modeled by the set
\begin{equation}
    \tilde\Omega(t) = \{x=(x',x_3)\in\Sigma\times\mathbb R : Af(\omega t)-b < x_3 < \tilde\eta(x',t)\}.
\end{equation}
Note that the lower boundary of $\tilde\Omega(t)$ is the oscillating set
\begin{equation}
    \tilde\Sigma_b(t) = \{x=(x',x_3)\in\Sigma\times\mathbb R : x_3 = Af(\omega t)-b\},
\end{equation}
while the moving upper surface is
\begin{equation}
    \tilde\Sigma(t) = \{x=(x',x_3)\in\Sigma\times\mathbb R : x_3 = \tilde\eta(x',t)\}.
\end{equation}

\subsubsection{Equations of motion}
For each $t\geq 0$, the fluid is described by its velocity and pressure functions $(\tilde u,\tilde p):\tilde\Omega(t)\to\mathbb R^3\times\mathbb R$. We require that $(\tilde u,\tilde p,\tilde\eta)$ satisfy the incompressible Navier-Stokes equations in $\tilde\Omega(t)$ for $t>0$:
\begin{equation}\label{system:ns-original}
\begin{cases}
    \partial_t\tilde u+\tilde u\cdot\nabla\tilde u+\nabla\tilde p - \mu\Delta\tilde u = -ge_3 & \text{in $\tilde\Omega(t)$} \\
    \diverge\tilde u = 0 & \text{in $\tilde\Omega(t)$} \\
    \partial_t \tilde{\eta} + \tilde u_1\partial_1\tilde\eta + \tilde u_2\partial_2\tilde\eta = \tilde u_3  & \text{on $\tilde\Sigma(t)$} \\
    (\tilde p I - \mu\mathbb D\tilde u)\nu = (P_{\mathrm{ext}}-\sigma\mathfrak H(\tilde\eta))\nu & \text{on $\tilde\Sigma(t)$} \\
    \tilde u = A\omega f'(\omega t)e_3 & \text{on $\tilde\Sigma_b(t)$}
\end{cases}.
\end{equation}
Here, $\mu>0$ is the fluid viscosity, $(\mathbb D\tilde u)_{ij} = \partial_i\tilde u_j + \partial_j\tilde u_i$ is the symmetric gradient of $\tilde u$, $\nu$ is the outward-point unit normal vector on $\tilde\Sigma(t)$, $I$ is the $3\times 3$ identity matrix, $P_{\mathrm{ext}}\in\mathbb R$ is the constant pressure above the fluid, $\sigma>0$ is the surface tension coefficient, and
\begin{equation}
    \mathfrak H(\tilde\eta) = \diverge\left(\frac{\nabla\tilde\eta}{\sqrt{1+\abs{\nabla\tilde\eta}^2}}\right)
\end{equation}
is (minus) twice the mean curvature of $\Sigma(t)$, which models the force of surface tension on the free interface. The first two equations of \eqref{system:ns-original} are the standard incompressible Navier-Stokes equations, the third is the kinematic transport equation for $\tilde\eta$, the fourth is the balance of stress at the interface, and the fourth is the no-slip boundary condition at the bottom. The problem is augmented with initial data $\tilde\eta_0:\tilde\Sigma\to(−b + Af(0),\infty)$, which determines the initial domain $\tilde\Omega_0$, as well as an initial velocity field $\tilde u_0:\tilde\Omega_0\to\mathbb R^3$. Note that the assumption $\tilde\eta_0>-b+Af(0)$ on $\Sigma$ means that $\tilde\Omega_0$ is well-defined.

We will assume that the constant $b>0$ is chosen such that the mass of the fluid, which is conserved in time due to the incompressibility, is given by 
\begin{equation}
 \mathcal{M} := b \abs{\Sigma} = b L_1 L_2.
\end{equation}
Rewriting this condition in terms of $\tilde{\eta}$ shows that 
\begin{equation}
 b \abs{\Sigma} = \mathcal{M} =  \int_{\Sigma} [\tilde{\eta}(x',t) - (A f(\omega t) - b)] dx' = b \abs{\Sigma} +  \int_{\Sigma} [\tilde{\eta}(x',t) - A f(\omega t)] dx', 
\end{equation}
or equivalently 
\begin{equation}\label{zero_avg_proto}
 \int_{\Sigma} [\tilde{\eta}(x',t) - A f(\omega t)] dx' =0.
\end{equation}

\subsection{Recasting the problem in the oscillating frame}
Here we recast the problem in the oscillating fluid frame and make some convenient changes of unknowns in order to simplify further analysis.

\subsubsection{Change of coordinates}
First, we make a Galilean change of coordinates. The above formulation of the problem is intuitive as an external observer, but it is more convenient to view the problem from the frame of the fluid itself and fix the moving lower boundary. As such, we employ the following change of coordinates and unknowns:
\begin{equation}
\begin{aligned}
        \tilde u(x,t) &= \bar u(x',x_3-Af(\omega t),t)+A\omega f'(\omega t)e_3 \\
        \tilde p(x,t) &= \bar p(x',x_3-Af(\omega t),t) \\
        \tilde\eta(x',t) &= \bar\eta(x',t) + Af(\omega t)
\end{aligned}
\end{equation}

By plugging the above into \eqref{system:ns-original}, we obtain the equivalent set of equations  
\begin{equation}
    \begin{cases}
        \partial_t\bar u + \bar u\cdot\nabla \bar u+\nabla\bar p-\mu\Delta\bar u + A\omega^2f''(\omega t)e_3 = -g e_3 &  \text{in $\Omega(t)$} \\
        \diverge\bar u = 0 & \text{in $\Omega(t)$} \\
        \partial_t\bar \eta +\bar u_1\partial_1\bar \eta + \bar u_2\partial_2\bar \eta = \bar u_3 & \text{on $\Sigma(t)$} \\
        (\bar pI-\mu\mathbb D\bar u)\nu = \left(P_{ext} -\sigma\mathfrak H(\bar\eta)\right)\nu & \text{on $\Sigma(t)$} \\
        \bar u = 0 & \text{on $\Sigma_b$}
    \end{cases}
\end{equation}
where
\begin{equation}
\begin{aligned}
    \Omega(t) &= \{x = (x',x_3)\in\Sigma\times\mathbb R : -b < x_3 < \bar\eta(x',t)\} \\
    \Sigma(t) &= \{x = (x',x_3)\in\Sigma\times\mathbb R : x_3 = \bar\eta(x',t)\} \\
    \Sigma_b &= \{x = (x',x_3)\in\Sigma\times\mathbb R : x_3 = -b\}
\end{aligned}
\end{equation}
are the new versions of the domains where the lower boundary is now unmoving and the upper boundary is now defined by the new graph function $\bar\eta$. 

\subsubsection{Modifying the pressure}
Next, we modify the pressure to remove the term $A\omega^2 f''(\omega t)e_3 + g e_3$ from the first equation and to eliminate $P_{ext}$ on the boundary. To this end we define
\begin{equation}
    \bar p_{\mathrm{new}} \coloneqq \bar p_{\mathrm{old}} -  P_{ext} + (g + A\omega^2f''(\omega t) )x_3 
\end{equation}
in order to arrive (after dropping the subscript) at the equivalent problem
\begin{equation}\label{system:ns-final}
    \begin{cases}
        \partial_t\bar u + \bar u\cdot\nabla \bar u+\nabla\bar p-\mu\Delta\bar u = 0 &  \text{in $\Omega(t)$} \\
        \diverge\bar u = 0 & \text{in $\Omega(t)$} \\
        \partial_t\bar\eta + \bar u_1\partial_1\bar\eta + \bar u_2\partial_2\bar\eta = \bar u_3 & \text{on $\Sigma(t)$} \\
        (\bar pI-\mu\mathbb D\bar u)\nu = \left(-\sigma\mathfrak H(\bar\eta)+\left(g+A\omega^2f''(\omega t)\right)\bar\eta\right)\nu & \text{on $\Sigma(t)$} \\
        \bar u = 0 & \text{on $\Sigma_b$}.
    \end{cases}
\end{equation}
In summary, to go from \eqref{system:ns-original} to \eqref{system:ns-final} we have made the following changes of unknowns:
\begin{equation}
\begin{aligned}
    \tilde u(x,t) &= \bar u(x',x_3-Af(\omega t),t) + A\omega f'(\omega t)e_3 \\
    \tilde p(x,t) &= \bar p(x',x_3-Af(\omega t),t) + P_{\mathrm{ext}} - \left(g+A\omega^2f''(\omega t)\right)\left(x_3-Af(\omega t)\right) \\
    \tilde\eta(x',t) &= \bar\eta(x',t) + Af(\omega t).
\end{aligned}
\end{equation}

Note that \eqref{zero_avg_proto} now becomes 
\begin{equation}\label{zero_avg}
 \int_{\Sigma} \bar{\eta}(x',t) dx' =0 \text{ for } t \ge 0.
\end{equation}
However, for sufficiently regular solutions to \eqref{system:ns-final} we have that  $\pt_t\bar\eta = \bar u\cdot\nu\sqrt{1+(\pt_1\bar\eta)^2+(\pt_2\bar\eta)^2}$, and hence
\begin{equation}
    \ddt\int_\Sigma\bar\eta(x',t) dx' = \int_\Sigma\pt_t\bar\eta(x',t) dx' = \int_{\Sigma(t)}\bar u\cdot\nu = \int_{\Omega(t)}\diverge\bar u = 0.
\end{equation}
Thus \eqref{zero_avg} is satisfied provided that the initial surface function satisfies the ``zero average'' condition
\begin{equation}
    \frac1{L_1L_2}\int_\Sigma \bar\eta_0 = 0,
\end{equation}
a condition that we henceforth assume.  Note, though, that this condition is no real loss of generality, as it can always be achieved with a coordinate shift via the relation between the fluid mass $\mathcal{M}$ and the parameter $b$.  See, for instance, the introduction of \cite{guo2013almost} for an explanation of how the coordinate shift works.

\subsection{Steady oscillating solution}
Note that $\overline U(x,t) = 0, \overline P(x,t) = 0,\overline H(x,t) = 0$  is a solution to the reparameterized system \eqref{system:ns-final} when we set $\bar u = \overline U, \bar p = \overline P, \bar\eta = \overline H$. In the original system, this corresponds to the steady oscillation solution
\begin{equation}\label{eqn:steady-oscillating-solution}
\begin{aligned}
    \tilde U(x,t) &= A\omega f'(\omega t)e_3 \\
    \tilde P(x,t) &= P_{\mathrm{ext}} - \left(g+A\omega^2f''(\omega t)\right)\left(x_3-Af(\omega t)\right) \\
    \tilde H(x,t) &= Af(\omega t)
\end{aligned}
\end{equation}
and it is easy to check that this indeed satisfies system \eqref{system:ns-original} along with the fixed mass condition $\mathcal{M} = b L_1 L_2 = b \abs{\Sigma}$.

We will study the Faraday problem in the reparametrization \eqref{system:ns-final}, with the aim of showing that the above steady oscillation solution is asymptotically stable for some range of the parameters.  In order to justify why we might expect such a stability result, consider the natural energy-dissipation equation associated with \eqref{system:ns-final} (for details of the derivation, see Proposition \ref{prop:geom-ed}):
\begin{equation}\label{eqn:full-ed}
    \ddt\left(\int_{\Omega(t)}\frac{\abs{\bar u}^2}2+\int_{\Sigma} g\frac{\abs{\bar\eta}^2}2 + \sigma\sqrt{1+\abs{\nabla\bar\eta}^2}\right)+\int_{\Omega(t)}\frac{\mu\abs{\mathbb D\bar u}^2}2 = -(A\omega^2 f''(\omega t))\int_{\Sigma} \bar\eta \pt_t\bar\eta .
\end{equation}
This identity establishes that the competition between the viscous dissipation (the integral with $\mu$ on the left) and power supplied by the oscillation of the plate (the term on the right) will determine the stability of the system.  In particular, it shows that if we can absorb the oscillation term with the dissipation, then we should expect stability, and this is indeed what we will prove.  Obviously, the natural dissipation term involves only the velocity field $\bar{u}$ and does not control $\bar{\eta}$ or $\pt_t \bar{\eta}$, so to complete our analysis we will need to introduce a host of auxiliary estimates that provide dissipative control of these terms.

\subsection{Previous work}

A brief survey of previous mathematical work on the Faraday problem was recorded above in Section \ref{sec:faraday}.  To the best of our knowledge, there are no rigorous results on the fully nonlinear analysis of this problem, either in the stable or unstable parameter regimes.  However, when $f =0$, i.e. when the rigid bottom is not oscillated, the fully nonlinear dynamics of the free boundary problem \eqref{system:ns-final} and its variants are well-understood for small data.  Nishida-Teramoto-Yoshihara \cite{nishida_teramoto_yoshihara} constructed global solutions for the problem with surface tension and showed that the solutions decay to equilibrium at an exponential rate.  The corresponding problem without surface tension was handled by Hataya \cite{hataya2009decaying}, who constructed global solutions decaying at a fixed algebraic rate, and later by Guo-Tice \cite{guo2013almost}, who constructed global solutions that decay almost exponentially.  Tan-Wang \cite{tan2014zero} established the vanishing surface tension limit.  In the non-periodic setting many related results are known; see for instance the work of Beale \cite{beale1981initial,beale1984large}, Beale-Nishida \cite{beale_nishida}, Tani-Tanaka \cite{tani_tanaka}, and Guo-Tice \cite{guo_tice_inf}.  The stability of the periodic problem without Faraday oscillation has also been studied with more physical effects included.  Gravitational fields with horizontal components, corresponding to sliding along a tilted incline plane, were studied by Tice \cite{tice2018asymptotic}.  The coupling to the MHD system was studied by Tan-Wang \cite{tan_wang_mhd}.  Remond--Tiedrez-Tice \cite{remondtiedrez_tice} studied stability with more general surface forces generated by bending energies.

\subsection{Reformulation in a flattened coordinate system}\label{section:reformulation}
The moving domain $\Omega(t)$ is inconvenient for analysis, so we will reformulate the problem \eqref{system:ns-final} in the fixed equilibrium domain
\begin{equation}\label{definition:Omega}
    \Omega = \{x = (x',x_3)\in\Sigma\times\mathbb R  : -b < x_3 < 0\}.
\end{equation}
We will think of $\Sigma$ as the upper boundary of $\Omega$ and view $\bar\eta$ as a function on $\Sigma\times\mathbb R^+$. We then define
\begin{equation}
    \hat\eta \coloneqq \calP\bar\eta
\end{equation}
to be the harmonic extension of $\bar\eta$ into the lower half space as in Section \ref{section:poisson-extension}. Then, we flatten the coordinate domain via the mapping $\Phi:\Omega\times\mathbb R^+\to\Omega(t)$
\begin{equation}\label{definition:Phi}
    \Phi(x,t) = \left(x_1,x_2,x_3 + \hat\eta(x',t)\left(1+\frac{x_3}b\right)\right).
\end{equation}
Note that $\Phi(\cdot,t)$ is smooth and extends to $\overline\Omega$ in such a way that $\Phi(\Sigma,t) = \Sigma(t)$ and $\Phi(\Sigma_b,t) = \Sigma_b$, i.e.\ $\Phi$ maps $\Sigma$ to the free surface and keeps the lower surface fixed. We have
\begin{equation}
    \nabla\Phi = 
    \begin{pmatrix}
        1&0&0\\0&1&0\\A&B&J
    \end{pmatrix},\qquad
    \mathcal A \coloneqq \left(\nabla\Phi^{-1}\right)^\top = 
    \begin{pmatrix}
        1&0&-AK\\0&1&-BK\\0&0&K
    \end{pmatrix}
\end{equation}
where under the notational convenience $\tilde b = (1+x_3/b)$ we have
\begin{equation}
    A=\partial_1\hat\eta\tilde b,\qquad B=\partial_2\hat\eta\tilde b,\qquad J=\left(1+\frac{\hat\eta}b+\pt_3\hat\eta\tilde b\right),\qquad K=J^{-1}.
\end{equation}
Note that $J = \det\nabla\Phi$ is the determinant of the transformation.

Using the matrix $\mathcal A$, we define a collection of $\mathcal A$-dependent differential operators. We define the differential operators $\nabla_{\mathcal A}$ and $\diverge_{\mathcal A}$ with their actions given by
\begin{equation}
    \left(\nabla_{\mathcal A}f\right)_i \coloneqq \mathcal A_{ij}\partial_j f,\qquad \diverge_{\mathcal A} X \coloneqq \mathcal A_{ij}\partial_jX_i
\end{equation}
for appropriate $f$ and $X$. We extend $\diverge_{\mathcal A}$ to act on symmetric tensors in the usual way. Now write the change of coordinates as
\begin{equation}
    u(x,t) = \bar u(\Phi(x,t),t), \qquad
    p(x,t) = \bar p(\Phi(x,t),t), \qquad
    \eta(x',t) = \bar\eta(x',t).
\end{equation}
We then also write
\begin{equation}
    \left(\mathbb D_{\mathcal A}u\right)_{ij} \coloneqq \mathcal A_{ik}\partial_ku_j + \mathcal A_{jk}\partial_ku_i,\qquad S_{\mathcal A}(u,p) \coloneqq pI - \mu\mathbb D_{\mathcal A}u,
\end{equation}
and we define
\begin{equation}
    \mathcal N \coloneqq (-\partial_1\bar\eta,-\partial_2\bar\eta,1)
\end{equation}
for the non-unit normal to $\Sigma(t)$. 
In this new coordinate system, the new system of PDEs \eqref{system:ns-final} becomes the following equivalent system:
\begin{equation}\label{system:ns-flattened}
\begin{cases}
    \partial_t u - \partial_t\hat\eta\tilde b K\partial_3 u + u\cdot\nabla_{\mathcal A} u + \diverge_{\mathcal A}S_{\mathcal A}(u,p) = 0 & \text{in $\Omega$} \\
    \diverge_{\mathcal A} u = 0 & \text{in $\Omega$} \\
    \partial_t\eta = u\cdot\mathcal N & \text{on $\Sigma$} \\
    S_{\mathcal A}(u,p)\mathcal N = \left(-\sigma\mathfrak H(\eta) + \left(g+A\omega^2f''(\omega t)\right)\eta\right)\mathcal N & \text{on $\Sigma$} \\
    u = 0 & \text{on $\Sigma_b$}
\end{cases}.
\end{equation}

\section{Main results and discussion}
\subsection{Notation and definitions}
In order to properly state our main results we must first introduce some notation and define various functionals that will be used throughout the paper. We begin with some notational conventions.

\emph{Einstein summation and constants:}
We will employ the Einstein convention of summing over repeated indices for vector and tensor operations. Throughout the paper $C > 0$ will denote a generic constant that can depend on $\Omega$ and its dimensions as well as on $g$, $\mu$, and the oscillation profile $f$, but not on the parameters $\sigma,$ $A,$ or $\omega$. Such constants are referred to as ``universal'', and they are allowed to change from one inequality to another. We employ the notation $a\lesssim b$ to mean that $a\leq Cb$ for a universal constant $C > 0$.

\emph{Norms:}
We write $H^k(\Omega)$ with $k\geq 0$ and $H^s(\Sigma)$ with $s\in\mathbb R$ for the usual $L^2$-based Sobolev spaces. In particular $H^0=L^2$. In the interest of concision, we neglect to write $H^k(\Omega)$ or $H^k(\Sigma)$ in our norms and typically write only $\norm{\cdot}_k$. The price we pay for this is some minor ambiguity in the set on which the norm is computed, but we mitigate potential confusion by always writing the space for the norm when traces are involved.

\emph{Multi-indices:}
We will write $\mathbb N^k$ for the usual set of multi-indices, where here we employ the convention that $0\in\mathbb N$. For $\al\in\mathbb N^k$ we define the spatial differential operator $\pt^\al = \pt_1^{\al_1}\pt_2^{\al_2}\dots\pt_k^{\al_k}$. We will also write $\mathbb N^{1+k}$ for the set of space-time multi-indices
\begin{equation}
    \mathbb N^{1+k} = \left\{(\alpha_0,\alpha_1,\dots,\alpha_k) : \text{$\alpha_i\in\mathbb N$ for $0\leq i\leq k$}\right\}.
\end{equation}
For a multi-index $\alpha\in\mathbb N^{1+k}$, we define the differential operator $\pt^\al = \pt_t^{\al_0}\pt_1^{\al_1}\dots\pt_k^{\al_k}$. Also, for a space-time multi-index $\al\in\mathbb N^{1+k}$ we use the parabolic counting scheme $\abs\al = 2\al_0 + \al_1 + \dots + \al_k$. 

\emph{Energy and dissipation functionals:}
Throughout the paper we will make frequent use of various energy and dissipation functionals, dependent on time. We define these now. The basic (with bars) and full (no bars) energy functionals, respectively, are defined as
\begin{equation}\label{definition:basic-E}
    \ov{\calE_n^\sigma} \coloneqq \sum_{\substack{\alpha\in\mathbb N^{1+2} \\ \abs\alpha\leq 2n}}\norm{\partial^\alpha u}_0^2 + g\norm{\partial^\alpha\eta}_0^2 + \sigma\norm{\nabla\partial^\alpha\eta}_0^2
\end{equation}
and
\begin{equation}\label{definition:full-E}
    \calE_n^\sigma \coloneqq \ov{\calE_n^\sigma} +\sum_{j=0}^{n}\norm{\partial_t^j u}_{2n-2j}^2 + \sum_{j=0}^{n-1}\norm{\partial_t^jp}_{2n-2j-1}^2+\sigma\norm{\eta}_{2n-2j+1}^2+\norm{\eta}_{2n}^2 + \sum_{j=1}^{n}\norm{\pt_t^j\eta}_{2n-2j+3/2}^2.
\end{equation}
The corresponding basic and full dissipation functionals are 
\begin{equation}\label{definition:basic-D}
    \ov\calD_n \coloneqq \sum_{\substack{\alpha\in\mathbb N^{1+2} \\ \abs\alpha\leq 2n}}\norm{\mathbb D\partial^\alpha u}_0^2
\end{equation}
and
\begin{equation}\label{definition:full-D}
\begin{aligned}
    \calD_n^\sigma &\coloneqq \ov\calD_n + \sum_{j=0}^{n}\norm{\pt_t^j u}_{2n-2j+1}^2 + \sum_{j=0}^{n-1}\norm{\pt_t^j p}_{2n-2j}^2 + \sum_{j=0}^{n-1}\left(\norm{\pt_t^j\eta}_{2n-2j-1/2}^2+\sigma^2\norm{\pt_t^j\eta}_{2n-2j+3/2}^2\right) \\
    &\hspace{6ex}+ \sum_{j=3}^{n+1}\norm{\pt_t^j\eta}_{2n-2j+5/2}^2 + \norm{\pt_t\eta}^2_{2n-1} +\sigma^2\norm{\pt_t\eta}^2_{2n+1/2} +  \norm{\pt_t^2\eta}_{2n-2}^2 + \sigma^2\norm{\pt_t^2\eta}_{2n-3/2}^2.
\end{aligned}
\end{equation}

We will also need to make frequent reference to two functionals that are not naturally of energy or dissipation type. We refer to these as
\begin{equation}\label{definition:F_n}
    \calF_n \coloneqq \norm{\eta}_{2n+1/2}^2
\end{equation}
and
\begin{equation}\label{definition:calK}
    \calK \coloneqq \norm{u}_{C_b^2(\Omega)}^2 + \norm{u}_{H^3(\Sigma)}^2 + \norm{p}_{H^3(\Sigma)}^2 + \norm{\eta}_{5/2}^2.
\end{equation}

\subsection{Local existence theory}

The main content of this work is the a priori estimates that can be combined with local existence theory in order to construct local-in-time solutions to the PDE of equation (\ref{system:ns-flattened}). In this section, we only state the local existence theory that is needed to make this work without proof.  Such an omission is justified by the abundance of similar local existence results based on the corresponding a priori estimates.  We refer, for instance, to the works \cite{guo2013almost,jang2016compressible, tan2014zero, wang2014viscous,wu2014well}. 

To state these local existence results, we need to introduce function spaces in which our solutions exist, as well as \emph{compatibility conditions}, which give sufficient constraints on our initial data for constructing solutions using our a priori estimates. Our function spaces are the following:
\begin{equation}
\begin{aligned}
    {}_0H^1(\Omega) &= \{v\in H^1(\Omega;\mathbb R^3) : v\mid_{\Sigma_b} = 0 \} \\
    \mathcal X_T &= \{u\in L^2([0,T];{}_0H^1(\Omega)) : \diverge_{\mathcal A(t)} u(t) = 0 \text{ for a.e. } t\in[0,T] \},
\end{aligned}
\end{equation}
where the $\mathcal A(t)$ here is determined by the $\eta:\Sigma\times[0,T]\to\mathbb R$ coming from the solution. We refer the reader to \cite{guo2013almost,jang2016compressible, tan2014zero, wang2014viscous,wu2014well} for the compatibility conditions, as they are simple yet cumbersome to record. 

When $\sigma>0$ is fixed and positive, we have the following local existence result, similar to that of  \cite{wang2014viscous}:

\begin{thm}[Local existence for fixed positive $\sigma$]\label{thm:local-existence-sigma}
    Let $\sigma>0$ be fixed and positive and let $n\geq 1$ be an integer and suppose that the initial data $(u_0,\eta_0)$ satisfy
    \begin{equation}
        \norm{u_0}_{2n}^2 + \norm{\eta_0}_{2n+1/2}^2 + \sigma\norm{\nabla\eta}_{2n}^2 < \infty
    \end{equation}
    as well as the natural compatibility conditions associated with $n$. Then there exist $0<\delta_*,T_*<1$ such that if
    \begin{equation}
        \norm{u_0}_{2n}^2 + \norm{\eta_0}_{2n+1/2}^2 + \sigma\norm{\nabla\eta}_{2n}^2 \leq\delta_*
    \end{equation}
    and $0<T\leq T_*$, then there exists a unique triple $(u,p,\eta)$ that achieves the initial data, solves (\ref{system:ns-flattened}), and obeys the estimates
    \begin{equation}
        \sup_{0\leq t\leq T}(\mathcal E_n^\sigma(t)+\mathcal F_n(t)) + \int_0^T\mathcal D_n^\sigma(t)~dt + \norm{\pt_t^{n+1}u}_{(\mathcal X_T)^*}^2\lesssim \norm{u_0}_{2n}^2 + \norm{\eta_0}_{2n+1/2}^2 + \sigma\norm{\nabla\eta}_{2n}^2.
    \end{equation}
\end{thm}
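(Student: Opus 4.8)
The plan is to combine the \emph{a priori} estimates established in the body of the paper with a standard linearization-plus-iteration argument adapted to the flattened geometry \eqref{definition:Phi}. Since the bound asserted in the theorem is precisely the one propagated by those \emph{a priori} estimates, the only genuinely new content here is the construction of \emph{some} solution on a short time interval together with its uniqueness; both are by now routine for flattened free-boundary Navier--Stokes systems (cf. \cite{guo2013almost,wang2014viscous,wu2014well,tan2014zero}), so I will only outline the structure.

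First I would set up the linear problem with frozen geometry. Given $\eta$ of the appropriate regularity --- equivalently a fixed diffeomorphism $\Phi$ and matrix $\calA$, with $\hat\eta = \calP\eta$ and hence $\tilde b, J, K$ determined --- consider the linear $\calA$-Stokes system obtained from \eqref{system:ns-flattened} by treating the convective term $u\cdot\naba u$, the transport term $\pt_t\hat\eta\,\tilde b K\pt_3 u$, and the curvature nonlinearity $\mathfrak H(\eta)$ as prescribed forcing. This is solved in the $\calA(t)$-divergence-free space $\Hsig$ by a Galerkin scheme; the pressure $p$ is then recovered as a Lagrange multiplier associated with the time-dependent constraint $\divergea u = 0$. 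The dynamic boundary condition contains the surface-tension term $-\sigma\mathfrak H(\eta)$, which supplies the elliptic regularization needed to upgrade the regularity of $\eta$ from the kinematic equation $\pt_t\eta = u\cdot\calN$ to the $H^{2n-2j+1}$-type norms appearing in $\calE_n^\sigma$; this is exactly where the hypothesis $\sigma>0$ is used.

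Next I would prove the high-regularity estimate for this linear problem. Differentiating the linear system $j$ times in $t$ for $0\le j\le n$, performing the basic $\calA$-energy estimate (the analogue of \eqref{eqn:full-ed} at the linear level), and then bootstrapping by means of $\calA$-Stokes elliptic regularity together with the kinematic and dynamic boundary conditions, one trades temporal for spatial regularity and arrives at a bound of the form $\sup_{[0,T]}(\calE_n^\sigma+\calF_n) + \int_0^T\calD_n^\sigma + \norm{\pt_t^{n+1}u}_{(\mathcal{X}_T)^*}^2 \lesssim \norm{u_0}_{2n}^2 + \norm{\eta_0}_{2n+1/2}^2 + \sigma\norm{\nabla\eta_0}_{2n}^2 + (\text{forcing})$, where the $(\mathcal{X}_T)^*$ norm of $\pt_t^{n+1}u$ comes from testing the top time-differentiated momentum equation against elements of $\mathcal{X}_T$. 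The parabolic counting $\abs\alpha = 2\alpha_0 + \alpha_1 + \cdots$ and the precise shapes of $\calE_n^\sigma$ and $\calD_n^\sigma$ are forced on us by this bootstrap, and the compatibility conditions on $(u_0,\eta_0)$ are exactly what is needed so that the time-differentiated data are admissible and the estimate holds uniformly up to $t=0$. With this in hand, one builds a sequence $(u^m,p^m,\eta^m)$ --- given $\eta^m$ (hence $\calA^m$), solve the linear problem for $(u^{m+1},p^{m+1})$ with coefficients $\calA^m$ and forcing built from $(u^m,\eta^m)$, then update $\eta^{m+1}$ by integrating the kinematic equation --- and, using that every nonlinear term is at least quadratic, chooses $\delta_*,T_*$ small so that the sequence remains in a ball of the high-norm space and contracts in a lower-norm space on $[0,T]$ for $T\le T_*$. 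Passing to the limit gives a solution of \eqref{system:ns-flattened} achieving the data, and uniqueness follows from an energy estimate on the difference of two solutions, again exploiting $T_*$ small.

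The main obstacle is the high-regularity linear estimate: closing it requires interlocking the $\calA$-Stokes elliptic bounds with the kinematic and dynamic boundary conditions so that each $\pt_t^j u$, $\pt_t^j p$, and $\pt_t^j\eta$ is controlled in \emph{precisely} the (often half-integer) norm appearing in $\calE_n^\sigma$, $\calD_n^\sigma$, and $\calF_n$, with no spurious loss of derivatives at the interface and with the powers of $\sigma$ organized as in \eqref{definition:full-E}--\eqref{definition:full-D}. The pressure is delicate because it is determined only modulo the divergence constraint and enters $\calD_n^\sigma$ only at order $2n-2j$; and the transport structure of the $\eta$-equation must be handled so that the surface-tension regularization is genuinely available at every differentiated level. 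Everything else --- the Galerkin construction, the contraction, and uniqueness --- is a matter of carefully assembling known Stokes, trace, and transport estimates.
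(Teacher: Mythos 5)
The paper deliberately states this theorem without proof, deferring to the standard local existence theory developed in \cite{guo2013almost,jang2016compressible,tan2014zero,wang2014viscous,wu2014well}, and your outline is precisely the construction used in those references: Galerkin solution of the frozen-coefficient $\calA$-Stokes problem, high-regularity estimates obtained by time differentiation plus elliptic bootstrapping through the kinematic and dynamic boundary conditions, and a contraction-mapping iteration with uniqueness from a difference estimate. Your sketch is therefore consistent with the route the paper intends (the only paper-specific wrinkle, the bounded time-dependent term $A\omega^2 f''(\omega t)\eta$ in the stress condition, is harmless on a short time interval), so there is nothing to flag.
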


We also consider the vanishing surface tension regime, in which we obtain the following result by requiring $n$ to be larger \cite{guo2013almost,jang2016compressible,wu2014well,tan2014zero}:
\begin{thm}[Local existence for vanishing $\sigma$]\label{thm:local-existence}
    Let $n\geq 2$ be an integer and suppose that the initial data $(u_0,\eta_0)$ satisfy
    \begin{equation}
        \norm{u_0}_{2n}^2 + \norm{\eta_0}_{2n+1/2}^2 + \sigma\norm{\nabla\eta}_{2n}^2 < \infty
    \end{equation}
    as well as the natural compatibility conditions associated with $n$. Then there exist $0<\delta_*,T_*<1$ such that if
    \begin{equation}
        \norm{u_0}_{2n}^2 + \norm{\eta_0}_{2n+1/2}^2 + \sigma\norm{\nabla\eta}_{2n}^2 \leq\delta_*
    \end{equation}
    and $0<T\leq T_*$, then there exists a unique triple $(u,p,\eta)$ that achieves the initial data, solves (\ref{system:ns-flattened}), and obeys the estimates
    \begin{equation}
        \sup_{0\leq t\leq T}(\mathcal E_n^\sigma(t)+\mathcal F_n(t)) + \int_0^T\mathcal D_n^\sigma(t)~dt + \norm{\pt_t^{n+1}u}_{(\mathcal X_T)^*}^2\lesssim \norm{u_0}_{2n}^2 + \norm{\eta_0}_{2n+1/2}^2 + \sigma\norm{\nabla\eta}_{2n}^2.
    \end{equation}
\end{thm}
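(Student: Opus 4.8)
The plan is to deduce Theorem~\ref{thm:local-existence} (and, \emph{mutatis mutandis}, Theorem~\ref{thm:local-existence-sigma}) by coupling the a priori estimates that occupy the bulk of the paper with the now-standard iteration-and-fixed-point construction for free boundary Navier--Stokes problems, as carried out in \cite{guo2013almost,wang2014viscous,wu2014well,tan2014zero,jang2016compressible}. First I would set up a \emph{linearized problem}: given a ``known'' pair $(\eta,u)$ of the regularity dictated by $\calE_n^\sigma+\calF_n$, form $\hat\eta=\calP\eta$, the flattening map $\Phi$, and the coefficients $\calA,A,B,J,K$; then solve the time-dependent Stokes system obtained from \eqref{system:ns-flattened} by freezing $\calA$, the transport operator $\pt_t\hat\eta\,\tilde b K\pt_3$, the advection $u\cdot\naba u$, and the curvature term $\mathfrak H(\eta)$ as given data, with the dynamic boundary condition $S_{\calA}(v,q)\calN=(\text{given})\calN$ on $\Sigma$, no-slip on $\Sigma_b$, together with the transport equation $\pt_t\zeta=v\cdot\calN$ on $\Sigma$.

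Second, I would build the linear theory for this problem in the spaces encoded by $\calE_n^\sigma,\calD_n^\sigma$. This proceeds by applying $\pt_t^j$ for $0\le j\le n$, testing against $\pt_t^j v$ to extract the natural energy--dissipation identity (the linear analogue of \eqref{eqn:full-ed}), and then invoking elliptic regularity for the stationary $\calA$-Stokes problem on the time-differentiated equations to trade temporal for spatial derivatives and recover the full norms. The \emph{compatibility conditions} enter precisely here: they guarantee that the formal initial values $\pt_t^j v(0),\pt_t^j q(0),\pt_t^j\zeta(0)$ produced by differentiating the equations at $t=0$ lie in the right spaces and satisfy the boundary conditions, so the energy hierarchy can be started. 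In the vanishing-$\sigma$ regime one typically first closes this linear theory with a small \emph{artificial} surface tension $\kappa>0$ (which restores an elliptic gain of regularity for $\zeta$) and then removes it, obtaining bounds uniform as $\kappa\to 0^+$; the upshot is a solution operator together with the estimate $\sup_{[0,T]}(\calE_n^\sigma+\calF_n)+\int_0^T\calD_n^\sigma+\norm{\pt_t^{n+1}v}_{(\mathcal X_T)^*}^2\lesssim(\text{data})+(\text{polynomial in the norms of the given forcing})$.

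Third, I would close the nonlinear problem by iteration: let $(u^{m+1},p^{m+1})$ solve the linearized problem with geometry and forcing assembled from $(\eta^m,u^m)$, and define $\eta^{m+1}$ by integrating $\pt_t\eta^{m+1}=u^{m+1}\cdot\calN^m$ from $\eta_0$. The nonlinear product and composition estimates from the a priori analysis --- which is where the hypothesis $n\ge 2$ (resp.\ $n\ge 1$ with $\sigma>0$) is used, so that the $H^{2n}$-type norms control the relevant $L^\infty$ and product norms by Sobolev embedding --- show that for $\delta_*,T_*$ small the iterates stay in a fixed ball and the map $(\eta^m,u^m)\mapsto(\eta^{m+1},u^{m+1})$ contracts in a norm one order below the top. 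Passing to the limit, using weak lower semicontinuity of the norms, yields a solution obeying the stated estimate; uniqueness follows from the same contraction bound, and the claim that the solution ``achieves the initial data'' is built into the construction via the compatibility conditions.

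The main obstacle --- and the reason the vanishing-surface-tension statement demands $n\ge 2$ while the fixed-$\sigma$ one needs only $n\ge 1$ --- is the regularity of $\eta$. When $\sigma>0$ the term $-\sigma\mathfrak H(\eta)$ is the top-order term in $\eta$ on the stress boundary, and through the Stokes estimate it furnishes a genuine gain of spatial regularity for $\eta$, making the quasilinear coupling mild. When $\sigma=0$ this gain disappears: the sole control of $\eta$ at top order comes from the hyperbolic kinematic equation $\pt_t\eta=u\cdot\calN$, which has no smoothing, so $\eta$ and its temporal derivatives must be propagated in the finely tuned spaces appearing in $\calE_n^\sigma,\calD_n^\sigma,\calF_n$, and all geometric error terms (which see $\eta$ through $\hat\eta$ and $\calA$) must be absorbed into the dissipation. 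Making these bounds \emph{uniform in $\sigma$} --- so that one local existence time serves the whole range $\sigma\to 0^+$ --- is the delicate point, and it is exactly what the a priori estimates in the remainder of the paper are designed to deliver.
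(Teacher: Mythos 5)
The paper deliberately omits any proof of this theorem: it is stated with only a citation to the local existence theory of \cite{guo2013almost,jang2016compressible,tan2014zero,wang2014viscous,wu2014well}, so there is no in-paper argument to compare against. Your outline --- linearize around a given geometry, build the linear theory via time-differentiated energy identities plus elliptic regularity (with an artificial surface tension removed by $\sigma$-uniform estimates in the vanishing-$\sigma$ case), then close by iteration and contraction with the compatibility conditions seeding the energy hierarchy --- is a faithful summary of the route those references take, and it correctly identifies the loss of the elliptic gain for $\eta$ at $\sigma=0$ as the reason the hypothesis strengthens from $n\ge 1$ to $n\ge 2$.
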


\subsection{Statement of main results}
The main result of this paper is the global well-posedness of the problem and decay of solutions, which establishes the asymptotic stability of the equilibrium solutions.  We begin with the result for a fixed value of surface tension.

\begin{thm}\label{thm:main-fixed-surface-tension}
Fix $\sigma >0$.  Suppose that initial data $(u_0,\eta_0)$ satisfy $\norm{u_0}_{2}^2 + \norm{\eta_0}_{5/2}^2 + \sigma\norm{\nabla\eta}_{2}^2< \infty$ as well as the compatibility conditions of Theorem \ref{thm:local-existence-sigma}. There exist constants $\gamma_0, \kappa_0 \in(0,1)$, both depending on $\sigma$, such that if 
\begin{equation}
\norm{u_0}_{2}^2 + \norm{\eta_0}_{5/2}^2 + \sigma\norm{\nabla\eta}_{2}^2 \leq \kappa_0 \text{ and } A\omega^2 + A\omega^3 \leq \gamma_0, 
\end{equation}
then there exists a unique (within the energy class) solution $(u,p,\eta)$ that solves \eqref{system:ns-flattened} on the temporal interval $(0,\infty)$ and achieves the initial data.  Moreover, there exists constants $\lb > 0$ and $C_0,C_1 > 0$, depending on $A$, $\omega$ and $\sigma$, such that the solution obeys the estimate
\begin{equation}
    \sup_{0\leq t \leq \infty} e^{\lb t} \calE_1^\sigma(t) + \int_0^\infty e^{\lb t} \calD_1^\sigma(t)\,dt \ls C_0 \calE_1^\sigma(0) \le C_1 \left(\norm{u_0}_{2}^2 + \norm{\eta_0}_{5/2}^2 + \sigma\norm{\nabla\eta}_{2}^2 \right)
\end{equation}
\end{thm}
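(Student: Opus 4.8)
The plan is to combine the local existence theory of Theorem~\ref{thm:local-existence-sigma} with a priori decay estimates in a continuation argument: I will show that a small solution of \eqref{system:ns-flattened} on a finite interval $[0,T]$ obeys a differential inequality forcing exponential decay of $\calE_1^\sigma$, with rate and constants independent of $T$, and then iterate the local theory to reach $T = \infty$.

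The heart of the matter is an energy--dissipation estimate at the level $n = 1$. Applying each space--time operator $\pa$ with $\abs\al \le 2$ to \eqref{system:ns-flattened}, testing against $\pa u$, and integrating by parts — using $\divergea u = 0$, the kinematic identity $\pt_t\eta = u\cdot\calN$, and the stress boundary condition — produces an identity of the schematic form $\ddt\ov{\calE_1^\sigma} + \ov\calD_1 = \mathcal{R}_{\mathrm{nl}} + \mathcal{R}_{\mathrm{osc}}$, where $\mathcal{R}_{\mathrm{nl}}$ collects the contributions of $u\cdot\naba u$, of the geometric terms caused by $\calA \ne I$ and $\hat\eta = \calP\bar\eta$, and of the curvature nonlinearity in $\mathfrak H(\eta)$, and $\mathcal{R}_{\mathrm{osc}}$ collects the terms coming from the coefficient $g + A\omega^2 f''(\omega t)$ in the stress condition. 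One then upgrades $\ov\calD_1$ to the full dissipation $\calD_1^\sigma$ by linear elliptic theory: viewing \eqref{system:ns-flattened} at each fixed time as a perturbed Stokes problem with the free-surface boundary condition recovers $\norm{u}_3$ and $\norm{p}_2$, and reading the normal component of the stress condition as $-\sigma\mathfrak H(\eta) \approx \sigma\Delta\eta$ on $\Sigma$ recovers the top-order term $\sigma\norm{\eta}_3$, while the time-derivative norms of $\eta$ are supplied by $\pt_t\eta = u\cdot\calN$. After absorbing a small multiple of $\calD_1^\sigma$, and if convenient replacing $\calE_1^\sigma$ by a comparable lower-order modification, this gives
\begin{equation}
    \ddt\calE_1^\sigma + \calD_1^\sigma \ls \mathcal{R}_{\mathrm{nl}} + \mathcal{R}_{\mathrm{osc}}.
\end{equation}

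It remains to control the two remainders and then close the estimate. Since $\norm{\eta}_{H^2(\Sigma)}$ and $\norm{u}_{H^2(\Omega)}$ are small, $\Phi$ is a near-identity diffeomorphism with $J$ bounded below, so Sobolev embeddings and product estimates yield $\mathcal{R}_{\mathrm{nl}} \ls \sqrt{\calE_1^\sigma}\,(\calE_1^\sigma + \calD_1^\sigma)$, a strictly higher-order quantity. For $\mathcal{R}_{\mathrm{osc}}$, the oscillation coefficient enters only the stress condition, so every such term is $A\omega^2 f''(\omega t)$, or — once a time derivative lands on the coefficient — $A\omega^3 f'''(\omega t)$, times a boundary integral involving $\eta$ and $\pt_t u$; since $f$ is smooth on $\mathbb T$ we have $\norm{f''}_{L^\infty},\norm{f'''}_{L^\infty}\ls 1$, and Korn's inequality, trace estimates, and the kinematic identity give $\mathcal{R}_{\mathrm{osc}} \ls (A\omega^2 + A\omega^3)(\calE_1^\sigma + \calD_1^\sigma)$, where the part charged to $\calE_1^\sigma$ may first require absorbing a perfect-time-derivative term into the energy. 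Taking $\gamma_0$ small relative to the ($\sigma$-dependent) universal constants, and using that $\calE_1^\sigma$ is small, the entire right-hand side is absorbed, so $\ddt\calE_1^\sigma + \hal\calD_1^\sigma \le 0$. Now comes the decisive point: the coercivity bound $\calE_1^\sigma \lesssim_{\sigma} \calD_1^\sigma$, valid precisely because $\sigma > 0$ is fixed, so $\sigma\norm{\eta}_3$ and the $\eta$-regularity inside $\calD_1^\sigma$ do not degenerate, and because $\int_\Sigma\eta = 0$ gives a Poincaré inequality on $\Sigma$; this no-derivative-loss comparison is exactly what produces exponential, rather than merely almost-exponential, decay. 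It converts the last inequality into $\ddt\calE_1^\sigma + c_\sigma\calE_1^\sigma \le 0$, hence $\calE_1^\sigma(t)\le e^{-c_\sigma t}\calE_1^\sigma(0)$, and repeating the estimate against the weight $e^{\lb t}$ for $0 < \lb < c_\sigma$ yields $\sup_{0\le t\le T} e^{\lb t}\calE_1^\sigma(t) + \int_0^T e^{\lb t}\calD_1^\sigma(t)\,dt \ls \calE_1^\sigma(0) \ls \norm{u_0}_2^2 + \norm{\eta_0}_{5/2}^2 + \sigma\norm{\nab\eta_0}_2^2$. A standard continuation/bootstrap argument then promotes $T$ to $\infty$: Theorem~\ref{thm:local-existence-sigma} furnishes a unique local solution, and choosing $\kappa_0$ small enough that the a priori bound keeps $\calE_1^\sigma$ below the local-existence smallness threshold for all time forces the maximal existence interval to be $(0,\infty)$, with the asserted estimate and with uniqueness inherited from the local theory.

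The main obstacle, as is typical for problems of this type, is the a priori estimate and the bookkeeping it requires. At $n = 1$ the regularity is only just above the threshold for the Sobolev embeddings needed to place every geometric and nonlinear term of $\mathcal{R}_{\mathrm{nl}}$ into $\sqrt{\calE_1^\sigma}\,\calD_1^\sigma$-type form, and one must verify carefully that all oscillation contributions — including those generated when time derivatives strike $f''(\omega t)$ — really scale no worse than $A\omega^2 + A\omega^3$, so that the single smallness hypothesis on $\gamma_0$ does the job; the elliptic/Stokes regularity needed to recover $\calD_1^\sigma$ from $\ov\calD_1$ with the correct $\sigma$-dependent constants is delicate, but by now fairly standard.
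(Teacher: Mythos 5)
Your proposal is correct and follows essentially the same route as the paper: horizontal energy--dissipation identities at the level $n=1$, upgrading $\ov{\calD_1}$ to $\calD_1^\sigma$ and $\ov{\calE_1^\sigma}$ to $\calE_1^\sigma$ via Stokes and capillary elliptic regularity (the paper's Theorems \ref{thm:bootstrap-D} and \ref{thm:bootstrap-E}), bounding the oscillation terms by $(A\omega^2+A\omega^3)$ times the dissipation, invoking the $\sigma$-dependent coercivity $\calE_1^\sigma\lesssim_\sigma\calD_1^\sigma$ to obtain the exponential differential inequality, and closing with a continuation argument built on Theorem \ref{thm:local-existence-sigma}. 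Your parenthetical remark about ``replacing $\calE_1^\sigma$ by a comparable lower-order modification'' corresponds exactly to the paper's auxiliary correction $\calH_1$ added to the energy before differentiating.
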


Theorem \ref{thm:main-fixed-surface-tension} requires a fixed positive value of surface tension and guarantees that solutions return to equilibrium exponentially fast in the topology determined by $\mathcal{E}^\sigma_1$.  Our next main result considers the cases $\sigma=0$ and $\sigma$ small but positive. We view the latter as the ``vanishing surface tension'' regime, as we will employ it to establish this limit. In these cases we work in a more complicated functional setting that changes depending on whether $\sigma$ vanishes or not. We introduce this with the following functional, defined for any integer $N\geq 3$ and time $t\in[0,\infty]$:
\begin{equation}\label{definition:calG}
\calG_{2N}^\sigma(t) \coloneqq \sup_{0\leq r\leq t}\calE_{2N}^\sigma(r) + \int_0^t\calD_{2N}^\sigma(r)\,dr + \sup_{0\leq r\leq t}(1+r)^{4N-8}\calE_{N+2}^\sigma(r) + \sup_{0\leq r\leq t}\frac{\calF_{2N}(r)}{1+r},
\end{equation}
where here $\calE_n^\sigma$, $\calD_n^\sigma$, and $\calF_n$ are defined by \eqref{definition:full-E}, \eqref{definition:full-D}, and \eqref{definition:F_n}, respectively. Note that the condition $N\geq 3$ implies that $2N>N+2$ and that $4N-8>0$. 

We can now state our second main result.

\begin{thm}\label{thm:main-vanishing-surface-tension}
Let $\Omega$ be given by \eqref{definition:Omega}, let $N\geq3$, and define $\calG_{2N}^\sigma$ via \eqref{definition:calG}. Suppose that the initial data $(u_0,\eta_0)$ satisfy $ \norm{u_0}_{4N}^2 + \norm{\eta_0}_{4N+1/2}^2 + \sigma\norm{\nabla\eta}_{4N}^2<\infty$ as well as compatibility conditions of Theorem \ref{thm:local-existence}. There exist universal constants $\gamma_0,\kappa_0\in(0,1)$  such that if 
\begin{equation}
 \norm{u_0}_{4N}^2 + \norm{\eta_0}_{4N+1/2}^2 + \sigma\norm{\nabla\eta}_{4N}^2 \leq\kappa_0,\; 0\leq \sum_{\ell=2}^{2N+2}A\omega^\ell\leq\gamma_0, \text{ and } 0\leq\sigma\leq 1,  
\end{equation}
then there exists a unique (within the energy class) triple $(u,p,\eta)$ that solves the (\ref{system:ns-flattened}) on the temporal interval $(0,\infty)$, achieves the initial data, and obeys the estimate
\begin{equation}\label{eqn:main-vanishing-surface-tension}
    \calG_{2N}^\sigma(\infty)\lesssim \calE_{2N}^\sigma(0) + \calF_{2N}(0) \ls  \norm{u_0}_{4N}^2 + \norm{\eta_0}_{4N+1/2}^2 + \sigma\norm{\nabla\eta}_{4N}^2.
\end{equation}
\end{thm}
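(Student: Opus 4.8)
The strategy is a continuity/bootstrap argument: we couple the local existence theory of Theorem~\ref{thm:local-existence} with a nonlinear a priori estimate for the functional $\calG_{2N}^\sigma$ that is uniform in $\sigma\in[0,1]$, in $A$ and $\omega$, and in the length of the time interval. Suppose $(u,p,\eta)$ solves \eqref{system:ns-flattened} on $[0,T]$, write $G(T)\coloneqq\calG_{2N}^\sigma(T)$, and assume the a priori bound $G(T)\le\delta$ for a small universal $\delta\in(0,1)$. The goal is the improvement
\begin{equation}\label{eqn:plan-improve}
  G(T)\ls \calE_{2N}^\sigma(0)+\calF_{2N}(0) + G(T)^{3/2}
\end{equation}
with universal implicit constant. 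Once \eqref{eqn:plan-improve} is available, if $\calE_{2N}^\sigma(0)+\calF_{2N}(0)\le\kappa_0$ is small we absorb the superlinear term and obtain $G(T)\ls\calE_{2N}^\sigma(0)+\calF_{2N}(0)$ with constant independent of $T$; since this controls $\sup_{[0,T]}\calE_{2N}^\sigma$ and $\sup_{[0,T]}\calF_{2N}$, the local theory can be reapplied indefinitely, producing a global solution for which \eqref{definition:calG} and \eqref{eqn:plan-improve} yield \eqref{eqn:main-vanishing-surface-tension}. Everything thus reduces to \eqref{eqn:plan-improve}.

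\emph{High-order energy--dissipation estimate.} Apply $\pa$ to \eqref{system:ns-flattened} for all space--time multi-indices with $\abs\al\le 4N$, pair the momentum equation against $\pa u$, integrate over $\Omega$, integrate by parts, and sum. This produces an inequality of the schematic form $\ddt\ov{\calE_{2N}^\sigma}+\ov{\calD_{2N}}\ls\mathcal R+\mathcal W$. The remainder $\mathcal R$ collects commutators with the $\calA$-dependent operators $\naba,\divergea,\mathbb D_\calA$, the convective term $u\cdot\naba u$, the term $\pt_t\hat\eta\,\tilde bK\pt_3u$, and the nonlinear part of $\mathfrak H(\eta)$; since $\mathcal A-I$, $\hat\eta$, and their derivatives are small multiples of $\sqrt{G(T)}$, the usual product and Sobolev estimates give $\abs{\mathcal R}\ls\sqrt{G(T)}\,\calD_{2N}^\sigma$. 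The forcing $\mathcal W$ comes from the surface term $\int_\Sigma\pa\bigl[(g+A\omega^2f''(\omega t))\eta\,\mathcal N\bigr]\cdot\pa u$: the $g\eta$-part reproduces the energy $g\norm{\pa\eta}_0^2$, while by the Leibniz rule each factor $\pt^\beta\bigl(A\omega^2f''(\omega t)\bigr)=A\omega^{2+\beta_0}f^{(2+\beta_0)}(\omega t)$ produces a power $A\omega^\ell$ with $\ell=2+\beta_0$ and $0\le\beta_0\le\alpha_0\le2N$, i.e.\ $\ell$ ranging over exactly $\{2,\dots,2N+2\}$---the range in the hypothesis. The crucial point is that, after using the kinematic identity $\pt_t\eta=u\cdot\mathcal N$ to replace $\pa u\cdot\mathcal N$ (modulo commutators) by $\pa(u\cdot\mathcal N)$, i.e.\ by a trace of a derivative of $u$, and splitting each term by the $H^{-1/2}(\Sigma)$--$H^{1/2}(\Sigma)$ duality, every contribution to $\mathcal W$ is bounded by $A\omega^\ell$ times a low-order trace norm of $\eta$ and a norm $\ls\norm{\pa u}_{H^1(\Omega)}$, both controlled by $\calD_{2N}^\sigma$ (the latter by Korn's inequality for $\ov{\calD_{2N}}$). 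Hence $\abs{\mathcal W}\ls\bigl(\textstyle\sum_{\ell=2}^{2N+2}A\omega^\ell\bigr)\calD_{2N}^\sigma\le\gamma_0\calD_{2N}^\sigma$. Adjoining the Stokes/elliptic and $\diverge$--$\curl$ regularity estimates that bound the full functionals $\calE_{2N}^\sigma,\calD_{2N}^\sigma$ by the basic ones $\ov{\calE_{2N}^\sigma},\ov{\calD_{2N}}$ up to errors absorbable by $\delta$-smallness, and choosing $\delta,\gamma_0$ small, we arrive at $\ddt\ov{\calE_{2N}^\sigma}+\tfrac12\calD_{2N}^\sigma\le0$; since $\calE_{2N}^\sigma\ls\ov{\calE_{2N}^\sigma}$ up to such errors, this gives $\sup_{[0,T]}\calE_{2N}^\sigma+\int_0^T\calD_{2N}^\sigma\ls\calE_{2N}^\sigma(0)$.

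\emph{Intermediate decay and the growth of $\calF_{2N}$.} Repeating the previous step at the lower order $n=N+2$ (the forcing powers are now $A\omega^\ell$ with $\ell\le N+4\le2N+2$, still covered) gives $\ddt\ov{\calE_{N+2}^\sigma}+\tfrac12\calD_{N+2}^\sigma\le0$. The dissipation $\calD_{N+2}^\sigma$ controls $\calE_{N+2}^\sigma$ except for the top-order surface term $\norm\eta_{2(N+2)}^2$, which we interpolate between $\norm\eta_{2(N+2)-1/2}^2\ls\calD_{N+2}^\sigma$ and $\norm\eta_{4N}^2\ls\calE_{2N}^\sigma$ to get $\calE_{N+2}^\sigma\ls(\calD_{N+2}^\sigma)^{1-\theta}(\calE_{2N}^\sigma)^\theta$ with $\theta=\tfrac1{4N-7}$. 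Feeding this into the differential inequality and invoking the elementary ODE comparison lemma yields $\calE_{N+2}^\sigma(t)\ls(1+t)^{-(4N-8)}\bigl(\calE_{N+2}^\sigma(0)+\sup_{[0,T]}\calE_{2N}^\sigma\bigr)\ls(1+t)^{-(4N-8)}\calE_{2N}^\sigma(0)$. For the last term of \eqref{definition:calG} we use the transport structure $\pt_t\eta+u'\cdot\nabla'\eta=u_3|_\Sigma$ to estimate $\ddt\calF_{2N}\ls\sqrt{\calF_{2N}\,\calD_{2N}^\sigma}+\norm{\nabla'u'}_{L^\infty}\calF_{2N}$; since $\norm{\nabla'u'}_{L^\infty}\ls\sqrt{\calE_{N+2}^\sigma}$ is time-integrable thanks to the decay just proved---this is precisely where $N\ge3$ is used---a Gr\"onwall argument together with Cauchy--Schwarz gives $\calF_{2N}(t)\ls\calF_{2N}(0)+(1+t)\int_0^T\calD_{2N}^\sigma\ls\calF_{2N}(0)+(1+t)\calE_{2N}^\sigma(0)$, hence $\sup_{[0,T]}\calF_{2N}(t)/(1+t)\ls\calF_{2N}(0)+\calE_{2N}^\sigma(0)$. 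Combining the four ingredients and tracking all nonlinear remainders---each a factor $\sqrt{G(T)}$ times a good quantity, contributing $G(T)^{3/2}$ after integration in time---gives \eqref{eqn:plan-improve}.

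\emph{Main obstacle.} The heart of the matter is the high-order energy--dissipation estimate, for two reasons. First, it must be carried out with constants independent of $\sigma$ as $\sigma\downarrow0$, which forbids using surface-tension-driven elliptic regularity to recover the top-order regularity of $\eta$; this is instead propagated by the kinematic transport equation, and is the source both of the linear-in-time growth of $\calF_{2N}$ built into $\calG_{2N}^\sigma$ and of the degenerate $\sigma$- and $\sigma^2$-weights in \eqref{definition:full-E}--\eqref{definition:full-D}. Second, the plate forcing must be absorbed \emph{entirely} by the dissipation: any leftover of the form $A\omega^\ell\calE_{2N}^\sigma$ would destroy the infinite-time bound through Gr\"onwall, so one genuinely needs the conversion of time derivatives of $\eta$ into traces of $u$ and the matching half-derivative duality. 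It is exactly the accounting of the powers $A\omega^2,A\omega^3,\dots,A\omega^{2N+2}$ thereby produced that dictates the smallness condition $0\le\sum_{\ell=2}^{2N+2}A\omega^\ell\le\gamma_0$, and this---together with making the whole scheme uniform down to $\sigma=0$---is the principal new difficulty relative to the $f\equiv0$ theory of \cite{guo2013almost}.
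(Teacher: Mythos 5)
Your plan follows essentially the same route as the paper: a bootstrap on $\calG_{2N}^\sigma$ built from horizontal energy--dissipation estimates (geometric form for pure time derivatives, flattened form for the rest), elliptic enhancement of the energy and dissipation, Sobolev interpolation to get the $(1+t)^{-(4N-8)}$ decay of $\calE_{N+2}^\sigma$, a transport estimate for $\calF_{2N}$ whose Gr\"onwall factor is tamed by that decay (this is where $N\ge 3$ enters), and a continuation argument with the local theory; the bookkeeping of the powers $A\omega^\ell$, $2\le\ell\le 2N+2$, is also as in the paper. One caveat: at the top order the error terms of the form $\calK\calF_{2N}$ are \emph{not} pointwise-in-time absorbable into $\sqrt{G(T)}\,\calD_{2N}^\sigma$, since $\calF_{2N}$ may grow linearly while $\calD_{2N}^\sigma$ is small at a given instant, so the clean differential inequality $\ddt\ov{\calE_{2N}^\sigma}+\tfrac12\calD_{2N}^\sigma\le 0$ and the consequent bound by $\calE_{2N}^\sigma(0)$ alone are not available; one must instead integrate in time and play the decay $\calK\ls \delta_0(1+r)^{-(4N-8)}$ against the linear growth of $\calF_{2N}$, which is why the correct high-tier conclusion carries the extra $\calF_{2N}(0)$ on the right --- consistent with your final inequality, but not with the intermediate step as you stated it. You also omit the auxiliary correction $\calH_{2N}$ needed to handle the term $\int_\Omega \partial_t^{2N}p\,J F^{2,2N}$ by moving a time derivative, a technical but necessary device; with these adjustments the scheme matches the paper's Theorems \ref{thm:high-tier}--\ref{thm:a-priori-vanishing}.
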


In particular, the bound in Theorem \ref{thm:main-vanishing-surface-tension} establishes the decay estimate
\begin{equation}
    \calE_{N+2}^\sigma(t)\lesssim \frac{\calE_{2N}^\sigma(0) + \calF_{2N}(0)}{(1+t)^{4N-8}}.
\end{equation}
This is an algebraic decay rate, slower than the exponential rate proved in Theorem  \ref{thm:main-fixed-surface-tension} with a fixed $\sigma>0$. Two remarks about this are in order. First, by choosing $N$ larger, we arrive at a faster rate of decay. In fact, by taking $N$ to be arbitrarily large we can achieve arbitrarily fast algebraic decay rates, which is what is known as ``almost exponential decay.''  The trade-off in the theorem is that faster decay requires smaller data in higher regularity classes. The second point is that when $0<\sigma\leq 1$ in the theorem, it is still possible to prove that $\calE_{2n}^\sigma$ decays exponentially by modifying the arguments used later in Theorem  \ref{thm:a-priori-est}. We neglect to state this properly here because we only care about the vanishing surface tension limit, and in this case we cannot get uniform control of the exponential decay parameter $\lambda(\sigma)$ from Theorem  \ref{thm:main-fixed-surface-tension}. 

Theorems \ref{thm:main-fixed-surface-tension} and \ref{thm:main-vanishing-surface-tension} also guarantee enough regularity to switch back to Eulerian coordinates. Consequently, the theorem tells us that the steady oscillating solution in \eqref{eqn:steady-oscillating-solution} remains asymptotically stable with and without surface tension, but that the rate of decay to equilibrium is faster with surface tension. 

Our third result establishes the vanishing surface tension limit for the problem \eqref{system:ns-flattened} in the same spirit as the result proved in \cite{tan2014zero}. 

\begin{thm}\label{thm:main-vanishing-surface-tension-limit}
Let $\Omega$ be given by \eqref{definition:Omega}, let $N\geq3$, and consider a decreasing sequence $\{\sigma_m\}_{m=0}^\infty\subset(0,1)$ such that $\sigma_m\to0$ as $m\to\infty$. Let $\kappa_0,\gamma_0\in(0,1)$ be as in Theorem \ref{thm:main-vanishing-surface-tension}, and assume that $0\leq\sum_{\ell=2}^{2N+2}A\omega^\ell\leq\gamma_0$. Suppose that for each $m\in\mathbb N$ we have initial data $(u_0^{(m)},\eta_0^{(m)})$ satisfying $ \norm{u_0}_{4N}^2 + \norm{\eta_0}_{4N+1/2}^2 + \sigma_m \norm{\nabla\eta}_{4N}^2<\kappa_0$ as well as the compatibility conditions of Theorem \ref{thm:local-existence}. Let $(u^{(m)},p^{(m)},\eta^{(m)})$ be the global solutions to \eqref{system:ns-flattened} associated to the data given by Theorem \ref{thm:main-vanishing-surface-tension}. Further assume that
\begin{equation}
    \text{$u_0^{(m)}\to u_0$ in $H^{4N}(\Omega)$, $\eta_0^{(m)}\to\eta_0$ in $H^{4N+1/2}(\Sigma)$, and $\sqrt{\sigma_m}\nabla\eta_0^{(m)}\to 0$ in $H^{4N}(\Sigma)$}
\end{equation}
as $m\to\infty$.

Then the following hold.
\begin{enumerate}
    \item The pair $(u_0,\eta_0)$ satisfy the compatibility conditions of Theorem \ref{thm:local-existence} with $\sigma=0$. 
    \item As $m\to\infty$, the triple $(u^{(m)},p^{(m)},\eta^{(m)})$ converges to $(u,p,\eta)$, where the latter triple is the unique solution to \eqref{system:ns-flattened} with $\sigma=0$ and initial data $(u_0,\eta_0)$. The convergence occurs in any space into which the space of triples $(u,p,\eta)$ obeying $\calG_{2N}^0(\infty)<\infty$ compactly embeds. 
\end{enumerate}
\end{thm}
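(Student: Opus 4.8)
The plan is to run a standard compactness-plus-uniqueness argument, leveraging the uniform-in-$\sigma$ bound \eqref{eqn:main-vanishing-surface-tension} that Theorem \ref{thm:main-vanishing-surface-tension} supplies. First I would observe that the hypotheses on the data give a uniform bound $\norm{u_0^{(m)}}_{4N}^2 + \norm{\eta_0^{(m)}}_{4N+1/2}^2 + \sigma_m\norm{\nabla\eta_0^{(m)}}_{4N}^2 \leq \kappa_0$, so each $(u^{(m)},p^{(m)},\eta^{(m)})$ exists globally and obeys $\calG_{2N}^{\sigma_m}(\infty) \lesssim \calE_{2N}^{\sigma_m}(0) + \calF_{2N}(0) \lesssim \kappa_0$ with a constant independent of $m$. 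Because $\sigma_m \to 0$, the $\sigma$-weighted terms in $\calE_{2N}^{\sigma_m}$, $\calD_{2N}^{\sigma_m}$, $\calG_{2N}^{\sigma_m}$ are themselves uniformly bounded but carry no loss when discarded; thus the sequence is bounded in the (Banach) space $\mathcal Z$ of triples with $\calG_{2N}^0(\infty) < \infty$. For part (1), the compatibility conditions for $\sigma > 0$ are, modulo the $\sigma\mathfrak H(\eta_0)$ contribution to the stress, identical to those for $\sigma = 0$; since $u_0^{(m)} \to u_0$ in $H^{4N}$, $\eta_0^{(m)} \to \eta_0$ in $H^{4N+1/2}$, and $\sqrt{\sigma_m}\nabla\eta_0^{(m)} \to 0$ in $H^{4N}$, passing to the limit in each compatibility relation (which only involves finitely many derivatives of the data and the matrix $\calA_0$, all of which converge) yields exactly the $\sigma=0$ compatibility conditions for $(u_0,\eta_0)$.

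For part (2), I would extract a subsequence (not relabeled) converging weakly-$*$ in $\mathcal Z$ to some limit $(u,p,\eta)$, and then upgrade to strong convergence in any space into which $\mathcal Z$ compactly embeds, via Aubin–Lions-type arguments: the uniform control of time derivatives built into $\calE_{2N}^\sigma$ and $\calD_{2N}^\sigma$ (up to order $n+1$ for $u$) together with the spatial regularity gives equicontinuity in time with values in a slightly weaker spatial norm, so Arzelà–Ascoli plus interpolation produces strong convergence on compact time intervals. With strong convergence in hand, I would pass to the limit termwise in the flattened system \eqref{system:ns-flattened}: the flattening map $\Phi^{(m)}$ and matrix $\calA^{(m)}$ depend on $\eta^{(m)}$ and converge (the harmonic extension $\calP$ is continuous), the nonlinear terms $u\cdot\nabla_{\calA}u$, $\diverge_{\calA}S_{\calA}(u,p)$, and $\mathfrak H(\eta)$ are continuous in the strong topology, and crucially the surface-tension term satisfies $\sigma_m\mathfrak H(\eta^{(m)}) \to 0$ because $\sigma_m \to 0$ while $\mathfrak H(\eta^{(m)})$ stays bounded. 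This shows $(u,p,\eta)$ solves \eqref{system:ns-flattened} with $\sigma = 0$ and data $(u_0,\eta_0)$; by the uniqueness clause of Theorem \ref{thm:main-vanishing-surface-tension} at $\sigma = 0$, the limit is independent of the subsequence, so the whole sequence converges.

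The main obstacle I anticipate is making the compactness argument clean in the face of the $\sigma$-dependent function space: the energy $\calE_{2N}^\sigma$ literally changes its definition as $\sigma$ varies (the terms $\sigma\norm{\eta}_{2n-2j+1}^2$ and the $\sigma^2$-weighted dissipation terms), so one must be careful to work in the $\sigma$-independent ``floor'' $\calG_{2N}^0$ and verify that all the estimates used — in particular the decay factor $(1+r)^{4N-8}$ on $\calE_{N+2}^\sigma$ and the growth bound on $\calF_{2N}$ — survive the limit. A secondary technical point is the regularity of the pressure: $p^{(m)}$ is controlled only at lower order than $u^{(m)}$, so to pass to the limit in the stress boundary condition one recovers $p^{(m)}$ from an elliptic (Stokes) problem driven by the other quantities and argues convergence there. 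Both issues are routine given the machinery already developed, so the bulk of the argument is bookkeeping rather than genuinely new estimates; the proof of Theorem \ref{thm:main-vanishing-surface-tension} itself (the uniform bound) is where the real work lies.
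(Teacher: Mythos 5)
Your proposal is correct and follows essentially the same route as the paper, which simply invokes the uniform bounds of Theorem \ref{thm:main-vanishing-surface-tension} together with the standard compactness-plus-uniqueness argument (citing the analogous proofs in \cite{tan2014zero} and \cite{jang2016compressible}). Your fleshed-out sketch — uniform $\calG_{2N}^{\sigma_m}$ bounds, weak-$*$ extraction, Aubin--Lions upgrade, term-by-term passage to the limit with $\sigma_m\mathfrak H(\eta^{(m)})\to 0$, and identification of the limit via uniqueness at $\sigma=0$ — is exactly the intended argument.
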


\subsection{Discussion and plan of paper}

The strategy of the current paper is similar to that of \cite{guo2013almost,tice2018asymptotic}, which proved similar results for related problems with $f=0$.  As in these papers, the main focus of this paper is to establish a priori estimates for solutions to the PDE \eqref{system:ns-flattened}, which allow us to prove Theorems \ref{thm:main-fixed-surface-tension} and \ref{thm:main-vanishing-surface-tension} by standard arguments coupling these estimates with local existence results.  The scheme of a priori estimates developed in this paper is a variant of the nonlinear energy method employed in \cite{guo2013almost,tice2018asymptotic} and is designed to carefully track the dependence on the surface tension $\sigma$, the oscillation amplitude parameter $A$, and the oscillation frequency parameter $\omega$ in order to optimize the parameter regime in which we obtain the desired existence and stability theorem.  In the case with fixed surface tension $\sigma >0$ we obtain sufficient conditions for asymptotic stability of the form
\begin{equation}\label{stable_regime}
    A\omega^2+A\omega^3\lesssim 1,
\end{equation}
without bounds on $A$ or $\omega$ individually (see Figure \ref{fig:faraday-stable_regime}). Thus, the Faraday oscillation system can be stable for arbitrarily large $A$ or $\omega$, so long as the other parameter is sufficiently small for \eqref{stable_regime} to hold.  In the vanishing surface tension case, we obtain a similar result, although the trade-off is that more stringent constraints on $A$ and $\omega$ are necessary, namely 
\begin{equation}
A \sum_{\ell=2}^{2N+2}\omega^\ell \ls 1.
\end{equation}
We note that although our technique is capable of rigorously identifying a stable regime in the oscillation parameter space, it tells us nothing about the complement of this set.  The numerics for the linearized problem in  \cite{kumar1996linear} suggest that the complement indeed contains both stable and unstable components.

\begin{figure}[ht]
\centering
\begin{tikzpicture}
    \clip(-1,-1) rectangle (11,5); 
    \draw[->] (-0.5,0) -- (10,0) node[right] {$\omega$};
    \draw[->] (0,-0.5) -- (0,4.2) node[above] {$A$};
    \fill[xscale=5.0,yscale=0.5,domain=0.5:2,smooth,variable=\x,blue,opacity=0.3] 
    (0,8) -- (0.5,8) -- 
    plot ({\x},{3/(\x*\x+\x*\x*\x)})
    -- (2,0.25) -- (2,0) -- (0,0) -- cycle;
    \node[text width=10cm] at (10,2) 
    {unknown};
    \node[text width=10cm] at (5.5,0.5) 
    {asymptotic stability, \\exponential decay};
    \draw[xscale=5.0,yscale=0.5,domain=0.5:2,smooth,variable=\x,ultra thick,line width=1mm] plot ({\x},{3/(\x*\x+\x*\x*\x)});
    \node[text width=10cm] at (8,4) 
    {$A\omega^2+A\omega^3 = C$};
\end{tikzpicture}

\caption{Bounds on the stability regime with fixed $\sigma >0$.}
\label{fig:faraday-stable_regime}
\end{figure}
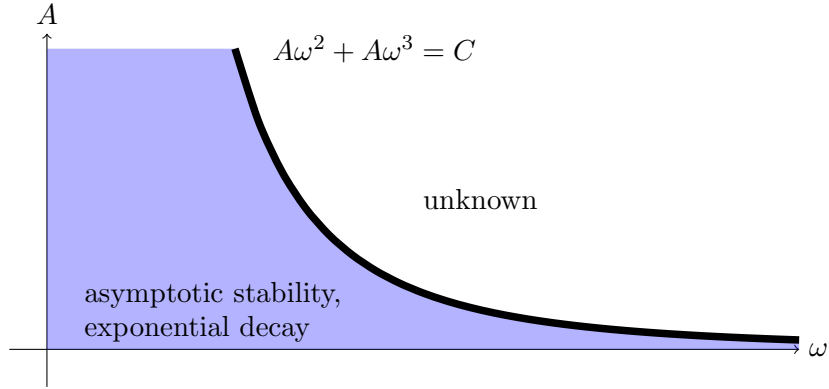
 
Our strategy for obtaining the a priori estimates is essentially the same nonlinear energy method as that of \cite{tice2018asymptotic}, so we refer the reader to the introduction of that paper for a detailed outline and opt for a terse summary here.  First, we obtain \emph{horizontal energy estimates} by applying horizontal and temporal derivatives to the problem and using the basic energy-dissipation structure.  It turns out to be convenient to do these estimates in two different forms depending on whether the derivatives involve only temporal derivatives or a mixture. These estimates are developed in Section \ref{section:evolution-ed}.

The next step in the nonlinear energy method is \emph{energy and dissipation enhancement}, where we employ various auxiliary estimates in order to gain control of more quantities in terms of those already controlled by the horizontal estimates.  The main tools are elliptic regularity for the Stokes problem and elliptic regularity for the capillary problem, both of which are recorded in Appendix \ref{section:appendix-b}.  The enhanced estimates are recorded in Section \ref{section:a-priori} and are predicated on the various estimates of the nonlinearities presented in Section \ref{section:nonlin-est}.

We combine the above estimates into a scheme of \emph{a priori estimates.}  In Section \ref{section:vanishing-surface-tension} we study the cases $\sigma =0$ and $\sigma \to 0$.  When coupled with the local existence theory, the a priori estimates allow us to complete the proofs of Theorems \ref{thm:main-vanishing-surface-tension} and \ref{thm:main-vanishing-surface-tension-limit}.  In Section \ref{section:fixed-surface-tension} we study the fixed surface tension problem and prove Theorem \ref{thm:main-fixed-surface-tension}.

\section{Evolution of the energy and dissipation}\label{section:evolution-ed}

In this section we record the energy-dissipation evolution equations for two linearized versions of the problem \eqref{system:ns-flattened}: the geometric form and the flattened form. We also record the forms of the nonlinear forcing terms that appear in the analysis of \eqref{system:ns-flattened}. 

\subsection{Geometric form}
Let $\Phi,\mathcal A,\mathcal N,J$, etc.\ be given in terms of $\eta$ as in Section \ref{section:reformulation}. We give the geometric linearization of \eqref{system:ns-flattened} { for $(v,q,\zeta)$}:
\begin{equation}\label{system:ns-lin-geometric}
\begin{cases}
    \partial_t v-\partial_t\hat\eta\tilde bK\partial_3 q+u\cdot\nabla_{\mathcal A}\zeta + \diverge_{\mathcal A}S_{\mathcal A}(v,q) = \Psi^1 & \text{in $\Omega$} \\
    \diverge_{\mathcal A}v = \Psi^2 & \text{in $\Omega$} \\
    \partial_t\zeta - v\cdot\mathcal N = \Psi^3 & \text{on $\Sigma$} \\
    S_{\mathcal A}(v,q)\mathcal N = \left(-\sigma\Delta\zeta + g\zeta + \Psi^5\right)\mathcal N + \Psi^4 & \text{on $\Sigma$} \\
    v = 0 & \text{on $\Sigma_b$}
\end{cases}
\end{equation}
Note that although we have written $\Psi^5$ in a form that suggests it will play the same nonlinear role as $\Psi^1,\dotsc,\Psi^4$, this term is actually linear.  We have left it in this general form for convenience in the derivation of the energy-dissipation equation.

\subsubsection{Energy-dissipation}
The next result records the energy-dissipation equation associated to the solutions of \eqref{system:ns-lin-geometric}.  
\begin{prop}[Geometric energy-dissipation]\label{prop:geom-ed}
Let $\eta$ and $u$ be given and satisfy
\begin{equation}\label{assumption:geometric-ed}
\begin{cases}
    \diverge_{\mathcal A}u = 0 & \text{in $\Omega$} \\
    \partial_t\eta = u\cdot\mathcal N & \text{on $\Sigma$}
\end{cases}.
\end{equation}
Suppose that $(v,q,\zeta)$ solve \eqref{system:ns-lin-geometric}, where $\Phi,\mathcal A, J$, etc.\ are determined by $\eta$ as before. Then,
\begin{equation}\label{equation:geometric-ed}
\begin{aligned}
    &\ddt\left[\int_\Omega\frac{\abs{v}^2J}2+\int_\Sigma\frac{\sigma\abs{\nabla{\eta}}^2}2 + \frac{g\abs{\eta}^2}2\right] + \int_\Omega\mu\frac{\abs{\mathbb D_{\mathcal A}{v}}^2J}2  \\
    &\hspace{5ex} = \int_\Omega J\left(v\cdot\Psi^1+q\Psi^2\right) 
 + \int_\Sigma \left(-\sigma\Delta\zeta + g\zeta \right)\Psi^3 -\Psi^4\cdot v - \Psi^5 v \cdot \mathcal{N}.
\end{aligned}
\end{equation}
\end{prop}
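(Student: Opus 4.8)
The plan is to derive \eqref{equation:geometric-ed} by testing the momentum equation of \eqref{system:ns-lin-geometric} against $Jv$ and integrating over $\Omega$, using the kinematic and divergence constraints in \eqref{assumption:geometric-ed} to convert boundary and bulk terms into the time derivative of the energy and the dissipation integral. First, I would recall the geometric identity $\pt_t J = \diverge_{\mathcal A}(J \partial_t \Phi) = J \diverge_{\mathcal A}(\partial_t\Phi)$ (or the equivalent statement that $\pt_t J - \pt_t\hat\eta\,\tilde b\, K \pt_3 J$ combines correctly), together with the Piola-type identity $\pt_j(J \mathcal A_{ij}) = 0$, which is what makes the $\calA$-divergence structure compatible with integration by parts against the weight $J$. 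Using these, the term $\int_\Omega J v \cdot (\pt_t v - \pt_t\hat\eta\,\tilde b\, K\pt_3 v)$ becomes $\ddt \int_\Omega \frac{|v|^2 J}{2}$ up to controlled terms, and the transport term $\int_\Omega J v\cdot(u\cdot\nabla_{\calA}v)$ vanishes after integration by parts precisely because $\diverge_{\calA} u = 0$ and $u = 0$ on $\Sigma_b$ while $u\cdot\mathcal N = \pt_t\eta$ on $\Sigma$ — here one has to be careful to match the $u\cdot\nabla_{\calA}\zeta$ term in \eqref{system:ns-lin-geometric}, which in the genuine problem is $u\cdot\nabla_{\calA}v$, so the cancellation is the standard one.

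Next I would handle the stress term $\int_\Omega J v \cdot \diverge_{\calA} S_{\calA}(v,q)$. Integrating by parts (again using $\pt_j(J\mathcal A_{ij}) = 0$ so that $\diverge_{\calA}$ is, against the weight $J$, the formal adjoint of $-\nabla_{\calA}$) produces the bulk dissipation $\int_\Omega \frac{\mu |\mathbb D_{\calA} v|^2 J}{2}$ — using symmetry of $\mathbb D_{\calA}v$ to symmetrize $\nabla_{\calA}v : \mathbb D_{\calA}v$ — the pressure term $-\int_\Omega J q \diverge_{\calA} v = -\int_\Omega J q \Psi^2$ (moved to the right side), and a boundary term $-\int_\Sigma (S_{\calA}(v,q)\mathcal N)\cdot v$ on $\Sigma$ (the $\Sigma_b$ contribution drops since $v=0$ there). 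On $\Sigma$ I substitute the dynamic boundary condition, $S_{\calA}(v,q)\mathcal N = (-\sigma\Delta\zeta + g\zeta + \Psi^5)\mathcal N + \Psi^4$, so the boundary term becomes $-\int_\Sigma (-\sigma\Delta\zeta + g\zeta + \Psi^5)(v\cdot\mathcal N) - \int_\Sigma \Psi^4\cdot v$. Then I replace $v\cdot\mathcal N = \pt_t\zeta - \Psi^3$ using the third equation of \eqref{system:ns-lin-geometric}; the $\pt_t\zeta$ piece combines with the surface-tension and gravity integrals: $\int_\Sigma(-\sigma\Delta\zeta + g\zeta)\pt_t\zeta$. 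Crucially, since the prescribed $\eta$ and the solved $\zeta$ need not coincide for the linearized system, I must track this honestly — but for the actual application to \eqref{system:ns-flattened} one has $\zeta = \eta$, and then $\int_\Sigma g\zeta\,\pt_t\zeta = \ddt\int_\Sigma \frac{g|\eta|^2}{2}$ and, integrating by parts on $\Sigma$ (which is boundaryless), $-\int_\Sigma \sigma\Delta\zeta\,\pt_t\zeta = \int_\Sigma\sigma\nabla\zeta\cdot\nabla\pt_t\zeta = \ddt\int_\Sigma\frac{\sigma|\nabla\eta|^2}{2}$. Collecting the remaining non-exact terms — $\int_\Omega J(v\cdot\Psi^1 + q\Psi^2)$ from the bulk forcing, $\int_\Sigma(-\sigma\Delta\zeta + g\zeta)\Psi^3$, $-\int_\Sigma\Psi^4\cdot v$, and $-\int_\Sigma\Psi^5\,v\cdot\mathcal N$ — yields exactly the right-hand side of \eqref{equation:geometric-ed}.

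The main obstacle I anticipate is the bookkeeping around the geometric/Jacobian terms: verifying that the combination $\pt_t v - \pt_t\hat\eta\,\tilde b\, K\pt_3 v$ tested against $Jv$ really reconstructs $\ddt\int_\Omega\frac{|v|^2 J}{2}$ without leftover terms. This hinges on the identity $\pt_t J = \pt_3(\pt_t\hat\eta\,\tilde b)\cdot(\text{something})$ — more precisely on $\pt_t J = \diverge(\pt_t\hat\eta\,\tilde b\, e_3) \cdot J$-type relations and the fact that $\Phi$ was built so that the velocity of the flattening map in the $e_3$ direction is exactly $\pt_t\hat\eta\,\tilde b$. One writes $\ddt\int_\Omega \frac{|v|^2 J}{2} = \int_\Omega v\cdot\pt_t v\, J + \int_\Omega \frac{|v|^2}{2}\pt_t J$, and the claim is $\int_\Omega\frac{|v|^2}{2}\pt_t J = \int_\Omega \frac{|v|^2}{2}\pt_3(\pt_t\hat\eta\,\tilde b) = -\int_\Omega v\cdot\pt_3 v\,(\pt_t\hat\eta\,\tilde b) + \int_\Sigma\frac{|v|^2}{2}\pt_t\hat\eta$ after integration by parts in $x_3$ (the $\Sigma_b$ term vanishes). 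Since $\pt_t\hat\eta\,\tilde b\, K \pt_3 v$ is what appears, one must also carry the factor $K = J^{-1}$ correctly; the weight $J$ in the test function is precisely what cancels it. The surface term $\int_\Sigma \frac{|v|^2}{2}\pt_t\hat\eta$ then must be shown to cancel against a matching contribution from the transport term's boundary integral — this is the delicate cancellation, and it is exactly the standard one in the Guo--Tice framework, so I would reference \cite{guo2013almost,tice2018asymptotic} for the precise form of these geometric identities rather than reprove them. Everything else is routine integration by parts.
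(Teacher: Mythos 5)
Your proposal follows essentially the same route as the paper's proof: test the momentum equation against $Jv$, use the Piola identity $\partial_k(J\mathcal A_{ik})=0$ to integrate by parts, extract $\frac{d}{dt}\int_\Omega \frac{|v|^2 J}{2}$ plus a remainder that cancels via $\partial_t J = \partial_3(\partial_t\hat\eta\,\tilde b)$, $J\mathcal A_{j3}=\mathcal N_j$ on $\Sigma$, and the hypotheses $\diverge_{\mathcal A}u=0$, $\partial_t\eta=u\cdot\mathcal N$, then substitute the dynamic and kinematic boundary conditions to produce the surface energy and the $\Psi^3,\Psi^4,\Psi^5$ terms. The boundary cancellation you defer to \cite{guo2013almost,tice2018asymptotic} is precisely the short computation the paper writes out (its term $I_2=0$), and you correctly flag that the identity naturally emerges with $\zeta$ (not $\eta$) in the surface energy and that $u\cdot\nabla_{\mathcal A}\zeta$ in the stated linearization should read $u\cdot\nabla_{\mathcal A}v$.
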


\begin{proof}
We take the dot product of the first equation in \eqref{system:ns-lin-geometric} with $v$, multiply by $J$, and integrate over $\Omega$ to see that
\begin{equation}\label{equation:ed-I-II}
    I+II=\int_\Omega \Psi^1\cdot vJ
\end{equation}
for
\begin{equation}
 I = \int_\Omega\partial_t v\cdot vJ - \partial_t\hat\eta\tilde b\partial_3 v\cdot v + (u\cdot\nabla_{\mathcal A}v)\cdot vJ \text{ and }
    II = \int_\Omega \diverge_{\mathcal A}S_{\mathcal A}(v,q)\cdot vJ.
\end{equation}
In order to integrate these terms by parts, we will utilize the geometric identity $\partial_k(J\mathcal A_{ik}) = 0$ (which is readily verified by direct computation) for each $i$. 

To handle the term $I$, we first compute
\begin{equation}
    I = \partial_t\int_\Omega\frac{\abs{v}^2J}2 + \int_\Omega -\frac{\abs{v}^2\partial_tJ}2-\partial_t\hat\eta\tilde b\partial_3\frac{\abs{v}^2}2 + u_j\partial_k\left(J\mathcal A_{jk}\frac{\abs{v}^2}2\right) =: I_1 + I_2.
\end{equation}
Since $\tilde b = (1+x_3/b)$, an integration by parts, an application of the boundary condition $v=0$ on $\Sigma_b$ reveals that
\begin{equation}
\begin{aligned}
    I_2 &= \int_\Omega -\frac{\abs{v}^2\partial_tJ}2-\partial_t\hat\eta\tilde b\partial_3\frac{\abs{v}^2}2 + u_j\partial_k\left(J\mathcal A_{jk}\frac{\abs{v}^2}2\right) \\
    &= \int_\Omega -\frac{\abs{v}^2\partial_tJ}2+\frac{\abs{v}^2}2\partial_3\left(\partial_t\hat\eta\tilde b\right) + u_j\partial_k\left(J\mathcal A_{jk}\frac{\abs{v}^2}2\right)+\int_\Sigma -\frac{\abs{v}^2}2\partial_t\hat\eta\tilde b \\
    &= \int_\Omega -\frac{\abs{v}^2}2\left(\frac{\pt\hat\eta}b+\pt_3\pt_t\hat\eta\tilde b\right)+\frac{\abs{v}^2}2\left(\pt_3\pt_t\hat\eta\tilde b + \frac{\pt_t\hat\eta}b\right) - \left(\partial_ku_j\right)\left(J\mathcal A_{jk}\frac{\abs{v}^2}2\right) \\
    &\hspace{5ex}+\int_\Sigma -\frac{\abs{v}^2}2\partial_t\hat\eta+u_jJ\mathcal A_{jk}\frac{\abs{v}^2}2(e_3\cdot e_k) \\
    &= \int_\Omega - J\frac{\abs{v}^2}2\diverge_{\mathcal A}u+\int_\Sigma -\frac{\abs{v}^2}2\partial_t\hat\eta+u_jJ\mathcal A_{jk}\frac{\abs{v}^2}2(e_3\cdot e_k).
\end{aligned}
\end{equation}
Now note that $J\mathcal A_{jk}(e_3\cdot e_k) = \mathcal N_j$ on $\Sigma$ and also we have that $u$ and $\eta$ satisfy \eqref{assumption:geometric-ed}, so the above becomes
\begin{equation}
    I_2 = \int_\Omega - J\frac{\abs{v}^2}2\diverge_{\mathcal A}u+\int_\Sigma \frac{\abs{v}^2}2\left(- \partial_t\eta+ u\cdot\mathcal N\right) = 0
\end{equation}
and hence
\begin{equation}\label{equation:ed-I}
    I = I_1 + I_2 = \partial_t\int_\Omega\frac{\abs{v}^2J}2
\end{equation}
so $I$ is purely just the transport of the quantity $\abs{v}^2J$ along the flow $u$. 

We begin our analysis of the term $II$ with a similar integration by parts, which reveals that \begin{equation}
\begin{aligned}
    II &= \int_\Omega\diverge_{\mathcal A}S_{\mathcal A}(v,q)\cdot vJ = \int_\Omega \mathcal A_{jk}\partial_k \left(S_{\mathcal A}(v,q)\right)_{ij}v_iJ = \int_\Omega v_iJ\mathcal A_{jk}\partial_k \left(S_{\mathcal A}(v,q)\right)_{ij} \\
    &= \int_\Omega -\partial_k\left(v_iJ\mathcal A_{jk}\right)\left(S_{\mathcal A}(v,q)\right)_{ij} + \int_\Sigma v_iJ\mathcal A_{jk}\left(S_{\mathcal A}(v,q)\right)_{ij}(e_3\cdot e_k) \\
    &= \int_\Omega -\left[\partial_k\left(v_iJ\mathcal A_{jk}\right)-v_i\partial_k\left(J\mathcal A_{jk}\right)\right]\left(S_{\mathcal A}(v,q)\right)_{ij} + \int_\Sigma v_iJ\mathcal A_{j3}\left(S_{\mathcal A}(v,q)\right)_{ij} \\
    &= \int_\Omega -J\mathcal A_{jk}\partial_k v_i\left(S_{\mathcal A}(v,q)\right)_{ij} + \int_\Sigma v_i\left(S_{\mathcal A}(v,q)\right)_{ij}\mathcal A_{j3}J \\
    &= \int_\Omega -J\left(\nabla_{\mathcal A}v\right)_{ij}\left(S_{\mathcal A}(v,q)\right)_{ij} + \int_\Sigma v_i\left(S_{\mathcal A}(v,q)\right)_{ij}\mathcal N_j \\
    &= \int_\Omega -JS_{\mathcal A}(v,q):\nabla_{\mathcal A}v + \int_\Sigma S_{\mathcal A}(v,q)\mathcal N\cdot v \\
    &= \int_\Omega -J\left(q\diverge_{\mathcal A}v-\frac{\mu\abs{\mathbb D_{\mathcal A}v}^2}2\right) + \int_\Sigma S_{\mathcal A}(v,q)\mathcal N\cdot v \\
    &= \int_\Omega -J\left(q\Psi^2-\frac{\mu\abs{\mathbb D_{\mathcal A}v}^2}2\right) + \int_\Sigma S_{\mathcal A}(v,q)\mathcal N\cdot v.
\end{aligned}
\end{equation}
Now using the third and fourth equations in \eqref{system:ns-lin-geometric}, we rewrite the integral on $\Sigma$ as
\begin{equation}
\begin{aligned}
    \int_\Sigma S_{\mathcal A}(v,q)\mathcal N\cdot v &= \int_\Sigma \left[\left(-\sigma\Delta\zeta + g\zeta + \Psi^5\right)\mathcal N + \Psi^4\right]\cdot v \\
    &= \int_\Sigma \left(-\sigma\Delta\zeta + g\zeta + \Psi^5\right)\mathcal N\cdot v + \int_\Sigma \Psi^4\cdot v \\
    &= \int_\Sigma \left(-\sigma\Delta\zeta + g\zeta \right)\left(\partial_t\zeta - \Psi^3\right) + \int_\Sigma \Psi^4\cdot v + \Psi^5 v \cdot \mathcal{N}\\
    &= \partial_t\left[\int_\Sigma\frac{\sigma\abs{\nabla\zeta}^2}2 + \frac{g\abs{\zeta}^2}2\right]  - \int_\Sigma \left(-\sigma\Delta\zeta + g\zeta \right)\Psi^3 - \Psi^4\cdot v - \Psi^5 v \cdot \mathcal{N}
\end{aligned}
\end{equation}
so on sum, we have
\begin{equation}\label{equation:ed-II}
\begin{aligned}
    II &= \int_\Omega -J\left(q\Psi^2-\frac{\mu\abs{\mathbb D_{\mathcal A}v}^2}2\right)+\ddt\left[\int_\Sigma\frac{\sigma\abs{\nabla\zeta}^2}2 + \frac{g\abs{\zeta}^2}2\right] \\
    &\qquad- \int_\Sigma \left(-\sigma\Delta\zeta + g\zeta \right)\Psi^3 - \Psi^4\cdot v - \Psi^5 v \cdot \mathcal{N}.
\end{aligned}
\end{equation}
Now to see that equation \eqref{equation:geometric-ed} holds, just plug \eqref{equation:ed-I} and \eqref{equation:ed-II} into \eqref{equation:ed-I-II} and rearrange.
\end{proof}

\subsubsection{Forcing terms}
We now record the form of the forcing terms that will appear in our analysis. Recall that this geometric form of the linearization is responsible for the highest order time derivatives $\pt_t^n$, so we build this into the notation by writing $F^{j,n}$ for the $j$th forcing term generated by applying $\pt_t^n$ to \eqref{system:ns-flattened}. 

Applying $\pt_t^n$ to the $i$th component of the first equation results in
\begin{equation}
\begin{aligned}
    \partial_t(\partial_t^n u_i) &+ \sum_{0\leq \ell\leq n}C_{\ell n }\left[-\partial_t^\ell\left(\partial_t\hat\eta\tilde b K\right)\partial_t^{n-\ell}\left(\partial_3 u_i\right) + \pt_t^\ell\left(u_j\calA_{jk}\right)\pt_t^{n-\ell}\left(\pt_ku_i\right)\right] \\
    &+\sum_{0\leq \ell\leq n}C_{\ell n}\left[\pt_t^\ell\calA_{jk}\pt_t^{n-\ell}\pt_k\left(S_{\calA}(u,p)\right)_{ij}\right] = 0.
\end{aligned}
\end{equation}
Then in the above, the first term as well as the terms in the summation corresponding to $\ell=0$ gives the left hand side of \eqref{system:ns-lin-geometric}, except for the last sum for which we have an extra term
\begin{equation}
\begin{aligned}
    \calA_{jk}\pt_t^n\pt_k\left(S_\calA(u,p)_{ij}\right) - \calA_{jk}\partial_k\left(S_\calA(\pt^n u,\pt_t^n p)_{ij}\right) &= \calA_{jk}\left(-\sum_{0\leq\ell\leq n}C_{\ell n}\mu\mathbb D_{\pt_t^\ell\calA}\pt_t^{n-\ell}u + \mu\mathbb D_{\calA}\pt_t^n u\right) \\
    &= -\calA_{jk}\sum_{0<\ell\leq n}C_{\ell n}\mu\mathbb D_{\pt_t^\ell\calA}\pt_t^{n-\ell}u.
\end{aligned}
\end{equation}
Thus,
\begin{equation}
\begin{aligned}\label{definition:F1}
    F^{1,n}_i &= \sum_{0< \ell\leq n}C_{\ell n }\left[\partial_t^\ell\left(\partial_t\hat\eta\tilde b K\right)\partial_t^{n-\ell}\left(\partial_3 u_i\right) - \pt_t^\ell\left(u_j\calA_{jk}\right)\pt_t^{n-\ell}\left(\pt_ku_i\right)\right] \\
    &+\sum_{0< \ell\leq n}C_{\ell n}\left[-\pt_t^\ell\calA_{jk}\pt_t^{n-\ell}\pt_k\left(S_{\calA}(u,p)\right)_{ij}+\calA_{jk}\mu\mathbb D_{\pt^\ell\calA}\pt_t^{n-\ell}u\right].
\end{aligned}
\end{equation}
Differentiating the second equation gives
\begin{equation}
    \sum_{0\leq\ell\leq n}C_{\ell n}\partial_t^\ell\mathcal A_{jk}\pt_t^{n-\ell}\pt_ku_j = 0
\end{equation}
so taking all but the $\ell=0$ terms gives
\begin{equation}\label{definition:F2}
    F^{2,n} = \sum_{0<\ell\leq n}C_{\ell n}\partial_t^\ell\mathcal A_{jk}\pt_t^{n-\ell}\pt_ku_j.
\end{equation}
Differentiating the third equation gives
\begin{equation}
    \partial_t(\partial_t^n\eta) - \sum_{0\leq \ell\leq n}C_{\ell n}\pt_t^\ell u\cdot \pt_t^{n-\ell}\calN = 0
\end{equation}
so
\begin{equation}\label{definition:F3}
    F^{3,n} = \sum_{0< \ell\leq n}C_{\ell n}\pt_t^\ell u\cdot \pt_t^{n-\ell}\calN.
\end{equation}
Finally, differentiating the $i$th component of the fourth equation gives
\begin{equation}
\begin{aligned}
    \sum_{0\leq\ell\leq n}C_{\ell n}\pt_t^{n-\ell}\left(S_{\mathcal A}(u,p)\right)_{ij}\pt_t^\ell\mathcal N_j &= \sum_{0\leq\ell\leq n}C_{\ell n}\pt_t^{n-\ell}\left(-\sigma\mathfrak H(\eta) + \left(g+A\omega^2f''(\omega t)\right)\eta\right)\partial_t^\ell\mathcal N_i \\
    &= \sum_{0\leq\ell\leq n}C_{\ell n}\left(-\sigma\pt_t^{n-\ell}\mathfrak H(\eta) + g\pt_t^{n-\ell}\eta + \pt_t^{n-\ell}\left(A\omega^2f''(\omega t)\eta\right)\right)\partial_t^\ell\mathcal N_i
\end{aligned}
\end{equation}
so taking away the $\ell=0$ terms and handling $\pt_t^n\left(S_{\mathcal A}(u,p)\right)_{ij}$ as before as well as handling the $\Delta\eta$ term, we get
\begin{equation}\label{definition:F4}
\begin{aligned}
    F^{4,n}_i &= \sum_{0<\ell\leq n}C_{\ell n}\left[-\pt_t^{n-\ell}\left(S_{\mathcal A}(u,p)\right)_{ij}\pt_t^\ell\mathcal N_j + \left(\mu\mathbb D_{\pt^\ell\calA}\pt_t^{n-\ell}u\right)_{ij}\calN_j\right] \\
    &+ \sum_{0<\ell\leq n}C_{\ell n}\left(-\sigma\pt_t^{n-\ell}\mathfrak H(\eta) + g\pt_t^{n-\ell}\eta + \pt_t^{n-\ell}\left(A\omega^2f''(\omega t)\eta\right)\right)\partial_t^{n-\ell}\mathcal N_i +\left(-\sigma\pt_t^n\left(\mathfrak H(\eta)-\Delta\eta\right)\right)\mathcal N_i
\end{aligned}
\end{equation}
and
\begin{equation}
    F^{5,n} = \pt_t^n\left(A\omega^2f''(\omega t)\eta\right).
\end{equation}
Note in particular that $F^{5,n}$ is a linear term, different in form from the other nonlinear forcing terms.

\subsection{Flattened form}
It will also be useful for us to have a linearized version of \eqref{system:ns-flattened} with constant coefficients. This version is as follows:
\begin{equation}\label{equation:flat-I}
\begin{cases}
    \partial_t v + \diverge S(v,q) = \Theta^1 & \text{in $\Omega$} \\
    \diverge v = \Theta^2 & \text{in $\Omega$} \\
    \partial_t\zeta = v_3 + \Theta^3 & \text{on $\Sigma$} \\
    S(v,q)e_3 = \left(-\sigma\Delta\zeta + g\zeta+\Theta^5\right)e_3 + \Theta^4 & \text{on $\Sigma$} \\
    v = 0 & \text{on $\Sigma_b$}
\end{cases}.
\end{equation}
Again we note that $\Theta^5$ is really a linear term, of a different nature than the other forcing terms.  We have again elected to write it in this general form for ease in the deriving the energy-dissipation equation.

\subsubsection{Energy-dissipation}

\begin{prop}[Flattened energy-dissipation]\label{prop:flattened-ed}
    Suppose $(v,q,\zeta)$ solve \eqref{equation:flat-I}. Then
    \begin{equation}
    \begin{aligned}
        &\ddt\left[\int_\Omega\frac{\abs v^2}2 + \int_\Sigma\frac{\sigma\abs{\nabla\zeta}^2}2 + {\frac{g\abs{\zeta}^2}2}\right] + \frac{\mu}2\int_\Omega\abs{\mathbb Dv}^2 \\ 
        & = \int_\Omega v\cdot\Theta^1 + q\Theta^2 + \int_\Sigma\left(-\sigma\Delta\zeta + g\zeta \right)\Theta^3 - \Theta^4\cdot v - \Theta^5 v_3.
    \end{aligned}
    \end{equation}
\end{prop}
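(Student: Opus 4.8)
The plan is to run the same energy computation as in the proof of Proposition \ref{prop:geom-ed}, but in the simpler constant-coefficient setting where $\mathcal A = I$ and $J = 1$; the result is not literally a corollary of Proposition \ref{prop:geom-ed} only because the forcing terms $\Theta^j$ differ in form from the $\Psi^j$, but the mechanism is identical and in fact cleaner. First I would take the dot product of the first equation of \eqref{equation:flat-I} with $v$ and integrate over the fixed domain $\Omega$. Because $\Omega$ does not move in time, $\int_\Omega \pt_t v\cdot v = \ddt\int_\Omega \abs{v}^2/2$ with no Jacobian factor and no transport remainder, so the only work is integrating by parts the stress term $\int_\Omega(\diverge S(v,q))\cdot v$.

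Next I would integrate by parts in $\Omega$: writing $\int_\Omega \pt_j S_{ij}\,v_i = -\int_\Omega S_{ij}\pt_j v_i + \int_{\pt\Omega} S_{ij}v_i\nu_j$, the boundary $\pt\Omega = \Sigma\cup\Sigma_b$ contributes nothing on $\Sigma_b$ by the no-slip condition $v=0$, while on $\Sigma$ the outward normal is exactly $e_3$, leaving $\int_\Sigma (S(v,q)e_3)\cdot v$. For the bulk term I would use $S(v,q) = qI - \mu\mathbb D v$ together with the symmetry of $\mathbb D v$ to get $S_{ij}\pt_j v_i = q\diverge v - \tfrac{\mu}{2}\abs{\mathbb D v}^2$, and then substitute $\diverge v = \Theta^2$ from the second equation. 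This produces the dissipation term $\tfrac{\mu}{2}\int_\Omega\abs{\mathbb D v}^2$ and the forcing term $-\int_\Omega q\Theta^2$.

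Then I would process the surface integral using the last two equations of \eqref{equation:flat-I}. Substituting $S(v,q)e_3 = (-\sigma\Delta\zeta + g\zeta + \Theta^5)e_3 + \Theta^4$ and then $v_3 = \pt_t\zeta - \Theta^3$ rewrites $\int_\Sigma(S(v,q)e_3)\cdot v$ as $\int_\Sigma(-\sigma\Delta\zeta + g\zeta)\pt_t\zeta$ plus the forcing contributions $-\int_\Sigma(-\sigma\Delta\zeta + g\zeta)\Theta^3 + \int_\Sigma\Theta^4\cdot v + \Theta^5 v_3$. Since $\Sigma$ is a torus with no boundary, one further integration by parts in the horizontal variables gives $\int_\Sigma(-\sigma\Delta\zeta)\pt_t\zeta = \sigma\int_\Sigma\nabla\zeta\cdot\nabla\pt_t\zeta = \ddt\int_\Sigma\tfrac{\sigma}{2}\abs{\nabla\zeta}^2$, while $\int_\Sigma g\zeta\pt_t\zeta = \ddt\int_\Sigma\tfrac{g}{2}\abs{\zeta}^2$. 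Collecting the exact time-derivative terms on the left and the remaining terms on the right yields precisely the asserted identity.

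I do not expect any genuine obstacle: unlike the geometric case this argument needs no geometric identity such as $\pt_k(J\mathcal A_{ik}) = 0$ and no cancellation of transport terms via assumptions like \eqref{assumption:geometric-ed}. The only points demanding minor care are bookkeeping of which $\Theta^j$ enters each term and remembering that $\Sigma$ has empty boundary, so no edge contributions appear when integrating by parts horizontally.
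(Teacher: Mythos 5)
Your proposal is correct and follows essentially the same route as the paper's proof: dot the momentum equation with $v$, integrate by parts the stress term using no-slip on $\Sigma_b$ and the identity $S(v,q):\nabla v = q\diverge v - \tfrac{\mu}{2}\abs{\mathbb D v}^2$, then substitute the stress and kinematic boundary conditions on $\Sigma$ to extract the exact time derivatives of the surface energy. The sign bookkeeping and the placement of each $\Theta^j$ in your outline match the asserted identity, so there is nothing to add.
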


\begin{proof}
    We dot the first equation of \eqref{equation:flat-I} with $v$ and integrate over $\Omega$ to see that
    \begin{equation}
    I + II = \int_\Omega v\cdot\Theta^1
    \end{equation} 
    where 
    \begin{equation}
    I \coloneqq \int_\Omega v\partial_t v = \partial_t\int_\Omega\frac{|v|^2}2 \text{ and } II \coloneqq \int_\Omega v\cdot \diverge(qI - \mu \mathbb{D} v).
    \end{equation}
    To deal with $II$ we compute
    \begin{equation}
 \int_\Omega v\cdot\diverge(qI - \mu\mathbb D v) =\int_\Omega -(qI - \mu\mathbb Dv) : \nabla v + \int_\Sigma (qI - \mu\mathbb D v)e_3\cdot v \coloneqq II_1 + II_2.
    \end{equation}
    A simple computation gives 
    \begin{equation}
    II_1 = \int_\Omega \mu\mathbb Dv : \nabla v - (qI):\nabla v = \frac \mu 2\int_\Omega \abs{\mathbb Dv}^2 - \int_\Omega q \diverge(v) =  \int_\Omega \frac\mu 2 \abs{\mathbb Dv}^2 - q\Theta^2.
    \end{equation}
    Now
    \begin{equation}
    \begin{aligned}
    II_2 &= \int_\Sigma v\cdot\left[(-\sigma\Delta\zeta + g\zeta+\Theta^5)e_3 + \Theta^4\right]\\
    &= \int_\Sigma \left(-\sigma\Delta\zeta + g\zeta+\Theta^5\right)v_3 + \Theta^4\cdot v
    \\
    &= \int_\Sigma \left(-\sigma\Delta\zeta + g\zeta \right)(\partial_t\zeta - \Theta^3) + \Theta^4\cdot v + \Theta^5 v_3 \\
    &=\partial_t\left[\int_\Sigma\frac{\sigma\abs{\nabla\zeta}^2}2 + {\frac{g\abs{\zeta}^2}2}\right] +\int_\Sigma  - (-\sigma\Delta\zeta + g\zeta )\Theta^3 + \Theta^4\cdot v +\Theta^5 v_3. 
    \end{aligned}
    \end{equation}
    Thus, in sum, we have
    \begin{equation}
    \begin{aligned}
        II &= \partial_t\left[\int_\Sigma\frac{\sigma\abs{\nabla\zeta}^2}2 + {\frac{g\abs{\zeta}^2}2}\right] + \frac{\mu}2\int_\Omega\abs{\mathbb Dv}^2 - \int_\Omega q\Theta^2 + \int_\Sigma - (-\sigma\Delta\zeta + g\zeta)\Theta^3 + \Theta^4\cdot v +\Theta^5 v_3.
    \end{aligned}
    \end{equation}
    The result follows by summing and regrouping.
\end{proof}

\subsubsection{Forcing terms}

The forcing terms come from rearranging the equation to get the terms we want -- we then designate everything else as forcing terms. Note that we will take derivatives of the full nonlinear equations in \eqref{system:ns-flattened}, but to get the corresponding forcing terms, we may just take derivatives of the forcing terms here since we constructed our linearization to have constant coefficients. To get the first forcing term, remark that the first equation in \eqref{system:ns-flattened} can be rewritten as
\begin{equation}
\begin{aligned}
    \partial_t u + \diverge S(u,p) &= \partial_t\hat\eta\tilde bK\partial_3 u - u\cdot\nabla_{\mathcal A}u + \left(\diverge S(u,p) - \diverge_{\mathcal A}S_{\mathcal A}(u,p)\right)\\
    &= \partial_t\hat\eta\tilde bK\partial_3 u - u\cdot\nabla_{\mathcal A}u -\diverge\mathbb D_{I-\mathcal A}u-\diverge_{\mathcal A-I}\left(pI-\mathbb D_{\mathcal A}u\right),
\end{aligned}
\end{equation}
and so 
\begin{equation}\label{definition:G1}
G^1 = \partial_t\hat\eta\tilde bK\partial_3 u - u\cdot\nabla_{\mathcal A}u -\diverge\mathbb D_{I-\mathcal A}u-\diverge_{\mathcal A-I}\left(pI-\mathbb D_{\mathcal A}u\right).
\end{equation}
The second term is 
\begin{equation}\label{definition:G2}
G^2 = \diverge_{I-\mathcal A}u;
\end{equation}
this is a result of simply adding and subtracting the two different types of divergence. To handle the third equation, rewrite as $\partial_t\eta = u\cdot e_3 + u\cdot(\mathcal N - e_3),$ and so 
\begin{equation}\label{definition:G3}
G^3 = u\cdot (\mathcal N - e_3).
\end{equation}
Finally, we similarly write the fourth nonlinear term as
\begin{equation}\label{definition:G4}
\begin{aligned}
G^4 &=(pI - \mu\mathbb D u)(e_3-\mathcal N) + (\mu\mathbb D_{\mathcal A - I}u)\mathcal N + \left(g\eta+A\omega^2f''(\omega t)\eta\right)(e_3-\mathcal N) \\
&\qquad- \left(-\sigma\mathfrak H(\eta)\right)\left(e_3-\mathcal N\right) - \left(-\sigma\left(\Delta\eta-\mathfrak H(\eta)\right)\right)e_3
\end{aligned}
\end{equation}
and the fifth error term, which is linear, is written as
\begin{equation}
    G^5 = A\omega^2f''(\omega t)\eta.
\end{equation}

\section{Estimates of the nonlinearities and other error terms}\label{section:nonlin-est}
In this section we develop the estimates of the nonlinearities as well as other error terms needed to close our scheme of a priori estimates.

\subsection{$L^\infty$ estimates}
The next result establishes some key $L^\infty$ bounds that will be used repeatedly throughout the paper.

\begin{prop}\label{prop:L-infinity}
There exists a universal constant $\delta\in(0,1)$ such that if $\norm{\eta}_{5/2}^2\leq\delta$, then the following bounds hold. 
\begin{enumerate}
    \item We have that
    \begin{equation}\label{inequality:L-infinity-1}
        \norm{J-1}_{L^\infty}^2 + \norm{\mathcal N-e_3}_{L^\infty}^2 + \norm{A}_{L^\infty}^2 + \norm{B}_{L^\infty}^2\leq\frac12
    \end{equation}
    and
    \begin{equation}\label{inequality:L-infinity-2}
        \norm{K}_{L^\infty}^2 + \norm{\mathcal A}_{L^\infty}^2\lesssim 1.
    \end{equation}
    \item The mapping given by \eqref{definition:Phi} is a diffeomorphism from $\Omega$ to $\Omega(t)$. 
    \item For all $v\in H^1(\Omega)$ such that $v=0$ on $\Sigma_b$ we have the estimate
    \begin{equation}
        \int_\Omega\abs{\mathbb Dv}^2\leq \int_\Omega J\abs{\mathbb D_{\mathcal A}v}^2 + C\left(\norm{\mathcal A-I}_{L^\infty}+\norm{J-1}_{L^\infty}\right)\int_\Omega\abs{\mathbb Dv}^2
    \end{equation}
    for a universal constant $C>0$. 
\end{enumerate}
\end{prop}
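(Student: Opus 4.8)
The heart of the proposition is item~(1); items~(2) and~(3) then follow quickly. For~(1) the plan is to reduce every quantity to a $C^1$ bound on the extension $\hat\eta = \mathcal{P}\eta$. By the elliptic estimate for the Poisson extension operator (Section~\ref{section:poisson-extension}), which gains half a derivative, together with the Sobolev embedding $H^3(\Omega)\hookrightarrow C^1(\overline\Omega)$ (valid since $\dim\Omega = 3$), one has $\norm{\hat\eta}_{C^1(\overline\Omega)}\lesssim\norm{\hat\eta}_3\lesssim\norm{\eta}_{5/2}$, and directly from $H^{5/2}(\Sigma)\hookrightarrow C^1(\Sigma)$ (since $\dim\Sigma = 2$) one has $\norm{\eta}_{C^1(\Sigma)}\lesssim\norm{\eta}_{5/2}$. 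Since the weight $\tilde b = 1 + x_3/b$ lies in $[0,1]$ on $\Omega$, each of $A = \partial_1\hat\eta\,\tilde b$, $B = \partial_2\hat\eta\,\tilde b$, and $J-1 = \hat\eta/b + \partial_3\hat\eta\,\tilde b$ is a sum of products of a factor bounded by a fixed constant with a first-order quantity in $\hat\eta$, hence is controlled in $L^\infty$ by $C\norm{\eta}_{5/2}$; likewise $\mathcal N - e_3 = (-\partial_1\eta,-\partial_2\eta,0)$ is controlled by $C\norm{\eta}_{5/2}$. Therefore the left-hand side of \eqref{inequality:L-infinity-1} is at most $C\norm{\eta}_{5/2}^2\leq C\delta$, and it suffices to choose $\delta$ so small that $C\delta\leq 1/2$. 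In particular $\norm{J-1}_{L^\infty}\leq 1/\sqrt 2 < 1$, so $J\geq 1-1/\sqrt 2 > 0$ pointwise on $\overline\Omega$, whence $\norm{K}_{L^\infty} = \norm{J^{-1}}_{L^\infty}\lesssim 1$; since the entries of $\mathcal A$ are among $1,0,-AK,-BK,K$, this gives \eqref{inequality:L-infinity-2}.

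For item~(2), observe that $\det\nabla\Phi = J$ is bounded below by a positive constant on $\overline\Omega$ by~(1), so $\Phi(\cdot,t)$ is a local $C^\infty$ diffeomorphism. It fixes $\Sigma_b$ (there $\tilde b = 0$, so the third component equals $-b$) and carries $\Sigma$ onto $\Sigma(t)$ (there $\hat\eta$ agrees with $\eta = \bar\eta$). For each fixed $x'$ the scalar map $x_3\mapsto x_3 + \hat\eta(x',x_3,t)(1+x_3/b)$ has derivative $J>0$, hence is a strictly increasing bijection of $(-b,0)$ onto $(-b,\bar\eta(x',t))$; here one uses $\bar\eta(x',t) > -b$, which holds after shrinking $\delta$ further so that $\norm{\eta}_{L^\infty}\leq C\norm{\eta}_{5/2}\leq C\sqrt\delta < b$. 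Assembling over $x'$ shows $\Phi(\cdot,t)$ is a bijection $\Omega\to\Omega(t)$, and the inverse function theorem upgrades this to a $C^\infty$ diffeomorphism.

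For item~(3), write $\mathbb D_{\mathcal A}v = \mathbb D v + \mathbb D_{\mathcal A - I}v$ with $(\mathbb D_{\mathcal A-I}v)_{ij} = (\mathcal A-I)_{ik}\partial_k v_j + (\mathcal A-I)_{jk}\partial_k v_i$, so that $\abs{\mathbb D_{\mathcal A-I}v}\lesssim\norm{\mathcal A-I}_{L^\infty}\abs{\nabla v}$ pointwise. Expanding the square and discarding the nonnegative term $J\abs{\mathbb D_{\mathcal A-I}v}^2$ (here $J\geq 0$ by~(1)) gives
\[
    J\abs{\mathbb D v}^2 \;\leq\; J\abs{\mathbb D_{\mathcal A}v}^2 - 2J\,\mathbb D v : \mathbb D_{\mathcal A-I}v .
\]
Integrating over $\Omega$, the cross term is controlled via Cauchy--Schwarz and Korn's inequality --- available because $v = 0$ on $\Sigma_b$, which yields $\norm{\nabla v}_0\lesssim\norm{\mathbb D v}_0$ --- together with $\norm{J}_{L^\infty}\leq 2$ from~(1), producing $2\abs{\int_\Omega J\,\mathbb D v : \mathbb D_{\mathcal A-I}v}\lesssim\norm{\mathcal A-I}_{L^\infty}\norm{\mathbb D v}_0^2$. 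Finally, decomposing $\int_\Omega\abs{\mathbb D v}^2 = \int_\Omega J\abs{\mathbb D v}^2 + \int_\Omega(1-J)\abs{\mathbb D v}^2$ and bounding the last integral by $\norm{J-1}_{L^\infty}\norm{\mathbb D v}_0^2$ yields the claimed inequality.

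The only step that is not pure bookkeeping is the use of Korn's inequality in item~(3): no pointwise bound $\abs{\nabla v}\lesssim\abs{\mathbb D v}$ holds, and it is precisely the no-slip condition $v|_{\Sigma_b} = 0$ that permits trading $\norm{\nabla v}_0$ for $\norm{\mathbb D v}_0$. Everything else amounts to choosing $\delta$ small enough to meet simultaneously the threshold $C\delta\leq 1/2$, the pointwise positivity of $J$, and the constraint $\bar\eta > -b$, while tracking that all constants remain universal (they may depend on $b$, hence on $\Omega$, which is permitted).
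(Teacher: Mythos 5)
Your proof is correct and follows essentially the same route as the paper: reduce item (1) to a $C^1$ bound on $\hat\eta$ via the Poisson-extension estimate and Sobolev embedding, use positivity of $J$ and the inverse function theorem for item (2), and prove item (3) by an algebraic decomposition of $\abs{\mathbb D v}^2$ versus $J\abs{\mathbb D_{\mathcal A}v}^2$ followed by $L^\infty$ bounds on the error. Your explicit invocation of Korn's inequality to convert $\norm{\nabla v}_0^2$ into $\norm{\mathbb D v}_0^2$ in item (3) is a point the paper leaves implicit, and is a welcome clarification rather than a divergence.
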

\begin{proof}
Recall that
\begin{equation}
    J-1 = \frac{\hat\eta}b + \partial_3\hat\eta\tilde b,\qquad\mathcal N-e_3 = (-\partial_1\eta,-\partial_2\eta,0),\qquad A=\partial_1\hat\eta\tilde b,\qquad B=\partial_2\hat\eta\tilde b.
\end{equation}
Thus, the left hand side of \eqref{inequality:L-infinity-1} can be bounded above, via Sobolev embedding $H^3(\Omega)\hookrightarrow C^1(\Omega)$, by $\norm{\hat\eta}_3$. This is in turn bounded by $\norm{\eta}_{5/2}$ by Lemma \ref{lem:poisson-L-infinity}. Then \eqref{inequality:L-infinity-2} holds by the definitions of $K$ and $\mathcal A$ and \eqref{inequality:L-infinity-1}.  To see the second item note that $\Psi = I + e_3 \hat{\eta} \tilde{b}$, which means that if $\norm{\hat{\eta}}_{C^1}$ is sufficiently small, then $\Psi$ is a bijection with positive Jacobian $J$.  In this case $\Psi$ is a diffeomorphism thanks to the inverse function theorem.  For the third item, first write
\begin{equation}
\begin{aligned}
    \abs{\mathbb Dv}^2 &= J\abs{\mathbb D_{\mathcal A}v}^2 - (J-1)\abs{\mathbb Dv}^2 - J\left(\abs{\mathbb D_{\mathcal A}v}^2 - \abs{\mathbb Dv}^2\right) \\
    &= J\abs{\mathbb D_{\mathcal A}v}^2 - (J-1)\abs{\mathbb Dv}^2 - J\left(\mathbb D_{\mathcal A}v + \mathbb Dv\right):\left(\mathbb D_{\mathcal A}v - \mathbb Dv\right) \\
    &=: I + II + III.
\end{aligned}
\end{equation}
Since the $I$ and $II$ terms are already in place, we just need to bound $III$. To do this, compute
\begin{equation}
    \left(\mathbb D_{\mathcal A}v\pm\mathbb Dv\right)_{ij} = \left(\mathcal A\pm I\right)_{ik}\partial_k v_j + \left(\mathcal A\pm I\right)_{jk}\partial_k v_i
\end{equation}
and so
\begin{equation}
    III = - J\left(\mathbb D_{\mathcal A}v + \mathbb Dv\right):\left(\mathbb D_{\mathcal A}v - \mathbb Dv\right)\leq \norm{J}_{L^\infty}\norm{\mathcal A+I}_{L^\infty}\norm{\mathcal A-I}_{L^\infty}\abs{\mathbb Dv}^2.
\end{equation}
The $L^\infty$ norms can be bounded by universal constants by \eqref{inequality:L-infinity-1} and \eqref{inequality:L-infinity-2}, so we conclude.
\end{proof}

\subsection{Estimates of the $F$ forcing terms}\label{subsection:est-F-nonlin}
We now present the estimates of the $F$ forcing terms that appear in the geometric form of the equations \eqref{system:ns-lin-geometric}. Estimates of the same general form are now well-known in the literature \cite{guo2013almost, tan2014zero, jang2016compressible, tice2018asymptotic}, so we will focus primarily on the terms that have not appeared before.

\begin{thm}\label{thm:bound-nonlin-F}
Let $F^{j,n}$ be defined by \eqref{definition:F1}, \eqref{definition:F2}, \eqref{definition:F3}, \eqref{definition:F4}. Assume that $\mathcal E\leq\delta$ for the universal $\delta\in(0,1)$ given by Proposition \ref{prop:L-infinity}, and further suppose that $\sum_{\ell=2}^{n+1}A\omega^\ell\lesssim 1$. Then there exists a polynomial $P$ with nonnegative universal coefficients such that 
\begin{equation}
\norm{F^{1,n}}_0^2 + \norm{F^{2,n}}_0^2 + \norm{\partial_t(JF^{2,n})}_0^2 +  \norm{F^{3,n}}_0^2 + \norm{F^{4,n}}_0^2\lesssim P(\sigma)\mathcal E_n^0\mathcal D_n^\sigma
\end{equation}
and
\begin{equation}
\norm{F^{2,n}}_0^2\lesssim \left(\calE_n^0\right)^2.
\end{equation}
\end{thm}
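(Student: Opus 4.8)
The plan is to expand each $F^{j,n}$ via the Leibniz rules \eqref{definition:F1}--\eqref{definition:F4} into a finite sum of schematic products $\mathfrak{g}\cdot\pt_t^{a}\pt^{\beta}w$, where $w\in\{u,p,\eta\}$ and $\mathfrak{g}$ is a product of space-time derivatives of the geometric quantities $J,K,\calA,\calN$ (equivalently, of smooth nonlinear functions of $\eta$ and its Poisson extension $\hat\eta$), and then to bound the $L^2$ norm of each product by a low-order factor times a high-order factor. The tools are: (i) the Poisson extension estimates (Lemma \ref{lem:poisson-L-infinity} and its companions) and Proposition \ref{prop:L-infinity}, to replace every geometric factor by norms of $\eta$ occurring in $\calE_n^\sigma$ and $\calD_n^\sigma$; (ii) the bilinear inequalities $\norm{fg}_0\le\norm{f}_{L^\infty}\norm{g}_0$ (with Sobolev embedding $H^s(\Omega)\hookrightarrow L^\infty$ for $s>3/2$) and $\norm{fg}_0\le\norm{f}_{L^4}\norm{g}_{L^4}$, used in whichever arrangement leaves one factor with a derivative to spare; and (iii) Korn's inequality applied to $\ov\calD_n$, which makes $\calD_n^\sigma$ control one more derivative of $u$ than $\calE_n^0$ does, with the analogous gain built into $\calD_n^\sigma$ for $p$ and for the $\pt_t^j\eta$.

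With these in hand the bookkeeping closes. Since every sum in \eqref{definition:F1}--\eqref{definition:F4} runs over $0<\ell\le n$, the geometric factor in each summand always carries at least one time derivative; once $\calE\le\delta$ it is therefore small in $L^\infty$ by Proposition \ref{prop:L-infinity}, and, having strictly fewer derivatives than its companion, it gets placed in $L^\infty$ (or $L^4$) and bounded by $\calE_n^0$. The companion factor in $u$, $p$, or $\eta$ then carries the top order, which lies exactly one derivative above the reach of $\ov\calE_n^0$; it gets placed in $L^2$ (or $L^4$) and bounded by $\calD_n^\sigma$. Multiplying produces the quadratic bound $\calE_n^0\calD_n^\sigma$. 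The term $\pt_t(JF^{2,n})$ is handled the same way, except that the extra $\pt_t$ raises the leading $u$- or $\hat\eta$-factor one full unit above $\ov\calE_n^0$, so it too must be absorbed by the dissipation --- which is precisely why $\pt_t(JF^{2,n})$ sits with the $\calE\calD$ terms and cannot be improved to $(\calE^0)^2$. The surface-tension dependence is tracked explicitly through the terms $-\sigma\,\pt_t^{n-\ell}\mathfrak{H}(\eta)$ and $-\sigma\,\pt_t^n(\mathfrak{H}(\eta)-\Delta\eta)$ of $F^{4,n}$: expanding $\mathfrak{H}$ and using that $\calD_n^\sigma$ already carries $\sigma^2$-weighted high norms of the $\pt_t^j\eta$, one sees that only finitely many explicit powers of $\sigma$ enter, and they assemble into a polynomial $P$ with nonnegative universal coefficients.

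The terms without precedent in the literature --- and where I expect the real difficulty to be --- are those generated by the oscillation coefficient $A\omega^2 f''(\omega t)$ inside $F^{4,n}$, namely $\pt_t^{n-\ell}\!\big(A\omega^2 f''(\omega t)\,\eta\big)\,\pt_t^\ell\calN_i$ with $0<\ell\le n$. Since $\pt_t^m\big(A\omega^2 f''(\omega t)\big)=A\omega^{2+m}f^{(2+m)}(\omega t)$, Leibniz rewrites each such term as a finite sum of $A\omega^{2+m}f^{(2+m)}(\omega t)\,\pt_t^{n-\ell-m}\eta\,\pt_t^\ell\calN_i$ with $0\le m\le n-\ell$; as $f$ is smooth on $\mathbb{T}$ and $2+m\le n+1$, the coefficient is bounded by $\sum_{\ell=2}^{n+1}A\omega^\ell\lesssim1$, and the remainder is a product of the perturbative quantity $\pt_t^{n-\ell-m}\eta$ (controlled by $\calE_n^0$) with $\pt_t^\ell\calN\sim(-\pt_t^\ell\nabla\eta,0)$ (which, because $\ell\ge1$, is controlled at one order higher by $\calD_n^\sigma$), hence is $\lesssim\calE_n^0\calD_n^\sigma$. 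The main obstacle is exactly this accounting: one must check that at \emph{every} splitting the Sobolev indices prescribed by \eqref{definition:full-E} and \eqref{definition:full-D} genuinely let one factor sit in $\calE_n^0$ and the other in $\calD_n^\sigma$ --- rather than both competing for the top-order dissipation --- and this is where the hypothesis $\sum_{\ell=2}^{n+1}A\omega^\ell\lesssim1$ is used sharply.

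The remaining bound $\norm{F^{2,n}}_0^2\lesssim(\calE_n^0)^2$ is the easy one and uses no dissipation: $F^{2,n}=\sum_{0<\ell\le n}C_{\ell n}\,\pt_t^\ell\calA_{jk}\,\pt_t^{n-\ell}\pt_k u_j$ contains no pressure and only a single spatial derivative of $u$, so in each summand the geometric factor $\pt_t^\ell\calA$ is bounded in $L^\infty$ or $L^4$ by $\calE_n^0$ via the Poisson estimates, and $\pt_t^{n-\ell}\pt_k u$ is bounded in $L^2$ or $L^4$ by $\calE_n^0$ using the $\sum_j\norm{\pt_t^j u}_{2n-2j}^2$ part of \eqref{definition:full-E} (unlike the case of $\pt_t(JF^{2,n})$, the $u$-factor here already lies within the reach of $\ov\calE_n^0$); multiplying yields $(\calE_n^0)^2$. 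This weaker bound is what is needed at the places in the a priori scheme where only energy control of $\diverge_{\calA}u$ is available.
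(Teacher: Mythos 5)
Your proposal is correct and follows essentially the same route as the paper: the paper defers the estimates of $F^{1,n}$, $F^{2,n}$, $\partial_t(JF^{2,n})$, $F^{3,n}$, and the non-oscillatory part of $F^{4,n}$ to Theorem 4.4 of \cite{tice2018asymptotic} (whose proof is precisely the product-splitting bookkeeping you outline), and then treats the genuinely new oscillation term $\pt_t^{n-\ell}\big(A\omega^2 f''(\omega t)\eta\big)\pt_t^\ell\calN_i$ exactly as you do, bounding the coefficients $A\omega^{2+m}f^{(2+m)}(\omega t)$ by $\sum_{\ell=2}^{n+1}A\omega^\ell\lesssim 1$ and pairing $\norm{\pt_t^{m}\eta}_2$ (energy) against $\norm{\pt_t^\ell\nabla\eta}_0$ (dissipation). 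One minor imprecision: your blanket claim that the geometric factor always carries strictly fewer derivatives than its companion, and hence always sits in $L^\infty$, fails at the endpoint $\ell=n$ of the Leibniz sums (e.g.\ in $F^{1,n}$ the factor $\pt_t^{n}(\pt_t\hat\eta\,\tilde b K)$ contains $\pt_t^{n+1}\hat\eta$, which is only dissipation-controlled at the level $H^{1/2}(\Sigma)$), so there the roles of the two factors must be swapped --- which is consistent with the ``whichever arrangement leaves a derivative to spare'' principle you already state in your toolbox.
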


\begin{proof}
The estimates 
\begin{equation}
\norm{F^{1,n}}_0^2 + \norm{F^{2,n}}_0^2 + \norm{\partial_t(JF^{2,n})}_0^2 +  \norm{F^{3,n}}_0^2   \lesssim P(\sigma)\mathcal E_n^0\mathcal D_n^\sigma, \;\;
\norm{F^{2,n}}_0^2\lesssim \left(\calE_n^0\right)^2
\end{equation}
and 
\begin{equation}
 \norm{F^{4,n} -   \sum_{0<\ell\leq n}C_{\ell n}   \pt_t^{n-\ell}\left(A\omega^2f''(\omega t)\eta\right) \partial_t^{n-\ell}\mathcal N_i   }_0^2\lesssim P(\sigma)\mathcal E_n^0\mathcal D_n^\sigma
\end{equation}
are proved in Theorem 4.4 of \cite{tice2018asymptotic}.  To conclude we then use the bound $\sum_{\ell=2}^{n+1}A\omega^\ell\lesssim 1$ together with the Sobolev embeddings on $\Sigma$ to estimate
\begin{equation}
\begin{aligned}
    &\norm{  \pt_t^{n-\ell}\left(A\omega^2f''(\omega t)\eta\right) \partial_t^\ell\mathcal N_i}_0^2 \\
    &\hspace{10ex}\lesssim  P(\sigma) \left( \sum_{\ell=2}^{n+1}A\omega^\ell  \right)^2  \left(\sum_{m=0}^{n-\ell} \norm{\pt_t^m\eta}_2^2\right)\norm{\partial_t^\ell\nabla\eta}_0^2 \lesssim P(\sigma)\calE_n^0\calD_n^\sigma,
\end{aligned}
\end{equation}
and then we sum over $0 < \ell \le n$ to arrive at the desired final estimate.

\end{proof}

\subsection{Estimates of the $G$ forcing terms}
We now present the estimates for the $G^i$ nonlinearities. Define
\begin{equation}\label{eqn:define-Y}
\begin{aligned}
    \mathcal Y_n &\coloneqq \sum_{j=0}^{n-1}\norm{\pt_t^j G^1}_{2n-2j-1}^2 + \norm{\pt_t^j G^2}_{2n-2j}^2 + \norm{\pt_t^j G^4}_{H^{2n-2j-1/2}(\Sigma)}^2 + \sum_{j=2}^n\norm{\pt_t^j G^3}_{H^{2n-2j+1/2}(\Sigma)}^2 \\
    &\hspace{10ex}+\norm{G^3}_{H^{2n-1}(\Sigma)}^2 + \norm{\pt_t G^3}_{H^{2n-2}(\Sigma)}^2+\sigma^2\left(\norm{G^3}_{H^{2n+1/2}(\Sigma)}^2 + \norm{\pt_t G^3}_{H^{2n-3/2}(\Sigma)}^2\right).
\end{aligned}
\end{equation}
and
\begin{equation}\label{eqn:define-W}
    \mathcal W_n \coloneqq \sum_{j=0}^{n-1}\norm{\partial_t^jG^1}_{2n-2j-2}^2 + \norm{\pt_t^j G^2}_{2n-2j-1}^2+\norm{\pt_t^j G^3}_{H^{2n-2j-1/2}(\Sigma)}^2 + \norm{\pt_t^j G^4}_{H^{2n-2j-3/2}(\Sigma)}^2.
\end{equation}
These nonlinearities are the ones generated by elliptic regularity estimates.

\begin{thm}\label{thm:bound-nonlin-G}
Let $G^i$ for $i=1,\dotsc,4$ be defined by \eqref{definition:G1}, \eqref{definition:G2}, \eqref{definition:G3}, and \eqref{definition:G4}. Assume that $\mathcal E\leq\delta$ for the universal $\delta\in(0,1)$ given by Proposition \ref{prop:L-infinity}, and suppose that  $\sum_{\ell=2}^{n+1}A\omega^\ell\lesssim 1$.  Then there exists a polynomial $P$ with nonnegative universal coefficients such that
\begin{equation}\label{est:G-nonlin-1}
    \calY_n \lesssim P(\sigma)\left(\calE_n^0\calD_n^\sigma + \calK\calF_n\right)
\end{equation}
and
\begin{equation}\label{est:G-nonlin-2}
    \calW_n \lesssim P(\sigma)\left(\calE_n^0\calE_n^\sigma + \calK\calF_n\right).
\end{equation}
Furthermore, in the case that $n = 1$ and $\sigma > 0$ is fixed, we have
\begin{equation}\label{est:G-nonlin-base-w}
    \calW_1 \lesssim P(\sigma)(\calE^\sigma_1)^2.
\end{equation}
and
\begin{equation}\label{est:G-nonlin-base-y}
    \norm{G^1}_1^2 + \norm{G^2}_2^2 + \norm{G^3}_{5/2}^2 + \norm{\pt_tG^3}_{1/2}^2 + \norm{G^4}_1^2 \lesssim P(\sigma) \calE_1^\sigma\calD_1^\sigma.
\end{equation}
Note the above is just $\calY_1$ after considering $\sigma$ as a fixed constant, with $\norm{G^4}_1$ replacing the term  $\norm{G^4}_{3/2}$.
\end{thm}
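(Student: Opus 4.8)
The plan is to reduce as much as possible to the estimates already established in \cite{tice2018asymptotic}, which treats precisely the flattened nonlinearities $G^1,\dots,G^4$ in the non-oscillating ($f=0$) setting, and then to isolate and estimate separately the genuinely new contribution generated by the Faraday term $A\omega^2 f''(\omega t)\eta$ sitting inside $G^4$ (and, for $G^5$, trivially). Since the compatibility hypotheses $\mathcal E\leq\delta$ and $\sum_{\ell=2}^{n+1}A\omega^\ell\lesssim 1$ match those of Theorem \ref{thm:bound-nonlin-F}, the same infrastructure — Proposition \ref{prop:L-infinity}, the harmonic-extension estimates for $\hat\eta=\calP\eta$, and the Sobolev product and embedding lemmas on $\Omega$ and $\Sigma$ — is available throughout.

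First I would note that $G^1$, $G^2$, $G^3$, and the portion of $G^4$ not involving $A\omega^2 f''(\omega t)\eta$ are identical in form to the corresponding objects of \cite{tice2018asymptotic}, so the bounds \eqref{est:G-nonlin-1}, \eqref{est:G-nonlin-2}, \eqref{est:G-nonlin-base-w}, and the non-Faraday part of \eqref{est:G-nonlin-base-y} follow verbatim from the analogous results there. I would recall, rather than reproduce, the underlying mechanism: each $G^i$ is a finite sum of products in which at least one factor is one of the ``small'' geometric quantities $\calA-I$, $J-1$, $K$, $\calN-e_3$, all built from $\hat\eta$ and hence controlled in $L^\infty$ and higher norms by $\eta$ via Proposition \ref{prop:L-infinity} and the extension estimates; thus every term is at least quadratic. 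One then distributes derivatives by Leibniz, applies the product/embedding lemmas, and books one factor into $\calE_n^0$ (or into $\calK$, for the top-order copies of $\eta$ carrying no dissipative control) and the complementary factor into $\calD_n^\sigma$ (or into $\calF_n$, respectively), which yields the $\calE_n^0\calD_n^\sigma+\calK\calF_n$ structure; the $\calW_n$ estimate is the same argument with $\calD$ replaced by $\calE$ and one fewer derivative in every norm. Surface tension enters nontrivially only through the $-\sigma\big(\Delta\eta-\mathfrak H(\eta)\big)e_3$ and $\sigma\mathfrak H(\eta)(e_3-\calN)$ pieces of $G^4$, whose estimates carry the explicit $\sigma$-weights collected in $P(\sigma)$.

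The new work is the Faraday contribution. Writing $\pt_t^k\big(A\omega^2 f''(\omega t)\big)=A\omega^{k+2}f^{(k+2)}(\omega t)$ and using that $f$ is smooth on $\mathbb T$, so that $\norm{f^{(m)}}_{L^\infty}\lesssim 1$ universally, each temporal derivative falling on this coefficient simply produces a bounded function times one extra power of $\omega$. Applying $\pt_t^j$ with $j\leq n-1$ to $A\omega^2 f''(\omega t)\,\eta\,(e_3-\calN)$ and expanding by Leibniz gives a sum of terms $A\omega^{k+2}f^{(k+2)}(\omega t)\,\pt_t^m\eta\,\pt_t^\ell(\calN-e_3)$ with $k+m+\ell\leq n-1$; since $k+2\leq n+1$ the prefactor satisfies $\abs{A\omega^{k+2}}\lesssim\sum_{\ell=2}^{n+1}A\omega^\ell\lesssim 1$, and because $\calN-e_3=(-\pt_1\eta,-\pt_2\eta,0)$ the remaining expression is quadratic in $\eta$ and its derivatives. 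I would then estimate it in the required $\Sigma$-norms exactly as in the treatment of the $F^{4,n}$ Faraday term in the proof of Theorem \ref{thm:bound-nonlin-F}, placing one $\eta$-factor into $\calD_n^\sigma$ (or $\calE_n^\sigma$, for the $\calW_n$ version) and the other into $\calE_n^0$, or into $\calF_n$ when it is the top-order copy; summing over the Leibniz indices produces the asserted bounds.

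For the $n=1$, fixed-$\sigma>0$ statements \eqref{est:G-nonlin-base-w} and \eqref{est:G-nonlin-base-y}, I would observe that with $\sigma$ a fixed constant $P(\sigma)$ may absorb every $\sigma$-weight, so only the crude quadratic bounds $\calW_1\lesssim P(\sigma)(\calE_1^\sigma)^2$ and $\norm{G^1}_1^2+\dots+\norm{G^4}_1^2\lesssim P(\sigma)\calE_1^\sigma\calD_1^\sigma$ are needed: here $\calE_1^\sigma$ controls $\eta$ in $H^2(\Sigma)$ and (with the now-harmless $\sigma$ weight) in $H^3(\Sigma)$, while $\calD_1^\sigma$ controls $u$ up to $H^3(\Omega)$, $\pt_t u$ up to $H^1$, and $p$ up to $H^2$, which is ample for all products occurring at this low order, and the Faraday term is handled as above with $\sum_{\ell=2}^2 A\omega^\ell=A\omega^2\lesssim 1$. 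The half-derivative loss in replacing $\norm{G^4}_{3/2}$ by $\norm{G^4}_1$ is exactly what is required to absorb the top-order piece $\sigma\big(\Delta\eta-\mathfrak H(\eta)\big)$, whose estimate in $H^{3/2}(\Sigma)$ would call for $\eta\in H^{7/2}(\Sigma)$ — more regularity than $\calE_1^\sigma$ carries — whereas its $H^1(\Sigma)$ estimate only needs the available $H^3(\Sigma)$ control. I expect the main obstacle to be purely the bookkeeping: in each Leibniz term ensuring that at least one factor is placed in a genuine dissipation (not merely energy) norm, that the top-order copies of $\eta$ land in $\calK$ or $\calF_n$, and that the powers of $\sigma$ are tracked honestly through $P(\sigma)$ — no single product estimate is difficult, but the terms are numerous, which is why leaning on the parallel arguments of \cite{tice2018asymptotic} and the already-proven Theorem \ref{thm:bound-nonlin-F} is the efficient route.
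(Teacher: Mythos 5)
Your proposal is correct and follows essentially the same route as the paper: both reduce $G^1$--$G^3$ and the non-Faraday part of $G^4$ to Theorem 4.2 of \cite{tice2018asymptotic}, isolate the new term $(A\omega^2 f''(\omega t)\eta)(e_3-\mathcal N)$ and bound it by the Leibniz rule, the product estimates, and $\sum_{\ell=2}^{n+1}A\omega^\ell\lesssim 1$, and then verify the $n=1$ statements by directly estimating the five constituents of $G^4$ one derivative lower. The only quibble is your aside attributing the drop from $\norm{G^4}_{3/2}$ to $\norm{G^4}_{1}$ solely to the $\sigma(\Delta\eta-\mathfrak H(\eta))$ piece; the term $(pI-\mu\mathbb D u)(e_3-\mathcal N)$ is at least as responsible, since its $H^{3/2}(\Sigma)$ norm would demand trace regularity of $\nabla u$ beyond what $\calD_1^\sigma$ supplies — but this remark is not load-bearing and does not affect the validity of the argument.
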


\begin{proof}

The estimates \eqref{est:G-nonlin-1} and \eqref{est:G-nonlin-2} but with $G^4$ replaced by $G^4 - (A\omega^2f''(\omega t)\eta)(e_3-\mathcal N)$ are proved in Theorem 4.2 of \cite{tice2018asymptotic}, so to complete the proof of these estimates it suffices to show that 
\begin{equation} 
 \sum_{j=0}^{n-1} \norm{\dt^j \left((A\omega^2f''(\omega t)\eta)(e_3-\mathcal N) \right)}_{H^{2n-2j-1/2}(\Sigma)}^2 \lesssim P(\sigma)\left(\calE_n^0\calD_n^\sigma + \calK\calF_n\right)
\end{equation}
and 
\begin{equation} 
 \sum_{j=0}^{n-1} \norm{\dt^j \left((A\omega^2f''(\omega t)\eta)(e_3-\mathcal N) \right)}_{H^{2n-2j-3/2}(\Sigma)}^2 \lesssim P(\sigma)\left(\calE_n^0\calE_n^\sigma + \calK\calF_n\right).
\end{equation}
These follow easily from the Leibniz rule and the product estimates of Theorem \ref{est:prod_est}, together with the hypothesis that $\sum_{\ell=2}^{n+1}A\omega^\ell\lesssim 1$.

The bounds 
\begin{equation}
   \calW_1 \lesssim P(\sigma)(\calE^\sigma_1)^2
\end{equation}
and 
\begin{equation}
     \norm{G^1}_1^2 + \norm{G^2}_2^2 + \norm{G^3}_{5/2}^2 + \norm{\pt_tG^3}_{1/2}^2 + \norm{G^4}_0^2 \lesssim P(\sigma) \calE_1^\sigma\calD_1^\sigma
\end{equation}
follow from similar arguments.  To complete the proof of \eqref{est:G-nonlin-base-y} it remains only bound 
\begin{equation}\label{thm:bound-nonlin-G_0}
 \norm{\nabla G^4}_0^2 \lesssim P(\sigma) \calE_1^\sigma\calD_1^\sigma.
\end{equation}

To prove \eqref{thm:bound-nonlin-G_0} we first recall that $G^4$ can be written as the sum of five terms:
\begin{equation}
\begin{aligned}
    G^4 &=(pI - \mu\mathbb D u)(e_3-\mathcal N) + (\mu\mathbb D_{\mathcal A - I}u)\mathcal N + \left(g\eta+A\omega^2f''(\omega t)\eta\right)(e_3-\mathcal N) \\
    &\qquad- \left(-\sigma\mathfrak H(\eta)\right)\left(e_3-\mathcal N\right) - \left(-\sigma\left(\Delta\eta-\mathfrak H(\eta)\right)\right)e_3.
\end{aligned}
\end{equation}
We will handle each in turn.  For the first term we estimate 
\begin{equation}
    \begin{aligned}
        \norm{\nabla \left( (e_3 - \mathcal N)(pI - \mu\mathbb D u)\right)}_0^2 &\lesssim \sum_{\abs\beta + \abs\gamma =   1}\norm{\partial^\beta(e_3 - \mathcal N)\partial^\gamma(pI - \mu\mathbb D u)}_0^2\\ 
        &\lesssim \norm{\eta}_2^2 \left(\norm{p}_2^2 + \norm{u}_3^2\right)\lesssim\calE^\sigma_1\calD^\sigma_1.
    \end{aligned}
\end{equation}
We may argue similarly for the second term to see that 
\begin{equation}
\norm{\nabla \left(  \mu\mathbb D_{\mathcal A - I}u \mathcal N  \right)}_0^2 \lesssim \norm{\eta}_2^2 \left(\norm{p}_2^2 + \norm{u}_3^2\right)\lesssim\calE^\sigma_1\calD^\sigma_1
\end{equation}
For the third term we use the hypothesis $A \omega^2 \ls 1$ to estimate 
\begin{equation}
 \norm{\nabla \left(  (g\eta+A\omega^2f''(\omega t)\eta)(e_3-\mathcal N)   \right)}_0^2 \ls   \norm{\eta}_{2}^2 \norm{\eta}_{3/2}^2  \lesssim\calE^\sigma_1\calD^\sigma_1.
\end{equation}
For the fourth and fifth terms we expand 
\begin{equation}
    \mathfrak H(\eta) = \left(1+\abs{\nabla\eta}^2\right)^{-1/2}\Delta\eta - \left(1+\abs{\nabla\eta}^2\right)^{-3/2}  D^2\eta \nabla \eta \cdot \nabla \eta
\end{equation}
and 
\begin{equation}
 \frac{1}{\sqrt{1+\abs{\nabla \eta}^2}} -1 = - \frac{\abs{\nab \eta}^2}{\sqrt{1+\abs{\nabla \eta}^2}(1+\sqrt{1+\abs{\nabla \eta}^2})}
\end{equation}
in order to arrive at the estimate
\begin{equation}
 \norm{\nab \left( \sigma\mathfrak H(\eta)(e_3-\mathcal N) \right)}_0^2 +  \norm{\nab \left( \sigma (\Delta \eta -  \mathfrak H(\eta) \right)}_0^2  \ls  \norm{\eta}_2^2  \sigma^2  \norm{\eta}_{7/2}^2  \lesssim\calE^\sigma_1\calD^\sigma_1.
\end{equation}
Combining these, we deduce that \eqref{thm:bound-nonlin-G_0} holds, which completes the proof of all of the stated estimates.

\end{proof}

\subsection{Estimates on auxiliary terms}

Our next result provides some bounds for nonlinearities appearing in integrals.

\begin{prop}\label{prop:bound-G3}
Let $\alpha\in\mathbb N^2$ with $\abs\alpha = 2n$. Assume that $\mathcal E_n^\sigma\leq\delta$ for the universal $\delta\in(0,1)$ given by Proposition \ref{prop:L-infinity}. Then there exists a polynomial with nonnegative universal coefficients such that
\begin{equation}
    \abs{\int_\Sigma \partial^\al\eta\partial^\al G^3}\lesssim \sqrt{\calE_n^0}\calD_n^0 + \sqrt{\calD_n^0\calK\calF_n}
\end{equation}
and
\begin{equation}
    \abs{\sigma\int_\Sigma \Delta\partial^\al\eta\partial^\al G^3}\lesssim P(\sigma)\left(\sqrt{\calE_n^0\calD_n^0\calD_n^\sigma} + \sqrt{\calD_n^\sigma\calK\calF_n}\right).
\end{equation}
Moreover, when $n=1$ and $\sigma > 0$, we can improve the estimate above to
\begin{equation}
    \abs{\sigma\int_\Sigma \Delta \pt^\al \eta \pt^\al G^3} \lesssim \frac{1+\sqrt{\sigma}}{\sigma}  \sqrt{\calE^\sigma_1}\calD_1^\sigma.
\end{equation}
\end{prop}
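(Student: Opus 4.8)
The plan is to exploit the $\sigma$-weighted control of the free surface that is available only when $\sigma>0$: for $n=1$ the energy $\calE_1^\sigma$ dominates $\sigma\norm{\eta}_3^2$ and $\norm{\eta}_2^2$ (and $\norm{u}_2^2$), while the dissipation $\calD_1^\sigma$ dominates $\sigma^2\norm{\eta}_{7/2}^2$, $\norm{\eta}_{3/2}^2$, and $\norm{u}_3^2$. These surface-tension-weighted top-order norms take over the role played in the general estimate by the uncontrolled product $\calK\calF_n$, so that term disappears. Recalling that $G^3=u\cdot(\calN-e_3)=-u_1\pt_1\eta-u_2\pt_2\eta$ is bilinear in $u$ and $\nabla\eta$, and that $\Sigma$ is a torus (so integration by parts produces no boundary terms), I would pass to the duality pairing on $\Sigma$:
\begin{equation*}
\begin{aligned}
 \abs{\sigma\int_\Sigma\Delta\pt^\al\eta\,\pt^\al G^3}
 &\le\sigma\norm{\Delta\pt^\al\eta}_{H^{-1/2}(\Sigma)}\norm{\pt^\al G^3}_{H^{1/2}(\Sigma)} \\
 &\ls\sigma\norm{\eta}_{7/2}\,\norm{\pt^\al G^3}_{H^{1/2}(\Sigma)}\le\sqrt{\calD_1^\sigma}\,\norm{\pt^\al G^3}_{H^{1/2}(\Sigma)},
\end{aligned}
\end{equation*}
using $\norm{\Delta\pt^\al\eta}_{H^{-1/2}(\Sigma)}\ls\norm{\pt^\al\eta}_{H^{3/2}(\Sigma)}\ls\norm{\eta}_{7/2}$ (valid since $\abs\al=2$) and then $\sigma^2\norm{\eta}_{7/2}^2\le\calD_1^\sigma$.

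It then remains to show $\norm{\pt^\al G^3}_{H^{1/2}(\Sigma)}\ls\frac{1+\sqrt\sigma}{\sigma}\sqrt{\calE_1^\sigma}\,\sqrt{\calD_1^\sigma}$. For this I would expand $\pt^\al G^3$ by Leibniz into the finitely many terms $\pt^\beta u_j\,\pt^\gamma\pt_j\eta$ with $\abs\beta+\abs\gamma=2$ and estimate each in $H^{1/2}(\Sigma)$ via the product and trace inequalities of Theorem \ref{est:prod_est}: the $u$-factors go into $\norm{u}_2$ or $\norm{u}_{H^{3/2}(\Sigma)}$ ($\le\sqrt{\calE_1^\sigma}$), or into $\norm{u}_3$ or $\norm{u}_{H^{5/2}(\Sigma)}$ ($\le\sqrt{\calD_1^\sigma}$); the $\eta$-factors go into $\norm{\eta}_2$ ($\le\sqrt{\calE_1^\sigma}$) at low order, and into $\sqrt\sigma\norm{\eta}_3$ ($\le\sqrt{\calE_1^\sigma}$) or $\sigma\norm{\eta}_{7/2}$ ($\le\sqrt{\calD_1^\sigma}$) at higher order. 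The term with $\abs\beta=0$ — all derivatives on $\eta$, hence $\norm{\eta}_{7/2}$, available only from $\calD_1^\sigma$ with the penalty $\sigma^{-1}$ — is the worst and produces the $\sigma^{-1}$; the term with $\abs\beta=1$ produces $\sigma^{-1/2}$; the term with $\abs\beta=2$ produces no negative power. Summing the three gives the claimed $\norm{\pt^\al G^3}_{H^{1/2}(\Sigma)}$ bound. (Alternatively one may simply invoke \eqref{est:G-nonlin-base-y}, which already contains $\norm{G^3}_{5/2}^2+\norm{\pt_tG^3}_{1/2}^2\ls P(\sigma)\calE_1^\sigma\calD_1^\sigma$, and then redistribute the $\sigma$-weights.) Multiplying by $\sqrt{\calD_1^\sigma}$ from the first step and summing over the finitely many $\al$ with $\abs\al=2$ completes the proof.

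The only real work is the $\sigma$-bookkeeping: for each Leibniz term one must choose which competing bound to place on the $u$-factor and which on the $\eta$-factor so that exactly one $\sqrt{\calE_1^\sigma}$ and two $\sqrt{\calD_1^\sigma}$ are produced and the $\sigma$-exponents collapse to $\sigma^{-1}+\sigma^{-1/2}=(1+\sqrt\sigma)/\sigma$. The negative powers are genuinely necessary: the top surface norm $\norm{\eta}_{7/2}$ only appears in $\calD_1^\sigma$ with a $\sigma^2$ weight whereas $\calE_1^\sigma$ carries only a $\sigma^1$ weight at third order, and it is precisely this weight mismatch, together with the fact that one high-derivative $\eta$ always survives on each side of the pairing, that forces the factor $(1+\sqrt\sigma)/\sigma$. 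Everything else is the routine product/trace/interpolation machinery already used for the general estimate.
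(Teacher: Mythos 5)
Your treatment of the third estimate is essentially the paper's own proof: the same duality pairing $\sigma\norm{\Delta\pt^\al\eta}_{H^{-1/2}(\Sigma)}\norm{\pt^\al G^3}_{H^{1/2}(\Sigma)}\ls\sqrt{\calD_1^\sigma}\,\norm{\pt^\al G^3}_{H^{1/2}(\Sigma)}$ followed by a Leibniz/product estimate of $\norm{\pt^\al G^3}_{H^{1/2}(\Sigma)}$, which the paper records compactly as $\norm{\eta}_{5/2}\norm{u}_3+\norm{u}_2\norm{\eta}_{7/2}\ls\frac{1+\sqrt\sigma}{\sigma}\sqrt{\calE_1^\sigma\calD_1^\sigma}$ --- exactly your bookkeeping. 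Two small points. First, you say nothing about the first two estimates of the proposition; the paper disposes of them by citing Lemma 3.5 of \cite{jang2016compressible}, so no new idea is needed, but a complete write-up should at least acknowledge them. Second, your claim that the Leibniz term with both derivatives on $u$ ``produces no negative power of $\sigma$'' is not quite right: on the two-dimensional $\Sigma$ the product estimate for $H^{1/2}$ forces the factor $\pt_j\eta$ into $H^{s}$ with $s>3/2$ (mere $H^1$ control via $\norm{\eta}_2\le\sqrt{\calE_1^\sigma}$ does not suffice), so one uses $\norm{\eta}_{5/2}\ls\sigma^{-1/2}\sqrt{\calE_1^\sigma}$ and picks up a $\sigma^{-1/2}$ there as well --- this is precisely how the paper groups that term into $\norm{\eta}_{5/2}\norm{u}_3$. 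Note also that a genuinely $\sigma$-free term would not be dominated by $(1+\sqrt\sigma)/\sigma$ uniformly for large $\sigma$, so the correction is needed for the stated form of the bound. Neither point changes the final estimate or the method.
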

\begin{proof}
The first two estimates are proved in  Lemma 3.5 of \cite{jang2016compressible}.

For the $n=1$ and $\sigma>0$ case, we first estimate
\begin{equation}
    \abs{\int_\Sigma \sigma\Delta\partial^\al\eta\partial^\al G^3} \lesssim \sigma\norm{\Delta\partial^\al\eta}_{-1/2}\norm{\partial^\al G^3}_{1/2}\lesssim  \sigma \norm{\eta}_{7/2}\norm{\partial^\al G^3}_{1/2}\lesssim \sqrt{\mathcal D_1^\sigma }\norm{\partial^\al G^3}_{1/2}.
\end{equation}
To conclude we use the definition of $G^3$ to bound  
\begin{equation}
 \norm{\partial^\alpha G^3}_{1/2} \ls \norm{\eta}_{5/2} \norm{u}_3 + \norm{u}_2  \norm{\eta}_{7/2} \ls \frac{1+\sqrt{\sigma}}{\sigma} \sqrt{\mathcal{E}^\sigma_1 \mathcal{D}^\sigma_1}.
\end{equation}

\end{proof}

We define the following auxiliary term which appear in later sections.
\begin{equation}\label{eqn:def-H}
    \calH_n \coloneqq \int_\Omega-\partial_t^{n-1}pF^{2,n}J + \frac12 |\pt_t^n u|^2(J-1) .
\end{equation}
The next result provides estimates for this term.

\begin{prop}\label{prop:bound-H}
Let $\calH_n$ be defined as in \eqref{eqn:def-H}, and assume $\calE_n^\sigma \leq \delta$ for the universal $\delta \in (0,1)$ given by Proposition \ref{prop:L-infinity}. Furthermore, suppose that $\sum_{\ell = 2}^{n+1}A\omega^\ell\lesssim 1$. Then
\begin{equation}
    |\calH_n| \lesssim (\calE_n^0)^{3/2}.
\end{equation}
\end{prop}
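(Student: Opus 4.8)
The plan is to estimate the two pieces of $\calH_n$ separately, using the Cauchy-Schwarz inequality together with the $L^\infty$ bounds from Proposition \ref{prop:L-infinity} and the nonlinear estimate $\norm{F^{2,n}}_0^2 \lesssim (\calE_n^0)^2$ from Theorem \ref{thm:bound-nonlin-F}. First I would address the term $\int_\Omega \partial_t^{n-1} p \, F^{2,n} J$. Pulling out $J$ in $L^\infty$ via \eqref{inequality:L-infinity-2}, applying Cauchy-Schwarz in $L^2(\Omega)$, and then using $\norm{F^{2,n}}_0 \lesssim \calE_n^0$ gives a bound of the form $\norm{\partial_t^{n-1} p}_0 \cdot \calE_n^0$. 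It remains to observe that $\norm{\partial_t^{n-1}p}_0^2$ is one of the summands controlled by $\calE_n^\sigma$ (indeed by $\ov{\calE_n^\sigma}$, since it is the $j = n-1$, zero spatial-derivative contribution to $\sum_j \norm{\partial_t^j p}_{2n-2j-1}^2$), hence $\norm{\partial_t^{n-1} p}_0 \lesssim \sqrt{\calE_n^0}$; here I would be slightly careful to use the version of the energy without surface tension, so that the conclusion is phrased in terms of $\calE_n^0$. This yields a contribution $\lesssim \sqrt{\calE_n^0}\cdot\calE_n^0 = (\calE_n^0)^{3/2}$.

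Next I would handle $\int_\Omega \tfrac12 |\partial_t^n u|^2 (J-1)$. By \eqref{inequality:L-infinity-1} we have $\norm{J-1}_{L^\infty} \lesssim \sqrt\delta \lesssim 1$; but to extract a power of the energy rather than just a universal constant, I would instead bound $\norm{J-1}_{L^\infty} \lesssim \norm{\eta}_{5/2} \lesssim \sqrt{\calE_n^0}$ (this is exactly the chain of inequalities $\norm{J-1}_{L^\infty}\lesssim\norm{\hat\eta}_3\lesssim\norm{\eta}_{5/2}$ used in the proof of Proposition \ref{prop:L-infinity}, valid since $n\geq 1$ means $\norm{\eta}_{5/2}^2\leq\norm{\eta}_{2n}^2\leq\calE_n^0$). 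Then $\int_\Omega |\partial_t^n u|^2 |J-1| \leq \norm{J-1}_{L^\infty}\norm{\partial_t^n u}_0^2 \lesssim \sqrt{\calE_n^0}\cdot\calE_n^0$, using that $\norm{\partial_t^n u}_0^2$ is the top-order temporal term appearing in $\ov{\calE_n^\sigma}$ (take $\alpha = (n,0,0)$, so $\abs\alpha = 2n$). Adding the two contributions gives $|\calH_n| \lesssim (\calE_n^0)^{3/2}$, as claimed.

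There is no real obstacle here; the only subtlety is bookkeeping — making sure each Sobolev norm that appears ($\norm{\partial_t^{n-1}p}_0$, $\norm{\partial_t^n u}_0$, $\norm{\eta}_{5/2}$) is genuinely dominated by $\calE_n^0$ and not merely by $\calE_n^\sigma$, so that the final bound is stated in the $\sigma$-free energy as required. The hypotheses $\calE_n^\sigma \leq \delta$ and $\sum_{\ell=2}^{n+1} A\omega^\ell \lesssim 1$ are needed only to invoke Proposition \ref{prop:L-infinity} and Theorem \ref{thm:bound-nonlin-F}; once those are in hand the estimate is a two-line application of Hölder's inequality.
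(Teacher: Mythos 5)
Your proposal is correct and follows essentially the same route as the paper's proof: Cauchy--Schwarz on each piece, $\norm{F^{2,n}}_0\lesssim \calE_n^0$ from Theorem \ref{thm:bound-nonlin-F}, the chain $\norm{J-1}_{L^\infty}\lesssim\norm{\hat\eta}_{H^3}\lesssim\norm{\eta}_{5/2}\lesssim\sqrt{\calE_n^0}$, and $\norm{\partial_t^{n-1}p}_0^2+\norm{\partial_t^n u}_0^2\lesssim\calE_n^0$. One small bookkeeping correction: $\norm{\partial_t^{n-1}p}_0^2$ is controlled by the \emph{full} energy $\calE_n^0$ through the summand $\sum_{j=0}^{n-1}\norm{\partial_t^j p}_{2n-2j-1}^2$ in \eqref{definition:full-E}, not by $\ov{\calE_n^\sigma}$ as your parenthetical claims (the barred energy \eqref{definition:basic-E} contains no pressure terms), but this does not affect the argument since the conclusion is stated in terms of $\calE_n^0$.
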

\begin{proof}
We can bound
\begin{equation}
    |\calH_n|\leq \norm{\partial_t^{n-1}p}_0 \norm{F^{2,n}}_0 \norm{J}_{L^\infty} + \frac12 \norm{J-1}_{L^\infty}\norm{\pt^n_t u}_0^2 .
\end{equation}
Then we use Proposition \ref{prop:L-infinity} and Theorem \ref{thm:bound-nonlin-F} to estimate
\begin{equation}
\norm{F^{2,n}}_0\norm{J}_{L^\infty} \lesssim \calE_n^0.
\end{equation}
Using the Sobolev embedding $H^3(\Omega) \hookrightarrow C^1(\Omega)$,
\begin{equation}
    \norm{J-1}_{L^\infty} \lesssim \norm{\hat\eta}_{C^1} \lesssim \norm{\hat\eta}_{H^3} \lesssim \norm{\eta}_{5/2}\lesssim \sqrt{\calE_n^0}.
\end{equation}
Therefore
\begin{equation}
    \abs{\calH_n} \lesssim \sqrt{\calE_n^0}\parens{\norm{\partial_t^{n-1}p}_0\sqrt {\calE_n^0}+\norm{\pt_t^n u}^2_0} \lesssim (\calE_n^0)^{3/2},
\end{equation}
as desired.
\end{proof}

\section{General a priori estimates}\label{section:a-priori}
The purpose of this section is to present a priori estimates that are general in the sense that they are valid for both the problem with and without surface tension. The general estimates presented here will be specially adapted later to each problem to prove different sorts of results.

\subsection{Energy-dissipation evolution estimates}
Let $\al\in\N^{1+2}$, and write
\begin{equation}\label{def:ed-general}
\begin{aligned}
    \ov \calE_\al^\sigma &= \int_\Omega \frac12 \abs{\pt^\al u}^2 + \int_\Sigma\frac12 \abs{g\pt^\al\eta}^2 + \frac\sigma2\abs{\nabla \pt^\al \eta}^2 \\
    \ov \calD_\al &= \int_\Omega \frac12 \abs{\mathbb D \pt^\al u}^2
\end{aligned}
\end{equation}
for the part of the energy and dissipation responsible for the $\alpha$ derivatives.

Our first result derives energy-dissipation estimates for the time derivative component of the energy and dissipation functionals. 
\begin{thm}\label{thm:nonlin-ed-time}
Assume that $\calE_n^\sigma\leq\delta$ for the universal $\delta\in(0,1)$ given by Proposition \ref{prop:L-infinity}.  Suppose further that  $\sum_{\ell = 2}^{n+1}A\omega^\ell\lesssim 1$.  Let $\alpha\in\N^{1+2}$ be given by $\alpha = (n,0,0)$, i.e. $\partial^\alpha = \partial_t^n$.  Then for $\bar\calE_\alpha^\sigma$ and $\calD_\alpha$ given by \eqref{def:ed-general}, there exists a polynomial $P$ with nonnegative universal constants such that we have the estimate
\begin{equation}
    \frac{d}{dt}(\ov{\calE_\alpha^\sigma} + \calH_n) + \ov\calD_\alpha\lesssim \left(\sum_{\ell = 2}^{n+2}A\omega^\ell\right)\calD_n^0 + P(\sigma)\sqrt{\calE_n^0}\calD_n^\sigma.
\end{equation}
\end{thm}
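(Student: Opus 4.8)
The plan is to apply the geometric energy-dissipation identity of Proposition~\ref{prop:geom-ed} to the equations obtained by differentiating \eqref{system:ns-flattened} in time $n$ times. Concretely, I would set $v = \pt_t^n u$, $q = \pt_t^n p$, $\zeta = \pt_t^n \eta$, with $(u,\eta)$ playing the role of the background fields satisfying \eqref{assumption:geometric-ed}, and with the forcing terms $\Psi^j = F^{j,n}$ given by \eqref{definition:F1}--\eqref{definition:F4} plus $\Psi^5 = F^{5,n} = \pt_t^n(A\omega^2 f''(\omega t)\eta)$. This places us exactly in the hypotheses of Proposition~\ref{prop:geom-ed}, so \eqref{equation:geometric-ed} yields
\begin{equation}
    \ddt\left[\int_\Omega \frac{\abs{\pt_t^n u}^2 J}{2} + \int_\Sigma \frac{\sigma\abs{\nabla\pt_t^n\eta}^2}{2} + \frac{g\abs{\pt_t^n\eta}^2}{2}\right] + \int_\Omega \frac{\mu\abs{\mathbb D_{\mathcal A}\pt_t^n u}^2 J}{2} = \mathcal{R},
\end{equation}
where $\mathcal{R}$ collects the right-hand side terms involving $F^{1,n},\dots,F^{5,n}$. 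The first step is to convert the $J$-weighted energy and dissipation on the left into the flat quantities $\ov{\calE_\alpha^\sigma}$ and $\ov\calD_\alpha$ from \eqref{def:ed-general}: writing $\int_\Omega \abs{\pt_t^n u}^2 J/2 = \ov{\calE_\alpha^\sigma}|_{\text{velocity part}} + \int_\Omega \frac12\abs{\pt_t^n u}^2(J-1)$ and noting that the $q\Psi^2 = \pt_t^n p\, F^{2,n}$ term on the right combines with the $J$-weight to produce $-\int_\Omega \pt_t^{n-1} p\, F^{2,n} J$ after moving a time derivative (this is precisely why $\calH_n$ in \eqref{eqn:def-H} is defined the way it is). The point of the correction term $\calH_n$ is to absorb both the $(J-1)$-weight discrepancy in the energy and, after integrating by parts in time, the term $\int_\Omega J\, q\,\Psi^2$; Proposition~\ref{prop:bound-H} then controls $\abs{\calH_n}$ by $(\calE_n^0)^{3/2}$, which is harmless. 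The $J$-weighted dissipation is handled by item (3) of Proposition~\ref{prop:L-infinity}, which gives $\ov\calD_\alpha \le \int_\Omega J\abs{\mathbb D_{\mathcal A}\pt_t^n u}^2/2 + C(\norm{\mathcal A - I}_{L^\infty} + \norm{J-1}_{L^\infty})\ov\calD_\alpha$, and the smallness hypothesis $\calE_n^\sigma \le \delta$ lets us absorb the error into the left-hand side.

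The second step is to estimate the remaining terms in $\mathcal{R}$. These are of the form $\int_\Omega J(\pt_t^n u \cdot F^{1,n})$, the leftover from $\int_\Omega J q F^{2,n}$ after the time-derivative manipulation (which is of the form $\int_\Omega \pt_t^{n-1} p\, \pt_t(J F^{2,n})$), the surface terms $\int_\Sigma(-\sigma\Delta\pt_t^n\eta + g\pt_t^n\eta)F^{3,n}$, $\int_\Sigma F^{4,n}\cdot \pt_t^n u$, and $\int_\Sigma F^{5,n}\, \pt_t^n u \cdot \mathcal{N}$. For all the genuinely nonlinear pieces ($F^{1,n}$, $F^{2,n}$, $\pt_t(JF^{2,n})$, $F^{3,n}$, and the part of $F^{4,n}$ not involving the oscillation), I would use Cauchy--Schwarz together with the bounds of Theorem~\ref{thm:bound-nonlin-F}: e.g. $\abs{\int_\Omega J \pt_t^n u\cdot F^{1,n}} \lesssim \norm{\pt_t^n u}_0 \norm{F^{1,n}}_0 \lesssim \sqrt{\calD_n^\sigma}\cdot\sqrt{P(\sigma)\calE_n^0\calD_n^\sigma} \lesssim P(\sigma)\sqrt{\calE_n^0}\,\calD_n^\sigma$, using that $\norm{\pt_t^n u}_0^2 \le \norm{\pt_t^n u}_1^2 \lesssim \calD_n^\sigma$. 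The $\sigma\Delta$ piece of the $F^{3,n}$ term requires the trace/negative-norm pairing handled in Proposition~\ref{prop:bound-G3} (applied with $G^3$ replaced by $F^{3,n}$, or more directly the same type of estimate); in any case it contributes at worst $P(\sigma)\sqrt{\calE_n^0}\calD_n^\sigma$. The new and important contributions are the ones carrying the oscillation: the part of $F^{4,n}$ of the form $\sum_{0<\ell\le n}C_{\ell n}\pt_t^{n-\ell}(A\omega^2 f''(\omega t)\eta)\pt_t^\ell\mathcal{N}$ and the term $\int_\Sigma F^{5,n}\pt_t^n u\cdot\mathcal{N}$ with $F^{5,n} = \pt_t^n(A\omega^2 f''(\omega t)\eta)$. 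Since each time derivative hitting $f''(\omega t)$ produces a factor of $\omega$, $\pt_t^n(A\omega^2 f''(\omega t)\eta)$ expands into a sum of terms each bounded by $A\omega^{2+k}\abs{\pt_t^{n-k}\eta}$ for $0\le k\le n$, i.e. controlled (in $L^2(\Sigma)$) by $(\sum_{\ell=2}^{n+2}A\omega^\ell)\sum_{k}\norm{\pt_t^k\eta}_0$; pairing with $\norm{\pt_t^n u\cdot\mathcal N}_0 \lesssim \norm{\pt_t^n u}_{H^1} \lesssim \sqrt{\calD_n^0}$ and using that $\norm{\pt_t^k\eta}_{1/2}^2$ (for $k\le n-1$, and the $k=n$ trace of $u$) all sit inside $\calD_n^0$, this term is bounded by $(\sum_{\ell=2}^{n+2}A\omega^\ell)\calD_n^0$. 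This is the only place the full range $\ell = 2,\dots,n+2$ (rather than $\ell\le n+1$) is needed, which explains the shape of the claimed estimate.

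The main obstacle, and the step requiring the most care, is the oscillation/power-input term: unlike the other nonlinearities it is \emph{linear} in the perturbation, so it cannot be absorbed by a smallness-of-data argument — instead it must be absorbed by the viscous dissipation $\ov\calD_\alpha$ on the left, and this is only possible if the coefficient $\sum_{\ell=2}^{n+2}A\omega^\ell$ is small, which is precisely the stable-parameter hypothesis. One has to be careful that the estimate genuinely yields the dissipation norm $\calD_n^0$ (which includes one full spatial derivative more than the energy on $u$, and the correct half-integer trace norms on the time derivatives of $\eta$), not the energy norm $\calE_n^0$; this forces one to use the transport equation $\pt_t\eta = u\cdot\mathcal N$ (and its time derivatives) to trade the $\pt_t^k\eta$ factors appearing in $F^{4,n}$ and $F^{5,n}$ for quantities controlled by $\calD_n^0$ — for the top-order $k=n$ term, $\pt_t^n\eta$ is estimated via $\pt_t^{n-1}(u\cdot\mathcal N)$ plus commutators, all of which land in $\calD_n^0$. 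Once the oscillation terms are seen to produce $(\sum_{\ell=2}^{n+2}A\omega^\ell)\calD_n^0$ and all remaining terms produce $P(\sigma)\sqrt{\calE_n^0}\calD_n^\sigma$, combining with the $\calH_n$ bound from Proposition~\ref{prop:bound-H} (whose time derivative is already incorporated on the left) and absorbing the $(J-1)$-dissipation error completes the proof.
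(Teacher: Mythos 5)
Your proposal is correct and follows essentially the same route as the paper: apply Proposition \ref{prop:geom-ed} to $\pt_t^n(u,p,\eta)$, generate $\calH_n$ by pulling a time derivative off the pressure term and by accounting for the $(J-1)$ weight in the energy, estimate the nonlinear forcing terms via Theorem \ref{thm:bound-nonlin-F} to get $P(\sigma)\sqrt{\calE_n^0}\calD_n^\sigma$, bound the linear oscillation term $\int_\Sigma F^{5,n}\pt_t^n u\cdot\mathcal N$ by $(\sum_{\ell=2}^{n+2}A\omega^\ell)\calD_n^0$, and use Proposition \ref{prop:L-infinity} to replace the $J$-weighted dissipation. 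The only cosmetic difference is that you place the oscillation part of $F^{4,n}$ in the $(\sum_\ell A\omega^\ell)\calD_n^0$ bucket, whereas the paper absorbs it into the $P(\sigma)\sqrt{\calE_n^0}\calD_n^\sigma$ bound via Theorem \ref{thm:bound-nonlin-F}; both groupings yield the stated estimate.
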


\begin{proof}
    We apply Proposition \ref{prop:geom-ed} with $(v,q,\zeta) = \partial_t^n(u,q,\eta)$ to get
\begin{equation}
\begin{aligned}\label{eqn:geometric-ed-new}
    &\ddt\left[\int_\Omega\frac{\abs{\pt_t^n u}^2J}2+\int_\Sigma\frac{\sigma\abs{\nabla{\pt_t^n\eta}}^2}2 + \frac{g\abs{\pt_t^n\eta}^2}2\right] + \int_\Omega\mu\frac{\abs{\mathbb D_{\mathcal A}{\pt_t^nu}}^2J}2 = \\
    &\hspace{5ex}  +\int_\Omega J\left(\pt_t^nu\cdot F^{1,n}+\pt_t^np\cdot F^{2,n}\right) + \int_\Sigma \left(-\sigma\Delta\pt_t^n\eta + g\pt_t^n\eta \right)F^{3,n} - \int_\Sigma F^{4,n}\cdot {(\pt_t^n u)} + F^{5,n} (\partial_t^n u) \cdot \mathcal{N}.
\end{aligned}
\end{equation}
Now we estimate the terms on the right hand side of \eqref{eqn:geometric-ed-new}. We easily bound the last term by 
\begin{equation}
\begin{aligned}
    \abs{\int_\Sigma F^{5,n} (\partial_t^n u) \cdot \mathcal{N}} = \abs{\int_\Sigma\left(\sum_{\ell = 0}^n C_{\ell,n}A\omega^{2+\ell}f^{(2+\ell)}(\omega t) \partial_t^{n-\ell}\eta\right) (\partial_t^n u) \cdot \mathcal{N}}\lesssim\left(\sum_{\ell = 2}^{n+2}A\omega^\ell\right)\calD_n^0.
\end{aligned}
\end{equation}
To handle the pressure term we first rewrite
\begin{equation}
    \int_\Omega\partial_t^npJF^{2,n} = \frac{d}{dt}\int_\Omega \partial_t^{n-1}pJF^{2,n} - \int_\Omega\partial_t^{n-1}p\partial_t(JF^{2,n}).
\end{equation}

We then use Theorem \ref{thm:bound-nonlin-F} to estimate
\begin{equation}
    \abs{\int_\Omega \pt_t^{n-1}p\pt_t(JF^{2,n})}\leq \norm{\pt_t^{n-1}p}_0\norm{\pt_t(JF^{2,n})}_0\lesssim P(\sigma)\sqrt{\calD_n^\sigma}\sqrt{\calE_n^0\calD_n^\sigma} = P(\sigma)\sqrt{\calE_n^0}\calD_n^\sigma.
\end{equation}
Using Theorem \ref{thm:bound-nonlin-F}, Proposition \ref{prop:L-infinity}, trace theory, we get that
\begin{equation}
    \abs{\int_\Omega J\pt_t^n u \cdot F^{1,n} - \int_\Sigma F^{4,n}\cdot \pt_t^n u} \lesssim \norm{\pt_t^n u}_1(\norm{F^{1,n}}_0 + \norm{F^{4,n}}_0) \lesssim P(\sigma)\sqrt{\calD_n^\sigma}\sqrt{\calE_n^0\calD_n^\sigma} = P(\sigma)\sqrt{\calE_n^0}\calD_n^\sigma.
\end{equation}
For the rest of the terms, we again use Theorem \ref{thm:bound-nonlin-F}  to estimate 
\begin{equation}
    \abs{\int_\Sigma(-\sigma\Delta\partial_t^n\eta + g\partial_t^n\eta )F^{3,n}}\lesssim \left( \sigma \norm{\dt^n \eta}_{2} +    \norm{\partial_t^n \eta}_0  \right) \norm{F^{3,n}}_0 
    \lesssim P(\sigma)\sqrt{\calD_n^\sigma}\sqrt{\calE_n^0\calD_n^\sigma} = P(\sigma)\sqrt{\calE_n^0}\calD_n^\sigma.
\end{equation}

Next we rewrite some of the terms on the left side of the equations.  Proposition \ref{prop:L-infinity} allows us to bound
\begin{equation}
    \frac12\int_\Omega\abs{\mathbb D\partial_t^n u}^2\leq \int_\Omega\frac12\abs{\mathbb D_\calA \pt_t^n u}^2 J + C\sqrt{\calE_n^0}\calD_n^\sigma
\end{equation}
and
\begin{equation}
    \int_\Omega\frac12\abs{\pt_t^n u}^2 J = \int_\Omega\frac12\abs{\pt_t^n u}^2 + \int_\Omega\frac12\abs{\pt_t^n u}^2(J-1).
\end{equation}
The theorem follows by combining the above estimates and rearranging.

\end{proof}

Our next result provides energy-dissipation estimates for all derivatives besides the highest order temporal ones.
\begin{thm}\label{thm:nonlin-ed-space}
Suppose that $\mathcal E_n^\sigma\leq\delta$ for $\delta\in(0,1)$ given in Proposition \ref{prop:L-infinity}. Let $\al\in\mathbb N^{1+2}$ be such that $\abs\al\leq 2n$ and $\al_0 < n$. Suppose further that $\sum_{\ell = 2}^{n+1}A\omega^\ell\lesssim 1$. Then there exists a polynomial $P$ with nonnegative universal coefficients such that
\begin{equation}
    \ddt\ov{\calE_\al^\sigma} + \ov{\calD_\al}\lesssim \left(\sum_{\ell = 2}^{n+1}A\omega^\ell\right)\calD_n^0 + P(\sigma)\left(\sqrt{\calE_n^0}\calD_n^\sigma  + \sqrt{\calD_n^\sigma\calK\calF_n}\right).
\end{equation}

Moreover, when $n=1$ and $\sigma > 0$ is a fixed constant, we can improve this to  
\begin{equation}
    \ddt\ov{\calE_\al^\sigma} + \ov{\calD_\al}\lesssim (A\omega^2 + A\omega^3)\calD_1^0 + \frac{P(\sigma)}{\sigma}\sqrt{\calE_1^\sigma}\calD_n^\sigma.
\end{equation}
\end{thm}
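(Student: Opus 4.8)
The plan is to run the argument of Theorem \ref{thm:nonlin-ed-time} with the constant-coefficient \emph{flattened} linearization in place of the geometric one. Fix $\al\in\N^{1+2}$ with $\abs\al\le 2n$ and $\al_0<n$, and apply $\pt^\al$ to \eqref{system:ns-flattened}; because \eqref{equation:flat-I} has constant coefficients, $(v,q,\zeta)=\pt^\al(u,p,\eta)$ solves \eqref{equation:flat-I} with $\Theta^i=\pt^\al G^i$, where $G^1,\dots,G^4$ are given by \eqref{definition:G1}--\eqref{definition:G4} and $G^5=A\omega^2 f''(\omega t)\eta$. Proposition \ref{prop:flattened-ed} then yields the identity
\begin{equation*}
\begin{aligned}
    \ddt\ov{\calE_\al^\sigma}+\mu\ov{\calD_\al} = {}&\int_\Omega \pt^\al u\cdot\pt^\al G^1+\pt^\al p\,\pt^\al G^2 \\
    &+\int_\Sigma\left[(-\sigma\Delta\pt^\al\eta+g\pt^\al\eta)\pt^\al G^3-\pt^\al G^4\cdot\pt^\al u-\pt^\al G^5\,(\pt^\al u)_3\right],
\end{aligned}
\end{equation*}
so the whole matter reduces to estimating the four groups of forcing integrals on the right.

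For the interior terms I would apply Cauchy--Schwarz, bounding $\pt^\al u$ and $\pt^\al p$ in $L^2(\Omega)$ by $\sqrt{\calD_n^0}$ (the hypothesis $\al_0<n$ keeps these inside the budget of \eqref{definition:full-D}) and bounding $\pt^\al G^1$, $\pt^\al G^2$ in $L^2(\Omega)$ by $\sqrt{\calY_n}$ via Theorem \ref{thm:bound-nonlin-G}. The only subtlety arises at top order $\abs\al=2n$: here $\al_0<n$ forces $\al_1+\al_2\ge 2$, so one first integrates by parts in a horizontal (hence periodic, boundary-free) direction to shift one derivative off $\pt^\al G^1$ and onto $\pt^\al u$, bounding the new $\pt^\al u$-factor through Korn's inequality and $\ov\calD_n$ and the new $G^1$-factor by $\sqrt{\calY_n}$ through \eqref{eqn:define-Y}. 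For the surface terms: the $G^4$ pairing is handled by trace theory and Theorem \ref{thm:bound-nonlin-G}; the pairings $g\int_\Sigma\pt^\al\eta\,\pt^\al G^3$ and (when $\sigma>0$) $-\sigma\int_\Sigma\Delta\pt^\al\eta\,\pt^\al G^3$ are, for purely spatial $\al$ of order $2n$, exactly the quantities bounded by Proposition \ref{prop:bound-G3}, while for mixed $\al$ ($\al_0\ge1$) one substitutes the kinematic relation $\pt_t\eta=u_3+G^3$ to trade one time derivative of $\eta$ for $u_3+G^3$ and then closes by trace, interpolation between the $\sigma$-weighted and unweighted $\eta$-norms in \eqref{definition:full-D}, and Theorem \ref{thm:bound-nonlin-G}. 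Collecting these bounds and using $\calD_n^0\le\calD_n^\sigma$ together with \eqref{est:G-nonlin-1} produces precisely the term $P(\sigma)\left(\sqrt{\calE_n^0}\calD_n^\sigma+\sqrt{\calD_n^\sigma\calK\calF_n}\right)$.

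The genuinely new ingredient is the linear Faraday forcing $-\int_\Sigma\pt^\al G^5\,(\pt^\al u)_3$. Expanding $\pt^\al(A\omega^2 f''(\omega t)\eta)$ by the Leibniz rule gives terms $A\omega^{2+k}f^{(2+k)}(\omega t)\,\pt_t^{\al_0-k}\pt_1^{\al_1}\pt_2^{\al_2}\eta$ with $0\le k\le\al_0\le n-1$; for $k\ge1$ the $\eta$-factor carries strictly fewer than $2n-2\al_0-\tfrac{1}{2}$ parabolic derivatives and so, after trace, is controlled by $\sqrt{\calD_n^0}$, contributing $\big(\sum_{\ell=3}^{n+1}A\omega^\ell\big)\calD_n^0$. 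The leftover term $A\omega^2 f''(\omega t)\pt^\al\eta$ is treated by cases: if $\abs\al\le 2n-1$, a direct trace suffices; if $\al_0\ge1$, one again uses $\pt_t\eta=u_3+G^3$ (the $u_3$ part landing in $\calD_n^0$, the $G^3$ part in $\sqrt{\calY_n}$); and if $\al$ is purely spatial of order $2n$, one splits half a derivative via the $H^{-1/2}(\Sigma)$--$H^{1/2}(\Sigma)$ pairing, so that $\pt^\al\eta$ costs only $2n-\tfrac{1}{2}$ surface derivatives (which $\calD_n^0$ controls, since $\norm{\eta}_{2n-1/2}^2$ appears in \eqref{definition:full-D}) while $(\pt^\al u)_3$ costs $2n+\tfrac{1}{2}$ surface derivatives, i.e.\ the trace of $\norm{u}_{2n+1}\ls\sqrt{\calD_n^0}$. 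Summing, the $G^5$ contribution is $\ls\big(\sum_{\ell=2}^{n+1}A\omega^\ell\big)\calD_n^0$, and combining with the previous paragraph (and using $\ov\calD_\al\ls\mu\ov\calD_\al$) gives the first estimate.

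For the refinement when $n=1$ and $\sigma>0$ is fixed, I would run the same scheme but replace \eqref{est:G-nonlin-1} by the fixed-$\sigma$ bounds \eqref{est:G-nonlin-base-y} and invoke the improved estimate of Proposition \ref{prop:bound-G3} for the $-\sigma\Delta\pt^\al\eta\cdot\pt^\al G^3$ term; these remove the $\calK$ and $\calF_1$ dependence at the cost of an explicit factor $P(\sigma)/\sigma$, and the $G^5$ term is handled exactly as above (now $\al_0=0$, so $\abs\al\le 2$) to produce the $(A\omega^2+A\omega^3)\calD_1^0$ piece. The \textbf{main obstacle} throughout, and the only place where real care is required, is the critical-regularity bookkeeping in the top-order cases $\abs\al=2n$: keeping every factor $\pt^\al G^i$, $\pt^\al u$, $\pt^\al\eta$ inside the exact budgets of \eqref{definition:full-D}, \eqref{eqn:define-Y}, and \eqref{eqn:define-W}, which is precisely what forces the horizontal integration by parts for $G^1$, the appeal to Proposition \ref{prop:bound-G3} for the purely spatial $G^3$ pairings, and the half-derivative duality trick for the linear forcing $G^5$.
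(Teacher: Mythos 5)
Your proposal is correct and follows essentially the same route as the paper: apply the flattened energy--dissipation identity of Proposition \ref{prop:flattened-ed} to $\pt^\al(u,p,\eta)$, control the linear Faraday forcing $\pt^\al G^5$ against $(\pt^\al u)_3$ via the $H^{1/2}$--$H^{-1/2}$ duality and trace theory to produce the $\big(\sum_\ell A\omega^\ell\big)\calD_n^0$ term, split into the pure-spatial top-order case (horizontal integration by parts for $G^1$, Proposition \ref{prop:bound-G3} for the $G^3$ pairings, trace duality for $G^4$) versus everything else (direct use of Theorem \ref{thm:bound-nonlin-G}), and invoke the fixed-$\sigma$ refinements \eqref{est:G-nonlin-base-y} and the improved Proposition \ref{prop:bound-G3} for $n=1$. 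The only deviations are cosmetic bookkeeping choices (your Leibniz-by-cases treatment of $G^5$ and the optional use of the kinematic relation for mixed-index $G^3$ pairings), which do not change the substance of the argument.
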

\begin{proof}

We begin by applying Proposition \ref{prop:flattened-ed} on $(v,q,\zeta) = \pt^\al(u,p,\eta)$ to see that
\begin{equation}\label{equation:flattened-ed-alpha}
\begin{aligned}
    &\ddt\ov\calE_\al + \ov\calD_\al = -\int_\Sigma \pt^\al\left(A\omega^2f''(\omega t)\eta\right)  \pt^\al u_3 + \int_\Omega \partial^\al u\cdot\partial^\al G^1 + \partial^\al p\partial^\al G^2 \\
    &\hspace{10ex} + \int_\Sigma\left(-\sigma\Delta\partial^\al\eta + g\pt^\al\eta   \right)\partial^\al G^3 - \partial^\al G^4\cdot\partial^\al u.
\end{aligned}
\end{equation}
{
We will now estimate all of the terms appearing on the right side of \eqref{equation:flattened-ed-alpha}. The first term is easily bounded using the duality between $H^{1/2}(\Sigma)$ and $H^{-1/2}(\Sigma)$ and trace theory:
\begin{multline}
    \abs{\int_\Sigma \pt^\al\left(A\omega^2f''(\omega t)\eta\right)  \pt^\al u_3 } \lesssim  \left(\sum_{\ell = 2}^{n+1}A\omega^\ell\right) \left(\sum_{j=0}^{n-1} \norm{\partial_t^j \eta}_{2n-2j-1/2}  \right) \left( \sum_{j=0}^{n-1} \norm{\partial^j u_3}_{H^{2n-2j+1/2}(\Sigma)} \right) \\
    \lesssim     
    \left(\sum_{\ell = 2}^{n+1}A\omega^\ell\right) \sqrt{\calD_n^0} \left( \sum_{j=0}^{n-1} \norm{\partial^j u}_{2n-2j+1} \right)   \lesssim     
    \left(\sum_{\ell = 2}^{n+1}A\omega^\ell\right)      \calD_n^0.
\end{multline}
}
In order to estimate the remaining terms on the right side of \eqref{equation:flattened-ed-alpha} we will break to cases based on $\alpha$.

\textbf{Case 1 -- Pure spatial derivatives of highest order:}
In this case we first consider $\alpha\in\mathbb N^{1+2}$ with $\abs\alpha = 2n$ and $\al_0 = 0$, i.e.\ $\pt^\al$ is purely spatial derivatives of the highest order. Now write $\alpha = \beta + \gamma$ for $\abs{\beta}=1$. We then use integration by parts and Theorem \ref{thm:bound-nonlin-G} to bound the $G^1$ term via
\begin{equation}
    \abs{\int_\Omega\partial^\al u\cdot\partial^\al G^1} = \abs{\int_\Omega\partial^{\al+\beta} u\cdot\partial^{\gamma} G^1}\lesssim \norm{u}_{2n+1}\norm{G^1}_{2n-1}\lesssim P(\sigma)\sqrt{\calD_n^0}\sqrt{\calE_n^0\calD_n^\sigma+\calK\calF_n}.
\end{equation}
To bound the $G^2$ term, compute
\begin{equation}
    \abs{\int_\Omega\partial^\al p\cdot\partial^\al G^2}\leq \norm{\partial^\al p}_0\norm{\partial^\al G^2}_0\lesssim P(\sigma)\sqrt{\calD_n^0}\sqrt{\calE_n^0\calD_n^\sigma + \calK\calF_n}.
\end{equation}
For the $G^3$ term, the $-\sigma\Delta\partial^\al\eta\partial^\al G^3$ and $g\pt^\al\eta $ terms are handled by Proposition \ref{prop:bound-G3}. Finally, to bound the $G^4$ term, we have
\begin{aligneq}
    \abs{\int_\Sigma\partial^\al G^4\cdot\partial^\al u} &= \norm{\pt^\al G^4}_{H^{-1/2}(\Sigma)}\norm{\pt^\al u}_{H^{1/2}(\Sigma)}\lesssim \norm{G^4}_{H^{2n-1/2}(\Sigma)}\norm{u}_{2n+1} \\
    &\lesssim P(\sigma)\sqrt{\calD_n^0}\sqrt{\calE_n^0\calD_n^\sigma + \calK\calF_n}.
\end{aligneq}
Combining the above estimates yields the desired bound for this case. For the $n=1$ and $\sigma > 0$ case, we can apply the special cases of Theorem \ref{thm:bound-nonlin-G} and Proposition \ref{prop:bound-G3} and the same computations as above to deduce the result for $G^1$, $G^2$ and $G^3$, noting that 
\begin{equation}
 P(\sigma) + \frac{1+\sqrt{\sigma}}{\sigma} \ls \frac{P(\sigma)}{\sigma}
\end{equation}
where $P$ denotes different universal polynomials on each side of the inequality.  For $G^4$ we can use the same method as for $G^1$ to get
\begin{equation}
    \abs{\int_\Sigma \pt^\al G^4 \cdot \pt^\al u} = \abs{\int_\Sigma \pt^\gamma G^4 \cdot \pt^{\al+\beta} u} \lesssim \norm{G^4}_1\norm{u}_3 \lesssim \sqrt{\calE^\sigma_1}\calD_1^\sigma.
\end{equation}

\textbf{Case 2 -- Everything else:}
We now consider the remaining cases, i.e.\ either $\abs\al\leq 2n-1$ or else $\abs\al = 2n$ and $1\leq\al_0 < n$. In this case, the $G^1$, $G^2$, $G^4$ terms may be handled with Theorem \ref{thm:bound-nonlin-G}. For the $G^3$ term, we directly compute
\begin{equation}
  \abs{\int_\Sigma \left(-\sigma\Delta\pt^\al\eta + g\pt^\al\eta \right)\pt^\al G^3} 
  \lesssim \norm{-\sigma\Delta\pt^\al\eta + g\pt^\al\eta }_0\norm{\pt^\al G^3} 
 \lesssim P(\sigma)\sqrt{\calD_n^\sigma}\sqrt{\calE_n^0\calD_n^\sigma + \calK\calF_n}.
\end{equation}

We may now combine the two cases to conclude the desired theorem. In the case of $n=1$ and $\sigma > 0$, we can apply the special cases of Theorem \ref{thm:bound-nonlin-G} in the above.
\end{proof}

By combining Theorems \ref{thm:nonlin-ed-time} and \ref{thm:nonlin-ed-space} we get the following synthesized result.
\begin{thm}\label{thm:nonlin-ed}
Suppose that $\calE_n^\sigma\leq\delta$ for $\delta\in(0,1)$ given by Proposition \ref{prop:L-infinity}. Suppose further that $\sum_{\ell=2}^{n+1}A\omega^\ell\lesssim 1$. Then we have the estimate
\begin{equation}
    \ddt \parens{\ov{\calE_n^\sigma} + \calH_n} + \ov{\calD_n} \lesssim {\left(\sum_{\ell=2}^{n+2}A\omega^\ell\right)}\calD_n^0 + P(\sigma)\left(\sqrt{\calE_n^0}\calD_n^\sigma + \sqrt{\calD_n^\sigma\calK\calF_n}\right),
\end{equation}
where $\calH_n$ is defined as in \eqref{eqn:def-H}. Moreover, when $n=1$ and $\sigma > 0$, we have
\begin{equation}
    \ddt \parens{\ov{\calE_1^\sigma} + \calH_1} + \ov{\calD_1} \lesssim (A\omega^2 + A\omega^3)\calD_1^0 + \frac{P(\sigma)}{\sigma} \sqrt{\calE_1^\sigma}\calD_1^\sigma.
\end{equation}
\end{thm}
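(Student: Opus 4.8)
The statement is purely a synthesis: we already have Theorem \ref{thm:nonlin-ed-time}, which bounds $\ddt(\ov{\calE_\alpha^\sigma} + \calH_n) + \ov\calD_\alpha$ for the single multi-index $\alpha = (n,0,0)$, and Theorem \ref{thm:nonlin-ed-space}, which bounds $\ddt\ov{\calE_\al^\sigma} + \ov{\calD_\al}$ for every $\alpha \in \N^{1+2}$ with $\abs\al \leq 2n$ and $\al_0 < n$. The plan is simply to sum these estimates over the finite collection of all $\alpha \in \N^{1+2}$ with $\abs\al \leq 2n$. Since by definition $\ov{\calE_n^\sigma} = \sum_{\abs\al \leq 2n}\ov{\calE_\al^\sigma}$ and $\ov\calD_n = \sum_{\abs\al\leq 2n}\ov\calD_\al$ (comparing \eqref{definition:basic-E}, \eqref{definition:basic-D} with \eqref{def:ed-general}, up to the harmless factor of $\tfrac12$ that is absorbed into the $\lesssim$), adding all these contributions produces exactly $\ddt(\ov{\calE_n^\sigma} + \calH_n) + \ov\calD_n$ on the left; the $\calH_n$ term appears only once, from the $\alpha = (n,0,0)$ contribution. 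On the right, each summand is bounded by either $(\sum_{\ell=2}^{n+2}A\omega^\ell)\calD_n^0 + P(\sigma)\sqrt{\calE_n^0}\calD_n^\sigma$ (from Theorem \ref{thm:nonlin-ed-time}) or $(\sum_{\ell=2}^{n+1}A\omega^\ell)\calD_n^0 + P(\sigma)(\sqrt{\calE_n^0}\calD_n^\sigma + \sqrt{\calD_n^\sigma\calK\calF_n})$ (from Theorem \ref{thm:nonlin-ed-space}); summing a fixed finite number of these and noting $\sum_{\ell=2}^{n+1}A\omega^\ell \leq \sum_{\ell=2}^{n+2}A\omega^\ell$ and that a sum of universal polynomials in $\sigma$ is again a universal polynomial in $\sigma$, we obtain precisely the claimed bound
\begin{equation*}
    \ddt\parens{\ov{\calE_n^\sigma} + \calH_n} + \ov{\calD_n} \lesssim \left(\sum_{\ell=2}^{n+2}A\omega^\ell\right)\calD_n^0 + P(\sigma)\left(\sqrt{\calE_n^0}\calD_n^\sigma + \sqrt{\calD_n^\sigma\calK\calF_n}\right).
\end{equation*}

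For the refined $n=1$, $\sigma>0$ assertion, the plan is the same but invoking the improved second conclusions of Theorems \ref{thm:nonlin-ed-time} and \ref{thm:nonlin-ed-space}: the time-derivative piece ($\alpha=(1,0,0)$) contributes $(A\omega^2+A\omega^3)\calD_1^0 + P(\sigma)\sqrt{\calE_1^0}\calD_1^\sigma$, and the lower-order pieces contribute $(A\omega^2+A\omega^3)\calD_1^0 + \tfrac{P(\sigma)}{\sigma}\sqrt{\calE_1^\sigma}\calD_1^\sigma$ each. Using $\calE_1^0 \leq \calE_1^\sigma$ and absorbing $P(\sigma) \leq \tfrac{P(\sigma)}{\sigma}$ (valid since we may take $\sigma \leq 1$ in the relevant regime, or simply enlarge the polynomial), the sum is dominated by $(A\omega^2+A\omega^3)\calD_1^0 + \tfrac{P(\sigma)}{\sigma}\sqrt{\calE_1^\sigma}\calD_1^\sigma$, which is the stated estimate.

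The only genuine bookkeeping point — and the closest thing to an obstacle — is checking that the decompositions $\ov{\calE_n^\sigma} = \sum_{\abs\al\leq 2n}\ov{\calE_\al^\sigma}$ and $\ov\calD_n = \sum_{\abs\al\leq 2n}\ov\calD_\al$ hold on the nose (up to universal constants), so that summing the per-multi-index differential inequalities reassembles the functionals in \eqref{definition:basic-E}, \eqref{definition:basic-D}; this is immediate from the definitions since both the parabolic counting $\abs\al = 2\al_0 + \al_1 + \al_2$ and the summands match. There is genuinely no analytic difficulty here: all the work has already been done in the two preceding theorems, and this result is a one-line corollary obtained by adding finitely many inequalities. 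I would therefore write the proof in a couple of sentences: apply Theorem \ref{thm:nonlin-ed-time} to $\alpha=(n,0,0)$, apply Theorem \ref{thm:nonlin-ed-space} to every other $\alpha$ with $\abs\al\leq 2n$, sum, and absorb constants; then repeat with the refined conclusions for the $n=1$, $\sigma>0$ case.
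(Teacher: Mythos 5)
Your proposal is correct and is exactly the paper's argument: the paper offers no separate proof of Theorem \ref{thm:nonlin-ed}, presenting it as the direct synthesis of Theorems \ref{thm:nonlin-ed-time} and \ref{thm:nonlin-ed-space} obtained by summing over the finitely many multi-indices (noting, as you implicitly do, that the parabolic counting forces $\alpha=(n,0,0)$ to be the only admissible index with $\alpha_0=n$, so the two theorems together cover all $\abs{\alpha}\leq 2n$). Your bookkeeping of the decompositions and the absorption $P(\sigma)\lesssim P(\sigma)/\sigma$ (after enlarging the polynomial) matches the paper's conventions.
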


\subsection{Comparison estimates}
Our goal now is to show that the full energy and dissipation, $\calE_n$ and $\calD_n$, can be controlled by their horizontal counterparts $\ov{\calE_n^\sigma}$ and $\ov\calD_n$ up to some error terms that can be made small. We begin with the result for the dissipation.

\begin{thm}\label{thm:bootstrap-D}
Suppose that $\calE_n^\sigma\leq\delta$ for $\delta\in(0,1)$ given by Proposition \ref{prop:L-infinity}. Let $\calY_n$ be as defined in \eqref{eqn:define-Y}. If $\sum_{\ell=2}^{n+1}A\omega^\ell\lesssim 1$, then
\begin{equation}\label{est:comparison-D}
    \calD_n^\sigma\lesssim \calY_n + \ov\calD_n.
\end{equation}
\end{thm}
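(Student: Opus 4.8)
The plan is to establish \eqref{est:comparison-D} by the bootstrapping scheme standard for this class of free boundary problems (cf.\ the analogous comparison estimates in \cite{guo2013almost, tice2018asymptotic}): starting from $\ov\calD_n$, which by definition only controls $\mathbb D\pt^\al u$ for horizontal-temporal multi-indices, we promote control to full spatial regularity of $u$, to the pressure, and to the free-surface function using Korn's inequality together with the elliptic regularity theory for the Stokes and capillary problems recorded in Appendix \ref{section:appendix-b}. Since $\calD_n^\sigma$ (see \eqref{definition:full-D}) decomposes into contributions indexed by the number $j$ of temporal derivatives, the natural organization is a finite downward induction on $j = n, n-1, \dots, 0$.

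\emph{Korn and the Stokes step.} First I would apply Korn's inequality to $\pt^\al u$ for each $\al\in\N^{1+2}$ with $\abs\al\le 2n$; as these derivatives vanish on $\Sigma_b$ this gives $\sum_{\abs\al\le 2n}\norm{\pt^\al u}_1^2 \ls \ov\calD_n$, which in particular settles the top temporal term $\norm{\pt_t^n u}_1^2$ and will feed the estimate of $\norm{\pt_t^{n+1}\eta}_{1/2}^2$. For $0\le j\le n-1$ I would then regard $(\pt_t^j u,\pt_t^j p)$ as a solution of the stationary Stokes system coming from \eqref{equation:flat-I}, with interior forcing $\pt_t^j G^1 - \pt_t^{j+1}u$, divergence $\pt_t^j G^2$, and Neumann data $(-\sigma\Delta\pt_t^j\eta + g\pt_t^j\eta + \pt_t^j G^5)e_3 + \pt_t^j G^4$, and invoke elliptic regularity at order $2n-2j+1$ to obtain
\begin{equation*}
 \norm{\pt_t^j u}_{2n-2j+1}^2 + \norm{\pt_t^j p}_{2n-2j}^2 \ls \norm{\pt_t^{j+1}u}_{2n-2(j+1)+1}^2 + \norm{\pt_t^j G^1}_{2n-2j-1}^2 + \norm{\pt_t^j G^2}_{2n-2j}^2 + \sigma^2\norm{\pt_t^j\eta}_{2n-2j+3/2}^2 + \norm{\pt_t^j\eta}_{2n-2j-1/2}^2 + \norm{\pt_t^j G^4}_{H^{2n-2j-1/2}(\Sigma)}^2.
\end{equation*}
Here the $\pt_t^{j+1}u$ term is supplied by the previous induction stage, the $G^1, G^2, G^4$ terms are exactly the time-order-$j$ pieces of $\calY_n$, and the $G^5$ contribution $\norm{\pt_t^j(A\omega^2 f''(\omega t)\eta)}_{H^{2n-2j-1/2}(\Sigma)}^2$ is controlled by $(\sum_{\ell=2}^{n+1}A\omega^\ell)^2$ times lower-order surface norms of $\eta$ and absorbed.

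\emph{The surface function.} The two residual $\eta$-terms in the Stokes estimate are closed by applying elliptic regularity for the capillary operator $-\sigma\Delta + g$ — whose estimate $\norm{\zeta}_s^2 + \sigma^2\norm{\zeta}_{s+2}^2 \ls \norm{-\sigma\Delta\zeta + g\zeta}_s^2$ furnishes precisely the $\sigma^2$-weighted norm appearing in $\calD_n^\sigma$ — to the dynamic boundary condition of \eqref{equation:flat-I} after $\pt_t^j$, with $s = 2n-2j-1/2$; the right-hand side involves $\norm{\pt_t^j p}_{H^s(\Sigma)}$, $\norm{\pt_t^j u}_{H^{s+1}(\Sigma)}$, $\norm{\pt_t^j G^4}_{H^s(\Sigma)}$, and a $G^5$ term, the first two of which are controlled by the Stokes output via the trace theorem and the last of which is again absorbed via the oscillation hypothesis. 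Combining this with the Stokes estimate and absorbing the coupled $\eta$ and $(u,p)$ terms closes the induction. Finally, the remaining surface terms of $\calD_n^\sigma$ — $\norm{\pt_t^j\eta}_{2n-2j+5/2}^2$ for $3\le j\le n+1$, $\norm{\pt_t\eta}_{2n-1}^2$, $\norm{\pt_t^2\eta}_{2n-2}^2$, $\sigma^2\norm{\pt_t\eta}_{2n+1/2}^2$, $\sigma^2\norm{\pt_t^2\eta}_{2n-3/2}^2$ — follow directly from the kinematic condition $\pt_t^{j+1}\eta = \pt_t^j u_3 + \pt_t^j G^3$ and the trace theorem, the forcing being absorbed by precisely the $G^3$-norms built into $\calY_n$ in \eqref{eqn:define-Y}.

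The main obstacle is that the Stokes and capillary estimates are genuinely coupled — Stokes needs $\eta$ as Neumann data, the capillary problem needs the trace of $(u,p)$ — so the real content is to interleave the two (either as a coupled linear system at each induction stage, or by first extracting all of the $(u,p)$ bounds and then all of the $\eta$ bounds) so that the circular terms can be absorbed into the left-hand side with uniform constants. Making this absorption work transparently forces careful tracking both of the $\sigma$-weights (in particular the factor $\sigma^2$ on every high-regularity capillary term) and of the powers of $A$ and $\omega$ produced by differentiating $G^5 = A\omega^2 f''(\omega t)\eta$; this $G^5$ term, absent in the $f=0$ analysis of \cite{tice2018asymptotic}, is the one genuinely new ingredient, and it is handled throughout by the standing hypothesis $\sum_{\ell=2}^{n+1}A\omega^\ell \ls 1$.
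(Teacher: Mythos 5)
Your overall architecture (Korn, then Stokes elliptic regularity, then capillary regularity for $\eta$ via the dynamic boundary condition, then the kinematic condition for the remaining time derivatives of $\eta$) matches the paper's, but there is a genuine gap at the Stokes step, and you have correctly located it without resolving it. You propose to treat $(\pt_t^j u,\pt_t^j p)$ as a solution of the Stokes system with \emph{stress} (Neumann) boundary data $(-\sigma\Delta\pt_t^j\eta+g\pt_t^j\eta+\pt_t^j G^5)e_3+\pt_t^j G^4$, which places $\sigma^2\norm{\pt_t^j\eta}_{2n-2j+3/2}^2+\norm{\pt_t^j\eta}_{2n-2j-1/2}^2$ on the right of the Stokes estimate; the capillary estimate then returns exactly these $\eta$ norms bounded by the traces of $\pt_t^j p$ and $\nabla\pt_t^j u$ at matching regularity. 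Substituting one into the other yields an inequality of the form $X\lesssim CX$ with $C$ a universal elliptic-regularity/trace constant that is in no way small, so the ``absorption into the left-hand side with uniform constants'' that you invoke is not available: there is no small parameter multiplying the coupling terms. (This is different from the situation in Theorem \ref{thm:bootstrap-E}, where the stress boundary condition \emph{is} used, because there the needed $\eta$ norms are already supplied by $\ov{\calE_n^\sigma}$ in a preliminary step; $\ov\calD_n$ contains no $\eta$ terms at all, so that escape route is closed here.)

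The paper avoids the circularity entirely by using the Stokes problem with \emph{Dirichlet} boundary conditions (Theorem \ref{thm:stokes-dirichlet}): the boundary datum $\pt_t^j u\vert_\Sigma\in H^{2n-2j+1/2}(\Sigma)$ is already controlled by $\ov\calD_n$ through Korn's inequality and trace theory, since the horizontal derivatives $\pt_1,\pt_2$ span all derivatives on $\Sigma$. This decouples the velocity and pressure estimates from $\eta$ from the outset, and the capillary estimate is applied only afterward, with $(u,p)$ already in hand. The price of this route is that the Dirichlet Stokes estimate controls only $\nabla\pt_t^j p$, not $\pt_t^j p$ itself, so the paper first applies a horizontal derivative $\pt_i$ to the dynamic boundary condition, estimates $\pt_i\pt_t^j\eta$, recovers $\pt_t^j\eta$ by Poincar\'e using the zero-average condition \eqref{zero_avg}, and only then returns to the boundary condition to recover the full pressure. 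To repair your argument you should replace the stress-form Stokes system with the Dirichlet one (or else prove a genuinely coupled elliptic estimate for the Stokes--capillary system, which the paper's appendix does not provide). The rest of your outline --- the treatment of $G^5$ via $\sum_{\ell=2}^{n+1}A\omega^\ell\lesssim 1$ and the kinematic-equation control of the higher time derivatives of $\eta$ --- is consistent with the paper's proof.
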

\begin{proof}
We divide the proof into several steps.

\textbf{Step 1 -- Application of Korn's inequality:}
Korn's inequality tells us that
\begin{equation} \sum_{\substack{\alpha\in\mathbb N^{1+2} \\ \abs\alpha\leq 2n}}\norm{\partial^\al u}_1^2\lesssim \ov\calD_n.
\end{equation}
Since $\pt_1$ and $\pt_2$ account for all the spatial differential operators on $\Sigma$, we deduce from standard trace estimates that
\begin{equation}\label{estimate:korn-trace}
    \sum_{j=0}^n \norm{\pt_t^j u}_{H^{2n-2j+1/2}(\Sigma)}\lesssim\sum_{\substack{\alpha\in\mathbb N^{1+2} \\ \abs\alpha\leq 2n}}\norm{\pt^\al u}_{H^{1/2}(\Sigma)}^2\lesssim \ov\calD_n.
\end{equation}

\textbf{Step 2 -- Elliptic estimates for the Stokes problem:}
With \eqref{estimate:korn-trace} in hand, we can now use the elliptic theory associated to the Stokes problem to gain control of the velocity field and the pressure. For $j=0,1,\dots,n-1$ we have that $\pt_t^j(u,p,\eta)$ solve the PDE
\begin{equation}
\begin{cases}
    \diverge S\left(\pt_t^j u,\pt_t^j p\right) = \pt_t^j G^1 - \pt_t\left(\pt_t^j u\right) & \text{in $\Omega$} \\
    \diverge\left(\pt_t^j u\right) = \pt_t^j G^2 & \text{in $\Omega$} \\
    \pt_t^j u = \left.\pt_t^j u\right\vert_\Sigma & \text{on $\Sigma$} \\
    \pt_t^j u = 0 & \text{on $\Sigma_b$}
\end{cases}.
\end{equation}
We may then apply the Stokes problem elliptic regularity estimates in Theorem \ref{thm:stokes-dirichlet} to bound
\begin{equation}\label{eq:stokes-D}
    \norm{\pt_t^{n-1} u}_3^2 + \norm{\nabla \pt_t^{n-1}p}_1^2\lesssim \norm{\pt_t^n u}_1^2 + \norm{\pt_t^{n-1} u}_{H^{5/2}(\Sigma)}^2 + \norm{\pt_t^{n-1} G^1}_1^2 + \norm{\pt_t^{n-1} G^2}_2^2\lesssim \calY_n + \ov\calD_n.
\end{equation}
The control of $\pt_t^{n-1} u$ provided by this bound then allows us to control $\pt_t^{n-2} u$ in a similar manner. We thus proceed iteratively with Theorem \ref{thm:stokes-dirichlet} with $m=2n-2j-1$, counting down from $n-1$ temporal derivatives to $0$ temporal derivatives in order to deduce that
\begin{equation}
    \sum_{j=0}^{n-1}\norm{\pt_t^j u}_{2n-2j+1}^2+\norm{\nabla \pt_t^j p}_{2n-2j-1}^2\lesssim P(\sigma)\left(\calY_n + \ov\calD_n\right).
\end{equation}

\textbf{Step 3 -- Free surface function estimates:}
Next we derive estimates for the free surface function. Consider the dynamic boundary condition on $\Sigma$ to write
\begin{equation}\label{eqn:dynamic-boundary}
    \left[(pI - \mu\mathbb Du)e_3\right]\cdot e_3 = \left[(-\sigma\Delta\eta + (g+A\omega^2f''(\omega t))\eta)e_3 + G^4\right]\cdot e_3.
\end{equation}
Now for $i=1,2$ and $j=0,1,\dots,n-1$, apply $\partial_i\pt_t^j$ to the above and rearrange to obtain
\begin{equation}
\begin{aligned}
    &-\sigma\Delta\pt_i\pt_t^j\eta + (g+A\omega^2 f''(\omega t))\pt_i\pt_t^j\eta = -\sum_{0<\ell\leq j}\pt_t^\ell\left(A\omega^2f''(\omega t)\right)\pt_i\pt^{j-\ell}\eta \\ 
    &\hspace{10ex}+ \left(\pt_i\pt_t^j p - 2\mu\pt_3\pt_i\pt_t^j u_3\right) - \pt_i\pt_t^j G^4\cdot e_3.
\end{aligned}
\end{equation}
We then use this in the capillary operator estimate count up from $j=0,1,\dots,n-1$ in Theorem \ref{thm:capillary-est} and employ \eqref{eq:stokes-D} to see that
\begin{equation}
\begin{aligned}
    &\norm{\partial_i\pt_t^j\eta}_{2n-2j-3/2}^2+\sigma^2\norm{\partial_i\pt_t^j\eta}_{2n-2j+1/2}^2 \lesssim \norm{-\sum_{0<\ell\leq j}\pt_t^\ell\left(A\omega^2f''(\omega t)\right)\pt_i\pt^{j-\ell}\eta}_{2n-2j-3/2}^2 \\
    &\hspace{15ex}+ \norm{\left(\pt_i\pt_t^j p - 2\mu\pt_3\pt_i\pt_t^j u_3\right) - \pt_i\pt_t^j G^4\cdot e_3}_{H^{2n-2j-3/2}(\Sigma)}^2 \\
    &\hspace{5ex}\lesssim \sum_{\ell=0}^{j-1}\norm{\pt_i\pt^\ell\eta}_{2n-2j-3/2}^2 + \norm{\nabla\pt_t^j p}_{2n-2j-1}^2 + \norm{\pt_t^j u}_{2n-2j+1}^2 + \norm{\pt_t^j G^4}_{H^{2n-2j-1/2}(\Sigma)}^2\lesssim \calY + \ov\calD.
\end{aligned}
\end{equation}
Recall that $\eta$ has zero integral over $\Sigma$ via \eqref{zero_avg}, so by using Poincar\'e's inequality, we also obtain
\begin{equation}\label{est:dt-eta-Y+D}
    \sum_{j=0}^{n-1}\norm{\pt_t^j\eta}_{2n-2j-1/2}^2 + \sigma^2\norm{\pt_t^j\eta}_{2n-2j+3/2}^2\lesssim \sum_{j=0}^{n-1}\sum_{i=1}^2\norm{\pt_i\pt_t^j\eta}_{2n-3/2}^2+\sigma^2\norm{\pt_i\pt_t^j\eta}_{2n+1/2}^2\lesssim \calY + \ov\calD.
\end{equation}

Next we estimate $\pt_t^j\eta$ for $j=1,2,\dots,n+1$ by employing the kinematic boundary condition
\begin{equation}\label{eqn:kinematic-boundary}
    \pt_t^{j+1}\eta = \pt_t^j u_3 + \pt_t^j G^3.
\end{equation}
We first use this and \eqref{est:dt-eta-Y+D} to bound
\begin{equation}
\begin{aligned}
    \norm{\pt_t\eta}^2_{2n-1}\lesssim \norm{u_3}_{H^{2n-1}(\Sigma)}^2 + \norm{G^3}_{H^{2n-1}(\Sigma)}^2\lesssim \norm{u}_{2n-1/2}^2+\calY_n\lesssim \calY_n+\ov\calD_n
\end{aligned}
\end{equation}
and then multiply by $\sigma^2$ in order to derive the similar estimate
\begin{equation}
\begin{aligned}
    \sigma^2\norm{\pt_t\eta}^2_{2n+1/2} &\lesssim \sigma^2\norm{u_3}_{H^{2n+1/2}(\Sigma)}^2 + \sigma^2\norm{G^3}_{H^{2n+1/2}(\Sigma)}^2 \\
    &\lesssim \norm{u}_{2n+1}^2 + \sigma^2\norm{G^3}_{H^{2n+1/2}(\Sigma)}^2\lesssim \calY_n + \ov\calD_n.
\end{aligned}
\end{equation}

Next we use a similar argument to control $\pt_t^2\eta$:
\begin{equation}
    \norm{\pt_t^2\eta}_{2n-2}^2\lesssim \norm{\pt_t u_3}_{H^{2n-2}(\Sigma)}^2 + \norm{\pt_t G^3}_{H^{2n-2}(\Sigma)}^2\lesssim \norm{\pt_t u}_{2n-3/2}^2 + \norm{\pt_t G^3}_{H^{2n-2}(\Sigma)}^2\lesssim \calY_n + \ov\calD_n
\end{equation}
and
\begin{equation}
\begin{aligned}
    \sigma^2\norm{\pt_t^2\eta}_{2n-3/2}^2 &\lesssim \sigma^2\norm{\pt_t u_3}_{H^{2n-3/2}(\Sigma)}^2 + \sigma^2\norm{\pt_t G^3}_{H^{2n-3/2}(\Sigma)}^2 \\
    &\lesssim \norm{\pt_t u_3}_{2n-1}^2 + \sigma^2\norm{\pt_t G^3}_{H^{2n-3/2}(\Sigma)}^2\lesssim \calY_n + \ov\calD_n.
\end{aligned}
\end{equation}

With control of $\pt_t^2\eta$ in hand we can iterate to obtain control of $\pt_t^j$ for $j=3,4,\dots,n+1$. This yields the estimate
\begin{equation}
\begin{aligned}
    \sum_{j=3}^{n+1}\norm{\pt_t^j\eta}_{2n-2j+5/2}^2 &\lesssim \sum_{j=3}^{n+1}\norm{\pt_t^{j-1} u_3}_{H^{2n-2j+5/2}(\Sigma)}^2 + \norm{\pt_t^{j-1} G^3}_{H^{2n-2j+5/2}(\Sigma)}^2 \\
    &= \sum_{j=2}^{n}\norm{\pt_t^j u}_{2n-2j+1}^2 + \norm{\pt_t^j G^3}_{H^{2n-2j+1/2}(\Sigma)}^2 \lesssim \calY_n + \ov\calD_n.
\end{aligned}
\end{equation}


Summing the above bounds then shows the following surface function estimate:
\begin{equation}\label{est:surface-function}
\begin{aligned}
    &\norm{\pt_t\eta}^2_{2n-1} +\sigma^2\norm{\pt_t\eta}^2_{2n+1/2} +  \norm{\pt_t^2\eta}_{2n-2}^2 +\sigma^2\norm{\pt_t^2\eta}_{2n-3/2}^2 \\
    &\hspace{10ex}+\sum_{j=0}^{n-1}\left(\norm{\pt_t^j\eta}_{2n-2j-1/2}^2+\sigma^2\norm{\pt_t^j\eta}_{2n-2j+3/2}^2\right)+\sum_{j=3}^{n+1}\norm{\pt_t^j\eta}_{2n-2j+5/2}^2\lesssim \calY_n + \ov\calD_n.
\end{aligned}
\end{equation}

\textbf{Step 4 -- Improved pressure estimates:}
We now return to \eqref{eqn:dynamic-boundary} with \eqref{est:surface-function} in hand in order to improve our estimates for the pressure. Applying $\pt_t^j$ for $j = 0,1,\dots,n-1$ shows that
\begin{equation}
    \pt_t^j p = -\sigma\Delta\pt_t^j\eta + g\pt_t^j\eta + \pt_t^j\left(A\omega^2f''(\omega t)\eta\right) + 2\pt_3\pt_t^j u_3 + \pt_t^j G^4\cdot e_3.
\end{equation}
We then use this with \eqref{est:dt-eta-Y+D} to bound
\begin{equation}
    \sum_{j=0}^{n-1}\norm{\pt_t^j p}_{H^0(\Sigma)}^2\lesssim \sum_{j=0}^{n-1}\norm{\pt_t^j\eta}_0^2 + \sigma^2\norm{\Delta\pt_t^j\eta}_2^2 + \norm{\pt_t^j u}_2^2 + \norm{\pt_t^j G^4}_{H^0(\Sigma)}^2\lesssim \calY_n + \ov\calD_n.
\end{equation}
Now by a Poincar\'e-type inequality, 
\begin{equation}
    \sum_{j=0}^{n-1}\norm{\pt_t^j p}_0^2\lesssim \sum_{j=0}^{n-1}\norm{\nabla\pt_t^j p}_0^2 + \norm{\pt_t^j p}_{H^0(\Sigma)}^2\lesssim \calY_n + \ov\calD_n.
\end{equation}
Hence
\begin{equation}
    \sum_{j=0}^{n-1}\norm{\pt_t^j p}_{2n-2j}^2\lesssim \sum_{j=0}^{n-1}\norm{\pt_t^j p}_0^2 + \norm{\nabla\pt_t^j p}_{2n-2j-1}^2\lesssim \calY_n + \ov\calD_n.
\end{equation}

\textbf{Step 5 -- Conclusion:}
The estimate \eqref{est:comparison-D} now follows by combining the above bounds.

\end{proof}

We now explore the counterpart for the energy.

\begin{thm}\label{thm:bootstrap-E}
Suppose that $\calE_n^\sigma\leq\delta$ for $\delta\in(0,1)$ given by Proposition \ref{prop:L-infinity}. Let $\calW_n$ be as defined in \eqref{eqn:define-W}. If $\sum_{\ell=2}^{n+1}A\omega^\ell\lesssim 1$, then there exists a polynomial $P$ with nonnegative universal coefficients such that
\begin{equation}\label{est:comparison-E}
    {\mathcal E_n^\sigma} \lesssim P(\sigma)\left(\mathcal W_n + \overline{\mathcal E_n^\sigma}\right).
\end{equation}
\end{thm}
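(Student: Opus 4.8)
The plan is to mirror the proof of Theorem \ref{thm:bootstrap-D}, but now working with the \emph{energy} functional $\calE_n^\sigma$ in place of the dissipation and using $\overline{\calE_n^\sigma}$ and $\calW_n$ on the right side. The starting point is that $\overline{\calE_n^\sigma}$ already controls $\sum_{|\alpha|\le 2n}(\norm{\pt^\alpha u}_0^2 + g\norm{\pt^\alpha\eta}_0^2 + \sigma\norm{\nabla\pt^\alpha\eta}_0^2)$; in particular it controls the full-time-derivative quantities $\norm{\pt_t^j u}_0^2$ and $\norm{\pt_t^j\eta}_0^2$ for $0\le j\le n$ (plus the $\sigma$-weighted $\norm{\pt_t^j\nabla\eta}_0^2$). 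As before, trace theory then gives $\sum_{j=0}^n\norm{\pt_t^j u}_{H^{1/2}(\Sigma)}^2$ type control, but here only at order $1/2$ since we start from $H^1$ bounds that are \emph{not} available from $\overline{\calE_n^\sigma}$ — so actually we must be careful: unlike the dissipation case, the horizontal energy does not give $\norm{\pt^\alpha u}_1$. What it gives is $\norm{\pt^\alpha u}_0$, hence by trace $\norm{\pt^\alpha u}_{H^{-1/2}(\Sigma)}$ only. So the Stokes step must be run one order lower than in Theorem \ref{thm:bootstrap-D}, which is exactly why $\calW_n$ is defined with indices shifted down by one relative to $\calY_n$.

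The concrete steps I would carry out are: (Step 1) Record that $\overline{\calE_n^\sigma}$ controls all the pure $\pt_t^j$ terms of $u$ and $\eta$ in $L^2$, and via trace $\sum_{j=0}^{n}\norm{\pt_t^j u}_{H^{2n-2j-1/2}(\Sigma)}^2\lesssim\overline{\calE_n^\sigma}$ (using that $\pt_1,\pt_2$ exhaust the tangential derivatives and $\abs\alpha\le 2n$, $\alpha_0<n$ accounts for up to $2n-2j$ horizontal derivatives on $\pt_t^j u$, while the $\alpha_0=n$ case is pure time and sits in $L^2(\Omega)$). (Step 2) Apply the Stokes elliptic regularity of Theorem \ref{thm:stokes-dirichlet} to $\pt_t^j(u,p)$ for $j=n-1,n-2,\dots,0$, using $\pt_t(\pt_t^j u)$ — which at the top level is $\pt_t^n u$, controlled in $L^2$ by $\overline{\calE_n^\sigma}$ — and the $G^1,G^2$ forcing at the orders appearing in $\calW_n$, together with the boundary data $\norm{\pt_t^j u}_{H^{2n-2j-1/2}(\Sigma)}$; this yields $\sum_{j=0}^n\norm{\pt_t^j u}_{2n-2j}^2 + \sum_{j=0}^{n-1}\norm{\nabla\pt_t^j p}_{2n-2j-2}^2\lesssim P(\sigma)(\calW_n+\overline{\calE_n^\sigma})$. (Step 3) Use the dynamic boundary condition \eqref{eqn:dynamic-boundary}, differentiated by $\pt_i\pt_t^j$, together with the capillary estimate Theorem \ref{thm:capillary-est} (counting up $j=0,\dots,n-1$), absorbing the lower-order $A\omega^\ell$ terms via $\sum_{\ell=2}^{n+1}A\omega^\ell\lesssim 1$, to control $\norm{\pt_t^j\eta}_{2n-2j+1}^2$ and $\sigma\norm{\eta}_{2n-2j+1}^2$ and (after Poincaré, using \eqref{zero_avg}) the $\norm{\eta}_{2n}^2$ and $\sigma$-weighted terms. (Step 4) Use the kinematic boundary condition \eqref{eqn:kinematic-boundary}, $\pt_t^{j+1}\eta = \pt_t^j u_3 + \pt_t^j G^3$, to convert the velocity control from Step 2 into control of $\norm{\pt_t^j\eta}_{2n-2j+3/2}^2$ for $j=1,\dots,n$, using the $G^3$-terms in $\calW_n$. (Step 5) Feed Steps 3–4 back into the dynamic boundary condition solved for $\pt_t^j p$ on $\Sigma$, then a Poincaré-type inequality, to upgrade the pressure to $\sum_{j=0}^{n-1}\norm{\pt_t^j p}_{2n-2j-1}^2$. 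Summing gives \eqref{est:comparison-E}.

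The main obstacle — and the essential structural difference from Theorem \ref{thm:bootstrap-D} — is the one-derivative loss: because $\overline{\calE_n^\sigma}$ only provides $L^2$, not $H^1$, control of $\pt^\alpha u$, every elliptic regularity application lands at one order lower than in the dissipation estimate, and one must check that $\calW_n$ as defined in \eqref{eqn:define-W} is precisely tuned so that the $G^i$ forcing norms needed are the ones available, and that the bootstrap still closes (in particular that $\pt_t^n u$ appearing as the time-derivative forcing in the top Stokes problem really is controlled by $\overline{\calE_n^\sigma}$ in $L^2(\Omega)$, which it is). One also has to be slightly careful with the $\sigma$-weighted terms in $\calE_n^\sigma$: since $\overline{\calE_n^\sigma}$ only carries $\sigma$ to the first power on $\norm{\nabla\pt^\alpha\eta}_0^2$ while $\calD_n^\sigma$ carries $\sigma^2$, the capillary estimates produce the correct $\sigma$ and $\sigma^2$ weights appearing in \eqref{definition:full-E}, and the polynomial $P(\sigma)$ absorbs any mismatch. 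Apart from bookkeeping of indices and $\sigma$ powers, each individual step is routine given Theorems \ref{thm:stokes-dirichlet}, \ref{thm:capillary-est}, and \ref{thm:bound-nonlin-G}.
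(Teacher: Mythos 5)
Your overall architecture (elliptic bootstrap downward in time derivatives, then the kinematic boundary condition for $\pt_t^j\eta$, then pressure recovery) resembles the paper's, but there is a genuine gap at the foundation of your Step 2, and it is precisely the issue you flagged in your own preamble and then did not resolve. You assert in Step 1 that trace theory gives $\sum_{j=0}^{n}\norm{\pt_t^j u}_{H^{2n-2j-1/2}(\Sigma)}^2\lesssim\overline{\calE_n^\sigma}$. This is false: $\overline{\calE_n^\sigma}$ only controls $\norm{\pt_t^j\pt_1^{a}\pt_2^{b}u}_{L^2(\Omega)}$ for $a+b\le 2n-2j$, with no control of vertical derivatives, so no trace on $\Sigma$ of positive (or even well-defined negative) order is available. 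In the dissipation comparison (Theorem \ref{thm:bootstrap-D}) the analogous step works only because Korn's inequality upgrades $\norm{\mathbb D\pt^\alpha u}_0$ to the full $H^1(\Omega)$ norm, after which the trace lands in $H^{1/2}(\Sigma)$; the horizontal energy has no such mechanism. Without this boundary data the Dirichlet--Stokes bootstrap of your Step 2 cannot be started, and the subsequent steps collapse.

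The paper's proof avoids the trace of $u$ entirely: it applies the Stokes elliptic estimates with \emph{stress} boundary conditions (Theorem \ref{est:stokes-stress}) rather than Dirichlet conditions, so the boundary data is $(\pt_t^j p\,I-\mu\mathbb D\pt_t^j u)e_3=(-\sigma\Delta\pt_t^j\eta+g\pt_t^j\eta+\pt_t^jG^5)e_3+\pt_t^jG^4$, which is controlled by the surface-function bounds $\norm{\pt_t^j\eta}_{2n-2j}^2+\sigma\norm{\pt_t^j\eta}_{2n-2j+1}^2\lesssim\overline{\calE_n^\sigma}$ (obtained directly from Poincar\'e, since $\pt_t^j\eta$ has zero average) together with the $G^4$ term of $\calW_n$ and the $A\omega^\ell$ smallness for $G^5$. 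This route also returns $\norm{\pt_t^jp}_{2n-2j-1}$ directly rather than only $\nabla\pt_t^jp$, so no separate pressure-recovery step is needed, and it makes your capillary-operator Step 3 unnecessary (the $\eta$ energy terms come from Poincar\'e, not from elliptic regularity for $-\sigma\Delta+g$, which is only needed in the dissipation comparison). If you replace your Steps 1--3 and 5 with this stress-condition argument, your Step 4 (kinematic boundary condition for $\pt_t^j\eta$, $j\ge1$) is correct and matches the paper.
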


\begin{proof}
We divide the proof into several steps.

\textbf{Step 1 -- Initial free surface terms:}
To begin, note that
\begin{equation}
    \sum_{\substack{\al\in\mathbb N^{1+2} \\ \abs\al\leq 2n}}\norm{\pt^\al\eta}_0^2 + \sigma\norm{\nabla\pt^\al\eta}_0^2 \lesssim \sum_{j=0}^{n}\norm{\pt_t^j\eta}_{2n-2j}^2 + \sigma\norm{\nabla\pt_t^j\eta}_{2n-2j}^2. 
\end{equation}
Since $\pt_t^j\eta$ has zero integral, we can then use Poincar\'e's inequality to conclude that
\begin{equation}\label{est:surface-function-init-E}
    \sum_{j=0}^n \norm{\pt_t^j\eta}_{2n-2j}^2 + \sigma\norm{\pt_t^j\eta}_{2n-2j+1}^2\lesssim \sum_{j=0}^n\norm{\pt_t^j\eta}_{2n-2j}^2 + \sigma\norm{\nabla\pt_t^j\eta}_{2n-2j}^2\lesssim \ov{\calE_n^\sigma}.
\end{equation}

\textbf{Step 2 -- Elliptic estimates:} Rewrite the flattened equations in \eqref{system:ns-flattened} as
\begin{equation}
\begin{cases}
    \nabla p - \mu\Delta u = G^1 - \partial_t u & \text{in $\Omega$} \\
    \diverge u = G^2 & \text{in $\Omega$} \\
    \partial_t\eta = u_3 + G^3 & \text{on $\Sigma$} \\
    (pI - \mu\mathbb Du)e_3 = (-\sigma\Delta\eta + g\eta + G^5)e_3 + G^4 & \text{on $\Sigma$} \\
    u = 0 & \text{on $\Sigma_b$}
\end{cases}.
\end{equation}
Note that in particular $(\partial_t^ju,\partial_t^jp,\partial_t^j\eta)$ for $j=1,2,\ldots,n-1$ satisfy the PDE
\begin{equation}
\begin{cases}
    \nabla \partial_t^jp - \mu\Delta \partial_t^ju = \partial_t^jG^1 - \partial_t^{j+1} u & \text{in $\Omega$} \\
    \diverge \partial_t^ju = \partial_t^jG^2 & \text{in $\Omega$} \\
    \partial_t^{j+1}\eta = \partial_t^ju_3 + \partial_t^jG^3 & \text{on $\Sigma$} \\
    (\partial_t^jpI - \mu\mathbb D\partial_t^ju)e_3 = (-\sigma\Delta\partial_t^j\eta + g\partial_t^j\eta + \partial_t^jG^5)e_3 + \partial_t^jG^4 & \text{on $\Sigma$} \\
    v = 0 & \text{on $\Sigma_b$}
\end{cases}.
\end{equation}
We may appeal to the elliptic estimates for the Stokes problem with stress boundary conditions \eqref{est:stokes-stress} to obtain
\begin{equation}
    \begin{aligned}
        \norm{\partial_t^{n-1}u}_2^2 + \norm {\partial_t^{n-1}p}_1^2&\lesssim \norm{\partial_t^{n-1}G^1 - \partial_t^n u}_0^2 + \norm{\partial_t^{n-1}G^2}_1^2 \\
        &\hspace{10ex}+ \norm{(-\sigma\Delta\partial_t^{n-1}\eta + g\partial_t^{n-1}\eta + \partial_t^{n-1}G^5)e_3 + \partial_t^{n-1}G^4}_{1/2}^2 \\
        &\lesssim \norm{\partial_t^{n-1}G^1}_0^2 + \norm{\partial_t^n u}_0^2 + \norm{\partial_t^{n-1}G^2}_1^2 \\
        &\hspace{10ex}+  \norm{\partial_t^{n-1}\eta}_{1/2}^2 +\sigma^2\norm{\partial_t^{n-1}\eta}_{5/2}^2 + \norm{\partial_t^{n-1}G^5}_{1/2}^2 + \norm{\partial_t^{n-1}G^4}_{H^{1/2}(\Sigma)}^2.
    \end{aligned}
\end{equation}
For the $G^5$ term we bound
\begin{equation}
\begin{aligned}
     \norm{\partial_t^{n-1}G^5}_{1/2}^2
    &\leq\sum_{0\leq\ell\leq n-1}\norm{A\omega^{\ell + 2}f^{(\ell + 2)}(\omega t)\partial_t^{(n-1)-\ell}\eta}_{1/2}^2\lesssim \sum_{0\leq\ell\leq n-1}\norm{\partial_t^\ell\eta}_{1/2}^2.
\end{aligned}
\end{equation}
As a result, we have
\begin{equation}
\begin{aligned}
    &\norm{\partial_t^{n-1}u}_2^2 + \norm {\partial_t^{n-1}p}_1^2 \lesssim \norm{\partial_t^{n-1}G^1}_0^2 + \norm{\partial_t^n u}_0^2 + \norm{\partial_t^{n-1}G^2}_1^2 \\
    &\hspace{10ex}+  \norm{\partial_t^{n-1}\eta}_{1/2}^2 +\sigma^2\norm{\partial_t^{n-1}\eta}_{5/2}^2 + \sum_{0\leq\ell\leq n-1}\norm{\partial_t^\ell\eta}_{1/2}^2 + \norm{\partial_t^{n-1}G^4}_{H^{1/2}(\Sigma)}^2 \lesssim P(\sigma)\left(\mathcal W_n + \overline{\calE_n^\sigma}\right).
\end{aligned}
\end{equation}
We in turn may induct downward to get bounds on $\partial_t^ju$ and $\partial_t^j p$ for $j = n-2,\ldots, 1, 0$.  Doing so, we arrive at the bounds
\begin{equation}\label{est:surface-function-E}
\begin{aligned}
    &\sum_{j=0}^{n-1}\norm{\partial_t^j u}_{2n-2j}^2 + \norm{\partial_t^jp}_{2n-2j-1}^2 \\
    &\hspace{10ex}\lesssim \ov{\calE_n^\sigma} + \sum_{j=0}^{n-1}\norm{\pt_t^j G^1}_{2n-2j-2}^2 + \norm{\pt_t^j G^2}_{2n-2j-1}^2+ \norm{\pt_t^j G^4}_{H^{2n-2j-3/2}(\Sigma)}^2 \lesssim P(\sigma)\left(\calW_n + \ov{\calE_n^\sigma}\right).
\end{aligned}
\end{equation}

\textbf{Step 3 -- Improved estimates for time derivatives of the free surface function:} With the estimates of \eqref{est:surface-function-E} in hand, we can improve the estimates for the time derivatives of the free surface function by employing the kinematic boundary condition
\begin{equation}
    \pt_t^{j+1}\eta = \pt_t^j u_3 + \pt_t^j G^3
\end{equation}
for $j = 0,1,\dots,n-1$. Using this, trace theory, \eqref{est:surface-function-init-E}, and \eqref{est:surface-function-E} provides us with the estimate
\begin{equation}
    \norm{\pt_t\eta}_{2n-1/2}^2\lesssim \norm{u}_{2n}^2 + \norm{G^3}_{H^{2n-1/2}(\Sigma)}^2\lesssim \calW_n + \ov{\calE_n^\sigma}.
\end{equation}
We then iterate this argument to control $\pt_t^j\eta$ for $j = 0, 1, \dots, n-1$. This yields the bound
\begin{equation}
    \sum_{j=1}^{n}\norm{\pt_t^j\eta}_{2n-2j+3/2}^2 \lesssim \sum_{j=0}^{n-1}\norm{\pt_t^j u}_{2n-2j}^2 + \norm{\pt_t^j G^3}_{H^{2n-2j-1/2}(\Sigma)}^2\lesssim \calW_n + \ov{\calE_n^\sigma}.
\end{equation}

\textbf{Step 4 -- Conclusion:} The estimate in \eqref{est:comparison-E} now follows by combining the above bounds.

\end{proof}

\section{Vanishing surface tension problem}\label{section:vanishing-surface-tension}
In this section we complete the development of the a priori estimates for the vanishing surface tension problem and for the problem with zero surface tension. With these estimates in hand we then prove Theorems \ref{thm:main-vanishing-surface-tension} and \ref{thm:main-vanishing-surface-tension-limit}, which establish the existence of global-in-time decaying solutions and study the limit as surface tension vanishes. 

\subsection{Preliminaries}
Here we record a simple preliminary estimate that will be quite useful in the subsequent analysis.
\begin{prop}\label{prop:max-bounds}
For $N\geq 3$ we have that
\begin{equation}
    \calK\lesssim\min\left\{\calE_{N+2}^0,\calD_{N+2}^0\right\},\qquad\calF_{N+2}\lesssim \calE_{2N}^0.
\end{equation}
\end{prop}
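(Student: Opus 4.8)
The plan is to prove each of the two inequalities in Proposition~\ref{prop:max-bounds} by unwinding the definitions \eqref{definition:calK}, \eqref{definition:F_n}, \eqref{definition:full-E}, and \eqref{definition:full-D}, and then matching each norm appearing in $\calK$ or $\calF_{N+2}$ against a norm that is manifestly a summand of the relevant energy or dissipation functional, keeping careful track of the regularity indices. Since $N\geq 3$ implies $N+2\geq 5$ and $2N\geq 2(N+2)-4\geq N+2$, there is plenty of slack in the Sobolev indices, so the only real work is bookkeeping together with a few applications of the trace inequality $H^{k}(\Omega)\hookrightarrow H^{k-1/2}(\Sigma)$ and of the Sobolev embeddings $H^2(\Omega)\hookrightarrow C_b^0(\Omega)$, $H^3(\Omega)\hookrightarrow C_b^1(\Omega)$, $H^4(\Omega)\hookrightarrow C_b^2(\Omega)$.

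First I would handle $\calK\lesssim \calE_{N+2}^0$. Recall $\calK = \norm{u}_{C_b^2(\Omega)}^2 + \norm{u}_{H^3(\Sigma)}^2 + \norm{p}_{H^3(\Sigma)}^2 + \norm{\eta}_{5/2}^2$. For the first term, Sobolev embedding gives $\norm{u}_{C_b^2(\Omega)}^2\lesssim \norm{u}_{H^4(\Omega)}^2$, and since $\calE_{N+2}^0\supseteq \norm{u}_{2(N+2)}^2$ with $2(N+2)\geq 10\geq 4$, this is controlled. For $\norm{u}_{H^3(\Sigma)}^2$, trace theory gives $\norm{u}_{H^3(\Sigma)}^2\lesssim \norm{u}_{H^{7/2}(\Omega)}^2\lesssim\norm{u}_{2(N+2)}^2$; similarly $\norm{p}_{H^3(\Sigma)}^2\lesssim\norm{p}_{H^{7/2}(\Omega)}^2$, and $\calE_{N+2}^0$ contains $\norm{p}_{2(N+2)-1}^2$ with $2(N+2)-1\geq 9\geq 7/2$. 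Finally $\norm{\eta}_{5/2}^2\leq \norm{\eta}_{2(N+2)}^2$, which is a summand of $\calE_{N+2}^0$ (the $\norm{\eta}_{2n}^2$ term). This proves $\calK\lesssim\calE_{N+2}^0$. For $\calK\lesssim\calD_{N+2}^0$ I would argue identically, noting that $\calD_{N+2}^0$ contains $\norm{u}_{2(N+2)+1}^2$, $\norm{p}_{2(N+2)}^2$ (from the $j=0$ term in $\sum_{j=0}^{n-1}\norm{\pt_t^j p}_{2n-2j}^2$), and $\norm{\eta}$-norms at regularity at least $2(N+2)-1/2\geq 9.5$ coming from the $\pt_t^0$-free surface terms (using Poincar\'e to include the full $H^k$ norm of $\eta$, as in Step~1 of Theorem~\ref{thm:bootstrap-D}); again every index needed for $\calK$ is dominated.

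For the second inequality, $\calF_{N+2} = \norm{\eta}_{2(N+2)+1/2}^2 = \norm{\eta}_{2N+9/2}^2$, and I would simply observe that $\calE_{2N}^0$ contains the summand $\norm{\eta}_{2(2N)}^2 = \norm{\eta}_{4N}^2$, and that $4N\geq 2N+9/2$ precisely when $2N\geq 9/2$, i.e.\ $N\geq 3$ (indeed $N\geq 3$ gives $4N-(2N+9/2)=2N-9/2\geq 3/2>0$). Hence $\norm{\eta}_{2N+9/2}^2\leq\norm{\eta}_{4N}^2\lesssim\calE_{2N}^0$. I do not anticipate a serious obstacle here: the entire proposition is an exercise in comparing regularity exponents, and the hypothesis $N\geq 3$ is exactly what guarantees all the needed inequalities between indices; the one point requiring a little care is making sure that when extracting $\eta$-norms from $\calD_{N+2}^0$ one picks the $j=0$ terms and, if the full $H^k$ norm (rather than a homogeneous seminorm) is wanted, invokes the zero-average condition \eqref{zero_avg} and Poincar\'e's inequality, exactly as is done elsewhere in the paper.
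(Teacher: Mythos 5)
Your proposal is correct and follows essentially the same route as the paper: both arguments reduce to comparing Sobolev indices via the embeddings $H^4(\Omega)\hookrightarrow C_b^2(\Omega)$ and trace theory, then observing that every index needed for $\calK$ and $\calF_{N+2}$ is dominated by a summand of $\calE_{N+2}^0$, $\calD_{N+2}^0$, or $\calE_{2N}^0$ once $N\geq 3$ (the paper simply routes through the intermediate bound $\calK\lesssim\calE_2^0\leq\calE_{N+2}^0$). Your extra caution about Poincar\'e for the $\eta$-terms in $\calD_{N+2}^0$ is harmless but unnecessary, since the norms appearing in \eqref{definition:full-D} are already full Sobolev norms.
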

\begin{proof}
    By Sobolev embeddings and trace theory, $\calK\lesssim \norm{u}_{7/2}^2+\norm{\eta}_{5/2}^2\leq \norm{u}_4^2 + \norm{\eta}_4^2$ and hence $\calK\lesssim \calE_2^0\leq \calE_{N+2}^0$ and $\calK\lesssim \calD_2^0\leq \calD_{N+2}^0$. On the other hand, $\calF_{N+2} = \norm{\eta}_{2N+4+1/2}^2\leq \norm{\eta}_{2N+5}^2$ and $2N+5\leq 4N$ for $N\geq 3$, so $\calF_{N+2}\leq \calE_{2N}^0$. 
\end{proof}

\subsection{Transport estimate}
We now turn to the issue of establishing structured estimates of the highest derivatives of $\eta$ by appealing to the kinematic transport equation.   

\begin{thm}\label{thm:F-upperbound}
Assume that $\calE_n^\sigma\leq\dt$ for the universal $\delta \in(0,1)$ given by Proposition \ref{prop:L-infinity}. Then
\begin{equation}\label{est:transport-est-main}
    \sup_{0\leq r\leq t}\calF_{2N}(r)\lesssim \exp\left(C\int_0^t\sqrt{\calK(r)}~dr\right)\left[\calF_{2N}(0) + t\int_0^t(1+\calE_{2N}^0)\calD_{2N}^0~dr + \left(\int_0^t\sqrt{\calK\calF_{2N}}~dr\right)^2\right].
\end{equation}
\end{thm}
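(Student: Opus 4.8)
The plan is to derive the estimate from the kinematic transport equation $\pt_t \eta = u \cdot \mathcal{N} = u_3 - u_1 \pt_1 \eta - u_2 \pt_2 \eta$ on $\Sigma$, treating it as a linear transport equation for $\eta$ driven by the horizontal velocity, with the vertical component $u_3$ as a forcing term. First I would fix a multi-index $\alpha$ with $|\alpha| = 2N$ purely spatial (since $\calF_{2N} = \norm{\eta}_{2N+1/2}^2$, I will actually want to work at the level of $H^{2N+1/2}(\Sigma)$, so either commute a fractional-derivative operator $\Lambda^{2N+1/2}$ past the equation or run a standard $H^s$ energy estimate for transport equations). Applying $\pt^\alpha$ (or $\Lambda^s$) to the transport equation and pairing with $\pt^\alpha \eta$ in $L^2(\Sigma)$, the top-order term $u_1 \pt_1 \pt^\alpha \eta \cdot \pt^\alpha \eta$ is integrated by parts in the horizontal variable to move one derivative off, producing a term controlled by $\norm{\nabla_{x'} u}_{L^\infty} \norm{\pt^\alpha \eta}_0^2 \lesssim \sqrt{\calK}\, \calF_{2N}$; all the commutator terms where derivatives are distributed are lower order and are likewise bounded by $\sqrt{\calK}\,\calF_{2N} + \text{(terms involving } \calD_{2N}^0)$ via product/commutator estimates and the definition of $\calK$ in \eqref{definition:calK}. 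This yields a differential inequality of Gronwall type
\begin{equation}
    \ddt \calF_{2N} \lesssim \sqrt{\calK}\,\calF_{2N} + (1 + \calE_{2N}^0)\calD_{2N}^0 + \sqrt{\calK \calF_{2N}}\,\sqrt{\calD_{2N}^0},
\end{equation}
where the middle term collects the forcing contributions from $u_3$ (estimated by trace theory, $\norm{u_3}_{H^{2N+1/2}(\Sigma)}^2 \lesssim \norm{u}_{2N+1}^2 \lesssim \calD_{2N}^0$) and the last term is the genuinely mixed contribution where a top-order derivative lands on $\eta$ but only $\calK$-level regularity on $u$ is available.

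Next I would integrate this differential inequality. Writing $G(t) = \int_0^t \sqrt{\calK(r)}\,dr$, the factor $\exp(C G(t))$ in \eqref{est:transport-est-main} is exactly what comes out of the integrating factor for the $\sqrt{\calK}\,\calF_{2N}$ term. After multiplying by $e^{-CG}$ and integrating, I get
\begin{equation}
    \calF_{2N}(t) \lesssim e^{CG(t)}\left[\calF_{2N}(0) + \int_0^t (1+\calE_{2N}^0)\calD_{2N}^0\,dr + \int_0^t \sqrt{\calK \calF_{2N}}\sqrt{\calD_{2N}^0}\,dr\right].
\end{equation}
For the first integral I bound it crudely by $t \sup_{[0,t]} \int_0^t (\cdots)$ — actually by $t \int_0^t (1+\calE_{2N}^0)\calD_{2N}^0\,dr$ after taking the sup over the upper limit, which matches the $t\int_0^t(\cdots)$ form in the statement. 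For the last integral, I would apply Cauchy–Schwarz in $r$: $\int_0^t \sqrt{\calK}\sqrt{\calF_{2N}}\sqrt{\calD_{2N}^0}\,dr \le \left(\int_0^t \sqrt{\calK \calF_{2N}}\,dr\right)^{1/2}\left(\int_0^t \sqrt{\calK\calF_{2N}}\,\calD_{2N}^0 / \sqrt{\calF_{2N}}\cdots\right)$ — more cleanly, one pulls $\sup_{[0,t]}\calF_{2N}$ partly out or uses Young's inequality to absorb a small multiple of $\sup_{[0,t]}\calF_{2N}$ into the left-hand side, which is legitimate since the implicit constant on the $\sqrt{\calK}$ term can be taken small after we restrict to the regime $\calE_n^\sigma \le \delta$ with $\delta$ small. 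Reorganizing and taking the supremum over $0 \le r \le t$ on the left produces precisely \eqref{est:transport-est-main}, where the squared quantity $\left(\int_0^t \sqrt{\calK \calF_{2N}}\,dr\right)^2$ is what survives from the Cauchy–Schwarz/Young step applied to the mixed integral.

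The main obstacle, and the step requiring the most care, is the top-order commutator analysis at the fractional level $s = 2N + 1/2$: one cannot simply apply $\pt^\alpha$ since $\calF_{2N}$ is a half-integer Sobolev norm, so I would use a Kato–Ponce-type commutator estimate for $[\Lambda^s, u_j \pt_j]\eta$ on the torus $\Sigma$, bounding it by $\norm{\nabla u}_{L^\infty}\norm{\eta}_{H^s} + \norm{\eta}_{W^{1,\infty}}\norm{u}_{H^s(\Sigma)}$; the first piece is the $\sqrt{\calK}\calF_{2N}$ term and the second, via trace theory $\norm{u}_{H^{s}(\Sigma)} \lesssim \norm{u}_{2N+1}$ and $\norm{\eta}_{W^{1,\infty}} \lesssim \norm{\eta}_{5/2} \lesssim \sqrt{\calK}$, is the mixed $\sqrt{\calK\calF_{2N}}\sqrt{\calD_{2N}^0}$ term. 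Care is also needed because $\calD_{2N}^0$ as defined in \eqref{definition:basic-D} only controls $\norm{\mathbb{D}\pt^\alpha u}_0$ on $\Omega$, so I must invoke Korn's inequality (as in the proof of Theorem \ref{thm:bootstrap-D}) plus trace estimates to extract $\norm{u}_{H^{2N+1/2}(\Sigma)}^2 \lesssim \calD_{2N}^0$ before these bounds are legitimate; this is routine but must be stated. Everything else is bookkeeping with the Gronwall inequality and Cauchy–Schwarz.
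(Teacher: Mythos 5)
Your route is essentially the paper's. The paper's proof of this theorem is a two\-line citation: it invokes Theorem 6.3 of \cite{tice2018asymptotic}, which rests on Danchin's fractional\-regularity a priori estimates for the transport equation \cite{danchin2005estimates} applied to the kinematic boundary condition $\pt_t\eta + u_1\pt_1\eta + u_2\pt_2\eta = u_3$ on $\Sigma$; your commutator/energy argument at the level $H^{2N+1/2}(\Sigma)$ is exactly a direct proof of that transport estimate in the case at hand, with the same identification of the drift ($\exp(C\int\sqrt{\calK})$ from $\|\nabla u\|_{L^\infty}\lesssim\sqrt{\calK}$), the forcing ($u_3$, controlled through $\calD_{2N}^0$), and the mixed commutator contribution. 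The one step you should tighten is the Gronwall bookkeeping: pairing $\Lambda^{2N+1/2}u_3$ with $\Lambda^{2N+1/2}\eta$ yields $\sqrt{\calD_{2N}^0}\sqrt{\calF_{2N}}$, which is \emph{not} bounded by $(1+\calE_{2N}^0)\calD_{2N}^0$, and your alternative of Young's inequality plus absorption of $\epsilon\sup\calF_{2N}$ is delicate because the absorption must beat the a priori unbounded factor $e^{C\int_0^t\sqrt{\calK}}$ (no smallness of $\calG_{2N}$ is assumed in this theorem, only $\calE_n^\sigma\leq\delta$). The clean fix is to run the inequality at the level of the norm rather than its square,
\begin{equation*}
\ddt\sqrt{\calF_{2N}}\lesssim\sqrt{\calK}\sqrt{\calF_{2N}}+\sqrt{\calD_{2N}^0}+\sqrt{\calK\calD_{2N}^0},
\end{equation*}
apply Gronwall to $\sqrt{\calF_{2N}}$, square, and use Cauchy--Schwarz in time: $\left(\int_0^t\sqrt{\calD_{2N}^0}\right)^2\leq t\int_0^t\calD_{2N}^0$ and $\left(\int_0^t\sqrt{\calK\calD_{2N}^0}\right)^2\leq t\int_0^t\calK\calD_{2N}^0\lesssim t\int_0^t\calE_{2N}^0\calD_{2N}^0$ (using $\calK\lesssim\calE_{2N}^0$ from Proposition \ref{prop:max-bounds}). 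This is precisely where the factor $t$ in $t\int_0^t(1+\calE_{2N}^0)\calD_{2N}^0$ comes from \--- it does not appear by ``taking the sup over the upper limit,'' and indeed for $t<1$ an estimate by $\int_0^t(1+\calE_{2N}^0)\calD_{2N}^0$ alone would not imply the stated bound. This is a repair of the bookkeeping, not of the idea; the overall structure of your argument is sound and matches the cited proof.
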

\begin{proof}
The argument used to prove Theorem 6.3 of \cite{tice2018asymptotic}, which is based on fractional regularity estimates for the transport equation proved by Danchin \cite{danchin2005estimates}, works here as well.  We refer to \cite{tice2018asymptotic} for details.
\end{proof}

Next we show that if we know a prior that $\calG_{2N}$ is small, then in fact it is possible to estimate $\calF_{2N}$ more strongly than is done in Theorem \ref{thm:F-upperbound}.

\begin{thm}\label{thm:F-sup-bound}
Let $\calG_{2N}^0$ be defined by \eqref{definition:calG} for $N\geq 3$.  There exists a universal $\delta\in(0,1)$ such that if $\calG_{2N}^0(T)\leq\delta$ and $\gamma\leq 1$, then
\begin{equation}\label{eqn:F-sup-bound}
    \sup_{0\leq r\leq t}\calF_{2N}(r)\lesssim\calF_{2N}(0) + t\int_0^t\calD_{2N}^0(r)\,dr
\end{equation}
for all $0\leq t\leq T$.
\end{thm}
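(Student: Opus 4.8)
The plan is to revisit the transport estimate of Theorem \ref{thm:F-upperbound} and exploit the a priori smallness of $\calG_{2N}^0(T)$ to turn the crude bounds there into the sharper inequality \eqref{eqn:F-sup-bound}. First I would recall that $\calG_{2N}^0(T) \leq \delta$ controls, uniformly in $r \in [0,T]$, the quantities $\sup_r \calE_{2N}^0(r)$, the integral $\int_0^T \calD_{2N}^0(r)\,dr$, the decaying quantity $(1+r)^{4N-8}\calE_{N+2}^0(r)$, and $\calF_{2N}(r)/(1+r)$. In particular, since $\calK \lesssim \calE_{N+2}^0$ by Proposition \ref{prop:max-bounds}, we get the pointwise decay $\calK(r) \lesssim \delta (1+r)^{-(4N-8)}$, and since $N \geq 3$ gives $4N-8 \geq 4 > 2$, the function $r \mapsto \sqrt{\calK(r)}$ is integrable on $[0,\infty)$ with $\int_0^\infty \sqrt{\calK(r)}\,dr \lesssim \sqrt\delta$. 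Hence the exponential prefactor $\exp(C\int_0^t \sqrt{\calK}\,dr)$ in \eqref{est:transport-est-main} is bounded by a universal constant (taking $\delta$ small enough), which removes one of the two problematic features of Theorem \ref{thm:F-upperbound}.

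The second step is to handle the last term in \eqref{est:transport-est-main}, namely $\left(\int_0^t \sqrt{\calK \calF_{2N}}\,dr\right)^2$. Here I would use the factorization $\sqrt{\calK \calF_{2N}} = \calK^{1/4} \cdot \calK^{1/4}\sqrt{\calF_{2N}}$ together with $\calF_{2N}(r) \lesssim \delta(1+r)$ from the definition of $\calG_{2N}^0$, so that $\calK^{1/4}\sqrt{\calF_{2N}} \lesssim \sqrt\delta$ after absorbing $\calK(r)^{1/4}(1+r)^{1/2} \lesssim \delta^{1/4}(1+r)^{-(N-2)+1/2}$, which is bounded since $N \geq 3$. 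Actually the cleanest route is: bound $\int_0^t \sqrt{\calK\calF_{2N}}\,dr \leq \big(\sup_{0\le r\le t}\calF_{2N}(r)\big)^{1/2}\int_0^t \sqrt{\calK(r)}\,dr \lesssim \sqrt\delta\,\big(\sup_{0\le r\le t}\calF_{2N}(r)\big)^{1/2}$, so that this term contributes $\lesssim \delta \sup_{0\le r\le t}\calF_{2N}(r)$, which for $\delta$ small can be absorbed into the left-hand side of \eqref{est:transport-est-main}.

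For the middle term, $t\int_0^t (1+\calE_{2N}^0)\calD_{2N}^0\,dr$, I would simply note $\calE_{2N}^0 \lesssim \calG_{2N}^0(T) \leq \delta \leq 1$, so $1 + \calE_{2N}^0 \lesssim 1$ and this term is $\lesssim t\int_0^t \calD_{2N}^0\,dr$, exactly the form required in \eqref{eqn:F-sup-bound}. Combining these three reductions with \eqref{est:transport-est-main} gives, for $\delta$ sufficiently small,
\begin{equation}
    \sup_{0\leq r\leq t}\calF_{2N}(r) \lesssim \calF_{2N}(0) + t\int_0^t \calD_{2N}^0(r)\,dr + \delta \sup_{0\leq r\leq t}\calF_{2N}(r),
\end{equation}
and absorbing the last term on the right into the left yields \eqref{eqn:F-sup-bound} for all $0 \leq t \leq T$. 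The main obstacle — and the only real subtlety — is verifying that the exponent $4N-8$ supplied by the slow-decay part of $\calG_{2N}^0$ is large enough ($>2$, and in the $\sqrt{\calK\calF_{2N}}$ term effectively $>1$ after pairing with the linear growth of $\calF_{2N}$) to make all the time integrals converge with a constant that is both universal and proportional to a positive power of $\delta$; this is precisely where the hypothesis $N \geq 3$ is used, and it is the reason this sharper estimate is only available once one already knows $\calG_{2N}^0$ is finite and small.
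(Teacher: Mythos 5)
Your proposal is correct and follows essentially the same route as the paper: bound the exponential prefactor via $\calK(r)\lesssim\delta(1+r)^{-(4N-8)}$ and the integrability of $(1+r)^{-(2N-4)}$, estimate $\left(\int_0^t\sqrt{\calK\calF_{2N}}\right)^2\lesssim\delta\sup_{0\le r\le t}\calF_{2N}(r)$ and absorb it into the left side, and use $1+\calE_{2N}^0\lesssim 1$ to reduce the middle term. The only cosmetic difference is that you make the $1+\calE_{2N}^0\lesssim 1$ step explicit and briefly float an unnecessary alternative factorization before settling on the same ``clean route'' the paper uses.
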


\begin{proof}
    According to Proposition \ref{prop:max-bounds} and the assumed bounds, we may estimate
    \begin{equation}\label{eqn:calG-one}
        \int_0^t\sqrt{\calK(r)}\,dr\lesssim\int_0^t\sqrt{\calE_{N+2}^0(r)}\,dr\leq\sqrt\delta\int_0^\infty\frac{1}{(1+r)^{2N-4}}\,dr\lesssim\sqrt\delta.
    \end{equation}
    Since $\delta\in(0,1)$, we thus have that for any universal $C > 0$
    \begin{equation}\label{eqn:calG-two}
        \exp\left(C\int_0^t\sqrt{\calK(r)}\,dr\right)\lesssim 1.
    \end{equation}
    Similarly,
    \begin{equation}\label{eqn:calG-three}
        \left(\int_0^t\sqrt{\calK(r)\calF_{2N}(r)}\,dr\right)^2\lesssim\left(\sup_{0\leq r\leq t}\calF_{2N}(r)\right)\left(\int_0^t\sqrt{\calE_{N+2}^0(r)}\,dr\right)^2\lesssim\left(\sup_{0\leq r\leq t}\calF_{2N}(r)\right)\delta.
    \end{equation}
    Then \eqref{eqn:calG-one}, \eqref{eqn:calG-two}, \eqref{eqn:calG-three}, and Theorem \ref{thm:F-upperbound} imply that
    \begin{equation}
        \sup_{0\leq r\leq t}\calF_{2N}(r)\leq C\left(\calF_{2N}(0) + t\int_0^t\calD_{2N}^0(r)\,dr\right) + C\delta\left(\sup_{0\leq r\leq t}\calF_{2N}(r)\right)
    \end{equation}
    for some $C > 0$. Then if $\delta$ is small enough so that $C\delta\leq\tfrac12$, we may absorb the right-hand $\calF_{2N}$ term onto the left and deduce \eqref{eqn:F-sup-bound}.
\end{proof}

\subsection{A priori estimates for $\calG_{2N}^\sigma$}
Our goal now is to complete our a priori estimates for $\calG_{2N}^\sigma$. We start with the bounds of the high-tier terms and $\calF_{2N}$. 

\begin{thm}\label{thm:high-tier}
There exist $\delta_0,\gamma_0\in(0,1)$ such that if $0\leq\sigma\leq 1$, $\calG_{2N}^\sigma(T)\leq \delta_0$, and $\sum_{\ell = 2}^{2N+2}A\omega^\ell\leq \gamma_0$, then
\begin{aligneq}
\sup_{0\leq r\leq t}\calE_{2N}^\sigma(r) + \int_0^t\calD_{2N}^\sigma(r)~dr + \sup_{0\leq r\leq t}\frac{\calF_{2N}(r)}{1+r}\lesssim \calE_{2N}^\sigma(0) + \calF_{2N}(0)
\end{aligneq}
for all $0\leq t\leq T$. 
\end{thm}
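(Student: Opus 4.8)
The strategy is the standard closure of the nonlinear energy method: combine the energy-dissipation evolution estimate (Theorem \ref{thm:nonlin-ed}) at level $n = 2N$ with the comparison estimates (Theorems \ref{thm:bootstrap-D} and \ref{thm:bootstrap-E}), the nonlinearity bounds (Theorems \ref{thm:bound-nonlin-F}, \ref{thm:bound-nonlin-G}), the transport estimate (Theorem \ref{thm:F-sup-bound}), and the auxiliary bounds $\calK\lesssim\min\{\calE_{N+2}^0,\calD_{N+2}^0\}$, $\calF_{N+2}\lesssim\calE_{2N}^0$ of Proposition \ref{prop:max-bounds}. Throughout, the smallness of $\calG_{2N}^\sigma(T)\leq\delta_0$ is what makes the various quadratic error terms absorbable, and the smallness of $\sum_{\ell=2}^{2N+2}A\omega^\ell\leq\gamma_0$ is what lets us absorb the plate-oscillation forcing into the dissipation.

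\textbf{Step 1: Horizontal energy evolution and absorption of the oscillation term.}
First I would apply Theorem \ref{thm:nonlin-ed} with $n = 2N$ to get
\begin{equation}
    \ddt\parens{\ov{\calE_{2N}^\sigma} + \calH_{2N}} + \ov{\calD_{2N}} \lesssim \left(\sum_{\ell=2}^{2N+2}A\omega^\ell\right)\calD_{2N}^0 + P(\sigma)\left(\sqrt{\calE_{2N}^0}\calD_{2N}^\sigma + \sqrt{\calD_{2N}^\sigma\calK\calF_{2N}}\right).
\end{equation}
Since $0\leq\sigma\leq 1$, the polynomial $P(\sigma)$ is bounded by a universal constant. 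Using $\calK\lesssim\calE_{N+2}^0$ and $\calF_{2N} = (1+r)\cdot\tfrac{\calF_{2N}}{1+r}$ together with the definition of $\calG_{2N}^\sigma$, the term $\sqrt{\calD_{2N}^\sigma\calK\calF_{2N}}$ is controlled by $\sqrt{\calD_{2N}^\sigma}\cdot\sqrt{\delta_0}$ after a Cauchy–Schwarz split; similarly $\sqrt{\calE_{2N}^0}\lesssim\sqrt{\delta_0}$. Next I invoke the comparison estimate $\calD_{2N}^\sigma\lesssim\calY_{2N} + \ov{\calD_{2N}}$ of Theorem \ref{thm:bootstrap-D} and the bound $\calY_{2N}\lesssim P(\sigma)(\calE_{2N}^0\calD_{2N}^\sigma + \calK\calF_{2N})$ of Theorem \ref{thm:bound-nonlin-G}: since $\calE_{2N}^0\leq\delta_0$ and $\calK\calF_{2N}\lesssim\delta_0\calD_{2N}^\sigma$ (again using $\calK\lesssim\calD_{N+2}^0\leq\calD_{2N}^\sigma$ and $\calF_{2N}\lesssim(1+r)\delta_0$... actually one must be a little careful: write $\calK\calF_{2N}\lesssim\calE_{N+2}^0\cdot\calF_{2N}$, bound $\calE_{N+2}^0\lesssim\delta_0(1+r)^{-(4N-8)}$, so $\calK\calF_{2N}\lesssim\delta_0(1+r)^{-(4N-8)}\calF_{2N}$ and then $\int_0^t\calK\calF_{2N}$ is controlled since $4N-8>0$), for $\delta_0$ small we can absorb the $\calY_{2N}$ contribution and conclude $\calD_{2N}^\sigma\lesssim\ov{\calD_{2N}}$. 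Plugging back, the oscillation term $(\sum A\omega^\ell)\calD_{2N}^0 \leq \gamma_0\calD_{2N}^\sigma\lesssim\gamma_0\ov{\calD_{2N}}$ is absorbed onto the left for $\gamma_0$ small, yielding
\begin{equation}
    \ddt\parens{\ov{\calE_{2N}^\sigma} + \calH_{2N}} + c\,\calD_{2N}^\sigma \lesssim \sqrt{\delta_0}\,\calD_{2N}^\sigma + \delta_0(1+r)^{-(4N-8)}\calF_{2N}
\end{equation}
for a universal $c > 0$; a further absorption of the $\sqrt{\delta_0}\calD_{2N}^\sigma$ term gives effectively $\ddt(\ov{\calE_{2N}^\sigma}+\calH_{2N}) + c'\calD_{2N}^\sigma \lesssim \delta_0(1+r)^{-(4N-8)}\calF_{2N}$.

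\textbf{Step 2: Integration in time, the $\calF_{2N}$ feedback, and closure.}
Integrating Step 1 from $0$ to $t$ and using the comparison estimate $\calE_{2N}^\sigma\lesssim P(\sigma)(\calW_{2N}+\ov{\calE_{2N}^\sigma})\lesssim\calW_{2N}+\ov{\calE_{2N}^\sigma}$ (Theorem \ref{thm:bootstrap-E}) together with $\calW_{2N}\lesssim\calE_{2N}^0\calE_{2N}^\sigma+\calK\calF_{2N}\lesssim\delta_0\calE_{2N}^\sigma+\delta_0\calF_{2N}$, plus the bound $|\calH_{2N}|\lesssim(\calE_{2N}^0)^{3/2}\lesssim\sqrt{\delta_0}\,\calE_{2N}^0$ from Proposition \ref{prop:bound-H} (which can thus be absorbed into $\calE_{2N}^\sigma$ on both ends), I obtain
\begin{equation}
    \sup_{0\leq r\leq t}\calE_{2N}^\sigma(r) + \int_0^t\calD_{2N}^\sigma(r)\,dr \lesssim \calE_{2N}^\sigma(0) + \delta_0\sup_{0\leq r\leq t}\calF_{2N}(r) + \delta_0\int_0^t(1+r)^{-(4N-8)}\calF_{2N}(r)\,dr.
\end{equation}
The term $\delta_0\int_0^t(1+r)^{-(4N-8)}\calF_{2N}\,dr\lesssim\delta_0\sup_{r}\tfrac{\calF_{2N}(r)}{1+r}\int_0^\infty(1+r)^{-(4N-9)}dr$ is finite and controlled by $\delta_0\sup_r\tfrac{\calF_{2N}}{1+r}$ once $4N-9>1$, i.e. $N\geq 3$; and $\delta_0\sup_r\calF_{2N}(r)\lesssim\delta_0(1+t)\sup_r\tfrac{\calF_{2N}}{1+r}$. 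To close the loop I invoke Theorem \ref{thm:F-sup-bound}: since $\calG_{2N}^\sigma(T)\leq\delta_0\leq\delta$ and $\gamma_0\leq1$, $\sup_{0\leq r\leq t}\calF_{2N}(r)\lesssim\calF_{2N}(0) + t\int_0^t\calD_{2N}^0(r)\,dr\lesssim\calF_{2N}(0) + t\int_0^t\calD_{2N}^\sigma(r)\,dr$. Dividing by $1+t$ gives $\sup_r\tfrac{\calF_{2N}(r)}{1+r}\lesssim\calF_{2N}(0)+\int_0^t\calD_{2N}^\sigma$, which combined with the displayed inequality above and taking $\delta_0$ small enough to absorb all $\delta_0$-weighted $\calF$ and $\int\calD$ terms onto the left gives exactly
\begin{equation}
    \sup_{0\leq r\leq t}\calE_{2N}^\sigma(r) + \int_0^t\calD_{2N}^\sigma(r)\,dr + \sup_{0\leq r\leq t}\frac{\calF_{2N}(r)}{1+r}\lesssim \calE_{2N}^\sigma(0) + \calF_{2N}(0).
\end{equation}

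\textbf{Main obstacle.}
The delicate point is the circular dependence between $\calF_{2N}$ and $\int_0^t\calD_{2N}^\sigma$: the energy estimate produces $\calF_{2N}$ on its right side (through $\calK\calF_{2N}$ in the nonlinearity bounds), while the transport estimate bounds $\calF_{2N}$ in terms of $t\int_0^t\calD_{2N}^\sigma$, which grows in $t$. The resolution is precisely the weighted quantity $\sup_r\calF_{2N}(r)/(1+r)$ built into $\calG_{2N}^\sigma$: the decay factor $(1+r)^{-(4N-8)}$ attached to $\calE_{N+2}^0$ (hence to $\calK$) must beat the linear growth of $\calF_{2N}$ in the integral $\int_0^t\calK\calF_{2N}\,dr$, which is exactly why the hypothesis $N\geq 3$ (so that $4N-8>0$ and, more sharply, $4N-9>1$) is needed. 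Tracking the constants carefully so that one genuinely small $\delta_0$ simultaneously controls the $\calH_{2N}$, $\calW_{2N}$, $\calY_{2N}$, and transport-feedback error terms, and so that one genuinely small $\gamma_0$ absorbs the oscillation forcing, is the bookkeeping heart of the argument; everything else is the now-standard machinery assembled in the preceding sections.
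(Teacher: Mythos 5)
Your overall strategy is the paper's: the horizontal energy evolution of Theorem \ref{thm:nonlin-ed} at $n=2N$, the comparison estimates of Theorems \ref{thm:bootstrap-D} and \ref{thm:bootstrap-E} combined with the nonlinearity bounds of Theorem \ref{thm:bound-nonlin-G} and Proposition \ref{prop:bound-H}, the decay $\calK\lesssim\calE_{N+2}^0\lesssim\delta_0(1+r)^{-(4N-8)}$ extracted from $\calG_{2N}^\sigma(T)\leq\delta_0$, and Theorem \ref{thm:F-sup-bound} to close the $\calF_{2N}$ loop. However, one step in your Step 2 fails as written. When you estimate $\calW_{2N}\lesssim\calE_{2N}^0\calE_{2N}^\sigma+\calK\calF_{2N}\lesssim\delta_0\calE_{2N}^\sigma+\delta_0\calF_{2N}$, you discard the time decay of $\calK$ and thereby produce the term $\delta_0\sup_{0\leq r\leq t}\calF_{2N}(r)$ on the right of your integrated inequality. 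You then control it via $\sup_r\calF_{2N}(r)\lesssim\calF_{2N}(0)+t\int_0^t\calD_{2N}^\sigma$ (equivalently, via $\sup_r\calF_{2N}\lesssim(1+t)\sup_r\calF_{2N}/(1+r)$), which yields $\delta_0\calF_{2N}(0)+\delta_0\,t\int_0^t\calD_{2N}^\sigma$. The factor $t$ is unbounded, so no fixed small $\delta_0$ lets you absorb $\delta_0\,t\int_0^t\calD_{2N}^\sigma$ into the left-hand $\int_0^t\calD_{2N}^\sigma$ uniformly in $t$; your closing claim that ``taking $\delta_0$ small enough'' absorbs all $\delta_0$-weighted terms is false for this one.

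The fix is exactly the ingredient you already deploy for the time-integrated term: keep the decay before taking the supremum, i.e.
\begin{equation}
\calK(r)\calF_{2N}(r)\lesssim\delta_0(1+r)^{-(4N-8)}\calF_{2N}(r)\leq\delta_0\,\frac{\calF_{2N}(r)}{1+r}\lesssim\delta_0\left(\calF_{2N}(0)+\int_0^t\calD_{2N}^0(s)\,ds\right),
\end{equation}
where the last step is Theorem \ref{thm:F-sup-bound} divided by $1+r$; the resulting term carries no growing factor and is absorbable, which is how the paper proceeds. A related minor imprecision: your intermediate claim $\calD_{2N}^\sigma\lesssim\ov{\calD_{2N}}$ is not available pointwise in time, since the $\calK\calF_{2N}$ part of $\calY_{2N}$ cannot be absorbed into $\calD_{2N}^\sigma$; the correct pointwise statement is $\calD_{2N}^\sigma\lesssim\ov{\calD_{2N}}+\calK\calF_{2N}$. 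Your subsequent displayed differential inequality does carry the $\calF_{2N}$ term, so nothing downstream breaks from this second point, but the absorption of the oscillation term should be phrased accordingly.
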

\begin{proof}
We first assume that $\delta_0$ is small as in Proposition \ref{prop:L-infinity}, and small as in Proposition \ref{prop:bound-H} so that $\abs{\calH_{2N}}\lesssim (\calE_{2N}^0)^{3/2}$.
\par We invoke Theorems \ref{thm:bootstrap-D} and \ref{thm:bootstrap-E} in order to bound
\begin{equation}
    \calE_{2N}^\sigma\lesssim\calW_{2N} + \ov{\calE_{2N}^\sigma}\qquad\text{and}\qquad \calD_{2N}^\sigma\lesssim\calY_{2N} + \ov{\calD_{2N}}.
\end{equation}
According to Theorem \ref{thm:bound-nonlin-G} we may then bound
\begin{equation}
    \calW_{2N}\lesssim\calE_{2N}^0\calE_{2N}^\sigma + \calK\calF_{2N}\qquad\text{and}\qquad\calY_{2N}\lesssim\calE_{2N}^0\calD_{2N}^\sigma + \calK\calF_{2N}.
\end{equation}
Upon combining the above two equations with the given bound for $\calH_{2N}$, we find that
\begin{equation}
    \calE_{2N}^\sigma\lesssim(\ov{\calE_{2N}^\sigma} + \calH_{2N}) + \calE_{2N}^0\calE_{2N}^\sigma + (\calE_{2N}^0)^{3/2} + \calK\calF_{2N}\quad\text{and}\quad\calD_{2N}^\sigma\lesssim\ov{\calD_{2N}} + \calE_{2N}^0\calD_{2N}^\sigma + \calK\calF_{2N},
\end{equation}
and consequently, if $\delta_0$ is assumed to be small enough we may absorb the $\calE_{2N}^0\calE_{2N}^\sigma + (\calE_{2N}^0)^{3/2}$ and $\calE_{2N}^0\calD_{2N}^\sigma$ terms onto the left to arrive at the bounds
\begin{equation}\label{est:calE-calD}
    \calE_{2N}^\sigma\lesssim(\ov{\calE_{2N}^\sigma} + \calH_{2N}) + \calK\calF_{2N}\qquad\text{and}\qquad \calD_{2N}^\sigma\lesssim\ov{\calD_{2N}} + \calK\calF_{2N}.
\end{equation}
We apply Theorem  \ref{thm:nonlin-ed} with $n=2N$ and integrate in time from $0$ to $t$ to see that
\begin{equation}
\begin{aligned}
    (\ov{\calE_{2N}^\sigma}(t) + \calH_{2N}(t)) + \int_0^t\ov{\calD_{2N}}(r)\,dr&\lesssim (\ov{\calE_{2N}^\sigma}(0) + \calH_{2N}(0)) + \left(\sum_{\ell = 2}^{2N+2}A\omega^\ell\right)\int_0^t\calD_{2N}^0(r)\,dr \\&\hspace{5ex}+ \int_0^t\sqrt{\calE_{2N}^0(r)}\calD_{2N}^\sigma(r)\,dr + \int_0^t\sqrt{\calD_{2N}^\sigma(r)\calK(r)\calF_{2N}(r)}\,dr.
\end{aligned}
\end{equation}
We then combine this with the estimate in \eqref{est:calE-calD} to arrive at the refined bound
\begin{equation}\label{est:refined-bound}
\begin{aligned}
    \calE_{2N}^\sigma(t) + \int_0^t\calD_{2N}^\sigma(r)\,dr&\lesssim\calE_{2N}^\sigma(0) + \left(\sum_{\ell = 2}^{2N+2}A\omega^\ell\right)\int_0^t\calD_{2N}^0(r)\,dr + \int_0^t\sqrt{\calE_{2N}^0(r)}\calD_{2N}^\sigma(r)\,dr \\&\hspace{20ex} +\int_0^t\left(\calK(r)\calF_{2N}(r) + \sqrt{\calD_{2N}^\sigma(r)\calK(r)\calF_{2N}(r)}\right)\,dr.
\end{aligned}
\end{equation}
We now turn our attention to the $\calK\calF_{2N}$ terms appearing on the right side of \eqref{est:refined-bound}.  To handle these we first note that $\calK\lesssim\calE_{N+2}^0$, as is shown in Proposition \ref{prop:max-bounds}.  Thus
\begin{equation}\label{est:K-bound}
    \calK(r)\lesssim\calE_{N+2}^0(r) = \frac{1}{(1+r)^{4N - 8}}(1+r)^{4N - 8}\calE_{N+2}^0\lesssim\frac{1}{(1+r)^{4N - 8}}\calG_{2N}(T)\lesssim\frac{\delta_0}{(1+r)^{4N - 8}}.
\end{equation}

Next we use Theorem \ref{thm:F-sup-bound} to see that for $0\leq r\leq t$ we can estimate
\begin{equation}\label{est:F-bound}
    \calF_{2N}(r)\lesssim\calF_{2N}(0) + (1+r)\int_0^r\calD_{2N}^0(s)\,ds.
\end{equation}
We may then combine \eqref{est:K-bound} and \eqref{est:F-bound} to estimate
\begin{equation}\label{eqn:KF-integral-bound}
\begin{aligned}
    \int_0^t\calK(r)\calF_{2N}(r)\,dr&\lesssim\delta_0\int_0^t\left(\frac{\calF_{2N}(0)}{(1+r)^{4N-8}} + \frac1{(1+r)^{4N-7}}\int_0^r\calD_{2N}^0(s)\,ds\right)\\
    &\lesssim\delta_0\calF_{2N}(0)\int_0^\infty\frac{dr}{(1+r)^{4N-8}} + \delta_0\left(\int_0^t\calD_{2N}^0(r)\,dr\right)\left(\int_0^\infty\frac{dr}{(1+r)^{4N-7}}\right)\\
    &\lesssim\delta_0\calF_{2N}(0) + \delta_0\int_0^t\calD_{2N}^0(r)\,dr,
\end{aligned}
\end{equation}
where here we have used $N\geq 3$ to guarantee that $(1+r)^{4N-8}$ and $(1+r)^{4N-7}$ are integrable on $(0,\infty)$.  Similarly, we may estimate
\begin{equation}\label{eqn:DKF-integral-bound}
\begin{aligned}
    &\int_0^t\sqrt{\calD_{2N}^\sigma(r)\calK(r)\calF_{2N}(r)}\,dr\leq\left(\int_0^t\calD_{2N}^\sigma(r)\,dr\right)^{1/2}\left(\int_0^t\calK(r)\calF_{2N}(r)\,dr\right)^{1/2}\\
    &\hspace{5ex}\lesssim\left(\int_0^t\calD_{2N}^\sigma(r)\,dr\right)^{1/2}\left(\delta_0\calF_{2N}(0) + \delta_0\int_0^t\calD_{2N}^0(r)\,dr\right)^{1/2} \\
    &\hspace{5ex}\lesssim\left(\calF_{2N}(0)+\int_0^t\calD_{2N}^\sigma(r)\,dr\right)^{1/2}\left(\delta_0\calF_{2N}(0) + \delta_0\int_0^t\calD_{2N}^\sigma(r)\,dr\right)^{1/2} \\
    &\hspace{5ex}\lesssim\sqrt{\delta_0}\calF_{2N}(0) + \sqrt{\delta_0}\int_0^t\calD_{2N}^\sigma(r)\,dr.
\end{aligned}
\end{equation}

Now we plug \eqref{eqn:KF-integral-bound} and \eqref{eqn:DKF-integral-bound} into \eqref{est:refined-bound}, bound $\calE_{2N}^0\leq\calG_{2N}^\sigma\leq\delta_0$, and use the fact that $\sqrt{\delta_0}\leq\delta_0$ due to $\delta_0 < 1$ to arrive at the bound
\begin{equation}
    \calE_{2N}^\sigma(t) + \int_0^t\calD_{2N}^\sigma(r)\,dr\lesssim \calE_{2N}(0) + \calF_{2N}(0) + \int_0^t\left(\sqrt\delta + \sum_{\ell = 2}^{2N+2}A\omega^\ell\right)\calD_{2N}^\sigma(r)\,dr.
\end{equation}
Thus if $\gamma_0,\delta_0\in(0,1)$ are chosen to be small enough, we may absorb the $\calD_{2N}^\sigma(r)$ integral term onto the left to deduce that
\begin{equation}\label{est:almost-thm}
    \calE_{2N}^\sigma(t) + \int_0^t\calD_{2N}^\sigma(r)\,dr\lesssim \calE_{2N}^\sigma(0) + \calF_{2N}(0).
\end{equation}
Upon combining \eqref{est:F-bound} and \eqref{est:almost-thm} we deduce that the desired inequality holds.
\end{proof}

Next we establish the algebraic decay results for the low-tier energy.
\begin{thm}\label{thm:low-tier}
There exists $\delta_0,\gamma_0\in(0,1)$ such that if $0\leq\sigma\leq 1$, $\sum_{\ell=2}^{N+4}A\omega^\ell\leq\gamma_0$, and $\calG_{2N}^\sigma(T)\leq\delta_0$, then
\begin{aligneq}\label{est:low-tier-decay-main}
\sup_{0\leq r\leq t}(1+r)^{4N-8}\calE_{N+2}^\sigma(r)\lesssim \calE_{2N}^\sigma(0) + \calF_{2N}(0)
\end{aligneq}
for all $0\leq t\leq T$. 
\end{thm}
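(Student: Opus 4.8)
The plan is to run the usual two-tier energy argument of \cite{guo2013almost,tice2018asymptotic}: apply the general a priori machinery at the \emph{low} tier $n=N+2$ to produce a closed differential inequality for a low-tier energy, then interpolate that energy between the low-tier dissipation and the (at most linearly growing) high-tier quantities to obtain a Bernoulli-type ODE whose integration produces the rate $(1+t)^{-(4N-8)}$. For the first step, since $N+2\le 2N$ we have $\calE_{N+2}^\sigma\le\calE_{2N}^\sigma\le\calG_{2N}^\sigma(T)\le\delta_0$, so shrinking $\delta_0$ below the thresholds of Propositions \ref{prop:L-infinity} and \ref{prop:bound-H} lets us apply everything with $n=N+2$ (the hypothesis $\sum_{\ell=2}^{N+3}A\omega^\ell\lesssim 1$ needed in Theorem \ref{thm:nonlin-ed} is implied by $\sum_{\ell=2}^{N+4}A\omega^\ell\le\gamma_0$). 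Theorem \ref{thm:nonlin-ed} with $n=N+2$ gives
\begin{equation}
    \ddt\left(\ov{\calE_{N+2}^\sigma}+\calH_{N+2}\right)+\ov{\calD_{N+2}}\lesssim\left(\sum_{\ell=2}^{N+4}A\omega^\ell\right)\calD_{N+2}^0+P(\sigma)\left(\sqrt{\calE_{N+2}^0}\,\calD_{N+2}^\sigma+\sqrt{\calD_{N+2}^\sigma\,\calK\,\calF_{N+2}}\right).
\end{equation}
The Faraday forcing is $\le\gamma_0\,\calD_{N+2}^\sigma$; for the nonlinear forcing use $\sigma\le1$ so $P(\sigma)\lesssim1$, $\calE_{N+2}^0\le\delta_0$, and Proposition \ref{prop:max-bounds} together with the high-tier bound (Theorem \ref{thm:high-tier}) to write $\calF_{N+2}\lesssim\calE_{2N}^0\le\delta_0$ and $\calK\lesssim\calD_{N+2}^0\le\calD_{N+2}^\sigma$, so this forcing is $\lesssim\sqrt{\delta_0}\,\calD_{N+2}^\sigma$. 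Using the comparison estimates (Theorems \ref{thm:bootstrap-D} and \ref{thm:bootstrap-E}) with Theorem \ref{thm:bound-nonlin-G} and Proposition \ref{prop:max-bounds}, exactly as in the proof of Theorem \ref{thm:high-tier}, one absorbs small terms to obtain $\calD_{N+2}^\sigma\lesssim\ov{\calD_{N+2}}$, $\calE_{N+2}^\sigma\lesssim\ov{\calE_{N+2}^\sigma}$, and (using $|\calH_{N+2}|\lesssim(\calE_{N+2}^0)^{3/2}$) that $\mathcal Z:=\ov{\calE_{N+2}^\sigma}+\calH_{N+2}$ is comparable to $\calE_{N+2}^\sigma$. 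Feeding this back and taking $\gamma_0,\delta_0$ small I obtain $\ddt\mathcal Z+c\,\calD_{N+2}^\sigma\le0$.

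Next I would prove an interpolation inequality of the form
\begin{equation}
    \calE_{N+2}^\sigma\lesssim\left(\calD_{N+2}^\sigma\right)^{1-\theta}\left(\calE_{2N}^\sigma+\calF_{2N}\right)^{\theta}
\end{equation}
by term-by-term Sobolev interpolation on $\Omega$ and $\Sigma$: the velocity and pressure norms in $\calE_{N+2}^\sigma$ are already dominated by $\calD_{N+2}^\sigma$, and the binding constraint comes from a top-order norm of $\eta$, the functionals being built so that the worst such term fixes $\theta$ to be the value satisfying $(1-2\theta)/\theta=4N-8$, i.e. $\theta=1/(4N-6)$; in particular the $\sigma$-weighted $\eta$-norms in $\calE_{N+2}^\sigma$ (whose dissipative counterparts carry only $\sigma^2$) must be interpolated against the $\sigma$-independent high quantity $\calF_{2N}$. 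By Theorem \ref{thm:high-tier} we have $\calE_{2N}^\sigma(r)+\calF_{2N}(r)\lesssim(1+r)M_0$ with $M_0:=\calE_{2N}^\sigma(0)+\calF_{2N}(0)$, so combining with the first step,
\begin{equation}
    \ddt\mathcal Z\le-c\,\calD_{N+2}^\sigma\le-c'\,\mathcal Z^{s}\bigl((1+r)M_0\bigr)^{-(s-1)},\qquad s:=\tfrac1{1-\theta}\in(1,2),\quad s-1=\tfrac1{4N-7}<1.
\end{equation}

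To finish, I would integrate this Bernoulli inequality: since $s-1<1$, rewriting it as $\ddt(\mathcal Z^{1-s})\ge c'(s-1)M_0^{1-s}(1+r)^{-(s-1)}$ and integrating from $0$ gives, using $\mathcal Z(0)\lesssim\calE_{2N}^\sigma(0)\lesssim M_0$ and $\int_0^t(1+r)^{-(s-1)}\,dr\gtrsim(1+t)^{2-s}$ for $t$ large, the bound $\mathcal Z(t)^{1-s}\gtrsim M_0^{1-s}(1+t)^{2-s}$; hence $\mathcal Z(t)\lesssim M_0(1+t)^{(2-s)/(1-s)}=M_0(1+t)^{-(4N-8)}$, while for bounded $t$ the same bound follows from the monotonicity $\mathcal Z(t)\le\mathcal Z(0)\lesssim M_0$. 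Since $\calE_{N+2}^\sigma\lesssim\mathcal Z$, taking the supremum over $r\le t$ yields \eqref{est:low-tier-decay-main}.

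The main obstacle is the interpolation inequality of the second step: one must verify it holds term by term with the \emph{sharp} exponent $\theta$ — precisely the value for which the time-weighted Bernoulli inequality integrates to $(1+t)^{-(4N-8)}$ rather than something slower — which is ultimately a statement about the internal compatibility of the definitions of $\ov{\calE_n^\sigma}$, $\calD_n^\sigma$ and $\calF_n$. The genuinely delicate point is the surface-tension-weighted boundary terms, where the energy carries a factor $\sigma$ but the dissipation only $\sigma^2$: these must be routed through the $\sigma$-free quantity $\calF_{2N}$, which by Theorem \ref{thm:high-tier} is only controlled by a linearly growing multiple of the data, forcing the time weight $(1+r)^{-(s-1)}$ in the ODE and the restriction $N\ge3$ (equivalently $\theta<\tfrac12$, so $s<2$ and $2-s>0$) that keeps its antiderivative unbounded. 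A secondary point is bookkeeping: one must check that every junk contribution — the Faraday forcing, the nonlinear forcing, and the $\calK\calF_{N+2}$ and $\calH_{N+2}$ terms — is absorbable, which is exactly where the smallness of $\gamma_0$ and $\delta_0$ and Proposition \ref{prop:max-bounds} are used.
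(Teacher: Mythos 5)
Your first step (the differential inequality $\tfrac{d}{dt}\mathcal Z+c\,\calD_{N+2}^\sigma\le 0$ with $\mathcal Z=\ov{\calE_{N+2}^\sigma}+\calH_{N+2}$ comparable to $\calE_{N+2}^\sigma$) and your final ODE integration are sound and match the paper's Steps 1 and 3. The gap is in your interpolation step. You claim a uniform bound $\calE_{N+2}^\sigma\lesssim(\calD_{N+2}^\sigma)^{1-\theta}(\calE_{2N}^\sigma+\calF_{2N})^\theta$ with $\theta=1/(4N-6)$, asserting that the only binding term is the top-order norm of $\eta$. It is not. The term $\norm{\pt_t\eta}_{2(N+2)-1/2}^2=\norm{\pt_t\eta}_{2N+7/2}^2$ in $\calE_{N+2}^\sigma$ is also not dominated by the dissipation (which only carries $\norm{\pt_t\eta}_{2N+3}^2$), and its only available high-tier anchor is $\norm{\pt_t\eta}_{4N-1/2}^2\in\calE_{2N}^\sigma$ — $\calF_{2N}$ contains no time derivatives of $\eta$. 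Interpolating $H^{2N+7/2}$ between $H^{2N+3}$ and $H^{4N-1/2}$ forces the high-tier exponent $\tfrac{1/2}{2N-7/2}=\tfrac{1}{4N-7}$, which is strictly larger than your $\tfrac{1}{4N-6}$. Passing from exponent $\tfrac{1}{4N-7}$ to $\tfrac{1}{4N-6}$ on the high factor would require $\calE_{2N}^\sigma\lesssim\calD_{N+2}^\sigma$, which is false (the low-tier dissipation decays while the high-tier energy merely stays bounded). So the claimed inequality fails term by term, and with it the single time-weighted Bernoulli ODE you integrate.

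The detour through $\calF_{2N}$ is also unnecessary and is what creates the mismatch. The paper anchors \emph{every} term at the time-\emph{bounded} quantity $\calE_{2N}^\sigma$ (which contains $\norm{\eta}_{4N}^2$ as well as $\norm{\pt_t\eta}_{4N-1/2}^2$, and is $\lesssim\calM_0$ uniformly in time by Theorem \ref{thm:high-tier}); with these anchors both critical terms, $\norm{\eta}_{2N+4}^2$ (against $H^{2N+7/2}$ and $H^{4N}$) and $\norm{\pt_t\eta}_{2N+7/2}^2$ (against $H^{2N+3}$ and $H^{4N-1/2}$), interpolate with the \emph{same} dissipation exponent $\theta=\tfrac{4N-8}{4N-7}$. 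This yields the autonomous inequality $\tfrac{d}{dt}\mathcal Z+C\calM_0^{-s}\mathcal Z^{1+s}\le 0$ with $s=1/(4N-8)$, whose integration gives the rate $(1+t)^{-(4N-8)}$ directly, with no time weight in the coefficient. If you insist on using $\calF_{2N}$ as an anchor for the top $\eta$ norm you would have to track two different power laws for $\calD_{N+2}^\sigma$ (one autonomous, one time-weighted) and handle their minimum in the ODE; as written, your single-exponent shortcut does not close.
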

\begin{proof}
We prove in four steps.

\textbf{Step 1 -- Set up:}
Assume $\delta_0$ is small as in Propositions \ref{prop:L-infinity} and \ref{prop:bound-H}. The latter allows us to estimate
\begin{aligneq}
\abs{\calH_{N+2}}\lesssim (\calE_{N+2}^0)^{3/2}\lesssim \sqrt{\calE_{2N}^0}\calE_{N+2}^0
\end{aligneq}

since $N\geq 3$. Then by applying Theorems \ref{thm:bootstrap-E} and \ref{thm:bootstrap-D} with $n=N+2$, together with Theorem \ref{thm:bound-nonlin-G} and Proposition \ref{prop:max-bounds} to get rid of the $G$ nonlinearities and the $\calK\calF_{N+2}$ terms, we obtain the bounds
\begin{equation}
\begin{aligned}
\calE_{N+2}^\sigma &\lesssim \left(\ov{\calE_{N+2}^\sigma} + \calH_{N+2}\right) + \sqrt{\calE_{2N}^0}\calE_{N+2}^0 + \calE_{N+2}^0\calE_{N+2}^\sigma + \calE_{N+2}^0\calE_{2N}^0 \\
\calD_{N+2}^\sigma &\lesssim \ov{\calD_{N+2}} + \calE_{N+2}^0\calD_{N+2}^\sigma + \calE_{2N}^0\calD_{N+2}^0
\end{aligned}.
\end{equation}
Thus if we assume that $\delta_0$ is small enough to absorb $\sqrt{\calE_{2N}^0}\calE_{N+2}^0 + \calE_{N+2}^0\calE_{N+2}^\sigma + \calE_{N+2}^0\calE_{2N}^0$ and $\calE_{N+2}^0\calD_{N+2}^\sigma + \calE_{2N}^0\calD_{N+2}^0$ onto the left hand side, then we may arrive at the bounds
\begin{aligneq}\label{est:ED-equivalence}
    \calE_{N+2}^\sigma\lesssim \left(\ov{\calE_{N+2}^\sigma}+\calH_{N+2}\right)\lesssim \calE_{N+2}^\sigma\qquad \calD_{N+2}^\sigma\lesssim \ov{\calD_{N+2}}\lesssim \calD_{N+2}^\sigma.
\end{aligneq}

\textbf{Step 2 -- Interpolation estimates:}
Now set 
\begin{equation}
    \theta\coloneqq \frac{4N-8}{4N-7}\in (0,1).
\end{equation}
We claim that we have the interpolation estimate
\begin{equation}\label{est:interpolation-est}
    \calE_{N+2}\lesssim \left(\calD_{N+2}^\sigma\right)^\theta\left(\calE_{2N}^\sigma\right)^{1-\theta}.
\end{equation}
For most of the terms appearing in $\calE_{N+2}^\sigma$, this is a simple matter. Indeed, the definitions of $\calE_{2N}^\sigma$ and $\calD_{N+2}^\sigma$ and the assumption that $\sigma\leq 1$ allow us to estimate
\begin{aligneq}
&\sum_{j=0}^{N+2}\norm{\pt_t^j u}_{2(N+2)-2j}^2 + \sum_{j=0}^{N+1}\norm{\pt_t^j p}_{2(N+2)-2j-1}^2 + \sigma\norm{\pt_t^j\eta}_{2n-2j+1}^2+ \sum_{j=2}^{N+2}\norm{\pt_t^j\eta}_{2(N+2)-2j+3/2} + \sigma\norm{\eta}_{2n+1}^2 \\
&\hspace{5ex}\lesssim \left(\calD_{N+2}^\sigma\right)^\theta\left(\calE_{2N}^\sigma\right)^{1-\theta}
\end{aligneq}
since the dissipation is actually coercive over the energy on these terms. To handle the remaining terms, we must use Sobolev interpolation. We begin with the most important term, which actually dictates the choice of $\theta$. We have that
\begin{equation}
    \left(2(N+2)-\frac12\right)\theta +  4N(1-\theta) = 2(N+2)\iff \left(2N-\frac72\right)\theta = 2N-4
\end{equation}
so this $\theta$ is compatible with Sobolev norm estimates and so we obtain
\begin{equation}
    \norm{\eta}_{2(N+2)}^2\leq \norm{\eta}_{2(N+2)-1/2}^{2\theta}\norm{\eta}_{4N}^{2(1-\theta)}\lesssim \left(\calD_{N+2}^\sigma\right)^\theta\left(\calE_{2N}^\sigma\right)^{1-\theta}.
\end{equation}
Finally, we bound
\begin{aligneq}
\norm{\pt_t\eta}_{2(N+2)-1/2}^2\lesssim \norm{\pt_t\eta}_{\theta(2(N+2)-1) + (1-\theta)(4N-1/2)}^2\lesssim \norm{\pt_t\eta}_{2(N+2)-1}^{2\theta}\norm{\pt_t\eta}_{4N-1/2}^{2(1-\theta)}\lesssim \left(\calD_{N+2}^\sigma\right)^\theta\left(\calE_{2N}^\sigma\right)^{1-\theta}
\end{aligneq}
and thus we have \eqref{est:interpolation-est} as claimed. 

\textbf{Step 3 -- Differential inequality:}
Next we apply Theorem \ref{thm:nonlin-ed} with $n=N+2$ in conjunction with Proposition \ref{prop:max-bounds} to see that
\begin{equation}
    \ddt\left(\ov{\calE_{N+2}^\sigma}+\calH_n\right) + \ov{\calD_{N+2}}\lesssim \left(\sum_{\ell=2}^{N+4}A\omega^\ell\right)\calD_{N+2}^0 + \sqrt{\calE_{N+2}^0}\calD_{N+2}^\sigma + \sqrt{\calE_{2N}^0}\calD_{N+2}^\sigma.
\end{equation}
We use this together with the bound $\calG_{2N}^\sigma(T)\leq\delta_0$ and the dissipation bounds of \eqref{est:ED-equivalence} to estimate
\begin{equation}
    \ddt\left(\ov{\calE_{N+2}^\sigma}+\calH_n\right) + \ov{\calD_{N+2}}\lesssim \left(\sqrt{\delta_0}+\sum_{\ell=2}^{N+4}A\omega^\ell\right)\ov{\calD_{N+2}}.
\end{equation}
Then by assuming that $\delta_0$ and $\sum_{\ell=2}^{N+4}A\omega^\ell$ are small enough, we may absorb the $\ov{\calD_{N+2}}$ onto the left of this inequality. Doing so and again invoking the dissipation bounds of \eqref{est:ED-equivalence} gives us that
\begin{equation}\label{est:diff-ineq-lem}
    \ddt\left(\ov{\calE_{N+2}^\sigma} +\calH_{N+2}\right)+ C_0\calD_{N+2}^\sigma\leq 0
\end{equation}
for a universal constant $C_0>0$. We then use the energy estimate in \eqref{est:ED-equivalence} to rewrite \eqref{est:interpolation-est} as
\begin{equation}
    \left(\ov{\calE_{N+2}^\sigma}+\calH_{N+2}\right)^{1/\theta}\lesssim \calD_{N+2}^\sigma\left(\calE_{2N}^\sigma\right)^{(1-\theta)/\theta}.
\end{equation}
We chain this together with the estimate in Theorem \ref{thm:high-tier} to write
\begin{equation}\label{est:interpolation+high-tier}
    \frac{C_1}{\calM_0^s}\left(\ov{\calE_{N+2}^\sigma}+\calH_{N+2}\right)^{1+s}\leq\calD_{N+2}^\sigma
\end{equation}
for $C_1>0$ a universal constant, $s\coloneqq (1-\theta)/\theta = 1/(4N-8)$, and $\calM_0\coloneqq \calE_{2N}(0) + \calF_{2N}(0)$. Upon combining \eqref{est:diff-ineq-lem} and \eqref{est:interpolation+high-tier}, we arrive at the differential inequality
\begin{equation}\label{est:diff-ineq}
    \ddt\left(\ov{\calE_{N+2}^\sigma} +\calH_{N+2}\right)+ \frac{C_0C_1}{\calM_0^s}\left(\ov{\calE_{N+2}^\sigma} +\calH_{N+2}\right)^{1+s}\leq 0.
\end{equation}
With \eqref{est:diff-ineq} in hand, we may integrate and argue as in the proofs of Theorem 7.7 of \cite{guo2013almost} or Proposition 8.4 of \cite{jang2016compressible} to deduce that
\begin{equation}\label{est:low-tier-decay}
    \sup_{0\leq r\leq t}(1+r)^{4N-8}\left(\ov{\calE_{N+2}^\sigma(r)} +\calH_{N+2}(r)\right)\lesssim \calM_0 = \calE_{2N}(0) + \calF_{2N}(0).
\end{equation}
Then \eqref{est:low-tier-decay} and the energy bound in \eqref{est:ED-equivalence} yield \eqref{est:low-tier-decay-main}.
\end{proof}

As the final step in our a priori estimates for $\calG_{2N}^\sigma$ we synthesize Theorems \ref{thm:high-tier} and \ref{thm:low-tier}.

\begin{thm}\label{thm:a-priori-vanishing}
There exist $\delta_0,\gamma_0\in(0,1)$ such that if $0\leq\sum_{\ell=2}^{2N+2}A\omega^\ell<\gamma_0$, $0\leq\sigma\leq 1$, and $\calG_{2N}^\sigma(T)\leq\delta_0$, then
\begin{equation}
    \calG_{2N}^\sigma(T)\lesssim \calE_{2N}^\sigma(0) + \calF_{2N}(0).
\end{equation}
\end{thm}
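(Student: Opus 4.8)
The plan is to obtain the estimate by simply adding together the conclusions of Theorems \ref{thm:high-tier} and \ref{thm:low-tier}, after verifying that their hypotheses are all implied by those of the present statement. Recall from \eqref{definition:calG} that
\begin{equation}
\calG_{2N}^\sigma(t) = \sup_{0\leq r\leq t}\calE_{2N}^\sigma(r) + \int_0^t\calD_{2N}^\sigma(r)\,dr + \sup_{0\leq r\leq t}(1+r)^{4N-8}\calE_{N+2}^\sigma(r) + \sup_{0\leq r\leq t}\frac{\calF_{2N}(r)}{1+r},
\end{equation}
so it suffices to bound each of the four summands by a universal multiple of $\calE_{2N}^\sigma(0) + \calF_{2N}(0)$ for every $0\leq t\leq T$.

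First I would fix $\delta_0\in(0,1)$ to be the smaller of the two smallness thresholds appearing in Theorems \ref{thm:high-tier} and \ref{thm:low-tier}, and $\gamma_0\in(0,1)$ likewise. Since $N\geq 3$ gives $N+4\leq 2N+2$ and every term $A\omega^\ell$ is nonnegative, the hypothesis $0\leq\sum_{\ell=2}^{2N+2}A\omega^\ell<\gamma_0$ implies both $\sum_{\ell=2}^{2N+2}A\omega^\ell\leq\gamma_0$ (as needed for Theorem \ref{thm:high-tier}) and $\sum_{\ell=2}^{N+4}A\omega^\ell\leq\gamma_0$ (as needed for Theorem \ref{thm:low-tier}); the hypotheses $0\leq\sigma\leq1$ and $\calG_{2N}^\sigma(T)\leq\delta_0$ are common to both. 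With these in hand, Theorem \ref{thm:high-tier} directly bounds the first, second, and fourth summands by $\calE_{2N}^\sigma(0)+\calF_{2N}(0)$, uniformly for $0\leq t\leq T$, while Theorem \ref{thm:low-tier} bounds the third summand $\sup_{0\leq r\leq t}(1+r)^{4N-8}\calE_{N+2}^\sigma(r)$ by the same quantity. Summing the two estimates and taking the supremum over $t\in[0,T]$ yields $\calG_{2N}^\sigma(T)\lesssim\calE_{2N}^\sigma(0)+\calF_{2N}(0)$.

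There is essentially no analytic obstacle in this step: the substance of the argument has already been carried out in the two preceding theorems, and what remains is bookkeeping. The only points requiring care are (i) choosing the smallness constants $\delta_0,\gamma_0$ compatibly, which taking minima accomplishes since ``universal'' constants are permitted to degrade from one inequality to the next, and (ii) confirming that the parameter-sum hypotheses of the component theorems are nested inside the single hypothesis $\sum_{\ell=2}^{2N+2}A\omega^\ell<\gamma_0$ stated here, which follows from nonnegativity of each $A\omega^\ell$ together with the inclusion $\{2,\dots,N+4\}\subseteq\{2,\dots,2N+2\}$ valid for $N\geq3$. One should also note that both component theorems deliver their bounds uniformly for all $0\leq t\leq T$, so the four suprema and the integral defining $\calG_{2N}^\sigma(T)$ are all controlled simultaneously.
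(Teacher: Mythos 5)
Your proposal is correct and matches the paper's proof, which likewise just combines Theorems \ref{thm:high-tier} and \ref{thm:low-tier}; your additional verification that the hypotheses nest (taking minima of the smallness constants and noting $N+4\leq 2N+2$ for $N\geq 3$) is exactly the implicit bookkeeping the paper leaves to the reader.
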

\begin{proof}
    We simply combine the estimates of Theorems \ref{thm:high-tier} and \ref{thm:low-tier}.
\end{proof}

\subsection{Main results for the vanishing surface tension problem}
Now that we have the a priori estimates of Theorem \ref{thm:a-priori-vanishing} in hand, we may prove Theorems  \ref{thm:main-vanishing-surface-tension} and \ref{thm:main-vanishing-surface-tension-limit} following previously developed arguments. For the sake of brevity we will omit full details and simply refer to the existing arguments.

\begin{proof}[Proof of Theorem \ref{thm:main-vanishing-surface-tension}]
The stated results follow by combining the local well-posedness theory, Theorem \ref{thm:local-existence}, with the a priori estimates of Theorem  \ref{thm:a-priori-vanishing} and a continuation argument. The details of the continuation argument may be fully developed by following the arguments elaborated in theorem 1.3 of \cite{guo2013almost} or theorem 2.3 of \cite{jang2016compressible}. 
\end{proof}

\begin{proof}[Proof of Theorem \ref{thm:main-vanishing-surface-tension-limit}]
The results follow from the estimates of Theorem \ref{thm:main-vanishing-surface-tension} and standard compactness arguments. See theorem 1.2 of \cite{tan2014zero} or theorem 2.9 of \cite{jang2016compressible} for details.
\end{proof}

\section{Fixed surface tension problem}\label{section:fixed-surface-tension}
In this section we study the problem \eqref{system:ns-flattened} in the case of a fixed $\sigma>0$. We develop a priori estimates and then present the proof of Theorem  \ref{thm:main-fixed-surface-tension}.  Although the structure of the proof is similar to that in \cite{tice2018asymptotic}, this paper uses $n=1$ to prove the main theorem rather than $n=2$ as done in \cite{tice2018asymptotic}.  This is because we wish to optimize our argument to give asymptotically better parameter regimes for $A$ and $\omega$; had we used $n=2$, then we would have to require $\sum_{\ell=2}^4 A\omega^\ell\lesssim 1$, which is worse than the regime in which $\sum_{\ell=2}^3 A\omega^\ell\lesssim 1$ when we wish to consider large $\omega$.  

Note that in what follows in this section we break our convention of not allowing universal constants to depend on $\sigma$.  All universal constants are allowed to depend on the fixed surface tension constant $\sigma$ but are still not allowed to depend on $A$ or $\omega$.

\subsection{A priori estimates for $\calS_\lambda$} In order to prove Theorem \ref{thm:main-fixed-surface-tension} we will introduce the following notation when $\lambda\in (0,\infty):$
\begin{equation}
\mathcal S_\lambda(T) \coloneqq \sup_{0\leq t\leq T}e^{\lambda t}\calE_1^\sigma(t) + \int_0^T e^{\lambda t} \calD_1^\sigma(t)\,dt.
\end{equation}
We now develop the main a priori estimates with surface tension.

\begin{thm}\label{thm:a-priori-est}
There exists $\delta_0,\gamma_0\in (0,1)$, depending on $\sigma>0$, such that if $\mathcal S_0(T)\le\delta_0$ and
\begin{equation}
    {\left(A\omega^2+A\omega^3\right)\leq \gamma_0},
\end{equation}
then there exists $\lambda = \lambda(\sigma) > 0$ such that
\begin{equation}
    \mathcal S_\lambda(T)\lesssim \mathcal E_1^\sigma(0).
\end{equation}
\end{thm}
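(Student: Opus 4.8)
The plan is to combine the differential energy–dissipation inequality of Theorem \ref{thm:nonlin-ed} (specialized to $n=1$, $\sigma>0$) with the comparison estimates of Theorems \ref{thm:bootstrap-D} and \ref{thm:bootstrap-E} (again at $n=1$), using the special improved bounds recorded there together with the base-case nonlinearity estimates \eqref{est:G-nonlin-base-w}, \eqref{est:G-nonlin-base-y} and Proposition \ref{prop:bound-G3}, and then to run a weighted Gr\"onwall argument with the exponential weight $e^{\lambda t}$. First I would note that since $\mathcal S_0(T)\le\delta_0$ we have $\calE_1^\sigma(t)\le\delta_0$ for all $t\le T$, so the smallness hypotheses $\calE_1^\sigma\le\delta$ of all the cited results are met, and the hypothesis $A\omega^2+A\omega^3\le\gamma_0\le 1$ supplies $\sum_{\ell=2}^{2}A\omega^\ell\lesssim 1$ (indeed $\sum_{\ell=2}^{3}$), which is what the $n=1$ versions of those theorems require.

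Next I would assemble the a priori inequality at $\sigma$ fixed. From the $n=1$ case of Theorem \ref{thm:nonlin-ed} we get
\begin{equation}
    \ddt\bigl(\ov{\calE_1^\sigma}+\calH_1\bigr) + \ov{\calD_1}\lesssim (A\omega^2+A\omega^3)\calD_1^0 + \frac{P(\sigma)}{\sigma}\sqrt{\calE_1^\sigma}\,\calD_1^\sigma,
\end{equation}
while Theorems \ref{thm:bootstrap-D} and \ref{thm:bootstrap-E} together with \eqref{est:G-nonlin-base-w}, \eqref{est:G-nonlin-base-y} (and the improved Proposition \ref{prop:bound-G3} bound, all of which produce terms quadratic in $\calE_1^\sigma$ or of the form $\sqrt{\calE_1^\sigma}\calD_1^\sigma$, since $\calK\calF_1\lesssim (\calE_1^\sigma)^2$ at this level) give, after absorbing $P(\sigma)(\calE_1^\sigma)^2\lesssim\delta_0\calE_1^\sigma$ and $\sqrt{\calE_1^\sigma}\calD_1^\sigma\lesssim\sqrt{\delta_0}\calD_1^\sigma$ onto the left for $\delta_0$ small,
\begin{equation}
    \calE_1^\sigma\lesssim \ov{\calE_1^\sigma}+\calH_1 \lesssim \calE_1^\sigma \qquad\text{and}\qquad \calD_1^\sigma\lesssim \ov{\calD_1}\lesssim \calD_1^\sigma.
\end{equation}
The first chain also needs $|\calH_1|\lesssim(\calE_1^0)^{3/2}\lesssim\sqrt{\delta_0}\calE_1^0$ from Proposition \ref{prop:bound-H}, so $\ov{\calE_1^\sigma}+\calH_1$ is comparable to $\calE_1^\sigma$. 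Feeding these comparisons back, and choosing $\gamma_0,\delta_0$ small enough that $(A\omega^2+A\omega^3)\calD_1^0\le(A\omega^2+A\omega^3)\calD_1^\sigma$ and $\frac{P(\sigma)}{\sigma}\sqrt{\delta_0}\calD_1^\sigma$ are each a small fraction of $\calD_1^\sigma$, I obtain a clean differential inequality
\begin{equation}
    \ddt G_1 + C_0\,\calD_1^\sigma \le 0, \qquad G_1\coloneqq \ov{\calE_1^\sigma}+\calH_1\simeq\calE_1^\sigma,
\end{equation}
for a constant $C_0=C_0(\sigma)>0$.

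Then I would upgrade this to exponential decay. Because $\sigma>0$ is fixed and bounded below, the full dissipation controls the full energy: $\calE_1^\sigma\lesssim\calD_1^\sigma$ (every term of $\calE_1^\sigma$ appears, up to one less power of regularity or with a favorable power of $\sigma$, in $\calD_1^\sigma$; this is exactly the place where the fixed-$\sigma$ coercivity is used and where the vanishing-$\sigma$ case would fail). Hence $G_1\lesssim\calD_1^\sigma$, and the differential inequality yields $\ddt G_1 + c_\sigma G_1\le 0$ for some $c_\sigma>0$, so that also $\ddt G_1 + \tfrac{c_\sigma}{2}G_1 + \tfrac{C_0}{2}\calD_1^\sigma\le 0$. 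Choosing any $0<\lambda<c_\sigma/2$, multiplying by $e^{\lambda t}$, and integrating from $0$ to $T$ gives
\begin{equation}
    e^{\lambda T}G_1(T) + \int_0^T e^{\lambda t}\Bigl(\tfrac{c_\sigma}{2}-\lambda\Bigr)G_1(t)\,dt + \frac{C_0}{2}\int_0^T e^{\lambda t}\calD_1^\sigma(t)\,dt \le G_1(0),
\end{equation}
and since $\tfrac{c_\sigma}{2}-\lambda>0$ and $G_1\simeq\calE_1^\sigma$, both $\sup_{t\le T}e^{\lambda t}\calE_1^\sigma(t)$ and $\int_0^T e^{\lambda t}\calD_1^\sigma(t)\,dt$ are bounded by $CG_1(0)\lesssim\calE_1^\sigma(0)$, i.e. $\mathcal S_\lambda(T)\lesssim\calE_1^\sigma(0)$.

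The main obstacle I anticipate is bookkeeping the $\sigma$-dependence so that all the absorptions are legitimate: the nonlinear terms carry factors like $P(\sigma)/\sigma$ and $(1+\sqrt\sigma)/\sigma$, so $\delta_0$ and $\gamma_0$ must be chosen depending on $\sigma$ (as the statement permits), and one must be careful that the comparison $\calE_1^\sigma\lesssim\calD_1^\sigma$ genuinely holds for the $n=1$ functionals with the surface-tension terms included — this is the structural heart of the argument and the reason $n=1$ suffices here, in contrast to the $\sigma\to0$ analysis. Everything else is the standard Gr\"onwall-with-exponential-weight manipulation.
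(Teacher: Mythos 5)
Your proposal is correct and follows essentially the same route as the paper's proof: the comparison estimates of Theorems \ref{thm:bootstrap-D} and \ref{thm:bootstrap-E} plus Proposition \ref{prop:bound-H} give $\ov{\calE_1^\sigma}+\calH_1\simeq\calE_1^\sigma$ and $\ov{\calD_1}\simeq\calD_1^\sigma$, the $n=1$ case of Theorem \ref{thm:nonlin-ed} with absorption yields $\ddt(\ov{\calE_1^\sigma}+\calH_1)+\tfrac12\ov{\calD_1}\le 0$, and the fixed-$\sigma$ coercivity $\calE_1^\sigma\lesssim\sigma^{-1}\calD_1^\sigma$ converts this into exponential decay via the weighted integration. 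The only cosmetic difference is that you split the dissipation into two halves before weighting, whereas the paper extracts $\lambda(\ov{\calE_1^\sigma}+\calH_1)$ directly from $\tfrac12\ov{\calD_1}$; these are equivalent.
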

\begin{proof}
We assume that $\delta_0$ is small enough that Propositions \ref{prop:L-infinity} and \ref{prop:bound-H} hold.

We use Theorems \ref{thm:bootstrap-D}, \ref{thm:bootstrap-E}, and \ref{thm:bound-nonlin-G}, as well as addition and subtraction of $\mathcal H$, to bound
\begin{equation}
    \calE_1^\sigma\lesssim \left(\ov{\calE_1^\sigma} + \calH_1\right) + (\calE_1^\sigma)^2 + (\calE_1^\sigma)^{3/2} \qquad\text{and}\qquad \calD_1^\sigma\lesssim \ov{\calD_1} + \calE_1^0\calD_1^\sigma.
\end{equation}
By further restricting $\delta_0$ we can use an absorbing argument to conclude
\begin{equation}\label{bound-E+H-1}
    \calE_1^\sigma + \calH_1\leq\calE_1^\sigma\lesssim\ov{\calE_1^\sigma} + \calH_1\quad\text{and}\quad \ov{\calD_1}\leq\calD_1^\sigma\lesssim\ov{\calD_1}.
\end{equation}

We now employ Theorem \ref{thm:nonlin-ed} with $n=1$ (recalling that we now allow universal constants to depend on $\sigma$) and \eqref{bound-E+H-1} to get under appropriate smallness assumptions that
\begin{equation}
\begin{aligned}
    \ddt \parens{\ov{\calE_1^\sigma} + \calH_1} + \ov{\calD_1} &\lesssim 
    \left(A\omega^2 + A\omega^3\right)\ov{\calD_1} + \left(\sqrt{\calE_1^\sigma}\right)\ov{\calD_1}.
    \end{aligned}
\end{equation}
We may then further restrict the size of $\delta_0$ and $\gamma_0$ in order to absorb terms on the right onto the left. Note that this absorption requires $\gamma_0,\delta_0$ to depend on $\sigma$. This yields the inequality
\begin{equation}\label{eqn:ED-differential}
    \ddt \left(\ov{\calE_1^\sigma} + \calH_1 \right)+ \frac12\ov{\calD_1} \leq 0.
\end{equation}
We defined $\calE_1^\sigma$ and $\calD_1^\sigma$ such that $\calE_1^\sigma\lesssim\sigma^{-1}\calD_1^\sigma$, so we can apply \eqref{bound-E+H-1} to get that there exists some $C > 0$ and $\lb > 0$ depending on $\sigma$ such that
\begin{aligneq}
    \frac12\ov{\calD_1} &\geq \frac{2}{4C)}\calD_1^\sigma\geq \frac{1}{4C} \calD_1^\sigma + \frac{\sigma}{4C} \calE_1^\sigma \\
    &\geq \frac1{4C}\calD_1^\sigma + \lambda\left(\ov{\calE_1^\sigma} + \calH_1\right)
\end{aligneq}
Plugging this into \eqref{eqn:ED-differential} gives
\begin{equation}
    \ddt\left(\ov{\calE_1^\sigma} +\calH_1\right) + \lambda \left(\ov{\calE_1^\sigma} + \calH_1\right) + \frac1{4C} \calD_1^\sigma \leq 0.
\end{equation}
We integrate this to get
\begin{equation}
    e^{\lambda t}\left(\ov{\calE_1^\sigma}(t) + \calH_1(t)\right) + \frac{1}{4C} \int_0^t e^{\lb r}\calD_1^\sigma(r)\,dr \leq \left(\ov{\calE_1^\sigma(0)}+\calH_1(0)\right).
\end{equation}

Now, appealing to \eqref{bound-E+H-1}, we deduce
\begin{equation}
    \sup_{0\leq t\leq T} e^{\lb t}\calE_1^\sigma(t) + \int_0^T e^{\lambda t} \calD_1^\sigma(t)dt \lesssim \calE_1^\sigma(0). 
\end{equation}
\end{proof}

\subsection{Proof of main result}
\begin{proof}[Proof of Theorem \ref{thm:main-fixed-surface-tension}]
We combine the local existence result in Theorem \ref{thm:local-existence-sigma} with the a priori estimates in Theorem \ref{thm:a-priori-est} and a continuation argument as in \cite{guo2013almost}.
\end{proof}

\appendix

\section{Elliptic estimates}\label{section:appendix-b}
Here we record basic elliptic estimates.

\subsection{Capillary operator}
Consider the problem
\begin{equation}\label{system:capillary}
    -\sigma\Delta\psi + g\psi = f\qquad \text{on $\mathbb T^n$} 
\end{equation}
for $g,\sigma >0$.  If $f\in H^{-1}(\mathbb T^n) = (H^1(\mathbb T^n))^*$, then a weak solution
If $f\in H^{-1}(\mathbb T^n) = (H^1(\mathbb T^n))^*$, then a weak solution is readily found with a standard application of Riesz's representation theorem: there exists a unique $\psi\in H^1(\mathbb T^n)$ such that
\begin{equation}
    \int_{\mathbb T^n}g\psi\varphi + \sigma\nabla\psi\cdot\nabla\varphi = \left\langle f,\varphi\right\rangle
\end{equation}

\begin{thm}\label{thm:capillary-est}
Let $s\geq 0$ and suppose that $f\in H^s(\mathbb T^n) \subseteq H^{-1}(\mathbb T^n)$.  Let $\psi\in H^1(\mathbb T^n)$ be the weak solution to \eqref{system:capillary}.  Then $\psi\in H^{s+1}(\mathbb T^n)$ and we have the estimates
\begin{equation}\label{equation:cap-est}
\norm{\psi}_s\leq\tfrac 1g\norm f_s\qquad\text{and}\qquad \norm{D^{2+s}\psi}_0\lesssim \tfrac1{\sigma}\norm{D^sf}_0,
\end{equation}
where $D = \sqrt{-\Delta}$.  Moreover, if $\int_{\mathbb T^n}\psi = 0$, then
\begin{equation}\label{equation:cap-est-ii}
    \norm\psi_{s+2}\lesssim\tfrac{1}{\sigma}\norm{D^s f}_0.
\end{equation}
\end{thm}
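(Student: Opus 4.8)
The plan is to diagonalize the operator $-\sigma\Delta + g$ by Fourier series on the torus and read off all three estimates from elementary pointwise bounds on the associated Fourier multiplier. Let $\{e_k\}$ denote the standard Fourier basis of $L^2(\mathbb T^n)$, indexed so that $-\Delta e_k = \lambda_k e_k$ with $\lambda_k\ge 0$ and $\lambda_k = 0$ precisely when $k = 0$; recall that $\|g\|_s^2 = \sum_k (1+\lambda_k)^s|\hat g(k)|^2$ and $\widehat{Dg}(k) = \sqrt{\lambda_k}\,\hat g(k)$. For $f = \sum_k \hat f(k)e_k\in H^{-1}(\mathbb T^n)$, testing the weak formulation against each $e_k$ forces $(\sigma\lambda_k+g)\hat\psi(k) = \hat f(k)$, so by uniqueness of the weak solution $\psi$ is given by the multiplier operator $\hat\psi(k) = m(k)\hat f(k)$ with $m(k) = (\sigma\lambda_k+g)^{-1}$. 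Every claimed bound then reduces to an estimate on a scalar sequence via Plancherel.

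First I would dispatch the two unconditional estimates. Since $\sigma\lambda_k + g\ge g$ we have $m(k)\le g^{-1}$, whence $\|\psi\|_s^2 = \sum_k(1+\lambda_k)^s m(k)^2|\hat f(k)|^2\le g^{-2}\|f\|_s^2$, which is the first part of \eqref{equation:cap-est}. Since $\sigma\lambda_k\le \sigma\lambda_k+g$ we have $\lambda_k m(k)\le \sigma^{-1}$, so
\begin{equation*}
\|D^{2+s}\psi\|_0^2 = \sum_k \lambda_k^s\bigl(\lambda_k m(k)\bigr)^2|\hat f(k)|^2 \le \sigma^{-2}\sum_k\lambda_k^s|\hat f(k)|^2 = \sigma^{-2}\|D^s f\|_0^2,
\end{equation*}
which is the second part. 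The regularity statement $\psi\in H^{s+1}(\mathbb T^n)$ (in fact $\psi\in H^{s+2}(\mathbb T^n)$) then follows immediately from the finiteness of the norms just estimated, using $(1+\lambda_k)^{p}\lesssim 1 + \lambda_k^{p}$ together with $m(k)^2\le g^{-2}$ and $\lambda_k^{2+s}m(k)^2\le \sigma^{-2}\lambda_k^s$.

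For the improved estimate \eqref{equation:cap-est-ii} under the hypothesis $\int_{\mathbb T^n}\psi = 0$, observe that this forces $\hat\psi(0) = 0$, so the Fourier sum runs over $k\ne 0$ only. On the torus the nonzero eigenvalues of $-\Delta$ are bounded below by a positive constant $\lambda_*$ depending only on the dimensions of $\mathbb T^n$ (a spectral gap, equivalently the Poincaré constant), so for $k\ne 0$ we have $1+\lambda_k\le (1+\lambda_*^{-1})\lambda_k$ and hence $(1+\lambda_k)^{s+2}\lesssim \lambda_k^{s+2}$ with a universal implicit constant. Therefore
\begin{equation*}
\|\psi\|_{s+2}^2 = \sum_{k\ne 0}(1+\lambda_k)^{s+2}|\hat\psi(k)|^2 \lesssim \sum_{k\ne 0}\lambda_k^{s+2}|\hat\psi(k)|^2 = \|D^{2+s}\psi\|_0^2\lesssim \sigma^{-2}\|D^s f\|_0^2
\end{equation*}
by the previous step, which is \eqref{equation:cap-est-ii}.

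There is no real obstacle here: the whole argument is routine Fourier analysis. The only points needing attention are bookkeeping ones — tracking which constants are genuinely universal (depending only on $\mathbb T^n$, such as $\lambda_*$) versus those carrying the displayed $g^{-1}$ and $\sigma^{-1}$ factors, and noting that the zero frequency must be removed, via the zero-average hypothesis, before the homogeneous control of $\|D^{2+s}\psi\|_0$ can be promoted to control of the full $H^{s+2}(\mathbb T^n)$ norm.
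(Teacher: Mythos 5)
Your proof is correct. The paper itself does not prove this theorem — it simply cites Theorem A.1 of \cite{tice2018asymptotic} — and your Fourier-multiplier argument (diagonalizing $-\sigma\Delta+g$, using $m(k)\le g^{-1}$ and $\lambda_k m(k)\le\sigma^{-1}$, and invoking the spectral gap for the zero-average case) is precisely the standard proof one finds there, with all the relevant points (validity of testing against $e_k$ for $f\in H^{-1}$, dependence of the implicit constants only on $s$ and the torus dimensions, and the necessity of removing the zero mode before upgrading the homogeneous bound) handled correctly.
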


\begin{proof}
See, for instance, Theorem A.1 of \cite{tice2018asymptotic}. 
\end{proof}

\subsection{Stokes operator with Dirichlet conditions}
Consider the problem
\begin{equation}\label{system:stokes-dirichlet}
\begin{cases}
        -\Delta u+\nabla p = f^1 & \text{in $\Omega$} \\
        \diverge u = f^2 & \text{in $\Omega$} \\
        u = f^3 &\text{on $\Sigma$} \\
        u = 0 & \text{on $\Sigma_b$}
    \end{cases}.
\end{equation}
The estimates for solutions are recorded in the following result, the proof of which is standard and thus omitted.
\begin{thm}\label{thm:stokes-dirichlet}
Let $m\in\mathbb N$. If $f^1\in H^m(\Omega)$, $f^2\in H^{m+1}(\Omega)$, and $f^3\in H^{m+3/2}(\Sigma)$, the the solution pair $(u,p)$ to \eqref{system:stokes-dirichlet} satisfies $u\in H^{m+2}(\Omega)$, $\nabla p\in H^{m+1}(\Omega)$, and we have the estimate
\begin{equation}
    \norm{u}_{m+2} + \norm{\nabla p}_{m}\lesssim \norm{f^1}_m + \norm{f^2}_{m+2} + \norm{f}_{m+3/2}. 
\end{equation}
\end{thm}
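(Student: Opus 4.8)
The plan is to run the classical regularity theory for the stationary Stokes system, exploiting the fact that $\Omega = \Sigma \times (-b,0)$ is a smooth periodic slab, so that horizontal differentiation commutes with the equations and the only real work is near the two flat boundary components. The argument has three parts: a lifting of the inhomogeneous data that reduces to the homogeneous problem; construction of a weak solution together with a pressure via de Rham's theorem; and an induction on $m$ driven by horizontal difference quotients.

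First I would remove the boundary and divergence data. Choose an extension $Ef^3$ of $f^3$ into $\Omega$ --- for instance $Ef^3(x',x_3) = \chi(x_3) f^3(x')$ with $\chi$ a fixed smooth cutoff equal to $1$ near $x_3=0$ and vanishing near $x_3 = -b$ --- so that $\norm{Ef^3}_{m+2} \lesssim \norm{f^3}_{m+3/2}$ by trace theory. Then solve $\diverge z = f^2 - \diverge(Ef^3)$ for $z \in H^{m+2}(\Omega;\mathbb{R}^3)$ vanishing on $\partial\Omega$, with norm bounded by the right-hand side of the asserted estimate; this is the Bogovskii construction, and it is solvable exactly because of the mean-zero compatibility condition $\int_\Omega f^2 = \int_\Sigma f^3 \cdot e_3$, which is needed for a solution to \eqref{system:stokes-dirichlet} to exist at all and which I would note is tacitly assumed in the statement. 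Writing $u = v + Ef^3 + z$, the remaining pair $(v,p)$ solves a Stokes system with $v|_{\partial\Omega}=0$, $\diverge v = 0$, and a new forcing $g^1 = f^1 + \Delta(Ef^3+z)$ controlled by the right side of the claim.

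Next I would solve the reduced problem and bootstrap. Lax-Milgram on the space of divergence-free $H^1$ fields vanishing on all of $\partial\Omega$ produces a unique $v$ with $\norm{v}_1 \lesssim \norm{g^1}_{-1}$, and de Rham's theorem --- equivalently the surjectivity of $\diverge$ from $H^1_0(\Omega;\mathbb{R}^3)$ onto the mean-zero subspace of $L^2(\Omega)$, i.e.\ the inf-sup condition for the slab --- yields a unique mean-zero $p \in L^2(\Omega)$ with $-\Delta v + \nabla p = g^1$ in $\mathcal{D}'(\Omega)$ and $\norm{p}_0 \lesssim \norm{g^1}_{-1}$. For regularity, apply the basic energy estimate not to $(v,p)$ but to their horizontal difference quotients in the $x_1$ and $x_2$ directions: since $\Sigma$ is a torus these preserve the system and the boundary conditions, so one gains $\partial_1 v, \partial_2 v \in H^1$ and $\partial_1 p, \partial_2 p \in L^2$. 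The normal derivatives are then recovered algebraically --- $\partial_3^2 v_3$ by differentiating $\diverge v = 0$, then $\partial_3^2 v_1$ and $\partial_3^2 v_2$ from the first two momentum equations, and finally $\partial_3 p$ from the third --- which gives the $m=0$ estimate $v \in H^2$, $\nabla p \in L^2$. For general $m$ one differentiates the whole system $m$ times in the horizontal variables, applies the $m=0$ estimate, and again trades normal derivatives for tangential ones using the equations; induction on $m$ closes the bound. Undoing $u = v + Ef^3 + z$ finishes the proof.

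The one genuinely non-scalar point --- and the only place one must be careful --- is the coupling of the pressure regularity to that of the velocity through $\nabla p = g^1 + \Delta v$: it is the classical de Rham pressure estimate $\norm{p}_0 \lesssim \norm{\nabla p}_{-1}$ for mean-zero $p$ (equivalently the inf-sup inequality on the slab) that prevents the induction from losing a derivative on $p$. Everything else --- the lifting, Lax-Milgram, and the difference-quotient induction --- is routine on this smooth periodic geometry. As an alternative one could simply invoke the Agmon-Douglis-Nirenberg elliptic theory for the Stokes system or cite the essentially identical estimates in \cite{beale1984large, guo2013almost}, which is why the proof was omitted in the main text.
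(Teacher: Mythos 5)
The paper gives no proof of this theorem (it is declared ``standard and thus omitted''), so there is nothing internal to compare against; your outline --- lift the boundary and divergence data, solve the reduced homogeneous problem by Lax--Milgram plus de Rham, then bootstrap with horizontal difference quotients on the periodic slab and recover normal derivatives and $\partial_3 p$ algebraically from the equations --- is one of the standard correct routes (the other being a direct appeal to Agmon--Douglis--Nirenberg/Cattabriga theory), and the induction structure is sound. Two technical points in your reduction need repair, though both are easy on this flat geometry. First, the specific extension $Ef^3(x',x_3)=\chi(x_3)f^3(x')$ does \emph{not} satisfy $\norm{Ef^3}_{m+2}\lesssim\norm{f^3}_{H^{m+3/2}(\Sigma)}$: a tensor product with a cutoff costs $m+2$ full horizontal derivatives of $f^3$, so it only lands in $H^{m+3/2}(\Omega)$. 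You must use a genuine right inverse of the trace operator, e.g.\ a cutoff of the Poisson extension $\calP$ from Appendix B of this paper (whose Fourier symbol $e^{2\pi\abs{n}x_3}$ trades horizontal oscillation for vertical decay), which does gain the half derivative. Second, the blanket citation of ``the Bogovskii construction'' to produce $z\in H^{m+2}(\Omega)\cap H^1_0(\Omega)$ with $\diverge z=f^2-\diverge(Ef^3)$ glosses over the fact that higher-order Bogovskii estimates $\norm{Bg}_{k+1}\lesssim\norm{g}_k$ generally require $g$ to vanish on $\partial\Omega$ in the trace sense; for data merely in $H^{m+1}(\Omega)$ with zero mean one should instead use a construction adapted to the slab (a horizontal Fourier-series construction, or a Neumann potential), which is routine here but should be said. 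Finally, two observations of yours are worth keeping: the mean-zero compatibility condition $\int_\Omega f^2=\int_\Sigma f^3\cdot e_3$ is indeed tacitly assumed (and holds in every application in Section 5), and your argument naturally yields the estimate with $\norm{f^2}_{m+1}$ on the right, confirming that the $\norm{f^2}_{m+2}$ appearing in the stated inequality is a typo given the hypothesis $f^2\in H^{m+1}(\Omega)$.
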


\subsection{Stokes operator with stress conditions}
Consider the problem
\begin{equation}\label{system:stokes-stress}
\begin{cases}
        -\Delta u+\nabla p = f^1 & \text{in }\Omega \\
        \diverge u = f^2 & \text{in }\Omega \\
        u = 0 & \text{on }\Sigma_b \\
        (pI-\mathbb Du)e_3 = f^3 &\text{on } \Sigma.
    \end{cases}
\end{equation}

The estimates for solutions needed are recorded in the following result, the proof of which is standard and thus omitted.

\begin{thm}\label{est:stokes-stress}
Let $m\in\mathbb N$.  If $f^1\in H^m(\Omega)$, $f^2\in H^{m+1}(\Omega)$, and $f^3\in H^{m+1/2}(\Sigma)$, then the solution pair $(u,p)$ to \eqref{system:stokes-stress} satisfies $u\in H^{m+2}(\Omega)$, $p\in H^{m+1}(\Omega)$, and we have the estimate
\begin{equation}
    \norm{u}_{m+2} + \norm p_{m+1}\lesssim\norm{f^1}_m + \norm{f^2}_{m+1} + \norm{f^3}_{m+1/2}.
\end{equation}
\end{thm}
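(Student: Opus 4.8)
The final statement, Theorem~\ref{est:stokes-stress}, is a classical elliptic regularity estimate for the Stokes system with the natural (stress) boundary condition on $\Sigma$ and a Dirichlet condition on $\Sigma_b$, posed in the horizontally periodic slab $\Omega$. The plan is to establish it by the standard three-step scheme — weak solvability, tangential regularity, algebraic recovery of normal derivatives — followed by an induction on $m$.

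\emph{Step 1: weak solvability and the base estimate.} First I would reduce the inhomogeneous constraint $\diverge u = f^2$ to the homogeneous case by subtracting a lift $w \in {}_0H^1(\Omega)$ with $\diverge w = f^2$ and $\norm{w}_1 \lesssim \norm{f^2}_0$, obtained from a bounded right inverse of the divergence (a Bogovskii-type operator, available since $\Sigma_b$ has positive measure). The weak formulation is then: find $(u,p)\in {}_0H^1(\Omega)\times L^2(\Omega)$ with $\diverge u = 0$ and $\tfrac12\int_\Omega \mathbb{D}u:\mathbb{D}\phi - \int_\Omega p\diverge\phi = \langle f^1,\phi\rangle_\Omega + \langle f^3,\phi\rangle_\Sigma$ for all $\phi\in {}_0H^1(\Omega)$. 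Korn's inequality on ${}_0H^1(\Omega)$ (using the no-slip part of the boundary) gives coercivity of $\tfrac12\int_\Omega\abs{\mathbb{D}\cdot}^2$, and the slab satisfies the inf-sup (LBB) condition $\sup_{\phi\in {}_0H^1(\Omega)}\tfrac{\int_\Omega q\diverge\phi}{\norm{\phi}_1}\gtrsim\norm{q}_0$; a standard Babu\v{s}ka--Brezzi argument then yields a unique weak solution with $\norm{u}_1+\norm{p}_0\lesssim \norm{f^1}_{({}_0H^1(\Omega))^*}+\norm{f^2}_0+\norm{f^3}_{H^{-1/2}(\Sigma)}$. This is the estimate one notch below the $m=0$ level and serves as the base of the induction; note that the stress condition determines $p$ completely, with no additive constant ambiguity, which is why the conclusion controls $\norm{p}_{m+1}$ and not merely $\norm{\nabla p}_m$.

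\emph{Step 2: tangential regularity.} Because $\Omega$ is a slab, periodic in $x_1,x_2$, all of $f^1,f^2,f^3$ and the differential operators are invariant under horizontal translation. Hence for $e\in\{e_1,e_2\}$ and small $h\neq 0$, the difference quotient $D^e_h(u,p)$ solves \eqref{system:stokes-stress} with data $D^e_h f^j$, and applying the Step~1 estimate gives bounds on $D^e_h(u,p)$ uniform in $h$; passing to the limit produces $\partial_e u,\partial_e p$ with one extra derivative of regularity. Iterating, I control $\partial'^{\alpha'}(u,p)$ for every purely horizontal multi-index $\alpha'$ up to the order dictated by the data. (Alternatively one may simply expand in the horizontal Fourier series.)

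\emph{Step 3: normal derivatives and induction.} With the tangential derivatives in hand, the remaining $x_3$-derivatives are read off algebraically from the equations: $\partial_3 u_3 = f^2-\partial_1 u_1-\partial_2 u_2$ from the divergence equation; $\partial_3 p = f^1_3+\Delta u_3$ from the third momentum equation; and $\partial_3^2 u_i = \partial_i p - f^1_i + \partial_1^2 u_i+\partial_2^2 u_i$ from the momentum equations (substituting the previous two identities when $i=3$). Differentiating these relations in $x_3$ and combining with the tangential bounds bootstraps one normal derivative at a time, and tracking the fact that the pressure gains derivatives one order behind the velocity reproduces exactly the count $u\in H^{m+2}(\Omega)$, $p\in H^{m+1}(\Omega)$; feeding the resulting estimates back into Steps~1--2 at level $m+1$ closes the induction. (Equivalently, one may invoke Agmon--Douglis--Nirenberg theory: \eqref{system:stokes-stress} is ADN-elliptic and the stress/no-slip boundary conditions satisfy the complementing Lopatinski\u{\i}--Shapiro condition, so $L^2$ interior and boundary estimates hold, and uniqueness from Step~1 removes the lower-order terms.)

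\emph{Main obstacle.} The delicate point is Step~3: since the difference-quotient method only yields tangential regularity, the normal derivatives must be extracted from the PDE itself, and one must carefully bookkeep mixed derivatives so that the induction closes with the pressure lagging the velocity by one order. The two essential classical inputs — the LBB inf-sup constant for the slab (needed for the pressure bound in Step~1) and Korn's inequality on ${}_0H^1(\Omega)$ — are routine for this geometry but must be in place before any of the above goes through.
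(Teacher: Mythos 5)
The paper does not prove this theorem at all—it is stated with the remark that ``the proof \ldots is standard and thus omitted''—and your outline is precisely the standard argument one would supply: weak solvability via Korn plus the inf-sup condition, horizontal difference quotients (or Fourier series) for tangential regularity in the periodic slab, and algebraic recovery of the normal derivatives from the divergence and momentum equations, closed by induction on $m$. Your proposal is correct in its essentials, including the observation that the stress condition on $\Sigma$ pins down the pressure without an additive constant, which is why the conclusion controls $\norm{p}_{m+1}$ rather than only $\norm{\nabla p}_m$.
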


\section{Analytic tools}
\subsection{Product estimates}
In this section we record the necessary product estimates on Sobolev norms that we will need to get the correct bounds.

\begin{thm}\label{est:prod_est}
The following hold on $\Sigma$ and on $\Omega$.
\begin{enumerate}
    \item Let $0\leq r\leq s_1\leq s_2$ be such that $s_1 > n/2$.  Let $f\in H^{s_1}$, $g\in H^{s_2}$.  Then $fg\in H^r$ and \begin{equation}
    \norm{fg}_{H^r}\lesssim \norm{f}_{H^{s_1}}\norm{g}_{H^{s_2}}.
    \end{equation}
    \item Let $0\leq r\leq s_1\leq s_2$ be such that $s_2 > r + n/2$.  Let $f\in H^{s_1}$, $g\in H^{s_2}$.  Then $fg\in H^r$ and \begin{equation}
    \norm{fg}_{H^r}\lesssim \norm{f}_{H^{s_1}}\norm{g}_{H^{s_2}}.
    \end{equation}
    \item Let $0\leq r\leq s_1\leq s_2$ be such that $s_2 > r + n/2$.  Let $f\in H^{-r}(\Sigma)$, $g\in H^{s_2}(\Sigma)$.  Then $fg\in H^{-s_1}(\Sigma)$ and 
    \begin{equation}
        \norm{fg}_{-s_1}\lesssim\norm{f}_{-r}\norm{g}_{s_2}.
    \end{equation}
\end{enumerate}
\end{thm}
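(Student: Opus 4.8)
The plan is to obtain (3) from (2) by duality, and to prove (1) and (2) by a reduction to the flat torus followed by a Littlewood--Paley paraproduct argument (equivalently, to cite the classical Sobolev multiplication theorem, of which (1) and (2) are instances).

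First I would dispatch (3). Using the duality $H^{-s_1}(\Sigma) = (H^{s_1}(\Sigma))^*$, it suffices to estimate $|\br{fg,\varphi}|$ for $\varphi \in H^{s_1}(\Sigma)$ with $\norm{\varphi}_{s_1}\le 1$. Writing $\br{fg,\varphi} = \br{f,g\varphi}$ and pairing $f \in H^{-r}$ against $g\varphi$ gives $|\br{fg,\varphi}| \le \norm{f}_{-r}\norm{g\varphi}_r$. Since $0 \le r \le s_1 \le s_2$ and $s_2 > r + n/2$, part (2), applied to the pair $(\varphi,g) \in H^{s_1}\times H^{s_2}$, yields $\norm{g\varphi}_r \lesssim \norm{\varphi}_{s_1}\norm{g}_{s_2}$, and combining the two bounds gives $\norm{fg}_{-s_1}\lesssim\norm{f}_{-r}\norm{g}_{s_2}$, which is (3).

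For (1) and (2) I would first reduce to the torus $\mathbb{T}^n$: for $\Sigma$ that is already the ambient space, while for $\Omega = \Sigma\times(-b,0)$ one applies a bounded Sobolev extension operator $E$ from $H^s(\Omega)$ to $H^s(\mathbb{T}^n)$ (obtained by higher-order reflection across the flat faces $\{x_3 = 0\}$ and $\{x_3 = -b\}$ followed by periodization in $x_3$), observes that $(Ef)(Eg)$ restricts to $fg$ on $\Omega$, and uses boundedness of restriction $H^r(\mathbb{T}^n)\to H^r(\Omega)$. On $\mathbb{T}^n$ I would use the Bony decomposition $fg = T_f g + T_g f + R(f,g)$ relative to a dyadic partition $\{\Delta_j\}$. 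The high--low piece $T_g f = \sum_j (S_{j-2}g)(\Delta_j f)$ has frequency support $\sim 2^j$, so $\norm{T_g f}_{H^r}\lesssim\norm{g}_{L^\infty}\norm{f}_{H^r}$; since $r \le s_1$ and since the hypotheses of both (1) and (2) force $s_2 > n/2$ (hence $\norm{g}_{L^\infty}\lesssim\norm{g}_{s_2}$), this is $\lesssim\norm{f}_{s_1}\norm{g}_{s_2}$. The remainder $R(f,g)$ obeys the standard bound $\norm{R(f,g)}_{H^{s_1+s_2-n/2}}\lesssim\norm{f}_{s_1}\norm{g}_{s_2}$ (via Bernstein's inequality and a Schur-test summation of the resulting dyadic kernel, using $s_1 + s_2 > 0$), and since $s_1 + s_2 - r > n/2$ in both cases --- in (1), $s_1+s_2-r \ge s_1 > n/2$; in (2), $s_1+s_2-r = s_1 + (s_2-r) > n/2$ --- we have $H^{s_1+s_2-n/2}(\mathbb{T}^n)\hookrightarrow H^r(\mathbb{T}^n)$, so this term is controlled. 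The low--high piece $T_f g = \sum_k (S_{k-2}f)(\Delta_k g)$ is where the two hypotheses are used differently: under the hypothesis of (1), $s_1 > n/2$ gives $\norm{S_{k-2}f}_{L^\infty}\lesssim\norm{f}_{s_1}$ uniformly in $k$, so $\norm{T_f g}_{H^r}\lesssim\norm{f}_{s_1}\norm{g}_{H^r}\le\norm{f}_{s_1}\norm{g}_{s_2}$ (using $r \le s_2$); under the hypothesis of (2), one instead bounds $\norm{(S_{k-2}f)(\Delta_k g)}_{L^2}\le\norm{f}_{L^2}\norm{\Delta_k g}_{L^\infty}\lesssim\norm{f}_{s_1}\,2^{k(n/2-s_2)}(2^{ks_2}\norm{\Delta_k g}_{L^2})$ and sums the $2^{2kr}$-weighted squares, which converges precisely because $r + n/2 - s_2 < 0$.

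The argument has no deep obstacle; the only point requiring care is the low--high paraproduct $T_f g$ together with the bookkeeping of which Sobolev exponent supplies the relevant $L^\infty$-control (case (1)) or summable dyadic decay (case (2)). The high--low term, the remainder estimate, the Schur test, and the extension/periodization reduction for $\Omega$ are all routine, and (1)--(2) may in any event be quoted directly from the standard multiplication theorems for Sobolev spaces.
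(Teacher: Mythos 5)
Your proposal is correct, but it cannot be compared against an in-paper argument because the paper supplies none: the proof of this theorem consists entirely of the citation ``See for example Lemma A.1 of \cite{guo2013almost}.'' Your closing remark --- that (1)--(2) may simply be quoted from the standard Sobolev multiplication theorems --- is in fact exactly what the authors do. What you have written is a correct self-contained derivation of that standard result: the duality reduction of (3) to (2) is clean (the hypotheses of (3) hand you precisely the hypotheses of (2) for the pair $(\varphi,g)$), the reflection-and-periodization reduction of $\Omega=\Sigma\times(-b,0)$ to a torus is legitimate since both boundary faces are flat, and the three paraproduct pieces are each handled with the right mechanism: $L^\infty$ control of $g$ (available in both cases since $s_2>n/2$ follows from either hypothesis) for $T_g f$, the dichotomy between uniform $L^\infty$ control of $S_{k-2}f$ (case (1), from $s_1>n/2$) versus Bernstein plus the geometric factor $2^{k(r+n/2-s_2)}$ (case (2), summable precisely because $s_2>r+n/2$) for $T_f g$, and the remainder bound in $H^{s_1+s_2-n/2}$ together with the verification that $s_1+s_2-r\ge n/2$ in both cases. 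The only points you leave implicit --- almost-orthogonality of the annulus-supported paraproduct blocks, the ball-supported remainder requiring $s_1+s_2>0$, and density of smooth functions to make the product $fg$ well defined in (3) --- are all routine. So the proposal is a valid, more detailed substitute for the citation the paper actually uses.
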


\begin{proof}
    See for example Lemma A.1 of \cite{guo2013almost}.
\end{proof}

\subsection{Poisson extension}\label{section:poisson-extension}
Suppose that $\Sigma=(L_1\mathbb T)\times(L_2\mathbb T)$. We define the Poisson integral in $\Omega_- = \Sigma\times(-\infty,0)$ by
\begin{equation}
    \calP f(x)\coloneqq\sum_{n\in(L_1^{-1}\mathbb Z)\times (L_2^{-1}\mathbb Z)} e^{2\pi in\cdot x'}e^{2\pi\abs nx_3}\hat f(n),
\end{equation}
where for $n\in (L_1^{-1}\mathbb Z)\times (L_2^{-1}\mathbb Z)$ we have written
\begin{equation}
    \hat f(n)\coloneqq \int_\Sigma f(x')\frac{e^{-2\pi in\cdot x'}}{L_1L_2}~dx'.
\end{equation}
It is well-known that $\calP:H^s(\Sigma)\to H^{s+1/2}(\Omega_-)$ is a bounded linear operator for $s>0$. We now show that derivatives of $\calP f$ can be estimated in the smaller domain $\Omega$. 

\begin{lem}
Let $\calP f$ be the Poisson integral of a function $f$ that is either in $\dot H^q(\Sigma)$ or $\dot H^{q-1/2}(\Sigma)$ for $q\in\mathbb N$. Then
\begin{equation}
    \norm{\nabla^q\calP f}_0^2\lesssim \norm{f}_{\dot{H}^{q-1/2}(\Sigma)}^2\qquad\text{and}\qquad\norm{\nabla^q\calP f}_0^2\lesssim \norm{f}_{\dot{H}^q(\Sigma)}^2.
\end{equation}
\end{lem}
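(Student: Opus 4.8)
The plan is to do everything on the Fourier side. Writing $\calP f(x) = \sum_{n}\hat f(n)\,e^{2\pi i n\cdot x'}e^{2\pi\abs n x_3}$ with the sum over $n\in(L_1^{-1}\mathbb Z)\times(L_2^{-1}\mathbb Z)$, I would first differentiate the series term by term. For a multi-index $\beta\in\mathbb N^3$ with $\abs\beta = q$, applying $\pt^\beta$ to the $n$-th summand produces the factor $(2\pi i n_1)^{\beta_1}(2\pi i n_2)^{\beta_2}(2\pi\abs n)^{\beta_3}$, since $\pt_3$ acting on $e^{2\pi\abs n x_3}$ brings down $2\pi\abs n$; in particular the modulus of the resulting coefficient is at most $(2\pi\abs n)^q\abs{\hat f(n)}e^{2\pi\abs n x_3}$, uniformly over all such $\beta$. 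Term-by-term differentiation is legitimate because for each fixed $x_3<0$ the weight $e^{2\pi\abs n x_3}$ decays faster than any polynomial in $n$; alternatively, since we only seek an inequality, it suffices to verify the identity for trigonometric polynomials $f$ and pass to a limit.

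Next, for fixed $x_3\in(-b,0)$ I would apply Parseval's identity on $\Sigma$, together with the fact that there are only $\binom{q+2}{2}$ multi-indices of length $q$, to obtain
\begin{equation}
\int_\Sigma\abs{\nabla^q\calP f(x',x_3)}^2\,dx' \lesssim L_1L_2\sum_{n}(2\pi\abs n)^{2q}\abs{\hat f(n)}^2 e^{4\pi\abs n x_3},
\end{equation}
with a universal implied constant. Integrating in $x_3$ over $(-b,0)$ and exchanging sum and integral by Tonelli reduces the lemma to estimating the scalar integral $\int_{-b}^0 e^{4\pi\abs n x_3}\,dx_3$ in two ways. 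The crude bound $\int_{-b}^0 e^{4\pi\abs n x_3}\,dx_3\le b$ immediately yields $\norm{\nabla^q\calP f}_0^2\lesssim\sum_n\abs n^{2q}\abs{\hat f(n)}^2\approx\norm f_{\dot H^q(\Sigma)}^2$, while for $n\neq 0$ the sharper identity $\int_{-b}^0 e^{4\pi\abs n x_3}\,dx_3 = (1-e^{-4\pi b\abs n})/(4\pi\abs n)\le 1/(4\pi\abs n)$ gives $\norm{\nabla^q\calP f}_0^2\lesssim\sum_{n\neq 0}\abs n^{2q-1}\abs{\hat f(n)}^2\approx\norm f_{\dot H^{q-1/2}(\Sigma)}^2$. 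The $n=0$ mode contributes nothing once $q\ge 1$ owing to the prefactor $(2\pi\abs n)^{2q}$, and for $q=0$ it is handled trivially under the standing convention that $\dot H^s(\Sigma)$ is taken modulo constants (equivalently, that the relevant functions have zero average, which holds in all our applications since $\eta$ does).

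I do not expect a genuine obstacle: the argument is a direct Fourier-series computation. The only points requiring a little care are (i) tracking that an order-$q$ derivative, whether purely horizontal, purely vertical, or mixed, always produces a symbol of size $\lesssim\abs n^q$ (the vertical derivative differentiates $e^{2\pi\abs n x_3}$, not the oscillatory factor), and (ii) recognizing the structural role of the finite slab height $b$: over the half-space $\Omega_-$ one has $\int_{-\infty}^0 e^{4\pi\abs n x_3}\,dx_3 = 1/(4\pi\abs n)$, which only delivers the half-derivative-gain estimate in terms of $\dot H^{q-1/2}$; it is precisely the truncation to $\Omega=\Sigma\times(-b,0)$, where the competing bound $\int_{-b}^0(\cdot)\le b$ is available, that yields the second estimate in terms of $\dot H^q$.
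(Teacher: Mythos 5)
Your argument is correct and is exactly the standard Fourier-series computation; the paper itself does not reproduce a proof but simply cites Lemma A.3 of \cite{guo2013almost}, where the same term-by-term differentiation, Parseval in $x'$, and two-way estimate of $\int_{-b}^0 e^{4\pi\abs{n}x_3}\,dx_3$ (by $b$ and by $1/(4\pi\abs{n})$) is carried out. Your closing remark that the $\dot H^q$ bound is genuinely a finite-slab phenomenon, while the $\dot H^{q-1/2}$ bound survives on the full half-space, is also accurate.
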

\begin{proof}
See Lemma A.3 in \cite{guo2013almost}.
\end{proof}

We will also need $L^\infty$ estimates. 

\begin{lem}\label{lem:poisson-L-infinity}
Let $\calP f$ be the Poisson integral of a function $f$ that is in $\dot H^{q+s}(\Sigma)$ for $q\geq 1$ an integer and $s>1$. Then
\begin{equation}
    \norm{\nabla^q\calP f}_{L^\infty}^2\lesssim \norm{f}_{\dot H^{q+s}}^2.
\end{equation}
The same estimate holds for $q=0$ if $f$ satisfies $\int_\Sigma f = 0$.
\end{lem}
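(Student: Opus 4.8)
The plan is to reduce the estimate to the convergence of a single numerical series by working directly with the Fourier expansion of $\calP f$. Recall that on $\Sigma = (L_1\mathbb T)\times(L_2\mathbb T)$ one has $\calP f(x) = \sum_n e^{2\pi i n\cdot x'}e^{2\pi\abs{n}x_3}\hat f(n)$, the sum running over the dual lattice $(L_1^{-1}\mathbb Z)\times(L_2^{-1}\mathbb Z)$. First I would differentiate under the sum: for a multi-index $\beta = (\beta_0,\beta_1,\beta_2)$ with $\abs\beta = q$, the operator $\pt_{x_3}^{\beta_0}\pt_{x_1}^{\beta_1}\pt_{x_2}^{\beta_2}$ applied to the $n$-th summand produces the scalar factor $(2\pi)^q(in_1)^{\beta_1}(in_2)^{\beta_2}\abs{n}^{\beta_0}$, whose modulus is at most $(2\pi)^q\abs{n}^q$. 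Since every point of $\Omega$ (and indeed of $\Omega_-$) has $x_3\le 0$, the Poisson factor satisfies $\abs{e^{2\pi\abs{n}x_3}}\le 1$, so the triangle inequality gives the pointwise bound $\abs{\nabla^q\calP f(x)}\lesssim\sum_n\abs{n}^q\abs{\hat f(n)}$, uniformly in $x\in\Omega$.

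The next step is to discard the zero Fourier mode, which is exactly what makes the homogeneous norm on the right the natural one: if $q\ge 1$ the weight $\abs{n}^q$ already annihilates the $n=0$ term, while if $q=0$ the hypothesis $\int_\Sigma f = 0$ forces $\hat f(0) = 0$. In either case the sum runs over $n\ne 0$, and splitting $\abs{n}^q = \abs{n}^{q+s}\abs{n}^{-s}$ and applying Cauchy--Schwarz in the lattice variable yields
\[
\sum_{n\ne 0}\abs{n}^q\abs{\hat f(n)} \le \Big(\sum_{n\ne 0}\abs{n}^{2(q+s)}\abs{\hat f(n)}^2\Big)^{1/2}\Big(\sum_{n\ne 0}\abs{n}^{-2s}\Big)^{1/2}\lesssim \norm{f}_{\dot H^{q+s}(\Sigma)},
\]
the first factor being (a constant multiple of) $\norm{f}_{\dot H^{q+s}(\Sigma)}$ by the definition of the homogeneous Sobolev norm, and the second a finite constant depending only on $L_1,L_2$. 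Taking the supremum over $x\in\Omega$ and squaring gives the claimed inequality.

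The only point demanding any attention — which I would regard as the crux rather than a genuine obstacle — is the finiteness of $\sum_{n\ne 0}\abs{n}^{-2s}$: because the dual lattice lives in a two-dimensional space (the torus $\Sigma$ has dimension $2$), this series converges precisely when $2s > 2$, i.e.\ $s > 1$, which is exactly the standing hypothesis on $s$. Everything else is routine bookkeeping with the Fourier normalization and with constants permitted to depend on $\Omega$ and its dimensions. In spirit this is the $L^\infty$ analogue of the $L^2$ estimate quoted earlier from \cite{guo2013almost}, using the embedding of $\dot H^s$ into bounded functions on the $2$-torus (valid for $s > 1$ after removal of the mean) in place of the Plancherel bound.
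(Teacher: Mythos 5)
Your proof is correct and is essentially the argument behind the result the paper cites (Lemma A.4 of \cite{guo2013almost}): term-by-term differentiation of the Fourier series, the bound $e^{2\pi\abs{n}x_3}\le 1$ for $x_3\le 0$, removal of the zero mode, and Cauchy--Schwarz with the splitting $\abs{n}^q=\abs{n}^{q+s}\abs{n}^{-s}$, where $s>1$ is exactly what makes $\sum_{n\ne 0}\abs{n}^{-2s}$ converge on the two-dimensional dual lattice. The paper itself gives no proof beyond the citation, so there is nothing further to compare.
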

\begin{proof}
See Lemma A.4 in \cite{guo2013almost}.
\end{proof}

\bibliographystyle{abbrv}
\bibliography{citations}

\begin{thebibliography}{10}

\bibitem{beale1981initial}
J.~T. Beale.
\newblock {The initial value problem for the navier-stokes equations with a
  free surface}.
\newblock {\em Communications on Pure and Applied Mathematics}, 34(3):359--392,
  1981.

\bibitem{beale1984large}
J.~T. Beale.
\newblock {Large-time regularity of viscous surface waves}.
\newblock {\em Archive for Rational Mechanics and Analysis}, 84(4):307--352,
  1984.

\bibitem{beale_nishida}
J.~T. Beale and T.~Nishida.
\newblock {Large-time behavior of viscous surface waves}.
\newblock In {\em {Recent topics in nonlinear {PDE}, {II} ({S}endai, 1984)}},
  volume 128 of {\em {North-Holland Math. Stud.}}, pages 1--14. North-Holland,
  Amsterdam, 1985.

\bibitem{benjamin1954stability}
T.~B. Benjamin and F.~J. Ursell.
\newblock {The stability of the plane free surface of a liquid in vertical
  periodic motion}.
\newblock {\em Proceedings of the Royal Society of London. Series A.
  Mathematical and Physical Sciences}, 225(1163):505--515, 1954.

\bibitem{bush2015pilot}
J.~W. Bush.
\newblock {Pilot-wave hydrodynamics}.
\newblock {\em Annual Review of Fluid Mechanics}, 47:269--292, 2015.

\bibitem{couder2005dynamical}
Y.~Couder, S.~Protiere, E.~Fort, and A.~Boudaoud.
\newblock {Dynamical phenomena: Walking and orbiting droplets}.
\newblock {\em Nature}, 437(7056):208, 2005.

\bibitem{danchin2005estimates}
R.~Danchin.
\newblock {Estimates in {B}esov spaces for transport and transport-diffusion
  equations with almost {L}ipschitz coefficients}.
\newblock {\em Revista Matem{\'a}tica Iberoamericana}, 21(3):863--888, 2005.

\bibitem{dasetal_2009}
K.~S. Das, S.~W. Morris, and A.~Bhattacharyay.
\newblock {Parametric internal waves in a compressible fluid}.
\newblock {\em Nonlinearity}, 22(12):2981--2990, 2009.

\bibitem{faraday1831forms}
M.~Faraday.
\newblock {On the forms and states assumed by fluids in contact with vibrating
  elastic surfaces}.
\newblock {\em Philos. Trans. R. Soc. London}, 121:39--346, 1831.

\bibitem{guo2013almost}
Y.~Guo and I.~Tice.
\newblock {Almost exponential decay of periodic viscous surface waves without
  surface tension}.
\newblock {\em Archive for Rational Mechanics and Analysis}, 207(2):459--531,
  2013.

\bibitem{guo_tice_inf}
Y.~Guo and I.~Tice.
\newblock {Decay of viscous surface waves without surface tension in
  horizontally infinite domains}.
\newblock {\em Anal. PDE}, 6(6):1429--1533, 2013.

\bibitem{hataya2009decaying}
Y.~Hataya.
\newblock {Decaying solution of a Navier-Stokes flow without surface tension}.
\newblock {\em J. Math. Kyoto Univ.}, 49(4):691--717, 2009.

\bibitem{jang2016compressible}
J.~Jang, I.~Tice, and Y.~Wang.
\newblock {The compressible viscous surface-internal wave problem: stability
  and vanishing surface tension limit}.
\newblock {\em Communications in Mathematical Physics}, 343(3):1039--1113,
  2016.

\bibitem{kumar1996linear}
K.~Kumar.
\newblock {Linear theory of {F}araday instability in viscous liquids}.
\newblock {\em Proc. R. Soc. Lond. A}, 452(1948):1113--1126, 1996.

\bibitem{kumar1994parametric}
K.~Kumar and L.~S. Tuckerman.
\newblock {Parametric instability of the interface between two fluids}.
\newblock {\em Journal of Fluid Mechanics}, 279:49--68, 1994.

\bibitem{mclachlan}
N.~W. McLachlan.
\newblock {\em {Theory and application of {M}athieu functions}}.
\newblock Dover Publications, Inc., New York, 1964.

\bibitem{miles_henderson_1990}
J.~Miles and D.~Henderson.
\newblock {Parametrically forced surface waves}.
\newblock In {\em {Annual review of fluid mechanics, {V}ol. 22}}, pages
  143--165. Annual Reviews, Palo Alto, CA, 1990.

\bibitem{nishida_teramoto_yoshihara}
T.~Nishida, Y.~Teramoto, and H.~Yoshihara.
\newblock {Global in time behavior of viscous surface waves: horizontally
  periodic motion}.
\newblock {\em J. Math. Kyoto Univ.}, 44(2):271--323, 2004.

\bibitem{perinet2009numerical}
N.~Perinet, D.~Juric, and L.~S. Tuckerman.
\newblock {Numerical simulation of {F}araday waves}.
\newblock {\em Journal of Fluid Mechanics}, 635:1--26, 2009.

\bibitem{qadeer2018simulating}
S.~Qadeer.
\newblock {\em {Simulating Nonlinear {F}araday Waves on a Cylinder}}.
\newblock PhD thesis, UC Berkeley, 2018.

\bibitem{remondtiedrez_tice}
A.~Remond-Tiedrez and I.~Tice.
\newblock {The viscous surface wave problem with generalized surface energies}.
\newblock Preprint(arXiv:1806.07660), 2018.

\bibitem{skeldon2015can}
A.~Skeldon and A.~Rucklidge.
\newblock {Can weakly nonlinear theory explain Faraday wave patterns near
  onset?}
\newblock {\em Journal of Fluid Mechanics}, 777:604--632, 2015.

\bibitem{tan2014zero}
Z.~Tan and Y.~Wang.
\newblock {Zero surface tension limit of viscous surface waves}.
\newblock {\em Communications in Mathematical Physics}, 328(2):733--807, 2014.

\bibitem{tan_wang_mhd}
Z.~Tan and Y.~Wang.
\newblock {Global well-posedness of an initial-boundary value problem for
  viscous non-resistive {MHD} systems}.
\newblock {\em SIAM J. Math. Anal.}, 50(1):1432--1470, 2018.

\bibitem{tani_tanaka}
A.~Tani and N.~Tanaka.
\newblock {Large-time existence of surface waves in incompressible viscous
  fluids with or without surface tension}.
\newblock {\em Arch. Rational Mech. Anal.}, 130(4):303--314, 1995.

\bibitem{tice2018asymptotic}
I.~Tice.
\newblock {Asymptotic stability of shear-flow solutions to incompressible
  viscous free boundary problems with and without surface tension}.
\newblock {\em Zeitschrift f{\"u}r angewandte Mathematik und Physik}, 69(2):28,
  2018.

\bibitem{wang2014viscous}
Y.~Wang, I.~Tice, and C.~Kim.
\newblock {The viscous surface-internal wave problem: global well-posedness and
  decay}.
\newblock {\em Archive for Rational Mechanics and Analysis}, 212(1):1--92,
  2014.

\bibitem{westra2003patterns}
M.-T. Westra, D.~J. Binks, and W.~{Van De Water}.
\newblock {Patterns of {F}araday waves}.
\newblock {\em Journal of Fluid Mechanics}, 496:1--32, 2003.

\bibitem{wu2014well}
L.~Wu.
\newblock {Well-posedness and decay of the viscous surface wave}.
\newblock {\em SIAM Journal on Mathematical Analysis}, 46(3):2084--2135, 2014.

\end{thebibliography}

\end{document}